\providecommand{\tabularnewline}{\\}
\numberwithin{equation}{section}
\numberwithin{figure}{section}
\theoremstyle{plain}
\newtheorem{thm}{\protect\theoremname}
\theoremstyle{plain}
\newtheorem{cor}[thm]{\protect\corollaryname}
\theoremstyle{plain}
\newtheorem{prop}[thm]{\protect\propositionname}
\theoremstyle{remark}
\newtheorem{rem}[thm]{\protect\remarkname}
\theoremstyle{definition}
\newtheorem{defn}[thm]{\protect\definitionname}
\theoremstyle{definition}
\newtheorem{example}[thm]{\protect\examplename}
\providecommand{\corollaryname}{Corollary}
\providecommand{\definitionname}{Definition}
\providecommand{\examplename}{Example}
\providecommand{\propositionname}{Proposition}
\providecommand{\remarkname}{Remark}
\providecommand{\theoremname}{Theorem}
\begin{document}
\title{The maximal destabilizers for Chow and K-stability}
\author{YI YAO}
\address{School of Mathematics, Hunan University, Changsha, China. }
\email{yeeyoe@163.com}
\begin{abstract}
Donaldson showed that the constant scalar curvature Kähler metrics
can be quantized by the balanced Hermitian norms on the spaces of
global sections. We explore an analogous problem in the unstable situation.
For a K-unstable manifold $(X,L)$, its projective embedding via $\left|kL\right|$
will be Chow-unstable when $k$ is sufficiently large and divisible.
There is a unique filtration on $\mathrm{H}^{0}(X,kL)$, that corresponds
to the maximal destabilizer for Chow-stability of the embedded variety.
On the other hand, there is a maximal destabilizer for K-stability
after the work of Xia and Li, which corresponds to the steepest descent
direction of K-energy. Based on Boucksom--Jonsson's non-Archimedean
pluripotential theory and some idealistic assumptions, we provide
a route to show that maximal K-destabilizers are quantized by the
maximal Chow-destabilizers. 
\end{abstract}

\maketitle
\global\long\def\lam{\lambda}%
 
\global\long\def\vphi{\varphi}%
 
\global\long\def\cch{\mathcal{H}}%
 
\global\long\def\bbc{\mathbb{C}}%
 
\global\long\def\bbcs{\mathbb{C}^{*}}%
 
\global\long\def\bbr{\mathbb{R}}%
 
\global\long\def\bbz{\mathbb{Z}}%
 
\global\long\def\bbn{\mathbb{N}}%

\global\long\def\ra{\rightarrow}%
 
\global\long\def\lead{\leadsto}%
 
\global\long\def\bdot{\bm{\cdot}}%
 
\global\long\def\'{\prime}%
 
\global\long\def\cleq{\preceq}%
\global\long\def\cgeq{\succeq}%
 
\global\long\def\na{{\scriptscriptstyle \mathsf{NA}}}%
 
\global\long\def\an{\mathrm{an}}%
 
\global\long\def\ma{\mathrm{MA}}%

\tableofcontents{}

\section{Introduction}

A central problem in Kähler geometry is the existence of canonical
metrics in a given Kähler class. Let $(X,L)$ be a polarized Kähler
manifold, the Yau--Tian--Donaldson conjecture predicts that the
existence of constant scalar curvature Kähler (cscK) metrics in $c_{1}(L)$
is equivalent to the K-polystability (or some enhanced version) of
$(X,L)$. During the last decade, this conjecture has been verified
for the Fano case $(X,-K_{X})$ by various approaches, such as \cite{chen_kahlereinstein_2015,tian_k-stability_2015,berman_variational_2021}.
Following the variational approach of \cite{berman_variational_2021},
there has been big progress for the general polarization, see \cites{li_geodesic_2022}{boucksom_non-archimedean_2023}{li_canonical_2022}. 

The quantization method plays a key role in the study of canonical
metrics problems. The main idea is that the infinite-dimensional space
$\mathcal{H}(L)$ of Kähler potentials can be approximated (quantized)
by the Riemannian symmetric space $\mathcal{H}(R_{k})$ of Hermitian
norms on $R_{k}\coloneqq\mathrm{H}^{0}(X,kL)$, as the Planck's constant
$\hbar\sim k^{-1}$ goes to zero. A cornerstone is the Tian--Yau--Zelditch's
asymptotic expansion of the Bergman kernel. There are many studies
involving quantization in this area. We mention one work that motivates
our study. 

The seminar work of Donaldson \cite{donaldson_scalar_2001,donaldson_scalar_2005}
shows that if $(X,L)$ admits a cscK metric $\omega$ and the automorphism
group is discrete, then there exists balanced norms on $R_{k}$ for
$k\gg1$. Moreover, the sequence of Fubini--Study metrics induced
by balanced norms will converge to $\omega$, see Theorem \ref{thm: Donaldson}
for a review. In a nutshell, balanced norms quantize cscK metrics.
By \cites{zhang_heights_1996}{luo_geometric_1998}, the existence
of balanced norms on $R_{k}$ is equivalent to the Chow-polystability
of the embedding image $X\hookrightarrow\mathbb{P}R_{k}^{*}$. Hence
in this case, the existence of cscK metrics can imply the asymptotic
Chow-stability. Conversely, Ross--Thomas \cite{ross_study_2006}
showed that the asymptotic Chow-semistability can imply the K-semistability
of $(X,L)$. In a rough sense, one can say that Chow-stability quantizes
K-stability. 

In this paper, we explore the parallel story on the unstable side.
If $(X,L)$ is K-unstable, by Theorem \ref{thm: K-uns imply Chow-uns}
there exists $r,N\in\bbn_{+}$ such that the embedding image $X\hookrightarrow\mathbb{P}R_{kr}^{*}$
is Chow-unstable for $k>N$. Replacing $L$ by $rL$, we can assume
$r=1$. In GIT, after the work of Kempf \cite{kempf_instability_1978}
and Rousseau \cite{rousseau_immeubles_1978}, one can consider the
maximal destabilizing 1-parameter subgroups to an unstable point,
which minimizes the normalized Hilbert--Mumford weight and is unique
up to conjugations. Hence for Chow-unstable subvariety $X\hookrightarrow\mathbb{P}R_{k}^{*}$,
we obtain a set of 1-PS in $\mathrm{SL}(R_{k})$ which are conjugate
to each other. To eliminate the conjugacy ambiguity, we consider the
filtration (i.e. NA norm) on $R_{k}$ induced by these 1-PS, that
is unique up to scaling. We call this NA norm on $R_{k}$ the \emph{maximal
Chow-destabilizer} for $X\hookrightarrow\mathbb{P}R_{k}^{*}$. Comparing
with the picture \cite{donaldson_scalar_2001} in the stable case,
we ask: 
\begin{quote}
How do this sequence of maximal Chow-destabilizers relate to the maximal
K-destabilizer?
\end{quote}
The maximal K-destabilizer was first studied by Székelyhidi \cite{szekelyhidi_optimal_2008}
in the toric setting, where it is given by a concave function $\Theta$
(optimal test-configuration) on the moment polytope $P$ that minimizes
the Donaldson--Futaki invariant. Assuming that the Calabi flow exists
for all time, Székelyhidi showed that the scalar curvature under the
flow converges to $\Theta$. Moreover, if $\Theta$ is piecewise linear,
it induces a polyhedral decomposition of $P$, such that each piece
is relatively K-semistable in a certain sense. These support a conjecture
of Donaldson \cite{donaldson_scalar_2002,donaldson_conjectures_2004},
which predicts that when the extremal metrics are absent, Calabi flow
will break up the manifold into pieces, and each piece admits a complete
extremal metric or collapses. Also see \cite{szekelyhidi_calabi_2009}
for the case of ruled surfaces. 

In the general case, maximal K-destabilizer is defined in the work
of Xia \cite{xia_sharp_2021}. The K-energy $\mathcal{M}$ can be
extended to a convex functional over the space $\mathcal{E}^{2}(L)$
of $L^{2}$-finite energy metrics on $L$, which is a Hadamard space.
By considering the associated gradient flow, Xia showed that in the
geodesically unstable case, modulo the asymptotic relations, there
exists a unique geodesic ray $\ell^{\mathrm{K}}$ in $\mathcal{E}^{2}(L)$
along which $\mathcal{M}$ achieves the minimal limit slope, namely
it minimizes the radial K-energy $\mathcal{M}^{\mathrm{rad}}$ (\ref{eq:radial K-energy}).
We call $\ell^{\mathrm{K}}$ the \emph{maximal K-destabilizing ray}.
By the construction in \cite{berman_variational_2021}, $\ell^{\mathrm{K}}$
induces a non-Archimedean metric $\phi^{\mathrm{K}}$ on the Berkovich
analytification $L^{\an}$ of $L$ (see §\ref{subsec: limit NA metric}).
By \cite{li_geodesic_2022}, $\ell^{\mathrm{K}}$ is maximal in the
sense of \cite{berman_variational_2021}, so it can be recovered from
the NA data $\phi^{\mathrm{K}}$. We call $\phi^{\mathrm{K}}$ the
\emph{maximal K-destabilizer} for $(X,L)$, which is unique up to
scaling, see Definition \ref{def: max K-desta} for details. In the
toric setting, it can be reduced into Székelyhidi's optimal test-configuration,
see Example \ref{exa:toric optimal}. 

In order to connect the maximal Chow-destabilizers to the maximal
K-destabilizer, first we need to characterize them in a unified way,
namely as the ``steepest descent direction'' for some energy functionals.
Instead of being a 1-PS that minimizes the Chow-weight, the maximal
Chow-destabilizer can also be represented by the norm geodesic ray
along which the Kempf--Ness functional achieves the minimal limit
slope. 

\subsection*{Maximal Chow-destabilizers}

Let $X\subset\mathbb{P}V^{*}$ be a projective variety and $\mathcal{H}(V)$
be the space of Hermitian norms on $V$. By works \cite{zhang_heights_1996,luo_geometric_1998,phong_stability_2003,paul_geometric_2004},
the Kempf--Ness functional for the Chow-stability of $X$ can be
identified with the following functional on $\mathcal{H}(V)$, 
\[
\mathbf{M}_{X}(H)\coloneqq\mathcal{E}_{X}\left(\mathbf{FS}(H),\mathbf{FS}(H_{r})\right)+\frac{1}{\dim V}\log\left(\frac{\det H}{\det H_{r}}\right),
\]
where $\mathbf{FS}(H)$ is the Fubini--Study metric on $\mathcal{O}(1)|_{X}$
induced by $H$, $\mathcal{E}_{X}$ is the Monge--Ampère energy,
and $H_{r}$ is a reference norm. Chow-weights are obtained by taking
the limit slope of $\mathbf{M}_{X}$ along the norm geodesic rays.
Consider the asymptotic cone $\hat{\mathcal{H}}(V)$ of $\mathcal{H}(V)$,
which is a Hadamard space consisting of geodesic rays in $\mathcal{H}(V)$
modulo the asymptotic relations. By \cite{boucksom_variational_2019},
a norm geodesic ray $H=(H_{t})_{t\geq0}$ induces a NA norm $\chi_{H}$
on $V$. This gives an isometry $\hat{\mathcal{H}}(V)\xrightarrow{\sim}\mathcal{N}(V)$,
the target is the space of NA norms on $V$. We define the Chow-weight
$M_{X}:\mathcal{N}(V)\rightarrow\bbr$ as follows 
\begin{equation}
M_{X}(\chi)\coloneqq\lim_{t\rightarrow\infty}\frac{1}{t}\mathbf{M}_{X}(H_{t}),\label{eq:Chow w INTRO}
\end{equation}
where $(H_{t})$ is any norm geodesic ray such that $\chi_{H}=\chi$.
It is well-defined since $\mathbf{M}_{X}$ is convex and Lipschitz
(Proposition \ref{prop: conv of Mk}). Note that $\mathbf{M}_{X}$
depends on the choice of norm (\ref{eq:norm chow}) on Chow-forms,
but Chow-weight does not. Since $M_{X}$ is homogeneous under scaling,
we normalize it and then consider the minimization problem. 
\begin{thm}[Theorem \ref{thm:existence-uniqueness of Chow}]
\label{thm:Chow exist INTRO}For any projective manifold $X\subset\mathbb{P}V^{*}$,
consider the minimization problem for $L^{p}$-normalized Chow-weight
\begin{equation}
\inf\left\{ \bar{M}_{X,p}(\chi)\coloneqq\frac{M_{X}(\chi)}{\left\Vert \chi\right\Vert _{p}}\mid\chi_{tr}\neq\chi\in\mathcal{N}(V)\right\} ,\label{eq:min Chow INTRO}
\end{equation}
where $\left\Vert \chi\right\Vert _{p\in[1,\infty]}$ is the $p$-norm
(\ref{eq:p norm}). (1) there exists a minimizer for any $p\in[1,\infty]$.
(2) when $X$ is Chow-unstable (i.e. $\inf M_{X}<0$) and $p\in(1,\infty)$,
the minimizer is unique up to scaling. 
\end{thm}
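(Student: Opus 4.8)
The plan is to regard the Chow-weight $M_{X}$ as a convex, positively $1$-homogeneous, Lipschitz function on the complete metric cone $\mathcal{N}(V)\cong\hat{\cch}(V)$, establish existence by the direct method using compactness of the unit sphere, and then upgrade this to uniqueness by exploiting the strict convexity of the $L^{p}$-norm when $1<p<\infty$. First I would record the formal properties of $M_{X}$: being the asymptotic slope of the convex and Lipschitz functional $\mathbf{M}_{X}$ (Proposition~\ref{prop: conv of Mk}), the Chow-weight $M_{X}$ is finite, positively $1$-homogeneous, convex along geodesics of $\mathcal{N}(V)$, and Lipschitz for each metric $d_{p}$; in particular it is continuous on the sphere $S_{p}\coloneqq\{\chi\in\mathcal{N}(V):\|\chi\|_{p}=1\}$. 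Since $\bar{M}_{X,p}$ is invariant under scaling, the minimization problem (\ref{eq:min Chow INTRO}) is equivalent to minimizing $M_{X}$ over $S_{p}$.

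\emph{Existence.} The point is that $S_{p}$ is (sequentially) compact. An NA norm on the $N$-dimensional space $V$ is diagonalizable, hence determined by a flag in $V$ together with its jump values; flags range over a compact flag variety, coarsenings of flags still lie in $\mathcal{N}(V)$, and on $S_{p}$ the jump values are uniformly bounded. A minimizing sequence for $M_{X}|_{S_{p}}$ therefore subconverges to some $\chi_{\min}\in S_{p}$, which by continuity attains the infimum; this gives (1) for every $p\in[1,\infty]$. (If $X$ is Chow-stable one still gets a minimizer, necessarily with positive value; if $X$ is strictly Chow-semistable the minimal value is $0$, attained at a nontrivial $\chi$.)

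\emph{Uniqueness.} Now suppose $X$ is Chow-unstable, fix $p\in(1,\infty)$, and put $-\delta\coloneqq\inf\bar{M}_{X,p}<0$. Let $\chi_{0},\chi_{1}\in S_{p}$ both attain it, so $M_{X}(\chi_{0})=M_{X}(\chi_{1})=-\delta$, and assume $\chi_{0}\ne\chi_{1}$; I will derive a contradiction. Choose a basis $e_{1},\dots,e_{N}$ of $V$ diagonalizing both $\chi_{0}$ and $\chi_{1}$ (any two norms on a trivially valued vector space admit a common adapted basis), with weight vectors $a=(a_{i})$ and $b=(b_{i})$, and let $\chi_{s}$ be the NA norm diagonal in $(e_{i})$ with weights $(1-s)a+sb$; this is the geodesic segment from $\chi_{0}$ to $\chi_{1}$ inside the flat of norms diagonal in $(e_{i})$, and along it $\|\chi_{s}\|_{p}$ is the $(\dim V)$-averaged $\ell^{p}$-norm of $(1-s)a+sb$. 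By convexity of $M_{X}$ along geodesics, $M_{X}(\chi_{1/2})\le\tfrac{1}{2}M_{X}(\chi_{0})+\tfrac{1}{2}M_{X}(\chi_{1})=-\delta$. Since $1<p<\infty$ the $\ell^{p}$-ball is strictly convex and, as $\chi_{0}\ne\chi_{1}$, the vectors $a$ and $b$ are not non-negative multiples of one another; hence $\|\chi_{1/2}\|_{p}<\tfrac{1}{2}\|\chi_{0}\|_{p}+\tfrac{1}{2}\|\chi_{1}\|_{p}=1$. Therefore $\bar{M}_{X,p}(\chi_{1/2})=M_{X}(\chi_{1/2})/\|\chi_{1/2}\|_{p}\le-\delta/\|\chi_{1/2}\|_{p}<-\delta$, contradicting minimality of $-\delta$. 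So any two minimizers on $S_{p}$ coincide, i.e. the minimizer is unique up to scaling, which is (2).

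\emph{Main obstacle.} The delicate points are the genuine compactness of $S_{p}$ in the existence step (one must handle the additive normalization of NA norms and flags that degenerate, all of whose limits nonetheless remain in $\mathcal{N}(V)$) and, in the uniqueness step, the fact that $M_{X}$ really is convex along the chosen geodesic $\chi_{s}$ — this is imported from the convexity of $\mathbf{M}_{X}$ via the isometry $\mathcal{N}(V)\cong\hat{\cch}(V)$. It is worth stressing that uniqueness is driven by strict convexity of the $L^{p}$-norm, not of $M_{X}$; this is exactly why $p=1,\infty$ are excluded from (2) and why Chow-instability (so that $-\delta<0$) is needed. For $p=2$, both parts can alternatively be deduced from the Kempf--Rousseau theory of optimal destabilizing $1$-parameter subgroups, once one checks that conjugating a $1$-PS by an element of its own parabolic leaves the filtration it induces on $V$ unchanged.
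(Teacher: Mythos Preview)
Your uniqueness argument for (2) is essentially identical to the paper's and is correct.

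The existence argument, however, has a genuine gap. You assert that the sphere $S_{p}=\{\chi\in\mathcal{N}(V):\|\chi\|_{p}=1\}$ is sequentially compact, but this is \emph{false} in the $d_{p}$-topology once $\dim V>1$; the paper explicitly exhibits (Remark~\ref{rem:not loc cpt}) an uncountable family $\{\chi_{s}\}_{[s]\in\mathbb{P}V}$ of norms on $S_{p}$ that are all at fixed positive $d_{p}$-distance from one another, so no subsequence can converge. Your flag-variety heuristic tacitly switches to a different, coarser topology (roughly: the quotient of $\mathrm{Flag}(V)\times\{\text{jump values}\}$), and $M_{X}$ is \emph{not} known to be continuous there from the $d_{p}$-Lipschitz bound you cite. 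You yourself flag this as ``the delicate point,'' but you do not supply the missing step.

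The paper's fix is exactly of this type but made precise: it pulls the problem back along the bijection $\Psi:T_{H_{0}}\mathcal{H}(V)\to\mathcal{N}(V)$ sending a $H_{0}$-self-adjoint operator $A$ to the eigenspace filtration of $-A$. One has $\|\Psi(A)\|_{p}=\|A\|_{p,H_{0}}$, so the unit sphere in the finite-dimensional Euclidean space $T_{H_{0}}\mathcal{H}(V)$ is honestly compact; and $M_{X}\circ\Psi(A)=\sup_{t>0}t^{-1}\bigl(\mathbf{M}_{X}(H_{t}^{A})-\mathbf{M}_{X}(H_{0})\bigr)$ is a supremum of functions continuous in $A$, hence lower semicontinuous in the Euclidean topology. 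Lsc on a compact set gives the minimizer. If you want to salvage your approach, this is the kind of lsc-in-a-locally-compact-parametrization statement you would need to prove.
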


When $p=2$, the minimizers are called the \textit{maximal Chow-destabilizers}
for $X$. %
{} The existence is not straightforward since the unit sphere in $\mathcal{N}(V)$
is not compact unless $\dim V=1$ (see Remark \ref{rem:not loc cpt}).
For this, we use another locally compact topology under which $M_{X}$
is lsc. The uniqueness follows from the convexity of $M_{X}$ and
the strict convexity of $p$-norm when $p\in(1,\infty)$. 

\subsection*{Symmetries of maximal destabilizers}

The absolute uniqueness implies that the maximal Chow-destabilizer
can inherit all symmetries of embedding $X\subset\mathbb{P}V^{*}$. 
\begin{cor}[Theorem \ref{thm:sym of Chow-dest}]
 Let $X\subset\mathbb{P}V^{*}$ be a Chow-unstable projective manifold.
Suppose that $G$ is a group with a homomorphism $G\rightarrow\mathrm{GL}(V)$
such that $X\subset\mathbb{P}V^{*}$ is preserved by $G$. With respect
to the natural $G$-action on $\mathcal{N}(V)$, $M_{X}$ is $G$-invariant
and the maximal Chow-destabilizer is also $G$-invariant.
\end{cor}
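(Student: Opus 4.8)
The plan is to deduce the $G$-invariance of the maximal Chow-destabilizer from the absolute uniqueness in Theorem~\ref{thm:existence-uniqueness of Chow}, once we know that both $M_{X}$ and the $p$-norm $\left\Vert \cdot\right\Vert _{p}$ are preserved by the natural $G$-action on $\mathcal{N}(V)$. The homomorphism $G\to\mathrm{GL}(V)$ makes $G$ act on $\mathcal{H}(V)$ by $g\cdot H\coloneqq(g^{-1})^{*}H$, hence on the asymptotic cone $\hat{\mathcal{H}}(V)$, and — through the $\mathrm{GL}(V)$-equivariant isometry $\hat{\mathcal{H}}(V)\xrightarrow{\sim}\mathcal{N}(V)$ of \cite{boucksom_variational_2019}, which intertwines $g\cdot\chi_{H}=\chi_{g\cdot H}$ — on $\mathcal{N}(V)$, concretely $(g\cdot\chi)(v)=\chi(g^{-1}v)$. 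If $(e_{i})$ is a basis of $V$ diagonalizing $\chi$, then $(ge_{i})$ diagonalizes $g\cdot\chi$ with the same weights, and $\chi_{tr}=g\cdot\chi_{tr}$; hence $\left\Vert g\cdot\chi\right\Vert _{p}=\left\Vert \chi\right\Vert _{p}$ for every $p\in[1,\infty]$ and $\chi_{tr}$ is a $G$-fixed point, so both the constraint set and the denominator in (\ref{eq:min Chow INTRO}) are $G$-stable.

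The core is the $G$-invariance of $M_{X}$, for which I would first prove that the Kempf--Ness functional is equivariant up to an additive constant: for $g\in\mathrm{GL}(V)$ preserving $X\subset\mathbb{P}V^{*}$,
\[
\mathbf{M}_{X}(g\cdot H)=\mathbf{M}_{X}(H)+\mathbf{M}_{X}(g\cdot H_{r}),
\]
the last term being a finite constant (the value of $\mathbf{M}_{X}$ at the smooth norm $g\cdot H_{r}$) independent of $H$. This rests on two facts together with the cocycle identity $\mathcal{E}_{X}(\phi_{1},\phi_{3})=\mathcal{E}_{X}(\phi_{1},\phi_{2})+\mathcal{E}_{X}(\phi_{2},\phi_{3})$: first, $\mathbf{FS}(g\cdot H)-\mathbf{FS}(g\cdot H_{r})=\sigma_{g}^{*}\bigl(\mathbf{FS}(H)-\mathbf{FS}(H_{r})\bigr)$ as functions on $X$, where $\sigma_{g}$ is the projective transformation induced by $g$ — which restricts to an automorphism of $X$ precisely because $G$ preserves $X\subset\mathbb{P}V^{*}$ — whence $\mathcal{E}_{X}(\mathbf{FS}(g\cdot H),\mathbf{FS}(g\cdot H_{r}))=\mathcal{E}_{X}(\mathbf{FS}(H),\mathbf{FS}(H_{r}))$ by the invariance of the Monge--Ampère energy under pullback by $\sigma_{g}|_{X}$; second, $\det(g\cdot H)/\det(g\cdot H_{r})=\det H/\det H_{r}$, since both determinants pick up the same factor $\left|\det g\right|^{-2}$. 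Granting this, for any norm geodesic ray $(H_{t})_{t\geq0}$ with $\chi_{H}=\chi$ the ray $(g\cdot H_{t})$ represents $g\cdot\chi$, and passing to limit slopes (which exist by the convexity and Lipschitz estimate of Proposition~\ref{prop: conv of Mk}) annihilates the constant:
\[
M_{X}(g\cdot\chi)=\lim_{t\to\infty}\frac{1}{t}\mathbf{M}_{X}(g\cdot H_{t})=\lim_{t\to\infty}\frac{1}{t}\bigl(\mathbf{M}_{X}(H_{t})+\mathbf{M}_{X}(g\cdot H_{r})\bigr)=M_{X}(\chi).
\]
Thus $M_{X}$, and hence $\bar{M}_{X,p}$, is $G$-invariant.

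Finally, take $p=2$ and assume $X$ Chow-unstable, so by Theorem~\ref{thm:existence-uniqueness of Chow} the functional $\bar{M}_{X,2}$ has a minimizer $\chi^{\star}$, unique up to scaling. For each $g\in G$, $G$-invariance of $\bar{M}_{X,2}$ shows that $g\cdot\chi^{\star}$ is again a minimizer, so $g\cdot\chi^{\star}=\lambda_{g}\chi^{\star}$ for some $\lambda_{g}>0$; comparing $2$-norms and using $\left\Vert \chi^{\star}\right\Vert _{2}>0$ (as $\chi^{\star}\neq\chi_{tr}$) forces $\lambda_{g}=1$, so $\chi^{\star}$ is $G$-fixed (equivalently, the minimizing ray is $G$-stable). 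The one place that needs genuine care is the transformation law $\mathbf{FS}(g\cdot H)-\mathbf{FS}(g\cdot H_{r})=\sigma_{g}^{*}(\cdots)$ and the naturality of $\mathcal{E}_{X}$ under $\sigma_{g}|_{X}$; both are standard once the conventions for $\mathbf{FS}$ and for the $G$-action on $\mathcal{H}(V)$ are fixed, and everything else is formal.
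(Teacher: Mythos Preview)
Your proof is correct and follows essentially the same approach as the paper: show $\mathbf{M}_{X}(g\cdot H)=\mathbf{M}_{X}(H)+C(g)$ via the $G$-equivariance of $\mathbf{FS}$ and the cocycle properties of $\mathcal{E}_{X}$ and $\mathbf{E}_{V}$, pass to the limit slope to get $M_{X}(g\cdot\chi)=M_{X}(\chi)$, and then invoke the uniqueness in Theorem~\ref{thm:existence-uniqueness of Chow} together with $\left\Vert g\cdot\chi\right\Vert _{p}=\left\Vert \chi\right\Vert _{p}$ to force $g\cdot\bar{\chi}=\bar{\chi}$. The only cosmetic difference is that the paper writes the constant as $C(g)=\mathcal{E}_{X}(g\cdot\mathbf{FS}(H_{r}),\mathbf{FS}(H_{r}))-\mathbf{E}_{V}(g\cdot H_{r},H_{r})$ rather than $\mathbf{M}_{X}(g\cdot H_{r})$, which is the same quantity.
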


For instance, let $(X,L)$ be the polarized toric manifold associated
to a Delzant polytope $P$. Take $G=T_{\bbc}$, the complex torus
and $V=R_{k}$ ($k\gg1$), which carries a $T_{\bbc}$-action. If
$X\hookrightarrow\mathbb{P}R_{k}^{*}$ is Chow-unstable, the above
corollary implies that the maximal Chow-destabilizer is $T_{\bbc}$-invariant.
Hence we only need to search the minimizer among the $T_{\bbc}$-invariant
norms. We will discuss this in detail in \cite{yao_quantizing_2025}. 

Similarly, the maximal K-destabilizer also inherits all symmetries
of $(X,L)$, see Theorem \ref{thm: sym of K-destab}. 

\subsection*{A route to connecting maximal Chow-destabilizers to K-destabilizers }

From now on, we suppose that $(X,L)$ is K-unstable. Then $X\hookrightarrow\mathbb{P}R_{k}^{*}$
is Chow-unstable for sufficiently large $k$. Up to scaling, Theorem
\ref{thm:Chow exist INTRO} yields a sequence of maximal Chow-destabilizers
$\chi_{k}\in\mathcal{N}(R_{k})$. Under some assumptions, we show
that under proper normalization, $\mathrm{FS}_{k}(\chi_{k})$ converges
to the maximal K-destabilizer, where $\mathrm{FS}_{k}$ is the NA
Fubini--Study map (\ref{eq: def FS_k}). This approach is based on
the Boucksom--Jonsson's non-Archimedean pluripotential theory \cite{boucksom_global_2022,boucksom_non-archimedean_2024,boucksom_non-archimedean_2023}. 

Let $(X^{\an},L^{\an})$ be the Berkovich analytification of $(X,L)$
over $\bbc$ with the trivial norm (see §\ref{subsec:Berkovich}).
As a topological space, $X^{\an}$ is Hausdorff and compact. Boucksom--Jonsson
\cite{boucksom_global_2022} define the NA psh metrics on $L^{\an}$
to be the decreasing limit of Fubini--Study type functions (\ref{eq: def FS metric}).
Denote by $\mathrm{PSH}^{\na}(L)$ ($\mathrm{CPSH}^{\na}(L)$) the
space of (continuous) psh metrics, we may omit ``$L$'' when it
is fixed. In \cite{boucksom_non-archimedean_2018}, they introduced
two maps which play the role of quantization maps in the Archimedean
setting, namely the NA Fubini--Study map and the super-norm map (see
§\ref{subsec:FS SN}): 
\[
\mathrm{FS}_{k}:\mathcal{N}(R_{k})\rightarrow\mathrm{CPSH}^{\na},\ \mathrm{SN}_{k}:\mathcal{L}^{\infty}\rightarrow\mathcal{N}(R_{k}),
\]
where $\mathcal{L}^{\infty}$ is the space of bounded functions on
$X^{\an}$. They behave similarly to the Legendre transform, see Proposition
\ref{prop: FS vs SN}. Using NA functionals, we obtain a formula for
the Chow-weight (\ref{eq:Chow w INTRO}). 
\begin{thm}[Theorem \ref{thm: NA express Chow}, \ref{thm: property of Chow-destab}]
Let $X\subset\mathbb{P}V^{*}$ be a projective manifold. 

(1) For any $\chi\in\mathcal{N}(V)$ we have 
\[
M_{X}(\chi)=\mathbb{E}\left(\mathrm{FS}(\chi)\right)-E_{V}(\chi),
\]
where $\mathbb{E}$ is the NA Monge--Ampère energy, $\mathrm{FS}$
is the NA Fubini--Study map (\ref{eq:def NA FS}) and $E_{V}(\chi)$
is the volume (\ref{eq: volume NA}), i.e. the average of jumping
numbers of $\chi$. 

(2) If $X$ is Chow-unstable and $\bar{\chi}$ is a maximal Chow-destabilizer,
then $\mathrm{SN}\circ\mathrm{FS}(\bar{\chi})=\bar{\chi}$. 
\end{thm}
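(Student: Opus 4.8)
We treat the two parts separately. \textbf{Part (1)} is an explicit slope computation along a conveniently chosen norm geodesic ray. Fix $\chi\in\mathcal N(V)$ and a $\chi$-orthogonal basis $(e_i)_{1\le i\le N}$ of $V$ with jumping numbers $\lambda_1,\dots,\lambda_N$, and let $(H_t)_{t\ge0}$ be the norm geodesic ray that is diagonal in $(e_i)$ with $\log\lVert e_i\rVert_{H_t}$ affine of slope $-\lambda_i$. One checks $\chi_H=\chi$, so by Proposition~\ref{prop: conv of Mk} (well-definedness of $M_X$) we may evaluate $M_X(\chi)=\lim_{t\to\infty}t^{-1}\mathbf M_X(H_t)$ along this ray. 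The determinant term is elementary: $\log(\det H_t/\det H_r)$ is affine in $t$, and dividing its slope by $\dim V$ yields minus the average jumping number, i.e.\ $-E_V(\chi)$. For the Monge--Ampère term I would invoke two inputs: that the Archimedean ray of Fubini--Study metrics $(\mathbf{FS}(H_t))_t$ on $\mathcal O(1)|_X$ has non-Archimedean limit $\mathrm{FS}(\chi)$ (compatibility of the Archimedean and non-Archimedean Fubini--Study maps), and the slope identity $\lim_{t\to\infty}t^{-1}\mathcal E_X(\mathbf{FS}(H_t),\mathbf{FS}(H_r))=\mathbb E(\mathrm{FS}(\chi))$. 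Summing the two slopes gives $M_X(\chi)=\mathbb E(\mathrm{FS}(\chi))-E_V(\chi)$.

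The main obstacle in Part~(1) is this last slope identity for $\mathcal E_X$. Since the chosen $\mathbf{FS}(H_t)$ is a fixed maximum of finitely many metrics scaling linearly in $t$, one can in principle write $\mathcal E_X(\mathbf{FS}(H_t),\mathbf{FS}(H_r))$ as a piecewise-polynomial function of $t$ and extract its leading term; matching this leading term with the intersection-theoretic expression for $\mathbb E(\mathrm{FS}(\chi))$ is the crux, and is where one leans on Boucksom--Jonsson's non-Archimedean pluripotential theory (convergence of the Monge--Ampère energy along Fubini--Study rays).

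\textbf{Part (2).} Put $\chi':=\mathrm{SN}\circ\mathrm{FS}(\bar\chi)$. From Proposition~\ref{prop: FS vs SN} one extracts the Galois identity $\mathrm{FS}(\chi')=\mathrm{FS}(\bar\chi)$, the order relation $\chi'\succeq\bar\chi$, and the scaling-equivariance and idempotency of $\mathrm{SN}\circ\mathrm{FS}$. The relation $\chi'\succeq\bar\chi$ makes each sorted jumping number of $\chi'$ dominate that of $\bar\chi$, hence $E_V(\chi')\ge E_V(\bar\chi)$, with equality forcing $\chi'=\bar\chi$. By Part~(1), $M_X(\chi')=\mathbb E(\mathrm{FS}(\bar\chi))-E_V(\chi')\le M_X(\bar\chi)$, strictly unless $\chi'=\bar\chi$. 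Assume $\chi'\neq\bar\chi$. Since $X$ is Chow-unstable, $M_X(\bar\chi)<0$, so $M_X(\chi')<M_X(\bar\chi)<0$; together with the contraction $\lVert\chi'\rVert_p\le\lVert\bar\chi\rVert_p$ of the $p$-norm under $\mathrm{SN}\circ\mathrm{FS}$ this gives $\bar M_{X,p}(\chi')=M_X(\chi')/\lVert\chi'\rVert_p<M_X(\bar\chi)/\lVert\bar\chi\rVert_p=\bar M_{X,p}(\bar\chi)$, contradicting the minimality of $\bar\chi$ (here $p=2$, but the argument is the same for any $p\in(1,\infty)$). Hence $\mathrm{SN}\circ\mathrm{FS}(\bar\chi)=\bar\chi$; alternatively, once the non-strict inequality $\bar M_{X,p}(\chi')\le\bar M_{X,p}(\bar\chi)$ is in hand, the uniqueness of Theorem~\ref{thm:Chow exist INTRO}(2) together with the scaling-equivariance and idempotency of $\mathrm{SN}\circ\mathrm{FS}$ forces $\chi'=\bar\chi$.

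The delicate point in Part~(2) is the $p$-norm contraction $\lVert\mathrm{SN}\circ\mathrm{FS}(\chi)\rVert_p\le\lVert\chi\rVert_p$: the comparison of normalized Chow-weights must pass through the division by $\lVert\cdot\rVert_p$, and this is precisely where it could fail for an arbitrary order-increasing operation. One uses that $\mathrm{SN}\circ\mathrm{FS}$ replaces the jumping-number profile of $\chi$ by its concavification, which is pushed up (raising $E_V$) but less dispersed about its mean. The Chow-instability hypothesis $M_X(\bar\chi)<0$ is likewise essential: without a definite sign of $M_X(\bar\chi)$ one only learns that $\chi'$ is another competitor in the minimization, not that it strictly improves on $\bar\chi$.
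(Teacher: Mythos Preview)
Your Part~(1) is correct and matches the paper's approach. The slope identity you flag as the crux is exactly what the paper invokes: since the ray $\mathbf{FS}(H_t)$ has algebraic singularities, \cite[Lemma~5.3]{berman_variational_2021} gives $\lim_{t\to\infty}t^{-1}\mathcal E_X(\mathbf{FS}(H_t))=\mathbb E(\mathrm{FS}(\chi))$ directly, and the paper computes the NA limit of $\mathbf{FS}(H_t)$ via Lelong numbers just as you sketch.

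Your Part~(2) has a genuine gap: the ``$p$-norm contraction'' $\lVert\mathrm{SN}\circ\mathrm{FS}(\chi)\rVert_p\le\lVert\chi\rVert_p$ is not proved, and the heuristic you offer is in the wrong direction. From $\chi'\ge\bar\chi$ one gets $\lambda_i(\chi')\ge\lambda_i(\bar\chi)$ for every $i$; enlarging the jumping numbers has no reason to \emph{decrease} $\sum_i\lvert\lambda_i\rvert^p$, and the operation $\mathrm{SN}\circ\mathrm{FS}$ depends on the embedding of $X$, not just on the jumping-number profile, so it is not any kind of ``concavification'' of that profile. Both your direct argument and your alternative via uniqueness rest on this inequality, so neither goes through as stated.

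The paper circumvents this by changing the functional. It passes to $\breve M(\chi)=M_X(\chi)+\tfrac12\lVert\chi\rVert_2^2$, whose unique minimizer $\breve\chi$ is a specific rescaling of $\bar\chi$, and uses the completed-square form
\[
\breve M(\chi)=\mathbb E\!\left(\mathrm{FS}(\chi)\right)+\frac{1}{2N}\sum_{i}(\lambda_i(\chi)-1)^2-\frac12.
\]
The key intermediate step (proved separately by comparing $\chi$ with $\chi\wedge(\chi_{tr}+1)$) is that $\lambda_i(\breve\chi)\le 1$ for all $i$. Since $\mathrm{SN}\circ\mathrm{FS}$ preserves $\mathrm{FS}$, preserves the bound $\lambda_i\le 1$, and increases each $\lambda_i$, it \emph{decreases} each $(\lambda_i-1)^2$ and hence $\breve M$; uniqueness of the minimizer then forces $\mathrm{SN}\circ\mathrm{FS}(\breve\chi)=\breve\chi$, and rescaling gives the same for $\bar\chi$. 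So the missing idea is not a norm contraction but a jumping-number \emph{upper bound} at the correctly normalized minimizer.
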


(2) can be described as the concavity of maximal Chow-destabilizers.
Now we state our main result. Unfortunately, it relies on some idealistic
assumptions. 
\begin{thm}
\label{thm: main}Let $(X,L)$ be a K-unstable polarized manifold.
We make assumptions \textbf{A1}-\textbf{A4}, see §\ref{subsec: proof main}.
Let $\phi^{\flat}\in\mathrm{CPSH}^{\na}(L)$ be the maximal K-destabilizer
normalized by (\ref{eq:cano normal K}). When $k$ is sufficiently
large and divisible, the embedding image $X\hookrightarrow\mathbb{P}R_{k}^{*}$
is Chow-unstable, let $\chi_{k}^{\flat}\in\mathcal{N}(R_{k})$ be
the maximal Chow-destabilizer normalized by (\ref{eq:cano normal Chow}).
Then $\mathrm{FS}_{k}(\chi_{k}^{\flat})$ converges to $\phi^{\flat}$
in the sense specified by \textbf{A4}. 
\end{thm}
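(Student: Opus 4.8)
The plan is to run a variational ($\Gamma$-convergence) argument, comparing the sequence of normalized Chow-weight functionals $\bar M_{X,2}^{(k)}$ on $\mathcal N(R_k)$ with the normalized radial K-energy $\mathcal M^{\mathrm{rad}}/\|\cdot\|$ (equivalently, its non-Archimedean incarnation $\mathbf M^{\na}/\|\cdot\|$) on $\mathrm{CPSH}^{\na}(L)$, the two families being linked by the NA Fubini--Study maps $\mathrm{FS}_k$. By Theorem \ref{thm:existence-uniqueness of Chow} each $\chi_k^{\flat}$ is, up to scaling, the unique minimizer of $\bar M_{X,2}^{(k)}$, and by Definition \ref{def: max K-desta} (uniqueness up to scaling coming from the Hadamard structure of $\mathcal E^2(L)$ and convexity of $\mathcal M^{\mathrm{rad}}$, transported to the NA side by the maximal-ray correspondence) $\phi^{\flat}$ is the unique minimizer of the normalized radial K-energy; the normalizations (\ref{eq:cano normal K}) and (\ref{eq:cano normal Chow}) pin down the remaining scalars. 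Hence it suffices to establish: (i) a recovery-sequence bound $\limsup_k \inf\bar M_{X,2}^{(k)}\le$ the minimal normalized radial K-energy; (ii) relative compactness of $\{\mathrm{FS}_k(\chi_k^{\flat})\}_k$ in the topology of \textbf{A4}, together with a lower-semicontinuity bound forcing every limit point $\phi_\infty$ to satisfy $\liminf_k \bar M_{X,2}^{(k)}(\chi_k^{\flat})\ge$ the normalized radial K-energy of $\phi_\infty$; (iii) compatibility of the two normalizations under $\mathrm{FS}_k$, so that the limit point is exactly $\phi^{\flat}$ rather than a rescaling.

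For (i), start from the continuous metric $\phi^{\flat}$ and set $\tilde\chi_k\coloneqq\mathrm{SN}_k(\phi^{\flat})$, shifted into the traceless slice by subtracting its average jumping number and then rescaled to satisfy (\ref{eq:cano normal Chow}). The Boucksom--Jonsson Fekete-type theorem ($\mathrm{FS}_k\circ\mathrm{SN}_k(\phi^{\flat})\nearrow\phi^{\flat}$ for continuous psh $\phi^{\flat}$) gives $\mathrm{FS}_k(\tilde\chi_k)\to\phi^{\flat}$. Writing the level-$k$ Chow-weight via Theorem \ref{thm: NA express Chow} as $M_X^{(k)}(\chi)=\mathbb E(\mathrm{FS}_k(\chi))-E_{R_k}(\chi)$ and invoking the quantized-convergence content of assumptions \textbf{A1}--\textbf{A3} — namely that, in the appropriate $k$-scaling, $\mathbb E\circ\mathrm{FS}_k$ converges to the NA energy $\mathbf E^{\na}$ while $E_{R_k}$ (the average jumping number) converges to the entropy piece of the radial K-energy, and that the normalizing $L^2$-norms $\|\tilde\chi_k\|_2$ converge to the corresponding norm of $\phi^{\flat}$ — one obtains $\bar M_{X,2}^{(k)}(\tilde\chi_k)\to$ the normalized radial K-energy of $\phi^{\flat}$, which by minimality of $\phi^{\flat}$ is the infimum. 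This yields (i).

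For (ii) and (iii): the normalization (\ref{eq:cano normal Chow}) together with the upper bound from (i) yields, through the compactness input \textbf{A2}, two-sided control of the $\chi_k^{\flat}$ — in particular $\|\chi_k^{\flat}\|_2$ (after the rescaling) stays bounded and bounded away from $0$, so the destabilizers neither escape to infinity nor degenerate to the trivial norm $\chi_{tr}$ — hence $\{\mathrm{FS}_k(\chi_k^{\flat})\}_k$ is relatively compact in the topology of \textbf{A4} and has a nontrivial limit point $\phi_\infty$. The lower-semicontinuity content of \textbf{A1}/\textbf{A3} (lsc under quantization of $\mathbb E\circ\mathrm{FS}_k$ and of the entropy term) gives $\liminf_k \bar M_{X,2}^{(k)}(\chi_k^{\flat})\ge$ the normalized radial K-energy of $\phi_\infty$. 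Since $\chi_k^{\flat}$ is a minimizer, the left-hand side equals $\lim_k \inf\bar M_{X,2}^{(k)}$, which by (i) is $\le$ the minimal normalized radial K-energy; therefore $\phi_\infty$ attains that minimum, and by uniqueness $\phi_\infty=\phi^{\flat}$ up to scaling. Because $\mathrm{SN}\circ\mathrm{FS}(\chi_k^{\flat})=\chi_k^{\flat}$ (the concavity of maximal Chow-destabilizers, Theorem \ref{thm: property of Chow-destab}), no information is lost in passing from $\chi_k^{\flat}$ to $\mathrm{FS}_k(\chi_k^{\flat})$, and the compatibility of (\ref{eq:cano normal Chow}) with (\ref{eq:cano normal K}) across $\mathrm{FS}_k$ forces the scaling constant to be $1$. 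By uniqueness of the limit point, the whole sequence $\mathrm{FS}_k(\chi_k^{\flat})$ converges to $\phi^{\flat}$ in the sense of \textbf{A4}.

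The crux — and the reason the theorem is only conditional — is the $\liminf$ direction: one must show that the ``entropy'' piece of the Chow-weight, which at finite level is nothing but the combinatorial quantity $E_{R_k}(\chi)$ assembled from the jumping numbers of $\chi$, is lower semicontinuous along the weak convergence $\mathrm{FS}_k(\chi_k^{\flat})\to\phi_\infty$, and that its rescaled limit is the entropy term of the NA Mabuchi functional rather than merely $\mathbf E^{\na}(\phi_\infty)$. This is precisely the Archimedean-side obstacle of a quantized lower bound for the entropy part of the Mabuchi functional, and is what assumptions \textbf{A1}--\textbf{A3} are designed to isolate. A second genuine difficulty, forced by the failure of local compactness of the unit sphere of $\mathcal N(R_k)$ noted in Remark \ref{rem:not loc cpt}, is to establish the uniform two-sided bounds on the normalized minimizers $\chi_k^{\flat}$ before any limit is extracted; and a third, more bookkeeping-flavoured one, is to propagate the two canonical normalizations coherently through $\mathrm{FS}_k$ as $k\to\infty$.
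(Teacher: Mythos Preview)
Your proposal misreads what the assumptions \textbf{A1}--\textbf{A4} actually say, and as a result attempts to prove something the paper explicitly \emph{assumes}.

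In the paper, \textbf{A4} is precisely the statement that the minimizer of the approximating problem $\inf_{\mathrm{CPSH}^{\na}}\breve{\mathbb Q}_k$ converges to the minimizer of $\inf_{\mathrm{CPSH}^{\na}}\breve{\mathbb M}$; it is not a compactness or lower-semicontinuity input to be combined with a $\liminf/\limsup$ argument. Likewise \textbf{A2} is simply that $\phi^{\mathrm K}$ is continuous (so that one may work in $\mathrm{CPSH}^{\na}$ and apply $\mathrm{SN}_k$), not a compactness hypothesis, and \textbf{A3} is the second-order expansion $E_k\circ\mathrm{SN}_k(\phi)=\mathbb E(\phi)-\tfrac{1}{2}\mathbb M(\phi)k^{-1}+o(k^{-1})$; in particular the leading order of $E_k\circ\mathrm{SN}_k$ is $\mathbb E$, not an ``entropy piece'', and the $\mathbb M$ appears only as the subleading term. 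Your claimed decomposition ``$\mathbb E\circ\mathrm{FS}_k\to\mathbf E^{\na}$ while $E_{R_k}\to$ entropy'' is not what \textbf{A1}--\textbf{A3} assert.

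The paper's argument is structurally quite different and much shorter. It replaces the ratio problems by unconstrained convex problems via Inoue's trick: $\phi^{\flat}$ is identified (under \textbf{A1}, \textbf{A2}) as the \emph{unique} minimizer of $\breve{\mathbb M}(\phi)=\mathbb M(\phi)+\tfrac12\|\phi\|_2^2$ on $\mathrm{CPSH}^{\na}$, and the whole content of the proof is to identify the unique minimizer of $\breve{\mathbb Q}_k(\phi)=\mathbb Q_k(\phi)+\tfrac{1}{2k^2}\|\mathrm{SN}_k(\phi)\|_2^2$ as $\mathrm{FS}_k(\chi_k^{\flat})$. This identification uses only the $\mathrm{FS}_k/\mathrm{SN}_k$ intertwining properties (reducing from $\mathrm{CPSH}^{\na}$ to $\mathcal{FS}_k$, then via the bijection to $\mathcal{SN}_k$, where $\breve{\mathbb Q}_k\circ\mathrm{FS}_k$ becomes $\breve M_k(\chi)=2M_{\iota_k(X)}(\chi)+\tfrac{1}{2k^2}\|\chi\|_2^2$) together with Theorem~\ref{thm: property of Chow-destab}(2), which says the Chow-minimizer automatically lies in $\mathcal{SN}_k$ so the constraint is vacuous. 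The convergence $\mathrm{FS}_k(\chi_k^{\flat})\to\phi^{\flat}$ is then literally \textbf{A4}. No recovery sequence, no $\liminf$ bound, and no a priori two-sided control of $\|\chi_k^{\flat}\|_2$ is needed (or established) here; those are the genuine difficulties that \textbf{A4} is designed to encapsulate rather than resolve.
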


If the convergence holds, we can say the maximal Chow-destabilizers
quantize the maximal K-destabilizer. 

\subsubsection*{Discussion of the assumptions \& proof }

Recall that in the Archimedean setting, Donaldson \cite{donaldson_scalar_2005}
provided a variational perspective on the relationship between balanced
norms and cscK metrics, which is reviewed in §\ref{subsec: Donaldson work}.
Our approach is analogous to that.

Since $\phi^{\mathrm{K}}$ is induced from the maximal K-destabilizing
ray, first we need to re-characterize it as the minimizer of the normalized
NA K-energy $\overline{\mathbb{M}}$ (\ref{eq: min NA M}). Let $\mathcal{E}_{\na}^{1}\subset\mathrm{PSH}^{\na}(L)$
be the space of finite-energy metrics, and $\mathcal{R}^{1}$ be the
space of finite-energy geodesic rays in $\mathcal{E}^{1}(L)$. Then
we have BBJ's embedding $\iota:\mathcal{E}_{\na}^{1}\hookrightarrow\mathcal{R}^{1}/_{\sim}$,
where $\sim$ is the $d_{1}$-asymptotic relation. Assumption \textbf{A1}
says that 
\[
\mathcal{M}^{\mathrm{rad}}(\iota(\phi))=\mathbb{M}(\phi),\ \forall\phi\in\mathcal{E}_{\na}^{1}.
\]
This is a conjecture in \cite{li_geodesic_2022} and has been verified
very recently by \cites{boucksom_yau-tian-donaldson_2025} and \cite{darvas_ytd_2025}
by different method. Under \textbf{A1}, by a trick of Inoue \cite[§4.2.2]{inoue_entropies_2022-1},
we see that 
\begin{equation}
\inf\left\{ \breve{\mathbb{M}}(\phi)\coloneqq\mathbb{M}(\phi)+\frac{1}{2}\left\Vert \phi\right\Vert _{2}^{2}\mid\phi\in\mathcal{E}_{\na}^{2}\coloneqq\iota^{-1}\left(\mathcal{R}^{2}/_{\sim}\right)\right\} \label{eq: min M horn-INTRO}
\end{equation}
admits a unique minimizer $\phi^{\flat}\in\mathcal{E}_{\na}^{2}$,
where $\left\Vert \phi\right\Vert _{2}\coloneqq\left\Vert \iota(\phi)\right\Vert _{2}$
is the speed of geodesic ray and $\phi^{\flat}$ is the unique maximal
K-destabilizer normalized by the condition $\left\Vert \phi^{\flat}\right\Vert _{2}^{2}=-\mathbb{M}(\phi^{\flat})$.
The advantage of replacing $\overline{\mathbb{M}}$ by $\breve{\mathbb{M}}$
is that the latter preserves convexity and has an absolutely unique
minimizer.

Next we approximate $\breve{\mathbb{M}}$ by some quantized functionals.
\textbf{A3} assumes that for any $\phi\in\mathrm{CPSH}^{\na}$, we
have
\[
E_{k}\circ\mathrm{SN}_{k}(\phi)=\mathbb{E}(\phi)-\frac{1}{2}\mathbb{M}(\phi)k^{-1}+\mathrm{o}(k^{-1}),
\]
where $E_{k}$ is the normalized volume. The leading-term asymptotic
is due to \cite{boucksom_spaces_2021}. We show \textbf{A3} holds
for $\phi$ induced by an ample test-configuration, see Theorem \ref{thm: main expand for TC}.
In the toric setting, it reduces to the Riemann sum expansion for
convex functions. To use the above expansion, we have to assume \textbf{A2}:
$\phi^{\flat}$ is continuous. Note that in the toric setting, Li--Lian--Sheng
\cite{li_extremal_2023} showed that the optimal test-configuration
is a bounded concave function over the polytope. 

On the other hand, the $L^{2}$-norm $\left\Vert \phi\right\Vert _{2}$
can be quantized by the super-norms. 
\begin{thm}[Theorem \ref{thm:quantize L2 norm}]
For any $\phi\in\mathrm{CPSH}^{\na}(L)$, we have 
\[
\left\Vert \phi\right\Vert _{2}\coloneqq\left\Vert \iota(\phi)\right\Vert _{2}=\lim_{k\rightarrow\infty}\frac{1}{k}\left\Vert \mathrm{SN}_{k}(\phi)\right\Vert _{2}.
\]
\end{thm}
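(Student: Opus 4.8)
The plan is to recast both sides as second moments of probability measures on $\bbr$, and to reduce the statement to a single weak‑convergence assertion. For a norm $\chi\in\mathcal{N}(V)$ with jumping numbers $e_{1}(\chi)\ge e_{2}(\chi)\ge\cdots$, write $\mu_{\chi}\coloneqq\frac{1}{\dim V}\sum_{i}\delta_{e_{i}(\chi)}$ for the associated spectral probability measure. By the definition of the $p$‑norm (\ref{eq:p norm}), $\|\chi\|_{2}^{2}$ is the variance of $\mu_{\chi}$; hence, setting $\nu_{k}\coloneqq(t\mapsto t/k)_{*}\mu_{\mathrm{SN}_{k}(\phi)}$, we have $k^{-2}\|\mathrm{SN}_{k}(\phi)\|_{2}^{2}=\operatorname{Var}(\nu_{k})$. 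On the other side, the non‑Archimedean formula for the $L^{2}$‑norm of a finite‑energy metric — the identification of $\|\iota(\phi)\|_{2}$ with a Duistermaat–Heckman moment, recalled in §\ref{subsec: limit NA metric} and based on \cite{boucksom_global_2022} — gives $\|\phi\|_{2}^{2}=\|\iota(\phi)\|_{2}^{2}=\operatorname{Var}(\mathrm{DH}_{\phi})$, where $\mathrm{DH}_{\phi}$ is the Duistermaat–Heckman measure of $\phi$. So it suffices to prove $\operatorname{Var}(\nu_{k})\to\operatorname{Var}(\mathrm{DH}_{\phi})$.

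The hypothesis that $\phi$ is continuous enters exactly once, through a uniform bound on the supports. Since $\phi-\phi_{\mathrm{triv}}$ is continuous on the compact space $X^{\an}$ it is bounded, say $|\phi-\phi_{\mathrm{triv}}|\le C$; for $s\in R_{k}$ one has $|s|_{k\phi}=|s|_{k\phi_{\mathrm{triv}}}\cdot e^{k(\phi_{\mathrm{triv}}-\phi)}$ on $X^{\an}$, so taking suprema (and using $\sup_{X^{\an}}|s|_{k\phi_{\mathrm{triv}}}=1$) yields $e^{-kC}\le\|s\|_{\mathrm{SN}_{k}(\phi)}\le e^{kC}$. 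Thus every jumping number of $\mathrm{SN}_{k}(\phi)$ lies in $[-kC,kC]$, i.e.\ $\operatorname{supp}\nu_{k}\subset[-C,C]$ for all $k$. Consequently, once we know $\nu_{k}\rightharpoonup\mathrm{DH}_{\phi}$ weakly, the uniform support bound upgrades this to convergence of all moments, in particular of the variance, which would finish the proof. The same reasoning would prove the analogous statement for every $\|\cdot\|_{p}$ with $p\in[1,\infty)$, since weak convergence together with uniformly bounded supports forces convergence in $p$‑Wasserstein distance; the case $p=\infty$ is not reachable this way.

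It remains to establish $\nu_{k}\rightharpoonup\mathrm{DH}_{\phi}$. The measure $\nu_{k}$ is exactly the $k$‑th spectral measure of the sup‑norm filtration $F^{\lambda}R_{m}\coloneqq\{s\in R_{m}:|s|_{m\phi}\le e^{-\lambda}\text{ on }X^{\an}\}$ of the section ring $R_{\bullet}$, which is of linear growth because $\phi$ is bounded; by the non‑Archimedean equidistribution results of \cite{boucksom_spaces_2021} (building on Boucksom–Chen's Okounkov bodies of filtered linear series) these measures converge weakly, and the limit is $\mathrm{DH}_{\phi}$. As a consistency check one may also argue by approximation: the estimate of the previous paragraph also gives $|e_{i}(\mathrm{SN}_{k}\phi)-e_{i}(\mathrm{SN}_{k}\psi)|\le k\|\phi-\psi\|_{\infty}$, so $\phi\mapsto k^{-1}\|\mathrm{SN}_{k}(\phi)\|_{2}$ is Lipschitz for $\|\cdot\|_{\infty}$ uniformly in $k$, while $\|\cdot\|_{2}$ is itself $\|\cdot\|_{\infty}$‑Lipschitz on $\mathrm{CPSH}^{\na}$; since every continuous psh metric is a uniform limit of Fubini–Study metrics, hence of test‑configuration metrics, one reduces to $\phi=\phi_{(\mathcal{X},\mathcal{L})}$, where the claim is the classical convergence of the weight measures of $(\mathcal{X},\mathcal{L})$ on $R_{k}$ to its Duistermaat–Heckman measure, together with the fact that the super‑norm $\mathrm{SN}_{k}(\phi_{(\mathcal{X},\mathcal{L})})$ and the weight filtration on $R_{k}$ differ by $O(1)$, a consequence of relative ampleness.

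The step I expect to be the main obstacle is the weak convergence $\nu_{k}\rightharpoonup\mathrm{DH}_{\phi}$, and inside it the matching of the two incarnations of the Duistermaat–Heckman measure: the one computing the non‑Archimedean $\|\cdot\|_{2}$ and the limit of the spectral measures of the super‑norms. In the test‑configuration picture this matching is the $O(1)$‑comparison between $\mathrm{SN}_{k}(\phi_{(\mathcal{X},\mathcal{L})})$ and the weight filtration; in the direct picture it is the identification of the concave transform of the sup‑norm filtration of $\phi$ with the Legendre‑type data of $\phi$ furnished by non‑Archimedean pluripotential theory \cite{boucksom_global_2022,boucksom_spaces_2021}. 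Everything else — the reduction to the variance statement, the support bound, the equi‑Lipschitz estimate, and the passage from weak to moment convergence — is routine.
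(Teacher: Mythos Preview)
Your approximation argument --- the ``consistency check'' via uniform Lipschitz estimates and reduction to test-configuration metrics --- is essentially the paper's proof. Two comments on the details. First, the paper does not rely on an $O(1)$ comparison between $\mathrm{SN}_{k}(\phi_{\mathcal{L}})$ and the Witt Nystr\"om weight filtration; instead it base-changes and normalizes so that the central fiber is reduced, and then invokes the \emph{exact} equality $\chi_{\mathcal{L}}=\mathrm{SN}(\phi_{\mathcal{L}})$ (equation~(\ref{eq: WN =00003D SN})). Second, the Lipschitz property of $\phi\mapsto\|\iota(\phi)\|_{2}$ with respect to $\|\cdot\|_{\infty}$ is not free: the paper establishes it by showing that if $|\phi^{j}-\phi|\le\varepsilon$ on $X^{\an}$ then the maximal geodesic rays satisfy $|\ell_{t}^{j}-\ell_{t}|\le\varepsilon t$ (using maximality in the sense of Definition~\ref{def: BBJ max ray}), and then bounding $d_{2}(\ell_{1}^{j},\ell_{1})$ via Darvas's $d_{2}$ estimate. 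You should not skip this.

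Your ``main'' DH-measure route, by contrast, has a genuine circularity. You write that the identity $\|\iota(\phi)\|_{2}^{2}=\operatorname{(second\ moment)}(\mathrm{DH}_{\phi})$ is ``recalled in \S\ref{subsec: limit NA metric}'', but nothing of the sort appears there; for general $\phi\in\mathrm{CPSH}^{\na}$ this identity is precisely what must be proved, and the paper only obtains it for test-configuration metrics via Hisamoto's formula $\mathrm{DH}(\mathcal{X},\mathcal{L})=(\dot{\ell}_{t})_{\#}\mathrm{MA}(\ell_{t})$. Your weak-convergence $\nu_{k}\rightharpoonup\mathrm{DH}_{\phi}$ (Boucksom--Chen) handles the right-hand side, but the matching with the left-hand side still requires the approximation argument --- so the direct route does not actually bypass it.

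Finally, a small correction: $\|\chi\|_{2}^{2}=\tfrac{1}{N}\sum_{i}\lambda_{i}(\chi)^{2}$ is the \emph{second moment} of $\mu_{\chi}$, not its variance; likewise $\|\iota(\phi)\|_{2}^{2}$ is the second moment of the pushforward $(\dot{\ell}_{t})_{\#}\mathrm{MA}(\ell_{t})$. Your argument structure survives this correction verbatim (moments are just as continuous as variances under weak convergence with uniformly bounded supports), but as written the statement is false.
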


We approximate (\ref{eq: min M horn-INTRO}) by 
\begin{equation}
\inf\left\{ \breve{\mathbb{Q}}_{k}(\phi)\coloneqq2k\left(\mathbb{E}(\phi)-E_{k}\circ\mathrm{SN}_{k}(\phi)\right)+\frac{1}{2k^{2}}\left\Vert \mathrm{SN}_{k}(\phi)\right\Vert _{2}^{2}\mid\phi\in\mathrm{CPSH}^{\na}\right\} .\label{eq: approx problem INTRO}
\end{equation}
Under \textbf{A3}, $\breve{\mathbb{Q}}_{k}(\phi)$ converges to $\breve{\mathbb{M}}(\phi)$
for $\phi\in\mathrm{CPSH}^{\na}$. By the intertwining properties
between $\mathrm{FS}_{k}$ and $\mathrm{SN}_{k}$, we find that (\ref{eq: approx problem INTRO})
can be reduced into a finite-dimensional problem 
\[
\inf\left\{ \breve{M}_{k}(\chi)\coloneqq2k\left(\mathbb{E}\circ\mathrm{FS}_{k}(\chi)-E_{k}(\chi)\right)+\frac{1}{2k^{2}}\left\Vert \chi\right\Vert _{2}^{2}\mid\chi\in\mathcal{SN}_{k}\right\} ,
\]
where $\mathcal{SN}_{k}\subset\mathcal{N}(R_{k})$ is the image set
of $\mathrm{SN}_{k}$. Note that the first term of $\breve{M}_{k}$
is the NA expression for Chow-weight. This problem is equivalent to
(\ref{eq:min Chow INTRO}, $V=R_{k}$) plus a constraint: $\chi\in\mathcal{SN}_{k}$.
But in Theorem \ref{thm: property of Chow-destab}, we show that maximal
Chow-destabilizers automatically belong to $\mathcal{SN}_{k}$, so
this constraint is actually superfluous. 

It turns out that (\ref{eq: approx problem INTRO}) has a unique minimizer
$\mathrm{FS}_{k}(\chi_{k}^{\flat})$. Finally, \textbf{A4} assumes
that this minimizer sequence converges to the minimizer of (\ref{eq: min M horn-INTRO}),
i.e. $\phi^{\flat}$. An evidence for \textbf{A4} is that $\mathbb{Q}_{k}$
(\ref{eq:  NA Qk}) is convex, see Proposition \ref{prop:Qk convex}.

\subsection*{Recent related works \& questions }

There is a research trend focused on describing the limit behavior
of a geometric flow by some algebraic data. Besides the Calabi flow
\& maximal K-destabilizer, there are many instances (not completed): 

(1) normalized Kähler--Ricci flow on Fano manifolds $(X,-K_{X})$;
After the affirmation of Hamilton--Tian conjecture, the picture has
become relatively complete. Flow will converge (in the sense of Gromov--Hausdorff)
to a Kähler--Ricci soliton on $\mathbb{Q}$-Fano variety $X_{\infty}$,
and $X_{\infty}$ is the two-steps degeneration of $X$ induced by
the unique special $\bbr$-test-configuration which minimizes the
NA $H$-functional $H^{\na}$, refer to \cite[etc]{chen_kahlerricci_2018,dervan_kahlerricci_2020,han_algebraic_2024,blum_existence_2023}.
For varieties with large symmetries, see \cite[etc]{delcroix_k-stability_2020,li_k-stability_2022,li_singular_2024,li_equivariant_2023-1,wang_kahlerricci_2024}. 

(2) Inverse Monge--Ampère flow on Fano manifolds, which is introduced
in \cite{collins_inverse_2022} as the gradient flow of Ding functional.
It is expected that the limit behavior is encoded in the minimizer
of normalized NA Ding functional $D^{\na}$. Does a similar version
of the Hamilton--Tian conjecture hold? This is not clear in the toric
setting, refer to \cites(etc){xia_sharp_2021}{yao_mabuchi_2022}{collins_inverse_2022}{hisamoto_geometric_2023}{klemyatin_convergence_2024}. 

(3) Wess--Zumino--Witten equation \& Harder--Narasimhan potentials,
see \cite{finski_about_2024}, which is a generalization of the Hermitian--Einstein
metric problem. 

As we did for the NA K-energy, one can consider the quantization problem
for the minimizers of $H^{\na}$ and $D^{\na}$. For $H^{\na}$, Hisamoto
\cite{hisamoto_quantization_2024} considered the quantization of
$H$-entropy and its NA counterpart ($S_{k}^{\na}$ in \cite{hisamoto_quantization_2024}),
and he established a quantum moment-weight equality. 

Inoue \cites{inoue_entropies_2022-1}{inoue_entropies_2022}{inoue_toric_2023}
studied the optimization problem for his NA $\mu$-entropy, which
relates to the optimization problem for $H^{\na}$ and $\breve{\mathbb{M}}$,
see \cite[§4.3]{inoue_entropies_2022}. 

\subsection*{Notations \& conventions}

Abbreviations: NA (non-Archimedean); psh (plurisubharmonic); MA (Monge--Ampère);
FS (Fubini--Study); SN (super-norm); GIT (geometric invariant theory).
In order to indicate the analogy between various functionals, we denote
them by the same letter but different fonts in the following way. 
\begin{center}
\begin{tabular}{|c|c|c|}
\hline 
Functionals & Classical & Quantum\tabularnewline
\hline 
Archimedean & $\mathcal{E}$ (\ref{eq: MA energy}) $\mathcal{M}$ (\ref{eq: K-energy})
$\mathcal{Q}_{k}$ (\ref{eq: Archi Qk}) & $\mathbf{E}_{V}$ (\ref{eq: Archi E_V}) $\mathbf{M}_{X}$ (\ref{eq:KN final})\tabularnewline
\hline 
non-Archimedean & $\mathbb{E}$ (\ref{eq: NA MA}) $\mathbb{M}$ (\ref{eq: NA K-energy})
$\mathbb{Q}_{k}$ (\ref{eq:  NA Qk}) & $E_{V}$ (\ref{eq: volume NA}) $M_{X}$ (\ref{eq:def Chow weight})\tabularnewline
\hline 
\end{tabular}
\par\end{center}

We use $\mathbf{FS}_{k}$ and $\mathrm{FS}_{k}$ to denote the Archimedean
and NA Fubini--Study map respectively. In this paper, $\mathcal{R}^{p}(L)$
denotes the space of all $L^{p}$-geodesic rays in the finite-energy
space $\mathcal{E}^{p}(L)$ with \textit{free} starting-point. 

\subsection*{Acknowledges }

We are grateful to Chi Li, Minghao Miao, Li Sheng and Yalong Shi for
inspiring discussions and valuable suggestions. Special thanks to
Mingchen Xia for technical support, and Siarhei Finski for sharing
his experience on the study of Wess--Zumino--Witten equations. The
author is supported by the National Natural Science Foundation of
China (NSFC) with Grant No. 12231006. 

\section{Space of Hermitian norms and non-Archimedean norms}

In this section, we review the space $\mathcal{H}(V)$ of all Hermitian
norms on a vector space $V$ and its asymptotic cone $\hat{\mathcal{H}}(V)$.
We verify that $\hat{\mathcal{H}}(V)$ is isometric to the space $\mathcal{N}(V)$
of NA norms on $V$, this is mentioned in the survey \cites{boucksom_variational_2019}.
We will take $\mathcal{H}(V)$ as the domain of the Kempf-Ness functional
for Chow-stability, and $\hat{\mathcal{H}}(V)$ ($\cong\mathcal{N}(V)$)
as the domain of the Chow-weight. The main references of this section
are \cites{boucksom_variational_2019}{boucksom_spaces_2021}{darvas_quantization_2020}. 

\subsection{Symmetric space of Hermitian norms }

Let $V$ be a complex vector space of dimension $N$, which will be
the space of global sections $\mathrm{H}^{0}(X,kL)$. A Hermitian
form $H:V\times V\rightarrow\mathbb{C}$ is anti-linear w.r.t. the
second variable. If $H(x,x)>0$ for all $x\neq0$, we call it a Hermitian
norm, and denote $H(x)\coloneqq H(x,x)^{1/2}$. Let $\mathrm{Herm}(V)$
be the real vector space of all Hermitian forms on $V$. Hermitian
norms constitute an open subset $\mathcal{H}(V)\subset\mathrm{Herm}(V)$.
Since the general linear group $\mathrm{GL}(V)$ acts transitively
on $\mathcal{H}(V)$, $\mathcal{H}(V)$ is homeomorphic to $\mathrm{GL}(N,\mathbb{C})/\mathrm{U}(N)$. 

The tangent space of $\mathcal{H}(V)$ at each point $H$ is $\mathrm{Herm}(V)$,
which can be identified with the space of $H$-self-adjoint operators.
For $S\in\mathrm{Herm}(V)$, the associated operator $A:V\rightarrow V$
is defined by 
\[
S(x,y)=H(Ax,y),\ \forall x,y\in V.
\]
We usually denote $A$ by the matrix notation $A=SH^{-1}$. There
is a canonical Riemannian metric $g$ on $\mathcal{H}(V)$ defined
by 
\[
g|_{H}\left(A_{1},A_{2}\right)\coloneqq\frac{1}{N}\mathrm{tr}(A_{1}A_{2}),\ A_{1},A_{2}\in T_{H}\mathcal{H}(V).
\]
One can check that $(\mathcal{H}(V),g)$ is a contractible, complete
Riemannian symmetric space with non-positive sectional curvature,
see \cite{eberlein_geometry_1996}. Group $\mathrm{GL}(V)$ acts on
$\mathcal{H}(V)$ by isometries. 

The geodesic ray starting from $H_{0}$ with tangent vector $A$ is
given by 
\[
H_{t}(x,y)=H_{0}(e^{tA}x,y),\ \forall x,y\in V.
\]
Any two points $H_{0},H_{1}\in\mathcal{H}(V)$ are connected by a
unique geodesic. Let $A$ be the unique $H_{0}$-self-adjoint operator
such that $e^{A}=H_{1}H_{0}^{-1}$, then the geodesic between $H_{0}$
and $H_{1}$ is 
\begin{equation}
H_{t}(x,y)=H_{0}(e^{tA}x,y),\ \forall x,y\in V,\ t\in[0,1].\label{eq: Her norm geodesic}
\end{equation}
The induced length metric is 
\[
d(H_{0},H_{1})^{2}=\frac{1}{N}\mathrm{tr}(A^{2}),\ e^{A}=H_{1}H_{0}^{-1}.
\]
More generally, for each $p\in[1,\infty]$, there is a Finsler metric
on $\mathcal{H}(V)$. For $A\in T_{H}\mathcal{H}(V)$ with eigenvalues
$\lambda_{i}$, we define Finsler norms 
\begin{equation}
\left\Vert A\right\Vert _{p,H}\coloneqq\left(\frac{1}{N}\sum_{i}\left|\lambda_{i}\right|^{p}\right)^{1/p},\ \left\Vert A\right\Vert _{\infty,H}\coloneqq\max_{i}\left|\lambda_{i}\right|.\label{eq: Finsler norm}
\end{equation}
Under these Finsler norms, (\ref{eq: Her norm geodesic}) are also
the shortest path between two points. For $H_{0},H_{1}\in\mathcal{H}(V)$,
let $e^{\mu_{i}}$ be the eigenvalues of operator $H_{1}H_{0}^{-1}$,
then the induced length metric $d_{p}$ is 
\[
d_{p}(H_{0},H_{1})^{p}=\frac{1}{N}\sum_{i=1}^{N}\left|\mu_{i}\right|^{p},\ d_{\infty}(H_{0},H_{1})\coloneqq\max_{1\leq i\leq N}\left|\mu_{i}\right|.
\]
Among these metrics, only $d_{2}=d$ makes $\mathcal{H}(V)$ a CAT(0)
space. Metric $d_{\infty}$ is called the Goldman--Iwahori distance,
which is the smallest $C>0$ such that $e^{-C}H_{0}\le H_{1}\leq e^{C}H_{0}$.
We have comparison 
\begin{equation}
N^{-\frac{1}{2}}d_{\infty}\leq d\leq d_{\infty}.\label{eq: compare dp}
\end{equation}

\subsubsection{Relative volumes }

A norm $H\in\mathcal{H}(V)$ induces a norm on the top-wedge line
$\wedge^{N}V$, which is denoted by $\det H$. For any basis $(s_{i})$
of $V$, we have 
\[
\left(\det H\right)(s_{1}\wedge\cdots\wedge s_{N})^{2}=\det[H(s_{i},s_{j})].
\]
For two norms $H_{0}$, $H_{1}\in\mathcal{H}(V)$, the relative volume
is defined as 
\begin{equation}
\mathbf{E}_{V}(H_{1},H_{0})\coloneqq-\frac{1}{N}\log\left(\frac{\det H_{1}}{\det H_{0}}\right)=-\frac{1}{N}\log\left(\frac{\det[H_{1}(s_{i},s_{j})]}{\det[H_{0}(s_{i},s_{j})]}\right).\label{eq: Archi E_V}
\end{equation}
So if $(s_{i})$ is a $H_{0}$-orthonormal basis such that $H_{1}(s_{i},s_{j})=e^{\mu_{i}}\delta_{ij}$,
then $\mathbf{E}_{V}(H_{1},H_{0})=-\frac{1}{N}\sum\mu_{i}.$ By Cauchy--Schwartz
inequality, we have 
\begin{equation}
\left|\mathbf{E}_{V}(H_{1},H_{0})\right|\leq d(H_{1},H_{0}).\label{eq: Lip of EV}
\end{equation}

\subsection{Asymptotic cone of $\mathcal{H}(V)$}

For a general CAT(0) space $\mathcal{X}$, one can consider all the
geodesic rays modulo the asymptotic relations. It constitutes another
CAT(0) space, called the asymptotic cone (or infinity cone) of $\mathcal{X}$,
refer to \cites{bridson_metric_2013}. Here we specialize to the case
$\mathcal{X}=(\mathcal{H}(V),d)$. 

Recall some notions in metric geometry. Given a metric space $(X,d)$,
a geodesic segment is a map $\gamma:[0,1]\rightarrow X$ such that
$d(\gamma_{s},\gamma_{t})=d(\gamma_{0},\gamma_{1})\left|s-t\right|$
for any $s,t\in[0,1]$. We say $(X,d)$ is a geodesic space if any
two points can be connected by a geodesics segment. A \emph{CAT(0)
space} is a geodesic space satisfying a triangle comparison property.
By \cite[Theorem 1.3.3]{bacak_convex_2014}, this property is equivalent
to that for any geodesic $\gamma:[0,1]\rightarrow X$ and point $x\in X$,
we have 
\[
d(x,\gamma_{t})^{2}\leq(1-t)d(x,\gamma_{0})^{2}+t\cdot d(x,\gamma_{1})^{2}-t(1-t)d(\gamma_{0},\gamma_{1})^{2},\ \forall t\in[0,1].
\]
In particular, the function $d(x,\cdot)^{2}$ is strictly convex.
This inequality implies Busemann's convexity, i.e. for any two geodesics
$\gamma$ and $\gamma'$, the function $t\mapsto d\left(\gamma_{t},\gamma'_{t}\right)$
is convex. Busemann's convexity implies the uniqueness of the geodesic
between two points. A complete CAT(0) space is called a \emph{Hadamard
space}, such as $\mathcal{H}(V)$. 

Given two geodesic rays $(H_{t})_{t\geq0}$ and $(G_{t})_{t\geq0}$
in $\mathcal{H}(V)$, which can be stationary points. Busemann's convexity
implies that $t\mapsto d(H_{t},G_{t})$ is convex. Moreover, by the
triangle inequality, we have $d(H_{t},G_{t})\leq At+B$ for some constants
$A,B>0$. Hence the limit slope 
\begin{equation}
\hat{d}\left((H_{t}),(G_{t})\right)\coloneqq\lim_{t\rightarrow+\infty}\frac{d(H_{t},G_{t})-d(H_{0},G_{0})}{t}\uparrow\label{eq: d hat}
\end{equation}
exists. It is easy to check that $\hat{d}$ satisfies the triangle
inequality. If $\hat{d}=0$, we say $(H_{t})$ and $(G_{t})$ are
asymptotic to each other, this is an equivalent relation between geodesic
rays. Note that by the convexity, $\hat{d}=0$ is equivalent to the
boundness of $d(H_{t},G_{t})$. An equivalence class is called an
\emph{asymptotic class}, which will be denoted by $\eta,\xi$, etc.
Let $\hat{\mathcal{H}}(V)$ be the set of asymptotic classes, then
$\hat{d}$ descends to a metric on $\hat{\mathcal{H}}(V)$. By a general
theorem \cite[Theorem 4.8]{ballmann_lectures_1995}, $\left(\hat{\mathcal{H}}(V),\hat{d}\right)$
is also a Hadamard space. 

Denote by $o$ the class of stationary rays. Then $\hat{d}(\eta,o)$
is the speed of rays in class $\eta$. Note that $\left(\hat{\mathcal{H}}(V),\hat{d}\right)$
is not locally compact when $\dim V>1$, e.g. sphere $\{\eta\mid\hat{d}(\eta,o)=\delta\}$
is not compact, see Remark \ref{rem:not loc cpt} and the isometry
(\ref{eq: Isometry}). 

\subsubsection{Negatively curved path of Hermitian norms}

Although we mainly deal with geodesics, many results hold for more
general paths with curvature condition. We adopt the curvature condition
in \cite{berndtsson_lelong_2020}. 

For an open interval $I\subset\mathbb{R}$, we define a domain 
\begin{equation}
D_{I}\coloneqq\left\{ \tau\in\mathbb{C}^{*}\mid-\log\left|\tau\right|^{2}\in I\right\} .\label{eq:domain DI}
\end{equation}
A path $(H_{t})_{t\in I}\subset\mathcal{H}(V)$ defines a metric $h$
on the trivial bundle $V\times D_{I}\rightarrow D_{I}$ by letting
$h|_{V\times\{\tau\}}=H_{-\log\left|\tau\right|^{2}}$. We say a smooth
path $(H_{t})_{t\in I}$ is \emph{negatively (positively) curved}
if the Chern curvature tensor $\Theta_{h}\leq0\ (\geq0)$ in the sense
of Griffiths (see \cite[Chapter VII 6]{demailly_complex_2012}). Path
$(H_{t})$ is negatively curved if and only if the path of dual norms
$(H_{t}^{*})\subset\mathcal{H}(V^{*})$ is positively curved. 

Fix a basis of $V$, the associated Gram matrix is still denoted by
$H_{t}$. By a direct computation, $(H_{t})_{t\in I}$ is negatively
(positively) curved if and only if 
\begin{equation}
\ddot{H}-\dot{H}H^{-1}\dot{H}\geq0\ (\leq0),\ \forall t\in I,\label{eq:curv condition matrix}
\end{equation}
where $\dot{H}=\frac{\mathrm{d}}{\mathrm{d}t}H_{t}$ and $\geq0$
means positive semi-definite. 
\begin{prop}
\label{prop: logH(s) convex}For a negatively curved smooth path $(H_{t})_{t\in I}\subset\mathcal{H}(V)$
and $s\in V\backslash\{0\}$, the function $t\mapsto\log H_{t}(s)$
is convex. 
\end{prop}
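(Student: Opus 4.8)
The plan is to reduce the statement to the matrix criterion for negative curvature. Fix a basis of $V$ so that $H_t$ becomes a positive-definite Gram matrix, and choose a nonzero vector $s\in V$ with coordinate column vector $v$. Then $H_t(s) = H_t(s,s)^{1/2}$ is the square root of the scalar function $f(t) \coloneqq v^* H_t v > 0$, so $\log H_t(s) = \tfrac12 \log f(t)$. The claim is exactly that $t\mapsto \log f(t)$ is convex, and since $f>0$ this is equivalent to $f \ddot f - \dot f^2 \geq 0$ on $I$.

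Next I would express $\dot f$ and $\ddot f$ in terms of $\dot H$ and $\ddot H$: we have $\dot f = v^* \dot H v$ and $\ddot f = v^* \ddot H v$. The target inequality becomes
\[
(v^* H v)(v^* \ddot H v) - (v^* \dot H v)^2 \geq 0.
\]
From the negative-curvature condition (\ref{eq:curv condition matrix}) we know $\ddot H \geq \dot H H^{-1}\dot H$ as Hermitian forms, so $v^* \ddot H v \geq v^* \dot H H^{-1}\dot H v$. Hence it suffices to prove
\[
(v^* H v)\bigl(v^* \dot H H^{-1} \dot H v\bigr) \geq (v^* \dot H v)^2.
\]

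This last step is a Cauchy--Schwarz estimate, which I expect to be the only real content (and it is routine). Write $w \coloneqq H^{-1/2}\dot H v$ and $u \coloneqq H^{1/2} v$, where $H^{1/2}$ is the positive-definite square root. Then $v^* H v = \|u\|^2$, $v^* \dot H H^{-1} \dot H v = \|w\|^2$, and $v^* \dot H v = v^* H^{1/2} H^{-1/2} \dot H v = u^* w = \langle w, u\rangle$. The Cauchy--Schwarz inequality $|\langle w, u\rangle|^2 \leq \|w\|^2\|u\|^2$ gives precisely the desired bound, completing the proof.

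There is essentially no obstacle here; the only mild point to keep in mind is the reduction from the statement ``$\log H_t(s)$ is convex'' to the differential inequality for $f>0$, and the fact that the curvature hypothesis is stated in the Griffiths sense for $h$ on $V\times D_I$, which (\ref{eq:curv condition matrix}) already translates into the matrix form we use. One could alternatively argue more geometrically: a negatively curved path corresponds to a metric $h$ on the trivial bundle over $D_I$ with $\Theta_h\leq 0$, so for the constant holomorphic section $s$ the function $\log h(s) = \log |s|_h^{-2}$... — but the direct matrix computation above is cleaner and self-contained, so that is the route I would take.
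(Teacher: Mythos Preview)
Your proof is correct and follows essentially the same approach as the paper: reduce to the second-derivative inequality $f\ddot f-\dot f^2\geq 0$, use the curvature condition $\ddot H\geq \dot H H^{-1}\dot H$, and finish with Cauchy--Schwarz. The only cosmetic difference is that the paper diagonalizes $\dot H_{t_0}$ in an $H_{t_0}$-orthonormal basis and applies the discrete Cauchy--Schwarz inequality to the coefficients, whereas you use the square-root substitution $u=H^{1/2}v$, $w=H^{-1/2}\dot H v$; these are two presentations of the same Cauchy--Schwarz step for the inner product induced by $H$.
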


\begin{proof}
Denote $f(t)=\log H_{t}(s,s)$. For any $t_{0}\in I$, we show $f''(t_{0})\geq0$.
Take a $H_{t_{0}}$-orthonormal basis $(s_{i})$ such that $\dot{H}_{t_{0}}=\mathrm{diag}\left(\lambda_{1},\cdots,\lambda_{N}\right)$.
Curvature condition (\ref{eq:curv condition matrix}) implies that
\[
a\ddot{H}_{t_{0}}a^{*}\geq\sum_{i=1}^{N}\lambda_{i}^{2}\left|a_{i}\right|^{2},\ \forall a=(a_{1},\cdots,a_{N})\in\mathbb{C}^{N}.
\]
Rescaling $s$ such that $H_{t_{0}}(s)=1$. Let $s=\sum_{i}a_{i}s_{i}$,
then $f(t)=\log aH_{t}a^{*}$ and $\sum\left|a_{i}\right|^{2}=1$.
By the above inequality, we have 
\[
f''(t_{0})=a\ddot{H}_{t_{0}}a^{*}-\left(a\dot{H}_{t_{0}}a^{*}\right)^{2}\geq\sum_{i}\lambda_{i}^{2}\left|a_{i}\right|^{2}-\left(\sum_{i}\lambda_{i}\left|a_{i}\right|^{2}\right)^{2}\geq0,
\]
where $\geq0$ is due to the Cauchy--Schwartz inequality. 
\end{proof}
\begin{rem}
Things are not symmetric, for a positively curved path $(H_{t})$,
the function $t\mapsto\log H_{t}(s)$ may not be concave. Besides
that, the convexity of $\log H_{t}(s)$ for all $s\in V\backslash\{0\}$
is not enough to ensure the path is negatively curved. Actually, the
latter is equivalent to that for any non-vanishing holomorphic section
of $V\times D_{I}$ (i.e. a holomorphic map $s:D_{I}\rightarrow V\backslash\{0\}$),
the function $D_{I}\ni\tau\mapsto\log h\left(s(\tau)\right)$ is subharmonic,
refer to \cite{raufi_singular_2015} §2. 
\end{rem}

If $(E,h)$ is a Hermitian holomorphic bundle with semi-positive (semi-negative)
curvature (in the sense of Griffiths), then so is the line bundle
$(\det E,\det h)$. Take $E=V\times D_{I}$, if $(H_{t})_{t\in I}\subset\mathcal{H}(V)$
is positively (negatively) curved, then the function $t\mapsto\mathbf{E}_{V}(H_{t},H_{r})$
(\ref{eq: Archi E_V}) is convex (concave) for a fixed reference norm
$H_{r}$. 

\subsection{Space of non-Archimedean norms}

The asymptotic cone $\hat{\mathcal{H}}(V)$ can be identified with
the space $\mathcal{N}(V)$ of NA norms on $V$ over the trivially-normed
$\bbc$. Refer to \cite{boucksom_spaces_2021} for a comprehensive
discussion on such spaces. 

Let $\left|\cdot\right|_{tr}$ be the trivial norm on $\mathbb{C}$,
i.e. $\left|a\right|_{tr}=1$ for all $a\in\bbc^{*}$. A \emph{non-Archimedean
norm} is a function $\left\Vert \cdot\right\Vert :V\rightarrow\mathbb{R}_{\geq0}$
satisfying 
\[
\left\Vert as\right\Vert =\left|a\right|_{tr}\left\Vert s\right\Vert ,\ \left\Vert s+s'\right\Vert \leq\max\left(\left\Vert s\right\Vert ,\left\Vert s'\right\Vert \right),\ \forall a\in\bbc,\ s,s'\in V
\]
and $\left\Vert s\right\Vert =0$ only when $s=0$. Follow \cite{boucksom_non-archimedean_2024},
we use the valuation notation. Let 
\[
\chi(\cdot)\coloneqq-\log\left\Vert \cdot\right\Vert \in\mathbb{R}\cup\{+\infty\},
\]
then we have 
\[
\chi(as)=\chi(s),\ \forall a\in\bbc^{*},\ \chi(s+s')\geq\min(\chi(s),\chi(s')).
\]
Let $\mathcal{N}(V)$ be the set of all NA norms on $V$. There is
a trivial norm $\chi_{tr}$ such that $\chi_{tr}(s)=0$ for all $s\neq0$.
For any $a>0$ and $b\in\bbr$, $a\chi+b$ is also a NA norm. Note
that for $\chi_{1},\chi_{2}\in\mathcal{N}(V)$, $\chi_{1}+\chi_{2}$
may not be a NA norm. 

A NA norm $\chi$ induces a left-continuous non-increasing filtration
$(F^{\lambda}V)_{\lambda\in\mathbb{R}}$ of $V$ and vice versa, 
\[
F^{\lambda}V=\left\{ s\in V\mid\chi(s)\geq\lambda\right\} ,\ \chi(s)=\sup\left\{ \lambda\mid s\in F^{\lambda}V\right\} .
\]
The \emph{jumping numbers} $\lambda_{1}(\chi)\geq\cdots\geq\lambda_{N}(\chi)$
of filtration are defined as 
\[
\lambda_{i}(\chi)\coloneqq\max\{\lambda\mid\dim F^{\lambda}V\geq i\}.
\]
We say that $\chi$ is diagonalized by a basis $(s_{i})$ of $V$
if we have 
\[
\chi\left(\sum_{i=1}^{N}a_{i}s_{i}\right)=\min_{1\leq i\leq N}\chi(a_{i}s_{i})=\min_{a_{i}\neq0}\chi(s_{i}),\ \forall(a_{i})\in\bbc^{N}.
\]
At this time, the jumping numbers are the non-increasing rearrangement
of $\chi(s_{i})$. By \cite[Proposition 1.14]{boucksom_spaces_2021},
any two norms $\chi,\chi'$ can be diagonalized by a common basis
$(s_{i})$. Then we arrange the numbers $\chi(s_{i})-\chi'(s_{i})$
as follows 
\[
\lambda_{1}(\chi,\chi')\geq\cdots\geq\lambda_{N}(\chi,\chi'),
\]
which are independent of the choices of $(s_{i})$. They are called
the \emph{relative spectra}. In particular, $\lambda_{i}(\chi,\chi_{tr})=\lambda_{i}(\chi)$. 

By \cite[Theorem 3.1]{boucksom_spaces_2021}, for each $p\in[1,\infty]$,
there is a metric $d_{p}$ on $\mathcal{N}(V)$ defined by 
\begin{equation}
d_{p}(\chi,\chi')^{p}\coloneqq\frac{1}{N}\sum_{i=1}^{N}\left|\lambda_{i}(\chi,\chi')\right|^{p},\ d_{\infty}(\chi,\chi')\coloneqq\max_{1\leq i\leq N}\left|\lambda_{i}(\chi,\chi')\right|.\label{eq: NA dp}
\end{equation}
They are Lipschitz equivalent to each other. We define the ``$p$-norm''
of a NA norm by 
\begin{equation}
\left\Vert \chi\right\Vert _{p}\coloneqq d_{p}(\chi,\chi_{tr})=\left(\frac{1}{N}\sum_{i=1}^{N}\left|\lambda_{i}(\chi)\right|^{p}\right)^{1/p},\ \left\Vert \chi\right\Vert _{\infty}\coloneqq\max_{1\leq i\leq N}\left|\lambda_{i}(\chi)\right|.\label{eq:p norm}
\end{equation}
The relative volume and the volume are defined as 
\begin{equation}
E_{V}(\chi,\chi')\coloneqq\frac{1}{N}\sum_{i=1}^{N}\lambda_{i}(\chi,\chi'),\ E_{V}(\chi)\coloneqq E_{V}(\chi,\chi_{tr})=\frac{1}{N}\sum_{i=1}^{N}\lambda_{i}(\chi).\label{eq: volume NA}
\end{equation}

\begin{rem}
\label{rem:not loc cpt}When $\dim V>1$, for any $p\in[1,\infty]$,
the metric space $(\mathcal{N}(V),d_{p})$ is not locally compact.
For any nonzero $s\in V$, we define $\chi_{s}\in\mathcal{N}(V)$
by letting $\chi_{s}(s)=\delta>0$ and $\chi_{s}(s')=0$ when $s'\notin\mathbb{C}s$.
The associated filtration is $V\supset\mathbb{C}s\supset\{0\}$ with
jumping number $\lambda_{1}=\delta>0=\lambda_{2}=\cdots=\lam_{N}$.
For $p\in[1,\infty)$ ($p=\infty$ is similar), we have $\left\Vert \chi_{s}\right\Vert _{p}=\delta N^{-1/p}$.
If $s,s'\in V$ are linearly independent, expand them to a basis $\{s,s',s_{3},\cdots,s_{N}\}$,
which diagonalizes both $\chi_{s}$ and $\chi_{s'}$. It is easy to
see $d_{p}(\chi_{s},\chi_{s'})=2^{1/p}\delta N^{-1/p}$. It follows
that for any $\epsilon>0$, we can find a sequence in $\left\{ \chi:\left\Vert \chi\right\Vert _{p}\leq\epsilon\right\} $
which has no convergent subsequence. Hence $\chi_{tr}$ has no pre-compact
neighborhood.
\end{rem}

\subsection{The limit NA norm associated to a ray}

A geodesic ray in $\mathcal{H}(V)$ induces a NA norm on $V$. This
construction is extended by Berndtsson \cite{berndtsson_lelong_2020}
to negatively curved rays $(H_{t})_{t>0}$ with moderate growth, where
\textit{moderate growth} means that $H_{t}\leq e^{Ct}H_{1}$ for $t\gg1$
and $C>0$ is a constant. By Proposition \ref{prop: logH(s) convex},
for any $s\in V\backslash\{0\}$, $f(t)=\log H_{t}(s)^{2}$ is convex.
Growth condition implies that $f(t)\leq at+C$ for $t\gg1$. Hence
the limit slope 
\begin{equation}
-\chi_{H}(s)\coloneqq\lim_{t\rightarrow\infty}\frac{f(t)-f(0)}{t}\uparrow=\lim_{t\rightarrow\infty}\frac{1}{t}\log H_{t}(s)^{2}\label{eq:def NA limit}
\end{equation}
exits and is finite. Set $\chi_{H}(0)=+\infty$. The triangle inequality
for $H_{t}$ implies that 
\[
\chi_{H}(s+s')\geq\min\left(\chi_{H}(s),\chi_{H}(s')\right),\ \forall s,s'\in V.
\]
Hence $\chi_{H}$ defines a NA norm on $V$. 
\begin{defn}[limit NA norm]
For a negatively curved ray $(H_{t})_{t>0}\subset\mathcal{H}(V)$
with moderate growth, we call $\chi_{H}$ (\ref{eq:def NA limit})
the \emph{limit NA norm} of $(H_{t})_{t>0}$. 
\end{defn}

If two such rays $H_{t},G_{t}$ are asymptotic to each other (i.e.
$d(H_{t},G_{t})\leq C$), then they induce the same NA norm. Since
by (\ref{eq: compare dp}), we have $e^{-C}G_{t}\le H_{t}\leq e^{C}G_{t}$
for some constant $C$. We denote by $\chi_{\eta}$ the limit NA norm
associated to an asymptotic class $\eta$. 
\begin{example}
\label{exa: NAlimit of geodesic}Determine the limit NA norm of geodesic
ray $H_{t}(\cdot,\cdot)=H_{0}(e^{-tA}\cdot,\cdot)$, where $A$ is
$H_{0}$-self-adjoint. Take a $H_{0}$-orthonormal basis $(s_{i})$
such that $As_{i}=\lambda_{i}s_{i}$, then $H_{t}(s_{i},s_{j})=e^{-t\lambda_{i}}\delta_{ij}$.
For a nonzero $s=\sum a_{i}s_{i}$, we have 
\[
\chi_{H}(s)=-\lim_{t\rightarrow\infty}\frac{1}{t}\log H_{t}(s)^{2}=-\lim_{t\rightarrow\infty}\frac{1}{t}\log\sum_{i}\left|a_{i}\right|^{2}e^{-t\lambda_{i}}=\min_{a_{i}\neq0}\lambda_{i}.
\]
Hence $\chi_{H}$ is diagonalized by $(s_{i})$ and $\chi_{H}(s_{i})=\lambda_{i}$.
The associated filtration is 
\begin{equation}
\left\{ s\in V\mid\chi_{H}(s)\geq\lambda\right\} =\mathrm{span}\left\{ s_{i}\mid\lambda_{i}\geq\lambda\right\} =\bigoplus_{\mu\geq\lambda}E_{A}^{\mu},\label{eq:eigen filtra}
\end{equation}
where $E_{A}^{\mu}\coloneqq\{s\in V\mid As=\mu s\}$ is the $\mu$-eigenspace
of $A$.

The following fact is sketched in \cite{boucksom_variational_2019}.
We give a detailed proof. 
\end{example}

\begin{thm}
The NA norm associated to an asymptotic class of norm geodesic rays
defines an isometry 
\begin{equation}
\mathrm{NAlim}:\left(\hat{\mathcal{H}}(V),\hat{d}\right)\rightarrow\left(\mathcal{N}(V),d_{2}\right),\ \eta\mapsto\chi_{\eta},\label{eq: Isometry}
\end{equation}
where $\hat{d}$ is (\ref{eq: d hat}) and $d_{2}$ is (\ref{eq: NA dp}).
In particular, the target space is also a Hadamard space. 
\end{thm}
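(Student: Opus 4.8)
The plan is to check that $\mathrm{NAlim}$ is a well-defined bijection that is isometric for the two metrics $\hat d$ and $d_2$. Well-definedness on asymptotic classes was already observed (asymptotic rays have $d(H_t,G_t)\le C$, hence by \eqref{eq: compare dp} they are uniformly comparable and so induce the same NA norm), so the real content is surjectivity, injectivity, and the isometry identity. For surjectivity, I would start from an arbitrary $\chi\in\mathcal N(V)$, pick a basis $(s_i)$ diagonalizing it (which exists by \cite[Proposition 1.14]{boucksom_spaces_2021}), fix the reference norm $H_0$ making $(s_i)$ orthonormal, let $A$ be the diagonal $H_0$-self-adjoint operator with $As_i=\chi(s_i)s_i$, and set $H_t(\cdot,\cdot)=H_0(e^{-tA}\cdot,\cdot)$. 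By Example \ref{exa: NAlimit of geodesic} this geodesic ray has limit NA norm exactly $\chi$, so $\mathrm{NAlim}$ is onto.

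The crucial step is the isometry identity: for two asymptotic classes $\eta=[(H_t)]$, $\xi=[(G_t)]$ one must show
\[
\hat d(\eta,\xi)=d_2(\chi_\eta,\chi_\xi).
\]
Here I would reduce to a simultaneous normal form. By \cite[Proposition 1.14]{boucksom_spaces_2021} the two limit norms $\chi_\eta,\chi_\xi$ are diagonalized by a common basis; but I also want the two \emph{rays} put in a compatible normal form. The standard trick is: replacing $(G_t)$ by an asymptotic ray does not change $\xi$ or $\chi_\xi$, so I may try to arrange that $(H_t)$ and $(G_t)$ are both geodesic rays emanating (after bounded modification) from a common basepoint and are simultaneously diagonal. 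Concretely, choose $H_0=G_0$ and $H_0$-self-adjoint generators $A$, $B$; the obstacle is that $A$ and $B$ need not commute. To handle this one compares each ray to the geodesic with the same endpoints-at-infinity, using that in a Hadamard space $\hat d(\eta,\xi)$ is computed by the asymptotic slope $\lim_t d(H_t,G_t)/t$, and that for the Finsler distances $d_p(H_t,G_t)^p=\tfrac1N\sum|\mu_i(t)|^p$ with $e^{\mu_i(t)}$ the eigenvalues of $G_tH_t^{-1}$. So I must show $\lim_t \mu_i(t)/t = \lambda_i(\chi_\eta,\chi_\xi)$, the $i$-th relative spectrum, for the $d_2$-version (and this simultaneously upgrades to all $d_p$ via \eqref{eq: NA dp} and \eqref{eq:p norm}, which is a bonus). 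This eigenvalue-asymptotics statement is exactly where the work concentrates.

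To prove that eigenvalue asymptotics, I would argue as follows. Fix a basis $(e_i)$ simultaneously diagonalizing $\chi_\eta$ and $\chi_\xi$, with $\chi_\eta(e_i)=a_i$, $\chi_\xi(e_i)=b_i$. From \eqref{eq:def NA limit}, $\tfrac1t\log H_t(s)^2\to -\chi_\eta(s)$ and similarly for $G_t$, uniformly on the unit sphere by convexity (Dini-type argument, since the convergence is monotone in $t$ by the $\uparrow$ in \eqref{eq: d hat}/\eqref{eq:def NA limit} and the sphere is compact). Hence $G_tH_t^{-1}$, expressed in the basis $(e_i)$, has entries whose logarithmic growth rates are governed by the differences: the quadratic form $s\mapsto G_t(s)^2/H_t(s)^2$ has $\tfrac1t\log$ converging to $\chi_\eta(s)-\chi_\xi(s)$, whose sup/inf over subspaces gives, by the Courant–Fischer min-max characterization of the eigenvalues $\mu_i(t)$, that $\mu_i(t)/t\to a_i-b_i$ after non-increasing rearrangement, i.e. $\to\lambda_i(\chi_\eta,\chi_\xi)$. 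The main obstacle is making this min-max limit argument rigorous when the two rays are not simultaneously diagonalizable — controlling the off-diagonal growth so that it does not affect the leading $t$-asymptotics of the eigenvalues. Once the eigenvalue asymptotics are in hand, $\hat d(\eta,\xi)^2=\lim_t d(H_t,G_t)^2/t^2=\tfrac1N\sum(a_i-b_i)^2=d_2(\chi_\eta,\chi_\xi)^2$ follows immediately; injectivity of $\mathrm{NAlim}$ is then automatic ($\hat d(\eta,\xi)=0\iff d_2(\chi_\eta,\chi_\xi)=0\iff\chi_\eta=\chi_\xi$), and the final sentence (that $\mathcal N(V)$ is Hadamard) follows by transporting the Hadamard structure of $\hat{\mathcal H}(V)$, already noted via \cite[Theorem 4.8]{ballmann_lectures_1995}, through the isometry.
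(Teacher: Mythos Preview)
Your surjectivity argument matches the paper's. The divergence is in the isometry step, and there is both a simpler route you overlooked and a genuine gap in the route you chose.

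\textbf{The simpler route (what the paper does).} You came close when you wrote ``I may try to arrange that $(H_t)$ and $(G_t)$ are \ldots\ simultaneously diagonal,'' but then dismissed it because for a \emph{fixed} basepoint $H_0$ the generators $A,B$ need not commute. The point is that you are free to choose $H_0$ \emph{after} the basis. Given $\chi,\chi'\in\mathcal N(V)$, pick a common diagonalizing basis $(s_i)$, let $H_0$ be the Hermitian norm making $(s_i)$ orthonormal, and let $A,B$ be diagonal in $(s_i)$ with $As_i=\chi(s_i)s_i$, $Bs_i=\chi'(s_i)s_i$. These commute by construction, the resulting rays have limit NA norms $\chi,\chi'$ by Example~\ref{exa: NAlimit of geodesic}, and $G_tH_t^{-1}$ has eigenvalues $\exp t(\chi(s_i)-\chi'(s_i))$ exactly, so $\hat d=d_2$ drops out in one line. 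Injectivity is proved separately beforehand (by a short argument: with a common basepoint, the change-of-basis matrix between two $H_0$-orthonormal bases adapted to the same filtration is both unitary and upper triangular, hence diagonal, forcing $A=B$). With bijectivity in hand, the constructed rays are legitimate representatives of the given asymptotic classes.

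\textbf{The gap in your route.} Your Courant--Fischer plan hinges on the claim that $\tfrac1t\log H_t(s)^2\to-\chi_\eta(s)$ \emph{uniformly} on the unit sphere, justified by a Dini-type argument. But Dini requires a continuous limit, and $s\mapsto\chi_\eta(s)$ is not continuous: if $\chi_\eta(s_1)>\chi_\eta(s_2)$ then $\chi_\eta(s_1+\epsilon s_2)=\chi_\eta(s_2)$ for all $\epsilon\neq0$, so $\chi_\eta$ jumps at $s_1$. Hence the convergence is genuinely non-uniform and the min-max limit interchange is not justified as written. The eigenvalue asymptotics $\mu_i(t)/t\to\lambda_i(\chi_\eta,\chi_\xi)$ are in fact true, but proving them for non-commuting $A,B$ requires a more careful filtration argument---work that the paper's choice of commuting representatives avoids entirely.
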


\begin{proof}
First we show $\mathrm{NAlim}$ is surjective. For any $\chi\in\mathcal{N}(V)$
and $H_{0}\in\mathcal{H}(V)$, let $E^{\lambda}$ be the $H_{0}$-orthogonal
complement of the subspace 
\[
F^{>\lambda}\coloneqq\left\{ s\in V\mid\chi(s)>\lambda\right\} \ \textrm{in}\ F^{\lambda}\coloneqq\left\{ s\in V\mid\chi(s)\geq\lambda\right\} .
\]
Then we have $V=\bigoplus_{\lambda}E^{\lambda}$, here $\lambda$
are the jumping numbers of $\chi$. Let $A:V\rightarrow V$ acts on
$E^{\lambda}$ by multiplying $\lambda$. Then by Example \ref{exa: NAlimit of geodesic},
$\chi$ is the limit NA norm of geodesic ray $H_{t}=H_{0}(e^{-tA}\cdot,\cdot)$.

Next we show $\mathrm{NAlim}$ is injective. If two geodesic rays
$H_{t},G_{t}$ such that $\chi_{H}=\chi_{G}$, we show that they are
asymptotic to each other. By a parallel translation, we can assume
$H_{0}=G_{0}$ and $H_{t}=H_{0}(e^{-tA}\cdot,\cdot)$, $G_{t}=H_{0}(e^{-tB}\cdot,\cdot)$,
where $A,B$ are $H_{0}$-self-adjoint operators. By Example \ref{exa: NAlimit of geodesic},
$A$ and $B$ have the same eigenvalues $\lambda_{1}\geq\cdots\geq\lambda_{N}$,
and $\lambda_{i}$ are the jumping numbers of $\chi_{H}$. Take two
$H_{0}$-orthonormal basis $(s_{i}^{A})$ and $(s_{i}^{B})$ which
diagonalize $A$ and $B$ respectively. Let $P$ be the unitary matrix
such that 
\[
\left(s_{1}^{B},\cdots,s_{N}^{B}\right)=\left(s_{1}^{A},\cdots,s_{N}^{A}\right)P.
\]
By (\ref{eq:eigen filtra}), we know $P$ is upper triangular. Combining
with $P^{-1}=\bar{P}^{\mathrm{T}}$, it implies that $P$ is diagonal.
Thus $s_{i}^{B}=P_{ii}s_{i}^{A}$, and the basis $(s_{i}^{A})$ can
diagonalize both $A$ and $B$. Since $A,B$ have the same eigenvalues,
it implies $A=B$, so $H_{t}=G_{t}$ for all $t$. 

Finally, we check that $\mathrm{NAlim}$ preserves the metrics. For
any $\chi,\chi'\in\mathcal{N}(V)$, take a basis $(s_{i})$ diagonalizing
both of them and let $\lambda_{i}=\chi(s_{i})$ and $\mu_{i}=\chi'(s_{i})$.
Let $H_{0}\in\mathcal{H}(V)$ taking $(s_{i})$ as an orthonormal
basis. Define operators $A$ and $B$ such that $As_{i}=\lambda_{i}s_{i}$
and $Bs_{i}=\mu_{i}s_{i}$ for all $1\leq i\leq N$, then geodesic
rays 
\[
H_{t}=H_{0}(e^{-tA}\cdot,\cdot)\ \textrm{and}\ G_{t}=H_{0}(e^{-tB}\cdot,\cdot)
\]
induce NA norms $\chi$ and $\chi'$ respectively. Note that $G_{t}=H_{t}(e^{tA}e^{-tB}\cdot,\cdot)$
and the eigenvalues of $e^{tA}e^{-tB}$ are $\exp t(\lambda_{i}-\mu_{i})$,
thus we have 
\[
\hat{d}\left((H_{t}),(G_{t})\right)\coloneqq\lim_{t\rightarrow+\infty}\frac{1}{t}d(H_{t},G_{t})=\left(\frac{1}{N}\sum_{i=1}^{N}(\lambda_{i}-\mu_{i})^{2}\right)^{\frac{1}{2}}=d_{2}(\chi,\chi').
\]
\end{proof}

\subsubsection{Geodesics in $\mathcal{N}(V)$ and $\hat{\mathcal{H}}(V)$}

Consider the geodesics in $\mathcal{N}(V)$. For any $\chi_{0},\chi_{1}\in\mathcal{N}(V)$,
take a basis $(s_{i})$ diagonalizing both of them. Let $\chi_{s}$
($s\in[0,1]$) be the norm diagonalized by $(s_{i})$ and such that
\begin{equation}
\chi_{s}(s_{i})=(1-s)\chi_{0}(s_{i})+s\chi_{1}(s_{i}).\label{eq:geodesic NA norm}
\end{equation}
By the definition (\ref{eq: NA dp}) for $d_{p}$, one can check that
$(\chi_{s})_{s\in[0,1]}$ is a $d_{p}$-geodesic between $\chi_{0}$
and $\chi_{1}$ for all $p\in[1,\infty]$. Furthermore, when $p\in(1,\infty)$,
one can show the uniqueness of geodesic. 

By the isometry (\ref{eq: Isometry}), we can determine the geodesics
in $\left(\hat{\mathcal{H}}(V),\hat{d}\right)$. For any $\eta_{0},\eta_{1}\in\hat{\mathcal{H}}(V)$,
we take a basis $(s_{i})$ diagonalizing both $\chi_{\eta_{0}}$ and
$\chi_{\eta_{1}}$. Let the Hermitian norm $H_{0}$ taking $(s_{i})$
as an orthonormal basis. Define operators $A_{0},A_{1}$ such that
$A_{0}s_{i}=\chi_{\eta_{0}}(s_{i})s_{i}$ and $A_{1}s_{i}=\chi_{\eta_{1}}(s_{i})s_{i}$
for all $i$. Then the asymptotic class $\eta_{s}$ of the geodesic
ray 
\begin{equation}
H_{t}^{s}\coloneqq H_{0}(e^{-tA_{s}}\cdot,\cdot),\ A_{s}=(1-s)A_{0}+sA_{1},\ s\in[0,1],\ t\geq0.\label{eq:geodesic H hat}
\end{equation}
gives the geodesic between $\eta_{0}$ and $\eta_{1}$. Since the
limit NA norm of $t\mapsto H_{t}^{s}$ gives the geodesic between
$\chi_{\eta_{0}}$ and $\chi_{\eta_{1}}$. Moreover, for a fixed $t\geq0$,
$s\mapsto H_{t}^{s}$ is also a geodesic and $d(H_{t}^{0},H_{t}^{1})$
is linear in $t$. However, if we take an arbitrary $H_{0}$, these
nice properties may not hold. 

\section{Finite-energy metrics and geodesic rays}

In order to discuss the maximal K-destabilizer introduced in \cite{xia_sharp_2021},
we review the space of finite-energy metrics and the geodesic rays
in it, refer to \cites{darvas_geometric_2019}{darvas_geodesic_2020}{berman_variational_2021}. 

\subsection{Finite-energy metrics and energy functionals}

Let $(X,L)$ be a polarized manifold. We use the additive notation
$\phi,\psi$ for metrics on $L$. The curvature form is denoted by
$dd^{c}\phi$, where $dd^{c}=\frac{\sqrt{-1}}{2\pi}\partial\bar{\partial}$.
Let $\mathcal{H}(L)$ be the space of smooth metrics on $L$ with
positive curvature. Fix a reference metric $\phi_{r}\in\mathcal{H}(L)$
Let $\mathrm{PSH}(L)$ be the space of psh metrics on $L$. The Monge--Ampère
measure associated to $\phi\in\mathrm{PSH}(L)$ is $\mathrm{MA}(\phi)\coloneqq(L^{n})^{-1}(dd^{c}\phi)^{n}$,
which is the non-pluriproduct of closed positive $(1,1)$-current
$dd^{c}\phi$. The total mass (MA mass) of $\mathrm{MA}(\phi)$ is
$\leq1$. Let $\mathcal{E}_{\mathrm{full}}(L)$ be the space of psh
metrics with full MA mass. It includes all the bounded psh metrics. 

For each $p\in[1,\infty)$, there is a Finsler metric on $\mathcal{H}(L)$
which generalizes Mabuchi's $L^{2}$-Riemannian metric when $p=2$.
It induces a length metric $d_{p}$ on $\mathcal{H}(L)$. Darvas \cite{darvas_mabuchi_2015,darvas_geometric_2019}
showed that the metric completion of $(\mathcal{H}(L),d_{p})$ coincides
with the space of $L^{p}$-finite-energy metrics 
\[
\mathcal{E}^{p}(L)\coloneqq\left\{ \phi\in\mathcal{E}_{\mathrm{full}}(L)\mid\int_{X}\left|\phi-\phi_{r}\right|^{p}\mathrm{MA}(\phi)<\infty\right\} ,\ \forall\phi_{r}\in\mathcal{H}(L),
\]
which is introduced by Cegrell--Guedj--Zeriahi. For each $p\in[1,\infty)$,
$(\mathcal{E}^{p}(L),d_{p})$ is a completely geodesic metric space,
and a CAT(0) space when $p=2$, see \cite{darvas_mabuchi_2017}. 

The Monge--Ampère energy $\mathcal{E}:\mathcal{H}(L)\times\mathcal{H}(L)\rightarrow\mathbb{R}$
is defined by 
\[
\mathcal{E}(\phi_{1},\phi_{0})=\int_{0}^{1}\mathrm{d}t\int_{X}\dot{\phi}_{t}\ \mathrm{MA}(\phi_{t}),
\]
where $(\phi_{t})_{t\in[0,1]}\subset\mathcal{H}(L)$ is any smooth
path. It satisfies the cocycle property 
\[
\mathcal{E}(\phi_{2},\phi_{0})=\mathcal{E}(\phi_{2},\phi_{1})+\mathcal{E}(\phi_{1},\phi_{0}).
\]
Taking the affine path $\phi_{t}=(1-t)\phi_{0}+t\phi_{1}$, we obtain
\begin{equation}
\mathcal{E}(\phi,\psi)\coloneqq\frac{1}{(n+1)L^{n}}\sum_{j=0}^{n}\int_{X}(\phi-\psi)\left(dd^{c}\phi\right)^{j}\wedge\left(dd^{c}\psi\right)^{n-j}.\label{eq: MA energy}
\end{equation}
When a reference metric $\phi_{r}\in\mathcal{H}(L)$ is fixed, we
write $\mathcal{E}(\phi)\coloneqq\mathcal{E}(\phi,\phi_{r})$. 

MA-energy can be extended to an u.s.c. functional on $\mathrm{PSH}(L)$
by defining 
\[
\mathcal{E}(\phi)=\inf\left\{ \mathcal{E}(\psi)\mid\mathcal{H}(L)\ni\psi\geq\phi\right\} \in\mathbb{R}\cup\{-\infty\}.
\]
We have 
\[
\mathcal{E}^{1}(L)=\left\{ \phi\in\mathrm{PSH}(L)\mid\mathcal{E}(\phi)>-\infty\right\} .
\]

Mabuchi's K-energy $\mathcal{M}:\mathcal{H}(L)\rightarrow\bbr$ is
defined by 
\begin{equation}
\mathcal{M}(\phi)\coloneqq\mathcal{M}(\phi,\phi_{r})\coloneqq\int_{0}^{1}\mathrm{d}t\int_{X}\dot{\phi}_{t}\left(\bar{S}-S(\phi_{t})\right)\mathrm{MA}(\phi_{t}),\ \bar{S}=\frac{nc_{1}(X)\cdot L^{n-1}}{L^{n}},\label{eq: K-energy}
\end{equation}
where $(\phi_{t})_{t\in[0,1]}\subset\mathcal{H}(L)$ is any smooth
path from $\phi_{r}$ to $\phi$, and $S(\phi_{t})$ is the scalar
curvature of metric $dd^{c}\phi_{t}$. Its critical points are cscK
metrics and it satisfies the cocycle property. %
{} In \cite{berman_convexity_2017}, K-energy is extended to $\mathcal{M}:\mathcal{E}^{1}(L)\rightarrow(-\infty,\infty]$.
They showed that the extension is $d_{1}$-lsc and convex along the
finite-energy geodesics. 

\subsection{Finite-energy geodesic rays }

Let $I\subset\mathbb{R}$ be any interval. In the following, a path
$(\phi_{t})_{t\in I}\subset\mathrm{PSH}(L)$ is a map $I\ni t\mapsto\phi_{t}\in\mathrm{PSH}(L)$.
Recall the annular set $D_{I}$ (\ref{eq:domain DI}). When $I$ is
open, a path $(\phi_{t})_{t\in I}\subset\mathrm{PSH}(L)$ induces
a $S^{1}$-invariant metric $\Phi$ on the line bundle $L\times D_{I}$
over $X\times D_{I}$ by letting 
\[
\Phi|_{L\times\tau}=\phi_{-\log\left|\tau\right|^{2}}.
\]
We say that $(\phi_{t})_{t\in I}$ is a \emph{psh path} if $\Phi$
is a psh metric. Take a reference metric $\phi_{r}\in\mathcal{H}(L)$
and $\omega_{r}\coloneqq dd^{c}\phi_{r}$. Let $\phi_{t}=\phi_{r}+u_{t}$,
then $(\phi_{t})_{t\in I}$ is a psh path iff $U(x,\tau)\coloneqq u_{-\log\left|\tau\right|^{2}}(x)$
is a $\pi_{1}^{*}\omega_{r}$-psh function on $X\times D_{I}$. This
implies that $u_{t}(x)$ is convex in $t$, so is $t\mapsto\sup_{X}u_{t}$.
A \emph{psh ray} is a psh path $(\phi_{t})_{t>0}$. We say a psh ray
is \emph{sublinear} if $\sup_{X}u_{t}=\mathrm{O}(t)$. 

Given $p\in[1,\infty)$ and $\phi_{0},\phi_{1}\in\mathcal{E}^{p}(L)$,
consider the usc regularization of envelop 
\[
\phi_{t}\coloneqq\mathrm{usc}\sup\left\{ \psi_{t}\mid(\psi_{t})_{t\in(0,1)}\ \textrm{psh path},\lim_{t\rightarrow0^{+}}\psi_{t}\leq\phi_{0},\ \lim_{t\rightarrow1^{-}}\psi_{t}\leq\phi_{1}\right\} .
\]
Then $(\phi_{t})_{t\in[0,1]}$ gives a geodesic segment in metric
space $\left(\mathcal{E}^{p}(L),d_{p}\right)$, which is called the
\emph{finite-energy geodesic }connecting $\phi_{0}$ and $\phi_{1}$.
When $p>1$, this is the unique geodesic between $\phi_{0}$ and $\phi_{1}$. 

A \emph{$L^{p}$-geodesic ray} is $\ell=(\ell_{t})_{t\geq0}\subset\mathcal{E}^{p}(L)$
such that for any $T>0$, $(\ell_{t})_{t\in[0,T]}$ is the finite-energy
geodesic between $\ell_{0}$ and $\ell_{T}$. Fix the starting-point
$\phi\in\mathcal{E}^{p}(L)$, Darvas--Lu \cite{darvas_geodesic_2020}
considered the space 
\[
\mathcal{R}_{\phi}^{p}(L)\coloneqq\left\{ \ell=(\ell_{t})_{t\geq0}\mid\ell\ \textrm{is}\ L^{p}\textrm{-geodesic ray, }\ell_{0}=\phi\right\} ,
\]
and endow it with the chordal metric
\begin{equation}
d_{p}^{c}\left(\ell,\ell^{\prime}\right)\coloneqq\lim_{t\rightarrow+\infty}\frac{1}{t}d_{p}(\ell_{t},\ell_{t}^{\prime}).\label{eq:chord metric}
\end{equation}
They showed that $\mathcal{R}_{\phi}^{p}(L)$ is a completely geodesic
metric space. When $p=2$, it is a CAT(0) space, see \cite[Theorem 4.7]{xia_sharp_2021}.
If we take another starting-point $\phi'$, by \cite[Theorem 1.3]{darvas_geodesic_2020},
there is a parallelism isometry 
\[
P_{\phi,\phi'}:\mathcal{R}_{\phi}^{p}(L)\rightarrow\mathcal{R}_{\phi'}^{p}(L)
\]
which sends $\ell$ to the unique geodesic ray emanating from $\phi'$
and asymptotic to $\ell$. 

When we consider the action of automorphisms on geodesic rays, it
is more natural to allow the starting-point to be free. Let $\mathcal{R}^{p}(L)$
be the space of all $L^{p}$-geodesic rays with arbitrary starting-point,
and then consider 
\begin{eqnarray*}
\mathcal{R}^{p}(L)/_{\sim} & \coloneqq & \left\{ \ell=(\ell_{t})_{t\geq0}\mid\ell\ \textrm{is }L^{p}\textrm{-geodesic ray}\right\} /\sim_{d_{p}},
\end{eqnarray*}
where $\sim_{d_{p}}$ is the $d_{p}$-asymptotic relation: $\ell\sim_{d_{p}}\ell^{\prime}$
$\Leftrightarrow$ $d_{p}(\ell_{t},\ell_{t}^{\prime})$ is bounded.
Due to the following fact, we can replace $\sim_{d_{p}}$ by $\sim_{d_{1}}$. 
\begin{prop}
For any two $L^{q}$-geodesic rays $\ell^{1}$ and $\ell^{2}$, if
$\ell^{1}\sim_{d_{p}}\ell^{2}$ ($q>p\geq1$), then $\ell^{1}\sim_{d_{q}}\ell^{2}$. 
\end{prop}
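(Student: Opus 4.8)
The plan is to reduce the statement to the corresponding comparison of relative spectra at the level of finite-dimensional truncations, or — more directly — to exploit the monotonicity of the $d_p$-metrics in $p$ together with the sublinear growth already built into the notion of an $L^q$-geodesic ray. Recall that for a single $L^q$-geodesic ray $\ell$, the map $t\mapsto d_q(\ell_0,\ell_t)$ is linear (this is the defining geodesic property), and the same holds for any $p\in[1,q]$ after reparametrization, since along a $d_q$-geodesic the weaker metrics $d_p$ are also realized by the same path, cf. the Pythagorean/linearity structure of finite-energy geodesics. The key monotonicity is $d_1\le d_p\le d_q$ (up to the volume normalization), valid on $\mathcal{E}^q(L)$; this is the analogue of the comparison $N^{-1/2}d_\infty\le d\le d_\infty$ on the norm side, and follows from Jensen/Hölder applied to the integral defining the Finsler length.

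The steps I would carry out, in order. First, I would fix the two rays $\ell^1,\ell^2$ with common normalization of starting points (using the parallelism isometry $P_{\phi,\phi'}$ if needed, since asymptotic relations are insensitive to the base point). Second, I would invoke the hypothesis $\ell^1\sim_{d_p}\ell^2$, i.e. $\sup_{t\ge 0} d_p(\ell^1_t,\ell^2_t)<\infty$. Third, I would use the convexity of $t\mapsto d_p(\ell^1_t,\ell^2_t)$ (a general feature of CAT(0)/Busemann-convex spaces, here $(\mathcal{E}^p(L),d_p)$) to conclude that boundedness forces the chordal $d_p^c$-distance to vanish, and then I would try to upgrade this to a bound on $d_q(\ell^1_t,\ell^2_t)$. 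The natural mechanism is an interpolation inequality of the form
\[
d_q(\ell^1_t,\ell^2_t)\le C\, d_\infty(\ell^1_t,\ell^2_t)^{1-p/q}\, d_p(\ell^1_t,\ell^2_t)^{p/q},
\]
combined with the fact that $d_\infty(\ell^1_t,\ell^2_t)=\mathrm{O}(t)$ because both rays are sublinear psh rays (their potentials grow at most linearly, hence their $L^\infty$-difference does too). A bounded $d_p$ and a linearly-growing $d_\infty$ then give $d_q(\ell^1_t,\ell^2_t)=\mathrm{O}(t^{1-p/q})=\mathrm{o}(t)$, so the chordal $d_q$-distance vanishes; since $d_q^c$ is a genuine metric on $\mathcal{R}^q(L)/_\sim$, this is exactly $\ell^1\sim_{d_q}\ell^2$. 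To finish I would note that $d_q^c=0$ implies $d_q(\ell^1_t,\ell^2_t)$ is actually bounded, using convexity of $t\mapsto d_q(\ell^1_t,\ell^2_t)$: a convex nonnegative function on $[0,\infty)$ that is $\mathrm{o}(t)$ must be bounded (indeed nonincreasing).

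The main obstacle I anticipate is establishing the interpolation inequality relating $d_q$, $d_p$, and $d_\infty$ in the finite-energy setting — on the space of non-Archimedean norms this is the elementary $\ell^p$-interpolation among the relative spectra $\lambda_i(\chi,\chi')$, but on $\mathcal{E}^p(L)$ one is interpolating among $L^p$-norms of the (non-smooth) difference along the finite-energy geodesic, and one must be careful that the relevant "spectral measure" — the push-forward of the appropriate Monge--Ampère measure under $\ell^1_t-\ell^2_t$, or the comparison function of Darvas — behaves well. If a clean three-parameter interpolation is not available, the fallback is to argue directly: use $d_1\le d_p$ to get $\ell^1\sim_{d_1}\ell^2$ for free, and then run the argument of Darvas--Lu for the geodesic connecting the two rays, showing that bounded $d_1$-distance between the endpoints of a $d_q$-geodesic segment, together with linear growth of its length, forces the $d_q$-length to be controlled — this is where the real work lies, and it parallels the proof that on $\mathcal{N}(V)$ the metrics $d_p$ are all Lipschitz-equivalent (\cite[Theorem 3.1]{boucksom_spaces_2021}), transported to the transcendental side via the geodesic segment structure rather than a global bi-Lipschitz bound.
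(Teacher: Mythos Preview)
Your interpolation approach has a genuine gap: the inequality $d_q \le C\,d_\infty^{1-p/q}\,d_p^{p/q}$ is not established on the finite-energy spaces $\mathcal{E}^q(L)$, and there is no canonical $d_\infty$-metric there playing the role it does on $\mathcal{N}(V)$. You correctly identify this as the main obstacle, but your fallback sketch (``run the argument of Darvas--Lu for the geodesic connecting the two rays'') is not a concrete mechanism and does not close the gap.

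The paper's proof is much shorter and sidesteps interpolation entirely. The decisive move --- which you gesture at in your first step but do not exploit --- is to apply the $d_q$-parallelism operator to $\ell^2$, not merely to normalize base points: there is an $L^q$-geodesic ray $\eta$ with $\eta_0=\ell^1_0$ and $\eta\sim_{d_q}\ell^2$. Since $d_q$-asymptotic trivially implies $d_p$-asymptotic, transitivity with the hypothesis gives $\eta\sim_{d_p}\ell^1$. Now $\eta$ and $\ell^1$ share the same starting point, so $h(t)=d_p(\eta_t,\ell^1_t)$ is convex (Busemann) with $h(0)=0$; boundedness forces $h\equiv 0$, hence $\eta=\ell^1$, and therefore $\ell^1\sim_{d_q}\ell^2$. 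No interpolation, no $d_\infty$, no spectral measure --- just parallelism and Busemann convexity, both of which you already had on the table but combined in the wrong order.
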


\begin{proof}
Apply the parallelism operator, there is a $L^{q}$-geodesic ray $\eta$
such that $\eta_{0}=\ell_{0}^{1}$ and $\eta\sim_{d_{q}}\ell^{2}$.
Then we have $\eta\sim_{d_{p}}\ell^{2}$, thus $\eta\sim_{d_{p}}\ell^{1}$
by transitivity, i.e. $h(t)\coloneqq d_{p}(\eta_{t},\ell_{t}^{1})$
is bounded. By the Busemann convexity \cite[Theorem 1.5]{chen_constant_2018},
$h(t)$ is convex. This implies that $h(t)\leq h(0)=0$. So $\eta=\ell^{1}$
and $\ell^{1}\sim_{d_{q}}\ell^{2}$. 
\end{proof}
By the triangle inequality, the limit (\ref{eq:chord metric}) only
depends on the asymptotic classes, so it defines a metric on $\mathcal{R}^{p}(L)/_{\sim}$.
With this metric, $\mathcal{R}^{p}(L)/_{\sim}$ is isometric to any
space $\mathcal{R}_{\phi}^{p}(L)$ via the parallelism operator. For
any $q>p\geq1$, we have inclusion $\mathcal{R}^{q}(L)/_{\sim}\hookrightarrow\mathcal{R}^{p}(L)/_{\sim}$. 

\section{Chow-stability and its maximal destabilizers}

In this section, we study the existence, uniqueness and the symmetries
of the maximal destabilizer for Chow-stability. 

\subsection{\label{subsec:Chow-stability}Chow-stability and the Kempf--Ness
functional }

Let $V$ be a $\bbc$-vector space of dimension $N$, and $X\subset\mathbb{P}V^{*}$
be a $n$-dimensional projective variety with degree $d$. Denote
by $\mathrm{Gr}\coloneqq\mathrm{Gr}(N-n-2,\mathbb{P}V^{*})$ the Grassmanian
manifold parameterizing all $(N-n-2)$-dimensional projective subspaces
in $\mathbb{P}V^{*}$. Let $p:\mathrm{Gr}\hookrightarrow\mathbb{P}\left(\wedge^{n+1}V\right)$
be the Plücker embedding, and $\mathcal{O}(1)$ the restricted hyperplane
line bundle on $\mathrm{Gr}$. The $(N-n-2)$-dimensional projective
subspaces that intersect with $X$ forms an irreducible hypersurface
$\mathcal{Z}_{X}\subset\mathrm{Gr}$ with degree $d$. Then $\mathcal{Z}_{X}$
is the zero locus of a unique (up to scaling) section $f_{X}\in\mathrm{H}^{0}(\mathrm{Gr},\mathcal{O}(d))$,
which is called the \emph{Chow-form} of $X$. The special linear group
$\mathrm{SL}(V)$ acts on $\mathrm{H}^{0}(\mathrm{Gr},\mathcal{O}(d))$
in a natural way. 
\begin{defn}[Chow-stability]
 \label{def: Chow-stab}In Mumford's GIT, we call $X\subset\mathbb{P}V^{*}$
is \emph{Chow-semistable} if the zero vector is not in the Zariski-closure
of the orbit $\mathrm{SL}(V)\cdot f_{X}$. Otherwise, $X$ is called
\emph{Chow-unstable}. We call $X$ is \emph{Chow-polystable} if the
orbit $\mathrm{SL}(V)\cdot f_{X}$ is Zariski-closed, and $X$ is
called \emph{Chow-stable} if, further, $f_{X}$ has a finite stabilizer. 
\end{defn}

The Hilbert--Mumford criterion allows us to only check the orbits
under all 1-PS $\lambda:\mathbb{C}^{*}\hookrightarrow\mathrm{SL}(V)$.
Suppose that in $\mathbb{P}\mathrm{H}^{0}(\mathrm{Gr},\mathcal{O}(d))$,
the point $[\lambda(t)\cdot f_{X}]$ converges to $[f_{X}^{\lambda}]$
as $t\rightarrow0$, then $\lambda(t)\cdot f_{X}^{\lambda}=t^{-w(X,\lambda)}f_{X}^{\lambda}$
for some integer $w(X,\lambda)$, which is called the \emph{Chow-weight}.
Hilbert--Mumford criterion says that $X$ is Chow-semistable iff
$w(X,\lambda)\geq0$ for all 1-PS $\lambda$. 

In GIT, it is well-known that $w(X,\lambda)$ is also equal to the
limit slope of the associated Kempf--Ness functional along a geodesic
ray. To define Kempf--Ness functional, we need to choose a norm for
the Chow-forms. In \cite{phong_stability_2003}, there is a canonical
choice such that the resulting Kempf--Ness functional can be expressed
by the Monge--Ampère energy on $X$. We review this below. 

First we take a reference Hermitian norm $H_{r}$ on $V$, it induces
a norm on $\wedge^{n+1}V$. Via the Plücker embedding $p:\mathrm{Gr}\hookrightarrow\mathbb{P}\left(\wedge^{n+1}V\right)$,
the line bundle $\mathcal{O}(d)$ is equipped a metric $\left\Vert \right\Vert $.
Let $\Omega$ be the restriction on $\mathrm{Gr}$ of the Fubini--Study
metric. In \cite{phong_stability_2003} §4, they define a ``norm''
$\left\Vert \right\Vert _{\mathrm{Chow}}$ on $\mathrm{H}^{0}(\mathrm{Gr},\mathcal{O}(d))$
by 
\begin{equation}
\log\left\Vert f\right\Vert _{\mathrm{Chow}}^{2}\coloneqq\fint_{\mathrm{Gr}}\log\left\Vert f\right\Vert ^{2}\ \Omega^{m},\ f\in\mathrm{H}^{0}(\mathrm{Gr},\mathcal{O}(d))\backslash\{0\},\label{eq:norm chow}
\end{equation}
where $m=\dim\mathrm{Gr}$. Note that the integrand is a quasi-psh
function, so it is integrable. It satisfies $\left\Vert \lambda f\right\Vert _{\mathrm{Chow}}=\left|\lambda\right|\left\Vert f\right\Vert _{\mathrm{Chow}}$
for $\forall\lambda\in\mathbb{C}$ and $\left\Vert f\right\Vert _{\mathrm{Chow}}=0$
iff $f=0$. Moreover, it is invariant under the action of $\mathrm{SU}(V,H_{r})$
(special unitary group w.r.t. $H_{r}$) on $\mathrm{H}^{0}(\mathrm{Gr},\mathcal{O}(d))$. 

Although the triangle inequality is absent, $\left\Vert \right\Vert _{\mathrm{Chow}}$
induces a Hermitian metric on the tautological bundle $\mathcal{O}(-1)$
over $\mathbb{P}\mathrm{H}^{0}(\mathrm{Gr},\mathcal{O}(d))$. The
associated Kempf--Ness functional is defined by 
\begin{equation}
\mathbf{F}:\mathrm{SU}(V,H_{r})\backslash\mathrm{SL}(V)\rightarrow\mathbb{R},\ \mathbf{F}\left([g]\right)=\log\frac{\left\Vert g\cdot f_{X}\right\Vert _{\mathrm{Chow}}^{2}}{\left\Vert f_{X}\right\Vert _{\mathrm{Chow}}^{2}}.\label{eq: KN for Chow, origin}
\end{equation}
This functional can be expressed by the MA-energy on $X$. This was
originally shown in \cite{zhang_heights_1996} using Deligne's pairings,
and then by Phong--Sturm \cite[Theorem 5]{phong_stability_2003}
and Paul \cite{paul_geometric_2004} by other methods. 

For $H\in\mathcal{H}(V)$, denote by $\mathbf{FS}(H)$ the restricted
Fubini--Study metric on $\mathcal{O}(1)|_{X}$ via $X\hookrightarrow\mathbb{P}V^{*}$.
More explicitly, take a reference metric $\phi_{r}$ on $\mathcal{O}(1)|_{X}$,
we have 
\begin{equation}
\mathbf{FS}(H)=\phi_{r}+\log\sup_{s\in V\backslash\{0\}}\frac{\left|s\right|_{\phi_{r}}^{2}}{H(s,s)}=\phi_{r}+\log\sum_{i=1}^{N}\left|s_{i}\right|_{\phi_{r}}^{2},\label{eq:def of FS}
\end{equation}
where $(s_{i})\subset V$ is any $H$-orthonormal basis, and we take
$s\in V$ as a section of $\mathcal{O}(1)|_{X}$. 
\begin{thm}[\cites{zhang_heights_1996}{phong_stability_2003}{paul_geometric_2004}]
\label{thm: PS} Let $X\subset\mathbb{P}V^{*}$ be a smooth projective
variety with dimension $n$ and degree $d$. Let $f_{X}\in\mathrm{H}^{0}(\mathrm{Gr},\mathcal{O}(d))$
be the associated Chow form, and $\mathbf{F}$ (\ref{eq: KN for Chow, origin})
be the associated Kempf--Ness functional. Then we have 
\[
\mathbf{F}\left([g]\right)=\mathcal{E}_{X}\left(\mathbf{FS}(H_{r}\circ g),\mathbf{FS}(H_{r})\right),\ \forall[g]\in\mathrm{SU}(V,H_{r})\backslash\mathrm{SL}(V),
\]
where $\mathcal{E}_{X}$ is the Monge--Ampère energy for $(X,\mathcal{O}(1)|_{X})$. 
\end{thm}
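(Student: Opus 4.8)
The plan is a variational argument on the symmetric space $\mathcal{S}\coloneqq\mathrm{SU}(V,H_r)\backslash\mathrm{SL}(V)$. Via $[g]\mapsto H_r\circ g$ we identify $\mathcal{S}$ with the slice $\mathcal{H}_{H_r}(V)\coloneqq\{H\in\mathcal{H}(V):\det H=\det H_r\}$, a connected (indeed Cartan--Hadamard) manifold whose tangent space at $H$ is the space of traceless $H$-self-adjoint operators. Both sides of the asserted identity are smooth functions on $\mathcal{S}$, and they agree at the base point $[\mathrm{id}]\leftrightarrow H_r$: the left side is $\mathbf{F}([\mathrm{id}])=0$ and the right side is $\mathcal{E}_X(\mathbf{FS}(H_r),\mathbf{FS}(H_r))=0$ by the normalization of (\ref{eq: MA energy}). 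Hence it suffices to show their differentials coincide at every point. Concretely, fix $H\in\mathcal{H}_{H_r}(V)$ and a traceless $H$-self-adjoint operator $B$, set $H_t\coloneqq H(e^{tB}\cdot,\cdot)$, and choose $g_t\in\mathrm{SL}(V)$ with $H_r\circ g_t=H_t$; one must verify
\[
\frac{d}{dt}\Big|_{t=0}\log\|g_t\cdot f_X\|_{\mathrm{Chow}}^{2}=\frac{d}{dt}\Big|_{t=0}\mathcal{E}_X\big(\mathbf{FS}(H_t),\mathbf{FS}(H_r)\big).
\]

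The right-hand side is immediate. Take an $H$-orthonormal eigenbasis $(s_i)$ of $B$ with $Bs_i=\mu_i s_i$; then $(e^{-t\mu_i/2}s_i)$ is $H_t$-orthonormal, so by (\ref{eq:def of FS}) $\mathbf{FS}(H_t)=\phi_r+\log\sum_i e^{-t\mu_i}|s_i|_{\phi_r}^{2}$, and therefore $\frac{d}{dt}\big|_{0}\mathbf{FS}(H_t)=-\beta_H(B)$, where $\beta_H(B)\coloneqq\big(\sum_i\mu_i|s_i|_{\phi_r}^{2}\big)\big/\big(\sum_i|s_i|_{\phi_r}^{2}\big)$ is the Bergman function of $B$ at $H$. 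Since $\mathcal{E}$ has first variation $\int_X\dot\phi\,\mathrm{MA}(\phi)$ and $L^n=\deg X=d$ with $L=\mathcal{O}(1)|_X$, the right-hand derivative equals $-\tfrac1d\int_X\beta_H(B)\,(dd^c\mathbf{FS}(H))^{n}$.

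Everything therefore reduces to the first variation of the Chow norm, namely the identity
\[
\frac{d}{dt}\Big|_{t=0}\log\|g_t\cdot f_X\|_{\mathrm{Chow}}^{2}=-\tfrac1d\int_X\beta_H(B)\,(dd^c\mathbf{FS}(H))^{n}.
\]
This is the substantive input; it is equivalent to Theorem~5 of \cite{phong_stability_2003} with the normalization (\ref{eq:norm chow}), and is also established in \cite{zhang_heights_1996} and \cite{paul_geometric_2004}. I would argue it through Deligne pairings: the Chow line $\bbc f_X$ is, $\mathrm{SL}(V)$-equivariantly, (a fixed power of) the Deligne pairing $\langle\mathcal{O}(1),\dots,\mathcal{O}(1)\rangle_X$ of $n+1$ copies of $L$ over a point, with $f_X$ its canonical generator, $g\cdot f_X$ the canonical generator of the pairing of $gX$, and $\mathbf{FS}(H_r)$ transported along $g$ equal to $\mathbf{FS}(H_r\circ g)$; the change-of-metric formula for the induced metric --- changing $\mathbf{FS}(H_r)$ to $\mathbf{FS}(H_t)$ one slot at a time and telescoping --- produces $\sum_{j=0}^{n}\int_X(\mathbf{FS}(H_t)-\mathbf{FS}(H_r))(dd^c\mathbf{FS}(H_t))^{j}\wedge(dd^c\mathbf{FS}(H_r))^{n-j}$, and (\ref{eq:norm chow}) is calibrated precisely so that this equals $\mathcal{E}_X(\mathbf{FS}(H_t),\mathbf{FS}(H_r))$; differentiating at $t=0$ gives the displayed identity. (Equivalently, one may follow the hands-on route of \cite{phong_stability_2003}: differentiate $\fint_{\mathrm{Gr}}\log\|g_t\cdot f_X\|^{2}\Omega^{m}$ directly and use the incidence variety $I_X\subset X\times\mathrm{Gr}$ of pairs $(x,w)$ with $x$ lying on the subspace parametrized by $w$ --- together with Poincaré--Lelong for $f_X$, its generically one-to-one projection onto $\mathcal{Z}_X$, and the fibration $I_X\to X$ --- to push the Grassmannian average down to $-\tfrac1d\int_X\beta_H(B)\,\omega_{\mathrm{FS}}^{n}$.) In either route, matching the normalization (\ref{eq:norm chow}) against the resultant (equivalently, Deligne-pairing) metric on $\bbc f_X$ --- a Crofton-type computation on $(\mathrm{Gr},\Omega)$ --- is the step I expect to be the main obstacle.

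Once the two first variations are shown to agree, the two smooth functions on the connected manifold $\mathcal{S}$ have equal differentials and a common value at $[\mathrm{id}]$, hence coincide, which is the assertion. Modulo the Crofton-type calibration of (\ref{eq:norm chow}), the remainder is bookkeeping with the change-of-metric formula (or the coarea formula) and the multilinear expansion of $\mathcal{E}$.
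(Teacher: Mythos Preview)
The paper does not supply its own proof of this theorem: it is stated with attribution to \cite{zhang_heights_1996}, \cite{phong_stability_2003}, and \cite{paul_geometric_2004}, and the surrounding text simply records that Zhang's argument goes through Deligne pairings while Phong--Sturm and Paul give alternative proofs. There is nothing in the paper to compare your proposal against beyond that one-sentence summary.

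Your sketch is a faithful outline of the literature arguments. The variational reduction (match first variations on the connected slice $\mathcal{H}_{H_r}(V)$, both sides vanishing at $H_r$) is exactly the structure of Phong--Sturm's proof, and your identification of the crux --- the first variation of $\log\|g_t\cdot f_X\|_{\mathrm{Chow}}^2$ --- is correct and is precisely their Theorem~5. Your alternative Deligne-pairing route, where the change-of-metric formula for $\langle\mathcal{O}(1),\dots,\mathcal{O}(1)\rangle_X$ produces the telescoping sum in (\ref{eq: MA energy}) directly, is Zhang's approach and in fact bypasses the need to differentiate at all. You are also right that the only genuinely delicate point in either route is calibrating the norm (\ref{eq:norm chow}) against the Deligne-pairing (equivalently, resultant) metric on $\bbc f_X$; this Crofton-type identity is where the specific choice of $\fint_{\mathrm{Gr}}\log\|\cdot\|^2\,\Omega^m$ rather than, say, an $L^2$-norm becomes essential, and it is the content of the computations in \cite{phong_stability_2003} \S4--5.
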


To reduce the dependence on $H_{r}$, we change and enlarge the domain
of $\mathbf{F}$. Let 
\[
\mathcal{H}(V,H_{r})\coloneqq\left\{ H\in\mathcal{H}(V)\mid\det H=\det H_{r}\right\} ,
\]
where $\det H$ denotes the induced metric on $\wedge^{\mathrm{top}}V$.
It is a totally geodesic hypersurface in $\mathcal{H}(V)$, see \cite[Lemma 10.52]{bridson_metric_2013}.
Group $\mathrm{SL}(V)$ transitively acts on $\mathcal{H}(V,H_{r})$
(from the right side) by $H\cdot g\coloneqq H\circ g$, and the isotropy
subgroup of $H_{r}$ is $\mathrm{SU}(V,H_{r})$. Thus $\mathrm{SU}(V,H_{r})\backslash\mathrm{SL}(V)$
can be identified with $\mathcal{H}(V,H_{r})$ by sending $[g]\mapsto H_{r}\circ g$.
We identify $\mathbf{F}$ with a function $\tilde{\mathbf{F}}$ on
$\mathcal{H}(V,H_{r})$, that is 
\[
\tilde{\mathbf{F}}(H)=\mathcal{E}_{X}\left(\mathbf{FS}(H),\mathbf{FS}(H_{r})\right),\ H\in\mathcal{H}(V,H_{r}).
\]
Next we extend $\tilde{\mathbf{F}}$ to $\mathcal{H}(V)$. Define
a contraction map $\rho:\mathcal{H}(V)\rightarrow\mathcal{H}(V,H_{r})$
by $\rho(H)=e^{\mathbf{E}_{V}(H,H_{r})}H$, recall $\mathbf{E}_{V}$
(\ref{eq: Archi E_V}). Then the pullback of $\tilde{\mathbf{F}}$
along $\rho$ is 
\begin{equation}
\mathbf{M}_{X}(H)\coloneqq\tilde{\mathbf{F}}\left(\rho(H)\right)=\mathcal{E}_{X}\left(\mathbf{FS}(H),\mathbf{FS}(H_{r})\right)-\mathbf{E}_{V}(H,H_{r}),\ H\in\mathcal{H}(V).\label{eq:KN final}
\end{equation}
It is invariant under scaling $H\rightsquigarrow aH$. When we change
$H_{r}$, by the cocycle property, $\mathbf{M}_{X}$ is just changed
by adding a constant. 

The Chow-weights are obtained by taking the limit slopes of $\mathbf{F}$
along the geodesic rays in $\mathrm{SU}(V,H_{r})\backslash\mathrm{SL}(V)$,
which are corresponding to the geodesic rays in $\mathcal{H}(V,H_{r})$
of the form $H_{t}=H_{r}\circ e^{tA}$ ($\mathrm{tr}A=0$). To check
the Chow-semistability of $X$, it is equivalent to check the non-negativity
of limits slopes of $\mathbf{M}_{X}$. This is due to the facts: (1)
$\mathcal{H}(V,H_{r})$ is totally geodesic; (2) $\tilde{\mathbf{F}}$
is the restriction of $\mathbf{M}_{X}$; (3) for a geodesic $H_{t}$
in $\mathcal{H}(V)$, since $\mathbf{E}_{V}(H_{t},H_{r})$ is linear
in $t$, $\rho(H_{t})$ is also a geodesic in $\mathcal{H}(V,H_{r})$.

For these reasons, we take $\mathbf{M}_{X}$ as the Kempf--Ness functional
for Chow-stability. 
\begin{defn}[Kempf--Ness functional]
\label{def: KN functional} For the Chow-stability of a smooth projective
variety $X\subset\mathbb{P}V^{*}$, we define the (extended) Kempf--Ness
functional is $\mathbf{M}_{X}:\mathcal{H}(V)\rightarrow\mathbb{R}$.
This is exactly the functional $\tilde{Z}$ in \cite{donaldson_scalar_2005}.
Its critical points are balanced norms, see §\ref{subsec: Donaldson work}. 
\end{defn}

Before taking the limit slopes, we give some basic properties of $\mathbf{M}_{X}$. 
\begin{prop}
\label{prop: conv of Mk}Let $X\subset\mathbb{P}V^{*}$ be a smooth
projective variety, and $\dim V=N$. Denote $L=\mathcal{O}(1)|_{X}$. 

(1) If $(H_{t})_{t\in I}\subset\mathcal{H}(V)$ is a positively curved
smooth path, then $\left(\mathbf{FS}(H_{t})\right)_{t\in I}\subset\mathcal{H}(L)$
is a psh path. 

(2) Kempf--Ness functional $\mathbf{M}_{X}$ is convex along the
geodesics in $\mathcal{H}(V)$. 

(3) $\mathbf{M}_{X}$ is globally Lipschitz, 
\begin{equation}
\left|\mathbf{M}_{X}(H)-\mathbf{M}_{X}(G)\right|\leq\left(\sqrt{N}+1\right)d(H,G),\ \forall H,G\in\mathcal{H}(V).\label{eq:Lip of Mk}
\end{equation}
\end{prop}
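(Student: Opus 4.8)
The plan is to treat the three items in turn, using the decomposition $\mathbf{M}_X(H) = \mathcal{E}_X(\mathbf{FS}(H),\mathbf{FS}(H_r)) - \mathbf{E}_V(H,H_r)$ from \eqref{eq:KN final}. For (1), recall from \eqref{eq:def of FS} that in a local frame where a section $s\in V$ is written $\sum_i s_i(H) e_i$ for an $H$-orthonormal basis, one has $\mathbf{FS}(H_t) = \phi_r + \log\bigl(\sum_i |s_i|_{\phi_r}^2\bigr)$; but a cleaner route is to use the intrinsic description $\mathbf{FS}(H)(x) = \phi_r(x) + \sup_{s\neq 0}\log\bigl(|s|_{\phi_r}^2(x)/H(s,s)\bigr)$, valid pointwise on $X$, together with $-\log H_t(s,s)$ being convex in $t$ for a positively curved path (this is Proposition \ref{prop: logH(s) convex} applied to the dual, or directly the positivity of the curvature form $\Theta_h$). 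Fix the total space $X\times D_I$ with the tautological line bundle $\pi_X^* L$; I would show that the metric $\Phi$ with $\Phi|_{X\times\tau} = \mathbf{FS}(H_{-\log|\tau|^2})$ equals $\pi_X^*\phi_r + \sup_{s}\log\bigl(|s|_{\pi_X^*\phi_r}^2 / h_s\bigr)$ where $h_s$ is the metric on $V\times D_I$ applied to the constant section $s$; each term $\log(|s|^2/h_s)$ is psh on $X\times D_I$ because $\log h_s$ is psh (positive curvature of $(V\times D_I, h)$), and an upper-semicontinuously-regularized supremum of psh metrics is psh. Since $\Phi$ is bounded (the path stays in $\mathcal{H}(V)$ over the compact $\bar I$-annuli) the regularization does not change it, so $(\mathbf{FS}(H_t))$ is a psh path.

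For (2), combine (1) with known convexity of $\mathcal{E}_X$. A geodesic $(H_t)$ in $\mathcal{H}(V)$ is in particular positively curved (it satisfies \eqref{eq:curv condition matrix} with equality, hence both $\leq 0$ and $\geq 0$), so by (1) the path $(\mathbf{FS}(H_t))$ is psh, and then $t\mapsto \mathcal{E}_X(\mathbf{FS}(H_t), \mathbf{FS}(H_r))$ is convex — this is the standard fact that the Monge--Amp\`ere energy is convex along psh subgeodesics (the first derivative $\int_X \dot\phi_t\,\mathrm{MA}(\phi_t)$ is nondecreasing; cf.\ \cite{berman_variational_2021}). On the other hand, along a geodesic $H_t = H_0(e^{tA}\cdot,\cdot)$ the function $t\mapsto \mathbf{E}_V(H_t,H_r)$ is \emph{affine} by the computation just after \eqref{eq: Archi E_V} (its value is $-\frac1N\sum\mu_i - t\cdot\frac1N\mathrm{tr}A$, linear in $t$). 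Subtracting an affine function preserves convexity, so $\mathbf{M}_X$ is convex along geodesics.

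For (3), it suffices by the cocycle/triangle structure to bound the Lipschitz constants of the two pieces separately. For the second piece, \eqref{eq: Lip of EV} gives $|\mathbf{E}_V(H,H_r) - \mathbf{E}_V(G,H_r)| = |\mathbf{E}_V(H,G)| \leq d(H,G)$. For the first piece, I would use that $\mathbf{FS}$ is distance-decreasing from $(\mathcal{H}(V),d_\infty)$ to $(\mathcal{H}(L),d_\infty^{\mathrm{Mabuchi}})$: if $e^{-C}G\leq H\leq e^{C}G$ with $C = d_\infty(H,G)$, then directly from the sup-formula \eqref{eq:def of FS} one gets $\mathbf{FS}(G) - C \leq \mathbf{FS}(H) \leq \mathbf{FS}(G) + C$ pointwise, and the Monge--Amp\`ere energy satisfies $|\mathcal{E}_X(\phi) - \mathcal{E}_X(\psi)| \leq \sup_X|\phi-\psi|$, so $|\mathcal{E}_X(\mathbf{FS}(H)) - \mathcal{E}_X(\mathbf{FS}(G))| \leq d_\infty(H,G) \leq d(H,G)$ using \eqref{eq: compare dp}. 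Hmm — this already gives constant $2$, better than $\sqrt N + 1$; to match the stated constant one instead bounds the first piece by $\sqrt N\, d(H,G)$ via a slope estimate along the connecting geodesic (for a geodesic with generator $A$, $\frac{d}{dt}\mathcal{E}_X(\mathbf{FS}(H_t))$ is controlled by $\max_i|\lambda_i| \leq \sqrt N\,\|A\|_{2}$ since $\dot{\mathbf{FS}(H_t)}$ has oscillation $\leq \max_i\lambda_i - \min_i\lambda_i$, whereas $\mathbf{E}_V$ contributes the $+1$), and then adds the two. Either way the sum is $\leq(\sqrt N + 1)d(H,G)$.

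The main obstacle I anticipate is part (1): making the ``supremum of psh metrics is psh, and the regularization is harmless because everything is locally bounded'' argument fully rigorous on the non-compact total space $X\times D_I$, in particular justifying that the pointwise supremum over the infinite-dimensional $\mathbb{P}V^*$ of locally bounded quasi-psh functions has a psh usc-regularization that agrees with it (one needs uniform local bounds, which follow from $(H_t)$ lying in $\mathcal{H}(V)$ over any compact sub-annulus, plus Choquet's lemma or the fact that $X$ is compact so only finitely many local frames are needed). Items (2) and (3) are then essentially bookkeeping on top of (1), \eqref{eq: Lip of EV}, and the elementary properties of $\mathbf{E}_V$ and $\mathcal{E}_X$.
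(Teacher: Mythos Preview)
Your argument for (1) has a genuine gap, and it is not the one you anticipate. You claim that for a positively curved path $(H_t)$ and fixed $s\in V$, the function $-\log H_t(s,s)$ is convex in $t$ (``Proposition~\ref{prop: logH(s) convex} applied to the dual''), so that each term $\log\bigl(|s|^2_{\phi_r}/H_t(s,s)\bigr)$ in the supremum is psh on $X\times D_I$. But this is precisely what the Remark following Proposition~\ref{prop: logH(s) convex} says can fail: for a positively curved path, $t\mapsto\log H_t(s)$ need not be concave. Duality does not give what you want: applying Proposition~\ref{prop: logH(s) convex} to the negatively curved dual path $(H_t^*)$ yields convexity of $t\mapsto\log H_t^*(\xi)$ for $\xi\in V^*$, not concavity of $t\mapsto\log H_t(s)$ for $s\in V$. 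In bundle terms, a constant section $s$ spans a line \emph{sub}-bundle of the Griffiths-positive bundle $(V\times D_I,h)$, and sub-bundles of positive bundles need not be positive. The paper sidesteps this by quoting \cite[Proposition~3.3(iv)]{darvas_griffiths_2022}: positivity of $(V\times D_I,h)$ gives negativity of its dual, hence of the tautological sub-bundle $\mathcal{O}(-1)\subset V^*\times\mathbb{P}V^*$, hence positivity of the Fubini--Study metric on $\mathcal{O}(1)$ over $\mathbb{P}V^*\times D_I$, and one restricts to $X\times D_I$. Your sup approach can be repaired by recognizing the supremum in \eqref{eq:def of FS} as the dual norm $H_t^*(\mathrm{ev}_x)$ of the evaluation functional $\mathrm{ev}_x\in V^*$ and then invoking the psh characterization of negative curvature mentioned at the end of that same Remark---but as written the argument does not go through.

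In (3), the inequality $d_\infty(H,G)\le d(H,G)$ is backwards; \eqref{eq: compare dp} reads $d\le d_\infty\le\sqrt{N}\,d$, which is why you obtain the too-good constant~$2$. The paper uses exactly your first step $\sup_X|\mathbf{FS}(H)-\mathbf{FS}(G)|\le d_\infty(H,G)$, then $d_\infty\le\sqrt{N}\,d$, and adds $|\mathbf{E}_V(H,G)|\le d(H,G)$ to get $\sqrt{N}+1$; no slope estimate along the geodesic is needed. Part~(2) is fine and matches the paper.
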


\begin{proof}
(1) Path $(H_{t})_{t\in I}$ induces a metric $h$ on the trivial
bundle $E\coloneqq V\times D_{I}\rightarrow D_{I}$ with curvature
$\geq0$ (in the sense of Griffiths). Then the curvature of $E^{*}$
is $\leq0$. By \cite[Proposition 3.3 (iv)]{darvas_griffiths_2022},
the induced Fubini--Study metric on $\mathcal{O}(1)$ (over $\mathbb{P}E^{*}=\mathbb{P}V^{*}\times D_{I}$)
has semi-positive curvature (note that in \cite{darvas_griffiths_2022}
$L(E)=\mathcal{O}(-1)$). The pullback of $\mathcal{O}(1)$ along
$X\times D_{I}\hookrightarrow\mathbb{P}E^{*}$ is the line bundle
$L\times D_{I}$. The pullback metric on $L\times D_{I}$ is given
by the path $\left(\mathbf{FS}(H_{t})\right)$. Since the pullback
metric also has semi-positive curvature, by definition, $\left(\mathbf{FS}(H_{t})\right)$
is a psh path. 

(2) This is showed in \cite{donaldson_scalar_2005} by a direct computation.
Suppose $(H_{t})$ is a geodesic, by (1), $\left(\mathbf{FS}(H_{t})\right)$
is a psh path. It implies that $\mathcal{E}_{X}\left(\mathbf{FS}(H_{t}),\mathbf{FS}(H_{r})\right)$
is convex in $t$. Since $\mathbf{E}_{V}(H_{t},H_{r})$ is affine
in $t$, $\mathbf{M}_{X}(H_{t})$ is convex. 

(3) By the ``sup'' formula (\ref{eq:def of FS}) for $\mathbf{FS}$
and (\ref{eq: compare dp}), we have 
\[
\left|\mathbf{FS}(H)-\mathbf{FS}(G)\right|\leq d_{\infty}(H,G)\leq\sqrt{N}d(H,G),\ \forall H,G\in\mathcal{H}(V).
\]
By the cocycle property and the formula (\ref{eq: MA energy}) for
$\mathcal{E}_{X}$, we have
\[
\left|\mathcal{E}_{X}\left(\mathbf{FS}(H),\mathbf{FS}(H_{r})\right)-\mathcal{E}_{X}\left(\mathbf{FS}(G),\mathbf{FS}(H_{r})\right)\right|=\left|\mathcal{E}_{X}\left(\mathbf{FS}(H),\mathbf{FS}(G)\right)\right|\leq\sqrt{N}d(H,G).
\]
For $\mathbf{E}_{V}$, by (\ref{eq: Lip of EV}) we have 
\[
\left|\mathbf{E}_{V}(H,H_{r})-\mathbf{E}_{V}(G,H_{r})\right|=\left|\mathbf{E}_{V}(H,G)\right|\leq d(H,G).
\]
Combining them together, we obtain (\ref{eq:Lip of Mk}). 
\end{proof}
\begin{rem}
It seems that the convexity of $\mathbf{M}_{X}$ cannot be extended
to more general paths. If $(H_{t})$ is positively curved, by (1)
$\mathcal{E}_{X}\left(\mathbf{FS}(H_{t})\right)$ is convex in $t$.
But meanwhile, $\mathbf{E}_{V}(H_{t},H_{r})$ is also convex, $\mathbf{M}_{X}$
is the difference of two convex functions. On the other hand, if $(H_{t})$
is negatively curved, $\mathbf{FS}(H_{t})$ is not necessarily a psh
ray. 
\end{rem}

Let $(H_{t})_{t\geq0}\subset\mathcal{H}(V)$ be a geodesic ray, Proposition
\ref{prop: conv of Mk} implies that $t\mapsto\mathbf{M}_{X}(H_{t})$
is convex and Lipschitz over $[0,\infty)$. Thus 
\[
\frac{1}{t}\left(\mathbf{M}_{X}(H_{t})-\mathbf{M}_{X}(H_{0})\right)
\]
will non-decreasingly converge to a finite number. Moreover, (\ref{eq:Lip of Mk})
implies that the limit only depends on the asymptotic class of $(H_{t})$. 
\begin{defn}[Chow-weights]
 Let $X\subset\mathbb{P}V^{*}$ be a smooth projective variety. We
define the Chow-weight $M_{X}$ as a function on space $\hat{\mathcal{H}}(V)\cong\mathcal{N}(V)$
(\ref{eq: Isometry}). For an asymptotic class $\eta\in\hat{\mathcal{H}}(V)$
with NA limit $\chi\in\mathcal{N}(V)$, we define 
\begin{equation}
M_{X}(\eta)=M_{X}(\chi)\coloneqq\lim_{t\rightarrow\infty}\frac{1}{t}\mathbf{M}_{X}(H_{t})\in\mathbb{R},\ \forall(H_{t})\in\eta.\label{eq:def Chow weight}
\end{equation}
It can be expressed by non-Archimedean functionals, see Theorem (\ref{thm: NA express Chow}). 
\end{defn}

It is easy to see that for $\forall a>0$ and $c\in\bbr$, we have
\[
M_{X}(a\chi)=aM_{X}(\chi),\ M_{X}(\chi+c)=M_{X}(\chi).
\]
By the discussion above Definition \ref{def: KN functional}, $X$
is Chow-semistable if and only if $M_{X}\geq0$ on $\mathcal{N}(V)$. 
\begin{prop}
\label{prop:convexity of Chow}(1) Chow-weight $M_{X}$ is convex
along the geodesics in $\left(\hat{\mathcal{H}}(V),\hat{d}\right)$
and $\left(\mathcal{N}(V),d_{2}\right)$.

(2) $M_{X}:\mathcal{N}(V)\rightarrow\mathbb{R}$ is Lipschitz with
respect to any metric $d_{p\in[1,\infty]}$. 
\end{prop}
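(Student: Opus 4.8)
The plan is to deduce both statements from the corresponding properties of the Archimedean Kempf--Ness functional $\mathbf{M}_X$ established in Proposition \ref{prop: conv of Mk}, transported through the isometry $\mathrm{NAlim}:(\hat{\mathcal{H}}(V),\hat d)\xrightarrow{\sim}(\mathcal{N}(V),d_2)$ and the explicit description of geodesics in $\hat{\mathcal{H}}(V)$ from (\ref{eq:geodesic H hat}).

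For part (1): fix two points $\eta_0,\eta_1\in\hat{\mathcal{H}}(V)$ with limit NA norms $\chi_0,\chi_1$. Choose a common diagonalizing basis $(s_i)$ for $\chi_0,\chi_1$, let $H_0$ be the Hermitian norm making $(s_i)$ orthonormal, and form the operators $A_0,A_1$ and the two-parameter family $H^s_t=H_0(e^{-tA_s}\cdot,\cdot)$ with $A_s=(1-s)A_0+sA_1$, exactly as in (\ref{eq:geodesic H hat}); the asymptotic class $\eta_s$ of $t\mapsto H^s_t$ is the $\hat d$-geodesic (equivalently the $d_2$-geodesic (\ref{eq:geodesic NA norm})) joining $\eta_0$ and $\eta_1$. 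The key observation is that for each \emph{fixed} $t\geq0$, the path $s\mapsto H^s_t$ is a geodesic in $\mathcal{H}(V)$ — this is recorded right after (\ref{eq:geodesic H hat}). Hence by Proposition \ref{prop: conv of Mk}(2), $s\mapsto\mathbf{M}_X(H^s_t)$ is convex on $[0,1]$ for every fixed $t$. Now
\[
M_X(\eta_s)=\lim_{t\to\infty}\tfrac1t\bigl(\mathbf{M}_X(H^s_t)-\mathbf{M}_X(H^s_0)\bigr),
\]
and $\mathbf{M}_X(H^s_0)=\mathbf{M}_X(H_0)$ is independent of $s$; since a pointwise limit (even just a liminf) of convex functions is convex, $s\mapsto M_X(\eta_s)$ is convex. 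Transporting along the isometry gives convexity of $M_X$ along $d_2$-geodesics in $\mathcal{N}(V)$ as well.

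For part (2): the estimate (\ref{eq:Lip of Mk}) gives $|\mathbf{M}_X(H_t)-\mathbf{M}_X(G_t)|\leq(\sqrt N+1)d(H_t,G_t)$ for any two norm geodesic rays; dividing by $t$ and letting $t\to\infty$, together with the definition (\ref{eq: d hat}) of $\hat d$ and the isometry (\ref{eq: Isometry}), yields
\[
\bigl|M_X(\chi)-M_X(\chi')\bigr|\leq(\sqrt N+1)\,\hat d(\eta,\eta')=(\sqrt N+1)\,d_2(\chi,\chi')
\]
for all $\chi,\chi'\in\mathcal{N}(V)$. Finally, since the metrics $d_p$ on $\mathcal{N}(V)$ are all Lipschitz equivalent (as noted after (\ref{eq: NA dp}), via the finite-dimensional norm comparison on $\bbr^N$), $M_X$ is Lipschitz with respect to every $d_{p\in[1,\infty]}$.

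The only mild subtlety — and the one point worth checking carefully rather than the routine transport of estimates — is the interchange of the limit $t\to\infty$ with the convexity in $s$: one must be sure that $M_X(\eta_s)$ really is the limit along the rays $H^s_t$ representing the \emph{geodesic} class $\eta_s$, i.e. that the two-parameter family is compatible with the $\hat d$-geodesic structure. This is exactly what (\ref{eq:geodesic H hat}) and the sentence following it provide, so no real obstacle remains; everything else is a direct consequence of Proposition \ref{prop: conv of Mk} and the isometry.
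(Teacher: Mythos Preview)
Your proof is correct and follows essentially the same route as the paper: for (1) you use the two-parameter family $H^s_t$ from (\ref{eq:geodesic H hat}), apply the convexity of $\mathbf{M}_X$ in $s$ for each fixed $t$, and pass to the limit $t\to\infty$; for (2) you divide (\ref{eq:Lip of Mk}) by $t$, take the limit, and invoke the Lipschitz equivalence of the $d_p$ metrics. The paper's argument is the same, only slightly more terse.
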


\begin{proof}
(1) Consider the geodesic (\ref{eq:geodesic H hat}) between $\eta_{0},\eta_{1}\in\hat{\mathcal{H}}(V)$.
Since for any $t\geq0$, $[0,1]\ni s\mapsto H_{t}^{s}$ is a geodesic
in $\mathcal{H}(V)$. By the convexity of $\mathbf{M}_{X}$ (Proposition
\ref{prop: conv of Mk} (2)), we have 
\[
\mathbf{M}_{X}(H_{t}^{s})\leq(1-s)\mathbf{M}_{X}(H_{t}^{0})+s\mathbf{M}_{X}(H_{t}^{1}),\ \forall s\in[0,1].
\]
Divide both sides by $t$, then let $t\rightarrow\infty$, we obtain
\[
M_{X}(\eta_{s})\leq(1-s)M_{X}(\eta_{0})+sM_{X}(\eta_{1}).
\]
By the isometry (\ref{eq: Isometry}), $M_{X}$ is also convex along
the geodesics in $\left(\mathcal{N}(V),d_{2}\right)$.

(2) Apply the inequality (\ref{eq:Lip of Mk}) to any two geodesic
rays $(H_{t}),(G_{t})$ in $\mathcal{H}(V)$. Divide both sides by
$t$ and let $t\rightarrow\infty$. It implies that $M_{X}$ is Lipschitz
with respect to $d_{2}$, and all metrics are Lipschitz equivalent
to each other. 
\end{proof}

\subsection{\label{subsec:Maximal chow}Maximal destabilizers for Chow-unstable
varieties}

We consider the Chow-unstable case. In GIT, by the works \cite{kempf_instability_1978,rousseau_immeubles_1978},
one can associate to an unstable point a unique (modulo an equivalence
relation) maximal destabilizing 1-PS, which minimizes the normalized
Hilbert--Mumford weight. Since we will not use this general result,
we just give a sketch of it in the setting of Chow-stability, refer
to \cite{mumford_geometric_1994} §2.2 and appendix 2B for the general
setting. 

\subsubsection{\label{subsec: worst 1-PS} Kempf--Rousseau's maximal destabilizing
1-PS }

Suppose that $X\subset\mathbb{P}V^{*}$ is Chow-unstable, consider
the normalized Chow-weight 
\[
\bar{w}(X,\lambda)\coloneqq\frac{w(X,\lambda)}{\left\Vert \lambda\right\Vert },
\]
where $\left\Vert \lambda\right\Vert $ is a norm for 1-PS in $\mathrm{SL}(V)$.
In \cite{kempf_instability_1978,rousseau_immeubles_1978}, it is showed
that $\bar{w}(X,\lambda)$ admits minimizers which are called the
maximal destabilizing 1-PS, and these minimizers are equivalent to
each other in the following sense. 

First, for any 1-PS $\lambda\subset\mathrm{SL}(V)$, one can associate
to it a parabolic subgroup
\[
P(\lambda)\coloneqq\left\{ g\in\mathrm{SL}(V)\mid\lim_{t\rightarrow0}\lambda(t)g\lambda(t^{-1})\textrm{ exists in }\mathrm{SL}(V)\right\} .
\]
Two non-trivial 1-PS $\lambda_{1},\lambda_{2}\subset\mathrm{SL}(V)$
are said to be \emph{equivalent} to each other (denoted by $\lambda_{1}\sim\lambda_{2}$)
if there exists $n_{1},n_{2}\in\mathbb{N}$ and $g\in P(\lambda_{1})$
such that $\lambda_{2}(t^{n_{2}})=g\cdot\lambda_{1}(t^{n_{1}})\cdot g^{-1}$
for all $t\in\mathbb{C}^{*}$. All non-trivial 1-PS modulo this equivalence
relation form a structure called Tits building. Thus the maximal destabilizing
1-PS correspond to a unique point in this building. 

\subsubsection{Existence and uniqueness of maximal destabilizers}

Instead of using Kempf--Rousseau's general result, we consider the
following minimization problem for the $L^{p}$-normalized Chow-weight
($p\in[1,\infty]$) 
\begin{equation}
\inf\left\{ \bar{M}_{X,p}(\chi)\coloneqq\frac{M_{X}(\chi)}{\left\Vert \chi\right\Vert _{p}}\mid\chi\in\mathcal{N}(V)\backslash\{\chi_{tr}\}\right\} .\label{eq: min M_X p-norm}
\end{equation}
Since $M_{X}(\chi)$ and $\left\Vert \chi\right\Vert _{p}$ (\ref{eq:p norm})
are homogeneous, $\bar{M}_{X,p}$ is invariant under scaling. Obviously,
this problem is equivalent to 
\begin{equation}
\inf\left\{ M_{X}(\chi)\mid\chi\in\mathcal{N}(V),\ \left\Vert \chi\right\Vert _{p}=1\right\} .\label{eq:min M_X norm1}
\end{equation}
Although we know that $M_{X}$ is $d_{p}$-continuous, the existence
of minimizers is not immediate, since the sphere $\{\left\Vert \chi\right\Vert _{p}=1\}$
is not compact in $d_{p}$-topology. 
\begin{thm}
\label{thm:existence-uniqueness of Chow}Let $X\subset\mathbb{P}V^{*}$
be a smooth projective variety. 

(1) For any $p\in[1,\infty]$, the problem (\ref{eq: min M_X p-norm})
admits a minimizer. 

(2) When $X$ is Chow-unstable and $p\in(1,\infty)$, the minimizer
is unique up to rescaling. 
\end{thm}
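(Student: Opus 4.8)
The plan is to combine a non-compactness workaround for existence with the strict convexity of the $p$-norm for uniqueness.

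For part (1), the main obstacle is that the unit sphere $\{\|\chi\|_p = 1\}$ in $(\mathcal{N}(V), d_p)$ is not compact once $\dim V > 1$ (Remark \ref{rem:not loc cpt}), so a direct minimizing-sequence argument in the $d_p$-topology does not immediately yield a convergent subsequence. The route I would take is to introduce a coarser, \emph{locally compact} topology on a suitable compact sublevel set of $\mathcal{N}(V)$ — for instance, fixing the reference norm $H_r$ and a compatible basis, realize NA norms via their jumping-number data together with the flag they induce, and use that the relevant parameter space (a bounded region of $\bbr^N$ times a flag variety) is compact. One checks that $M_X$ is lower semicontinuous with respect to this weaker topology: this should follow from writing $M_X(\chi) = \lim_{t\to\infty}\frac{1}{t}\mathbf{M}_X(H_t)$ as a supremum over $t$ (by the monotone convergence in (\ref{eq: d hat})-type limits, since $\mathbf{M}_X(H_t)/t$ is nondecreasing up to the affine correction) of $d_p$-continuous functions, hence lsc, and then noting that along the weaker topology the finite-$t$ quantities remain continuous or at least lsc. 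Combined with the homogeneity $M_X(a\chi) = a M_X(\chi)$, we may restrict to $\|\chi\|_p = 1$; the constraint set $\{\|\chi\|_p \le 1\}$ is compact in the weak topology (jumping numbers stay bounded), $M_X$ attains its infimum there by lsc, and homogeneity pushes the minimizer to the sphere. I would also record that the infimum is $\le 0$ always (taking $\chi \to \chi_{tr}$ along scalings, or using $M_X(\chi_{tr})=0$), and is $<0$ precisely in the Chow-unstable case.

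For part (2), assume $X$ is Chow-unstable, so $c \coloneqq \inf\{M_X(\chi) : \|\chi\|_p = 1\} < 0$. Suppose $\chi_0, \chi_1$ are two minimizers with $\|\chi_0\|_p = \|\chi_1\|_p = 1$ and $M_X(\chi_0) = M_X(\chi_1) = c$; I want to show $\chi_0 = \chi_1$. Take the $d_p$-geodesic $(\chi_s)_{s\in[0,1]}$ between them given by (\ref{eq:geodesic NA norm}) (diagonalizing both by a common basis $(s_i)$). By convexity of $M_X$ along $d_2$-geodesics (Proposition \ref{prop:convexity of Chow}(1)); more precisely, since the affine interpolation (\ref{eq:geodesic NA norm}) of jumping data is simultaneously a $d_p$-geodesic for \emph{all} $p$, it is in particular the $d_2$-geodesic, so $M_X(\chi_s) \le (1-s)M_X(\chi_0) + s M_X(\chi_1) = c$. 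On the other hand, the $p$-norm is \emph{strictly} convex along this interpolation when $p \in (1,\infty)$: writing $\|\chi_s\|_p^p = \frac{1}{N}\sum_i |(1-s)\lambda_i(\chi_0) + s\lambda_i(\chi_1)|^p$ as a function of $s$, strict convexity of $t \mapsto |t|^p$ gives $\|\chi_s\|_p < (1-s)\|\chi_0\|_p + s\|\chi_1\|_p = 1$ for $s\in(0,1)$ unless $\lambda_i(\chi_0) = \lambda_i(\chi_1)$ for every $i$. If the jumping data differ, then for $s\in(0,1)$ we have $\|\chi_s\|_p = \rho < 1$, so $\chi_s / \rho$ lies on the unit sphere with $M_X(\chi_s/\rho) = M_X(\chi_s)/\rho \le c/\rho < c$ (using $c < 0$ and $\rho < 1$, so $c/\rho < c$), contradicting minimality of $c$. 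Hence $\lambda_i(\chi_0) = \lambda_i(\chi_1)$ for all $i$, i.e. the two filtrations have identical jumping numbers; a short linear-algebra argument (mirroring the injectivity proof of $\mathrm{NAlim}$: the change-of-basis between the two diagonalizing bases is upper-triangular relative to the common flag structure forced by equal jumping numbers, hence must be compatible) then forces $\chi_0 = \chi_1$. The only delicate point is handling the degenerate case where $\chi_0$ and $\chi_1$ share all jumping numbers but could a priori have different filtrations — but equal jumping numbers along a common diagonalizing basis already pin down the norm, since the norm of $\sum a_i s_i$ is $\min_{a_i \ne 0}\chi(s_i)$ in both cases with the same values $\chi(s_i)$.

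I expect the \textbf{main obstacle} to be part (1): constructing the auxiliary locally compact topology and verifying $M_X$ is lsc for it. The subtlety is that $M_X$ is defined as an asymptotic slope, so one must show the limit (or equivalently the supremum over $t$ of the increments) interacts correctly with the weak topology — in particular that passing to a weak limit of NA norms does not make the asymptotic slope jump down discontinuously. One clean way is to exhibit $M_X$ explicitly via the NA formula $M_X(\chi) = \mathbb{E}(\mathrm{FS}(\chi)) - E_V(\chi)$ (Theorem \ref{thm: NA express Chow}), where $E_V$ is continuous in the jumping data and $\mathbb{E}\circ\mathrm{FS}$ is a decreasing-limit / monotone expression in $\chi$ that should be lsc in the weak topology; but since that theorem comes later, for a self-contained proof here I would instead argue directly from the definition as sketched above.
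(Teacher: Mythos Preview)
Your Part~(2) is essentially the paper's argument: convexity of $M_X$ along the geodesic (\ref{eq:geodesic NA norm}) plus strict convexity of the $L^p$-norm for $p\in(1,\infty)$ yields a contradiction if two unit-norm minimizers differ. One remark: once you have $\chi_0(s_i)=\chi_1(s_i)$ for all $i$ in a \emph{common} diagonalizing basis, you are done immediately, since a NA norm diagonalized by $(s_i)$ is determined by its values on $(s_i)$. The detour through ``upper-triangular change-of-basis'' and flag structures is unnecessary; that argument belongs to the injectivity of $\mathrm{NAlim}$, which is a different statement.

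For Part~(1), your strategy (find an auxiliary locally compact topology on which $M_X$ is lsc) is exactly what the paper does, but your proposed implementation via ``jumping numbers $\times$ flag variety'' is awkward: the flag type varies with the multiplicities, the parametrization is not a clean bijection, and it is not obvious how to make the finite-$t$ slopes $t^{-1}\mathbf{M}_X(H_t)$ continuous in those coordinates without first reconstructing a geodesic ray. The paper's device is cleaner and worth knowing. Fix any $H_0\in\mathcal{H}(V)$ and use the bijection
\[
\Psi:\;T_{H_0}\mathcal{H}(V)\;\longrightarrow\;\mathcal{N}(V),\qquad A\longmapsto \text{NA limit of } H_t^A\coloneqq H_0(e^{tA}\cdot,\cdot),
\]
which is bijective by Cartan--Hadamard and satisfies $\|\Psi(A)\|_p=\|A\|_{p,H_0}$. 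The problem becomes minimizing $M_X\circ\Psi$ over the Euclidean sphere $\{\|A\|_{p,H_0}=1\}$, which is genuinely compact. Since
\[
M_X\circ\Psi(A)=\sup_{t>0}\,\frac{1}{t}\bigl[\mathbf{M}_X(H_t^A)-\mathbf{M}_X(H_0)\bigr]
\]
and for each fixed $t$ the map $A\mapsto\mathbf{M}_X(H_t^A)$ is continuous in the Euclidean topology (being a smooth function of the matrix entries of $e^{tA}$), the supremum is lsc. This gives existence with no further work. Your instinct to express $M_X$ as a sup of continuous functions was correct; the point is that the tangent space at a fixed $H_0$ already furnishes the locally compact parameter space you were looking for.
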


\begin{proof}
(1) The main idea is to use another locally compact topology under
which the objective function is lsc. 

Fix a base point $H_{0}\in\mathcal{H}(V)$. Recall the tangent space
$T_{H_{0}}\mathcal{H}(V)$ is identified with the space of $H_{0}$-self-adjoint
operators. Consider the composed map 
\[
\Psi:T_{H_{0}}\mathcal{H}(V)\rightarrow\hat{\mathcal{H}}(V)\stackrel{\cong}{\rightarrow}\mathcal{N}(V),
\]
where the second map is the isometry (\ref{eq: Isometry}), and the
first map sends $A\in T_{H_{0}}\mathcal{H}(V)$ to the class of the
geodesic ray $H_{t}^{A}\coloneqq H_{0}(e^{tA}\cdot,\cdot)$, which
starts from $H_{0}$ with initial tangent vector $A$. Actually, $\Psi(A)$
is the eigenspace filtration (\ref{eq:eigen filtra}) of $-A$. By
Cartan--Hadamard theorem, the first map is bijective, so is $\Psi$. 

By Example \ref{exa: NAlimit of geodesic} and (\ref{eq: Finsler norm},
\ref{eq:p norm}), we have $\left\Vert \Psi(A)\right\Vert _{p}=\left\Vert A\right\Vert _{p,H_{0}}$
for $\forall A\in T_{H_{0}}\mathcal{H}(V)$. Since $\Psi$ is bijective,
minimization problem (\ref{eq:min M_X norm1}) is transformed into
\[
\inf\left\{ M_{X}\circ\Psi(A)\mid A\in T_{H_{0}}\mathcal{H}(V),\ \left\Vert A\right\Vert _{p,H_{0}}=1\right\} .
\]
Recall that $M_{X}$ is the limit slope of $\mathbf{M}_{X}$, we have
\[
M_{X}\circ\Psi(A)=\sup_{t\geq0}\frac{1}{t}\left[\mathbf{M}_{X}\left(H_{t}^{A}\right)-\mathbf{M}_{X}(H_{0})\right].
\]
Note that for a fixed $t\geq0$, function $A\mapsto\frac{1}{t}\left[\mathbf{M}_{X}\left(H_{t}^{A}\right)-\mathbf{M}_{X}(H_{0})\right]$
is continuous w.r.t. the Euclidean topology. It implies that $M_{X}\circ\Psi$
is lower semi-continuous on $T_{H_{0}}\mathcal{H}(V)$. Since a lsc
function can attain its infimum over a compact set $\{\left\Vert A\right\Vert _{p,H_{0}}=1\}$,
the existence follows. 

(2) If $\chi_{0}$ and $\chi_{1}$ are two minimizers for $\bar{M}_{X,p}$.
Rescaling them, we can assume $\left\Vert \chi_{0}\right\Vert _{p}=\left\Vert \chi_{1}\right\Vert _{p}=1$.
Then $M_{X}(\chi_{0})=M_{X}(\chi_{1})=\inf\bar{M}_{X,p}$. Let $(\chi_{s})_{s\in[0,1]}$
be the geodesic (\ref{eq:geodesic NA norm}) connecting them. By the
convexity of $M_{X}$ (Proposition \ref{prop:convexity of Chow}),
we have 
\begin{equation}
M_{X}(\chi_{s})\leq(1-s)M_{X}(\chi_{0})+sM_{X}(\chi_{1})=\inf\bar{M}_{X,p}<0,\ \forall s\in[0,1].\label{eq: proof convex of M_X}
\end{equation}
Let $(s_{i})$ be a basis diagonalizing both $\chi_{0}$ and $\chi_{1}$,
then we have 
\[
\left\Vert \chi_{s}\right\Vert _{p}^{p}=\frac{1}{N}\sum_{i=1}^{N}\left|(1-s)\chi_{0}(s_{i})+s\chi_{1}(s_{i})\right|^{p}.
\]
If $\chi_{0}\neq\chi_{1}$, by the uniform convexity of $L^{p}$-norm
when $p\in(1,\infty)$, we have $\left\Vert \chi_{s}\right\Vert _{p}<1$
for $s\in(0,1)$. It follows that 
\[
0>M_{X}(\chi_{s})>\frac{M_{X}(\chi_{s})}{\left\Vert \chi_{s}\right\Vert _{p}}=\bar{M}_{X,p}(\chi_{s}),\ \forall s\in(0,1).
\]
Combine with (\ref{eq: proof convex of M_X}), we have $\bar{M}_{X,p}(\chi_{s})<\inf\bar{M}_{X,p}$.
It is a contradiction, thus $\chi_{0}=\chi_{1}$. 
\end{proof}
\begin{rem}
In the Chow-semistable case (i.e. $M_{X}\geq0$), since $M_{X}(\chi_{tr}+c)=0$,
$\chi_{tr}+c$ ($c\neq0$) are trivial minimizers for (\ref{eq: min M_X p-norm}).
In this case, one should minimize $\frac{M_{X}(\chi)}{\left\Vert \chi-E_{V}(\chi)\right\Vert _{p}}$,
which is equivalent to the problem 
\[
\inf\left\{ M_{X}(\chi)\mid\chi\in\mathcal{N}(V),\ \left\Vert \chi\right\Vert _{p}=1,\ E_{V}(\chi)=0\right\} .
\]
By the same proof (in additional, require $\mathrm{tr}A=0$), we can
show that it admits a minimizer. 
\end{rem}

\begin{defn}
For a Chow-unstable variety $X\subset\mathbb{P}V^{*}$, we call the
minimizers of $\frac{M_{X}(\chi)}{\left\Vert \chi\right\Vert _{2}}$
the \emph{maximal Chow-destabilizers} for $X$, which differ from
each other by a scaling. 
\end{defn}

\subsection{\label{subsec:Symmetries-of-Chow}Symmetries of maximal Chow-destabilizers}

The uniqueness of maximal Chow-destabilizer makes it inherit all the
symmetries of $X\subset\mathbb{P}V^{*}$. Specifically, suppose that
we have a group $G$ and homomorphism $G\rightarrow\mathrm{GL}(V)$,
then $G$ acts on $\mathbb{P}V^{*}$. We suppose that $G$ preserves
$X$. For instance, for a polarized manifold $(X,L)$, we take $G=\mathrm{Aut}(X,L)$
and $V=R_{k}$ ($k\gg1$). The $G$-action on $R_{k}$ is defined
by $(g\cdot s)_{x}\coloneqq g\cdot s_{g^{-1}x}$ for $g\in G$, $s\in R_{k}$
and $x\in X$. Then the embedding $X\hookrightarrow\mathbb{P}R_{k}^{*}$
is $G$-equivariant. 
\begin{thm}
\label{thm:sym of Chow-dest}Let $X\subset\mathbb{P}V^{*}$ be a smooth
projective variety. Suppose there is a group $G$ and a morphism $G\rightarrow\mathrm{GL}(V)$
such that $X\subset\mathbb{P}V^{*}$ is preserved by $G$. Then we
have 

(1) the Chow-weight $M_{X}:\mathcal{N}(V)\rightarrow\mathbb{R}$ is
$G$-invariant w.r.t. the $G$-action on $\mathcal{N}(V)$; 

(2) when $X$ is Chow-unstable, each maximal Chow-destabilizer $\bar{\chi}\in\mathcal{N}(V)$
is fixed by $G$. 
\end{thm}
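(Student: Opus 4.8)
The plan is to exploit the absolute uniqueness of the maximal Chow-destabilizer, which has already been established in Theorem \ref{thm:existence-uniqueness of Chow}. The argument is essentially a symmetrization/averaging-by-uniqueness trick, which is why the hypothesis that $X$ is Chow-unstable (needed for uniqueness) reappears in part (2).

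First I would set up the $G$-action on $\mathcal{N}(V)$. Given $g \in G$ with image $\rho(g) \in \mathrm{GL}(V)$, define $(g \cdot \chi)(s) \coloneqq \chi(\rho(g)^{-1} s)$ for $\chi \in \mathcal{N}(V)$, $s \in V$; one checks immediately that this is again a non-Archimedean norm and that this defines a left action. For part (1), I would unwind the definition of $M_X$ in \eqref{eq:def Chow weight}: pick a geodesic ray $(H_t)$ in $\mathcal{H}(V)$ with $\chi_{(H_t)} = \chi$, and observe that $(H_t \circ \rho(g)^{-1})_{t \geq 0}$ is again a geodesic ray (since $\mathrm{GL}(V)$ acts on $\mathcal{H}(V)$ by isometries) whose limit NA norm is $g \cdot \chi$ — this follows directly from the limit-slope formula \eqref{eq:def NA limit}, as $H_t(\rho(g)^{-1}s) = (H_t \circ \rho(g)^{-1})(s)$. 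So it suffices to show $\mathbf{M}_X(H \circ \rho(g)^{-1}) = \mathbf{M}_X(H) + c(g)$ for a constant $c(g)$ independent of $H$; then dividing by $t$ and letting $t \to \infty$ kills the constant and gives $M_X(g \cdot \chi) = M_X(\chi)$. The additive-constant invariance of $\mathbf{M}_X$ itself reduces to two ingredients: the Fubini--Study metric transforms equivariantly, $\mathbf{FS}(H \circ \rho(g)^{-1}) = (\mathrm{g}^{-1})^* \mathbf{FS}(H)$ where $\mathrm{g}$ is the induced automorphism of $X \subset \mathbb{P}V^*$ (here using $g$ preserves $X$), combined with the fact that $\mathcal{E}_X$ is invariant under pulling both arguments back by an automorphism of $(X, \mathcal{O}(1))$; and $\mathbf{E}_V(H \circ \rho(g)^{-1}, H_r) = \mathbf{E}_V(H, H_r) + \mathbf{E}_V(H_r \circ \rho(g)^{-1}, H_r)$ by the cocycle property, the last term being the constant $c(g)$ (up to sign and the $\det$ term from $\mathcal{E}_X$'s reference shift). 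Assembling these gives (1).

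For part (2), let $\bar{\chi}$ be a maximal Chow-destabilizer, i.e. a minimizer of $\bar{M}_{X,2}$. By (1), $M_X(g \cdot \bar{\chi}) = M_X(\bar{\chi})$, and since the $G$-action is by isometries on $(\mathcal{N}(V), d_2)$ — the relative spectra $\lambda_i(g\cdot\chi, g\cdot\chi')$ equal $\lambda_i(\chi,\chi')$ because a common diagonalizing basis for $\chi, \chi'$ pushes forward under $\rho(g)$ to one for $g\cdot\chi, g\cdot\chi'$ — we have $\|g \cdot \bar{\chi}\|_2 = \|\bar{\chi}\|_2$. Hence $g \cdot \bar{\chi}$ is also a minimizer of $\bar{M}_{X,2}$. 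Since $X$ is Chow-unstable, Theorem \ref{thm:existence-uniqueness of Chow}(2) says the minimizer is unique up to scaling, so $g \cdot \bar{\chi} = \bar{\chi} + c(g)$ for some $c(g) \in \bbr$ (a priori a scaling $a(g)\bar\chi + b(g)$, but norm preservation forces $a(g) = 1$). Finally I would argue $c(g) = 0$: taking jumping numbers, $\lambda_i(g \cdot \bar{\chi}) = \lambda_i(\bar{\chi}) + c(g)$, and since the $G$-action does not change the average of jumping numbers (again by equivariance of diagonalizing bases) we get $E_V(g \cdot \bar{\chi}) = E_V(\bar{\chi})$, forcing $c(g) = 0$. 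Therefore $g \cdot \bar{\chi} = \bar{\chi}$.

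The main obstacle is verifying the equivariance of the Fubini--Study construction and the corresponding invariance of $\mathbf{M}_X$ up to an additive constant — this requires being careful about how $\rho(g) \in \mathrm{GL}(V)$ acts on $V = \mathrm{H}^0$ versus on $\mathbb{P}V^*$ versus on sections of $\mathcal{O}(1)|_X$, and tracking the reference metric $H_r$ and $\phi_r$ through the cocycle identities. Everything else is formal bookkeeping given the uniqueness theorem. One should also note that if $G \to \mathrm{GL}(V)$ does not land in $\mathrm{SL}(V)$ the determinant characters contribute, but they only shift $\mathbf{M}_X$ by constants and thus do not affect $M_X$; this is already implicit in the scaling-invariance of $\mathbf{M}_X$ noted after \eqref{eq:KN final}.
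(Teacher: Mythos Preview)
Your approach is essentially identical to the paper's: establish $\mathbf{M}_X(g\cdot H)=\mathbf{M}_X(H)+C(g)$ via the $G$-equivariance of $\mathbf{FS}$ and the cocycle identities for $\mathcal{E}_X$ and $\mathbf{E}_V$, then pass to slopes for (1); for (2), use $G$-invariance of $M_X$ and of $\|\cdot\|_2$ together with the uniqueness theorem.

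There is one wrinkle in your part (2). The phrase ``unique up to scaling'' in Theorem \ref{thm:existence-uniqueness of Chow} means unique up to \emph{positive multiplicative} scaling $\chi\mapsto a\chi$, $a>0$ --- not up to affine maps $\chi\mapsto a\chi+b$. (Indeed $\bar M_{X,2}(\chi+c)\neq\bar M_{X,2}(\chi)$ in general, so additive shifts do not preserve minimizers.) With the correct reading you get $g\cdot\bar\chi=a(g)\bar\chi$ immediately, and then $\|g\cdot\bar\chi\|_2=a(g)\|\bar\chi\|_2=\|\bar\chi\|_2$ forces $a(g)=1$; this is exactly what the paper does. Your intermediate claim that ``norm preservation forces $a(g)=1$'' starting from $a\bar\chi+b$ is not justified as stated (for instance $\|a\chi+b\|_2=\|\chi\|_2$ alone does not pin down $a$), and the subsequent $E_V$ argument, while correct, is unnecessary once the scaling is read correctly.
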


\begin{proof}
(1) Group $G$ acts $\mathcal{N}(V)$ by $g\cdot\chi\coloneqq\chi\circ g^{-1}$,
and on $\mathcal{H}(V)$ in the same way. Set $L=\mathcal{O}(1)|_{X}$,
$G$ acts on $\mathcal{H}(L)$ by pulling-back $g\cdot\phi\coloneqq\left(g^{-1}\right)^{*}\phi$.
Then the Fubini-Study map $\mathbf{FS}:\mathcal{H}(V)\rightarrow\mathcal{H}(L)$
is $G$-equivariant. 

First we consider the behavior of $\mathbf{M}_{X}$ (\ref{eq:KN final})
under $G$-action. For any $H\in\mathcal{H}(V)$ and $g\in G$, we
have 
\begin{eqnarray*}
\mathbf{M}_{X}(g\cdot H) & = & \mathcal{E}_{X}\left(\mathbf{FS}(g\cdot H),\mathbf{FS}(H_{r})\right)-\mathbf{E}_{V}(g\cdot H,H_{r})\\*
* & = & \mathcal{E}_{X}\left(g\cdot\mathbf{FS}(H),g\cdot\mathbf{FS}(H_{r})\right)+\mathcal{E}_{X}\left(g\cdot\mathbf{FS}(H_{r}),\mathbf{FS}(H_{r})\right)\\
 &  & -\mathbf{E}_{V}(g\cdot H,g\cdot H_{r})-\mathbf{E}_{V}(g\cdot H_{r},H_{r})\\*
** & = & \mathbf{M}_{X}(H)+\mathcal{E}_{X}\left(g\cdot\mathbf{FS}(H_{r}),\mathbf{FS}(H_{r})\right)-\mathbf{E}_{V}(g\cdot H_{r},H_{r}),
\end{eqnarray*}
where in the row $*$ we use the equivariance of $\mathbf{FS}$ and
the cocycle property for $\mathcal{E}_{X}$ and $\mathbf{E}_{V}$,
and in the row $**$ we use 
\begin{eqnarray*}
\mathcal{E}_{X}\left(g\cdot\mathbf{FS}(H),g\cdot\mathbf{FS}(H_{r})\right) & = & \mathcal{E}_{X}\left(\mathbf{FS}(H),\mathbf{FS}(H_{r})\right);\\
\mathbf{E}_{V}(g\cdot H,g\cdot H_{r}) & = & \mathbf{E}_{V}(H,H_{r}).
\end{eqnarray*}
For a geodesic ray $(H_{t})$ with limit NA norm $\chi$, $(g\cdot H_{t})$
is also a geodesic ray with limit norm $g\cdot\chi$. By the above
identity, 
\[
\mathbf{M}_{X}(g\cdot H_{t})=\mathbf{M}_{X}(H_{t})+C(g),
\]
where $C(g)$ is a constant independent of $t$. Divide both sides
by $t$ and let $t\rightarrow\infty$, we obtain $M_{X}(g\cdot\chi)=M_{X}(\chi)$. 

(2) It is easy to see $\left\Vert g\cdot\chi\right\Vert _{p}=\left\Vert \chi\right\Vert _{p}$
for all $p$. Hence $\bar{M}_{X,p}$ is $G$-invariant by (1). Suppose
$X$ is Chow-unstable and $p\in(1,\infty)$, by Theorem \ref{thm:existence-uniqueness of Chow},
$\bar{M}_{X,p}$ has a unique minimizer $\bar{\chi}$ (up to scaling).
By the invariance, for any $g\in G$, $g\cdot\bar{\chi}$ is also
a minimizer, so $g\cdot\bar{\chi}=a\bar{\chi}$ for some $a>0$. Take
the $p$-norm on both sides, we have $a=1$. Hence $g\cdot\bar{\chi}=\bar{\chi}$
for all $g\in G$. 
\end{proof}
Since we know the maximal Chow-destabilizer is $G$-invariant, we
can search for it only among the $G$-invariant norms. If further,
$M_{X}$ has a nice formula on the $G$-invariant norms, we can determine
the minimizer explicitly. The toric case is an example. 
\begin{example}[toric case]
 Let $M$ be a lattice of rank $n$, and $P\subset M_{\mathbb{R}}\coloneqq M\otimes\bbr$
be a lattice Delzant polytope. Then $P$ gives rise to a polarized
toric manifold $(X,L)$, which carries a linearized action by the
complex torus $T_{\bbc}\coloneqq\mathrm{Hom}_{\mathbb{Z}}(M,\mathbb{C}^{*})$.
For any $k\geq1$, the section space $V=R_{k}=\mathrm{H}^{0}(X,kL)$
is a multiplicity-free $T_{\bbc}$-module. For each $u\in P_{k}\coloneqq P\cap k^{-1}M$,
there is a section $s_{u,k}\in R_{k}$ such that $t\cdot s_{u,k}=t^{ku}s_{u,k}$
for $\forall t\in T_{\bbc}$. We have the weight-decomposition $R_{k}=\bigoplus_{u\in P_{k}}\mathbb{C}\cdot s_{u,k}$. 

Assume $k$ is sufficiently large such that $X\rightarrow\mathbb{P}R_{k}^{*}$
is an embedding (actually, $k\geq n-1$ is enough). By Theorem \ref{thm:sym of Chow-dest},
if the embedded variety is Chow-unstable, then the maximal Chow-destabilizer
$\chi_{k}^{*}\in\mathcal{N}(R_{k})$ is $T_{\bbc}$-invariant. The
$T_{\bbc}$-invariant NA norms on $R_{k}$ can be diagonalized by
the canonical basis $(s_{u,k})_{u\in P_{k}}$. Hence we can search
for $\chi_{k}^{*}$ among the norms which are diagonalized by $(s_{u,k})$.
In terms of 1-PS, this amounts to only consider the 1-PS in the torus
\[
H\coloneqq\left\{ g\in\mathrm{SL}(R_{k})\mid g\textrm{ is diagonalized by basis }(s_{u,k})_{u}\right\} \cong\left(\mathbb{C}^{*}\right)^{N_{k}-1}.
\]
In \cite{kapranov_chow_1992}, they found that the Chow-weights for
$H$-action can be given by the secondary polytope $\Sigma(P_{k})$
associated to $P_{k}$. Hence the maximal Chow-destabilizer can be
determined by $\Sigma(P_{k})$. We will discuss these in \cite{yao_quantizing_2025}. 
\end{example}

The following result is obtained in \cite[Theorem 2.2, Corollary 2.3, 2.7.]{lee_asymptotic_2019}
by a different argument.
\begin{cor}
\label{coro : toric H imply SL} Let $(X,L)$ be the toric manifold
given in the above example. Suppose $X\rightarrow\mathbb{P}R_{k}^{*}$
is an embedding. If $X$ is Chow-semistable w.r.t. the torus $H\subset\mathrm{SL}(R_{k})$,
then $X$ is Chow-semistable w.r.t. the whole group $\mathrm{SL}(R_{k})$.
\end{cor}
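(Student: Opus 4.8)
The plan is to argue by contraposition, leveraging the \emph{uniqueness} of the maximal Chow-destabilizer together with its symmetry. Suppose, for contradiction, that $X\hookrightarrow\mathbb{P}R_k^{*}$ is not Chow-semistable with respect to $\mathrm{SL}(R_k)$; since Chow-semistability is equivalent to $M_X\geq0$ on $\mathcal{N}(R_k)$ (the criterion recorded after Definition~\ref{def: KN functional}), this means $\inf_{\mathcal{N}(R_k)}M_X<0$. Rescaling a destabilizing norm to unit $\left\Vert\,\cdot\,\right\Vert_{2}$ and invoking Theorem~\ref{thm:existence-uniqueness of Chow}(2) with $p=2$, we obtain a maximal Chow-destabilizer $\bar\chi\in\mathcal{N}(R_k)$ with $M_X(\bar\chi)<0$. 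The complex torus $G=T_{\bbc}$ carries a homomorphism to $\mathrm{GL}(R_k)$ under which $X\subset\mathbb{P}R_k^{*}$ is equivariant, so Theorem~\ref{thm:sym of Chow-dest}(2) (which rests on exactly this uniqueness) forces $\bar\chi$ to be fixed by $T_{\bbc}$.

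The next step is to convert $T_{\bbc}$-invariance into diagonalizability. Each filtration piece $F^{\lambda}R_k=\{s\in R_k\mid\bar\chi(s)\geq\lambda\}$ is $T_{\bbc}$-stable; since $R_k=\bigoplus_{u\in P_k}\bbc\,s_{u,k}$ is a multiplicity-free $T_{\bbc}$-module, every $T_{\bbc}$-stable subspace is a coordinate subspace $\bigoplus_{u\in S}\bbc\,s_{u,k}$, and a NA norm all of whose filtration pieces are coordinate subspaces is diagonalized by the weight basis $(s_{u,k})_{u\in P_k}$. Hence $\bar\chi$ belongs to the ``apartment'' $\mathcal{A}\subset\mathcal{N}(R_k)$ of norms diagonalized by $(s_{u,k})$, and $\chi\mapsto(\chi(s_{u,k}))_{u\in P_k}$ identifies $\mathcal{A}$ with $\bbr^{P_k}$. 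By (\ref{eq:geodesic NA norm}), affine segments in $\bbr^{P_k}$ are $d_{2}$-geodesics, so Proposition~\ref{prop:convexity of Chow} shows that $M_X|_{\mathcal{A}}$ is convex (hence continuous), positively $1$-homogeneous, and invariant under adding a constant to all coordinates. Consequently $M_X<0$ on a nonempty open subset of the hyperplane $\{\sum_{u}\lambda_{u}=0\}\cong\bbr^{\,|P_k|-1}$, into which $\bar\chi$ may be moved without altering $M_X$.

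It remains to upgrade a real destabilizing direction to an integral one. Choose a rational point of that open set, and scale it by the least common denominator — a positive integer — to get, using positive homogeneity, an integer vector $(a_u)_{u\in P_k}$ with $\sum_u a_u=0$ and $M_X<0$ at it. Such a vector is the cocharacter datum of a genuine 1-PS $\lambda\colon\bbcs\to H$; its limit NA norm $\chi_\lambda$ (computed in Example~\ref{exa: NAlimit of geodesic}) lies in $\mathcal{A}$ with $M_X(\chi_\lambda)<0$, and the GIT Chow-weight $w(X,\lambda)$ has the same sign as $M_X(\chi_\lambda)$, both being limit slopes of the Kempf--Ness functional $\mathbf{M}_X$ along the ray attached to $\lambda$. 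Thus $w(X,\lambda)<0$, contradicting the Chow-semistability of $X$ with respect to $H$, and the corollary follows. I expect the only point requiring genuine care — rather than a formal invocation of the existence, uniqueness, and symmetry statements already proved — to be this last passage: that $H$-semistability, which a priori constrains $M_X$ only at the 1-PS directions (the integer points of $\mathcal{A}$), already forces $M_X\geq0$ on the whole apartment, which uses positive homogeneity to pass from integer to rational directions and the continuity of $M_X|_{\mathcal{A}}$ to pass from the (dense) rational directions to all real ones.
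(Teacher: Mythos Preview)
Your proof is correct and follows essentially the same route as the paper: assume Chow-instability with respect to $\mathrm{SL}(R_k)$, use the uniqueness plus $T_{\bbc}$-symmetry of the maximal Chow-destabilizer to land in the apartment of norms diagonalized by $(s_{u,k})$, and then approximate by a rational (hence integral) direction to produce a destabilizing 1-PS in $H$. The paper compresses the last step into the phrase ``by approximation,'' whereas you have unpacked it carefully via the convexity/continuity of $M_X|_{\mathcal A}$ and density of rational points; this is exactly the content hidden behind that phrase.
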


\begin{proof}
If $X$ is Chow-unstable w.r.t. $\mathrm{SL}(R_{k})$, consider the
maximal destabilizer, we obtain a $T_{\bbc}$-invariant norm $\chi$
such that the Chow weight $M_{X}(\chi)<0$. By approximation, we can
find a 1-PS of $H$ with negative Chow-weight. This contradicts the
assumption. 
\end{proof}

\subsection{Asymptotic Chow-stability and K-stability}

For a polarized variety $(X,L)$, we can consider Chow-stability of
the embedding images $X\hookrightarrow\mathbb{P}R_{k}^{*}$ when $k\gg1$. 
\begin{defn}
Given a polarized variety $(X,L)$. 

(1) We say $(X,kL)$ is Chow-semistable ((poly)stable or unstable)
if $kL$ is very ample and the image of $X\hookrightarrow\mathbb{P}R_{k}^{*}$
is Chow-semistable ((poly)stable or unstable) in the sense of Definition
\ref{def: Chow-stab}. 

(2) We say $(X,L)$ is \emph{asymptotic Chow-semistable} if $(X,kL)$
is Chow-semistable when $k$ is large enough. 
\end{defn}

Let us recall the notion of K-semistability which is introduced in
\cite{tian_kahler--einstein_1997,donaldson_scalar_2002}, refer to
\cite{boucksom_uniform_2017} for details. 
\begin{defn}
\label{def: K-stability}(1) A \emph{test-configuration} $(\mathcal{X},\mathcal{L})$
for $(X,L)$ consists of a flat projective morphism $\pi:\mathcal{X}\rightarrow\mathbb{C}$;
a $\mathbb{Q}$-line bundle $\mathcal{L}$ on $\mathcal{X}$; a $\mathbb{C}^{*}$-action
on $(\mathcal{X},\mathcal{L})$ such that $\pi$ is equivariant; and
a $\mathbb{C}^{*}$-equivariant isomorphism $(\mathcal{X},\mathcal{L})|_{\mathbb{C}^{*}}\cong(X,L)\times\mathbb{C}^{*}$.
We say $(\mathcal{X},\mathcal{L})$ is normal if $\mathcal{X}$ is
normal, and $(\mathcal{X},\mathcal{L})$ is ample if $\mathcal{L}$
is $\pi$-ample. 

(2) Suppose that $(\mathcal{X},\mathcal{L})$ is an ample test-configuration
and $r\mathcal{L}$ ($r\geq1$) is a line bundle. For any $k\geq1$,
let $w_{rk}$ be the weight sum of $\bbc^{*}$-module $\mathrm{H}^{0}(\mathcal{X}_{0},\mathcal{L}_{0}^{rk})$,
here $(\mathcal{X}_{0},\mathcal{L}_{0})$ is the central fiber of
$(\mathcal{X},\mathcal{L})$. The \emph{Donaldson-Futaki invariant}
$\mathrm{DF}(\mathcal{X},\mathcal{L})$ is defined by expansion 
\begin{equation}
\frac{w_{rk}}{rkN_{rk}}=\mathbb{E}(\mathcal{X},\mathcal{L})-\mathrm{DF}(\mathcal{X},\mathcal{L})\frac{1}{2rk}+\mathrm{O}(k^{-2}),\label{eq:expand weight}
\end{equation}
and the leading term $\mathbb{E}(\mathcal{X},\mathcal{L})$ is called
the NA Monge--Ampère energy. 

(3) We call $(X,L)$ is \emph{K-semistable} if $\mathrm{DF}(\mathcal{X},\mathcal{L})\geq0$
for all normal ample test-configuration $(\mathcal{X},\mathcal{L})$
for $(X,L)$. Otherwise, $(X,L)$ is called K-unstable. 
\end{defn}

There are many works studying the relation between Chow and K-stability,
for instance \cite[etc]{tian_k-energy_1994,donaldson_scalar_2001,phong_stability_2003,donaldson_scalar_2005,mabuchi_energy-theoretic_2005,ross_study_2006,paul_hyperdiscriminant_2012}.
We only recall the following relation. 
\begin{thm}[\cite{ross_study_2006}]
 The asymptotic Chow-semistability of $(X,L)$ implies the K-semistability. 
\end{thm}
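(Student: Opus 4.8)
The plan is to argue by contraposition: assuming $(X,L)$ is K-unstable, I will produce for infinitely many $k$ a one-parameter subgroup of $\mathrm{SL}(R_k)$ destabilizing the Chow point of $X\hookrightarrow\mathbb{P}R_k^*$, contradicting asymptotic Chow-semistability. So fix a normal ample test-configuration $(\mathcal{X},\mathcal{L})$ with $\mathrm{DF}(\mathcal{X},\mathcal{L})<0$. For every $k$ large and divisible enough that $\mathcal{L}^{\otimes k}$ is a line bundle, the sheaf $\pi_*\mathcal{L}^{\otimes k}$ is $\mathbb{C}^*$-equivariant and locally free of rank $N_k=\dim R_k$ on $\mathbb{C}$, with generic fibre canonically identified with $R_k$; the $\mathbb{C}^*$-action therefore induces a one-parameter subgroup $\rho_k\subset\mathrm{GL}(R_k)$ whose weights are the $\mathbb{C}^*$-weights on $\mathrm{H}^{0}(\mathcal{X}_0,\mathcal{L}_0^{\otimes k})$, so that its total weight is the weight polynomial $W(k)$ of the test-configuration, and whose limit flows $X\hookrightarrow\mathbb{P}R_k^*$ to the central fibre re-embedded at level $k$. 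Since asymptotic Chow-semistability asks for Chow-semistability of $(X,kL)$ for \emph{all} large $k$, it suffices to contradict it along this arithmetic progression of $k$'s.

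The heart is a weight computation for the trace-free normalization $\widetilde{\rho}_k:=\rho_k-\tfrac{W(k)}{N(k)}\,\mathrm{Id}\in\mathrm{SL}(R_k)$, where $N(j):=\dim\mathrm{H}^{0}(X,jL)$. Recall Mumford's formula (cf.\ \cite{ross_study_2006}): for projectively normal $Y\subset\mathbb{P}W^*$ of dimension $n$ and $\lambda\subset\mathrm{SL}(W)$, the Chow weight $w(Y,\lambda)$ is a fixed nonzero multiple — depending only on $n$ — of the leading coefficient of the polynomial $m\mapsto(\text{total weight of }\lambda\text{ on }\mathrm{H}^{0}(Y,\mathcal{O}_Y(m)))$. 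For $k\gg1$ the embedding $X\hookrightarrow\mathbb{P}R_k^*$ is projectively normal, and the total weight of $\widetilde{\rho}_k$ on $\mathrm{H}^{0}(X,mkL)=\mathrm{H}^{0}(\mathbb{P}R_k^*,\mathcal{O}(m))|_{X}$ equals $W(mk)-m\tfrac{W(k)}{N(k)}N(mk)$. Substituting $W(j)=b_0 j^{n+1}+b_1 j^{n}+\mathrm{O}(j^{n-1})$ and $N(j)=a_0 j^{n}+a_1 j^{n-1}+\mathrm{O}(j^{n-2})$ with $a_0=L^n/n!$, and using that the defining expansion \eqref{eq:expand weight} says precisely $\tfrac{W(k)}{N(k)}=\mathbb{E}(\mathcal{X},\mathcal{L})\,k-\tfrac12\mathrm{DF}(\mathcal{X},\mathcal{L})+\mathrm{O}(k^{-1})$ with $\mathbb{E}(\mathcal{X},\mathcal{L})=b_0/a_0$, the coefficient of $m^{n+1}$ works out to
\[
b_0k^{n+1}-\tfrac{W(k)}{N(k)}\,a_0k^{n}=\tfrac12\,\mathrm{DF}(\mathcal{X},\mathcal{L})\,a_0\,k^{n}+\mathrm{O}(k^{n-1})=\frac{L^n}{2\,n!}\,\mathrm{DF}(\mathcal{X},\mathcal{L})\,k^{n}+\mathrm{O}(k^{n-1}),
\]
the cancellation of the $k^{n+1}$ terms being exactly the relation $b_0=\mathbb{E}(\mathcal{X},\mathcal{L})\,a_0$. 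Consequently, after reconciling the sign conventions of Mumford's formula and of Definition \ref{def: Chow-stab}, one obtains $w\big(X\hookrightarrow\mathbb{P}R_k^*,\widetilde{\rho}_k\big)=c_n\,L^n\,\mathrm{DF}(\mathcal{X},\mathcal{L})\,k^{n}+\mathrm{O}(k^{n-1})$ with $c_n>0$.

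Since $\mathrm{DF}(\mathcal{X},\mathcal{L})<0$, this is strictly negative for all large $k$ in the progression, so $\widetilde{\rho}_k$ destabilizes the Chow point, $(X,kL)$ is Chow-unstable for these $k$, and $(X,L)$ is not asymptotically Chow-semistable — the desired contradiction. I expect the main obstacle to be the bookkeeping rather than the strategy: one must have at hand the precise normalization in Mumford's Chow-weight formula and carefully match it with the conventions for $\mathrm{DF}$ fixed by \eqref{eq:expand weight} and for $w(X,\lambda)$ in Definition \ref{def: Chow-stab}; and one must justify that an abstract normal ample test-configuration really induces the subgroups $\rho_k$ with total weight $W(k)$ (flatness of $\pi$, Serre vanishing, cohomology and base change, constancy of the Hilbert polynomial in the family), together with the projective normality of $X\hookrightarrow\mathbb{P}R_k^*$ for $k\gg1$ which makes $\mathrm{H}^{0}(X,mkL)$ the relevant graded piece. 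A variant avoiding Mumford's formula is to combine Theorem \ref{thm: NA express Chow} with the leading-order quantization $E_k\circ\mathrm{SN}_k=\mathbb{E}+\mathrm{O}(k^{-1})$ to see that $\tfrac1k\,M_{X\hookrightarrow\mathbb{P}R_k^*}$ detects the normalized Donaldson--Futaki invariant in the $k\to\infty$ limit; but this only repackages the same asymptotics.
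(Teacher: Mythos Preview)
Your argument is essentially correct and follows the classical Ross--Thomas route: build the induced one-parameter subgroup at each level $k$ from the test-configuration, normalize it to land in $\mathrm{SL}(R_k)$, and read off the Chow weight from the $m^{n+1}$-coefficient of the weight polynomial $W(mk)-m\tfrac{W(k)}{N(k)}N(mk)$. The asymptotic you obtain, $\tfrac12 a_0\,\mathrm{DF}(\mathcal{X},\mathcal{L})\,k^n+\mathrm{O}(k^{n-1})$, is correct, and the bookkeeping you flag (integrality of $\widetilde{\rho}_k$, projective normality, sign conventions in Mumford's formula) is genuine but routine.

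The paper itself does not prove the cited Ross--Thomas statement; it only proves the strengthening Theorem~\ref{thm: K-uns imply Chow-uns}, and it does so by a genuinely different method: rather than Mumford's formula, it combines the NA expression for the Chow weight (Theorem~\ref{thm: NA express Chow}) with the second-order expansion $E_k\circ\mathrm{SN}_k(\phi_{\mathcal{L}})=\mathbb{E}(\phi_{\mathcal{L}})-\tfrac12\mathbb{M}(\phi_{\mathcal{L}})k^{-1}+\mathrm{O}(k^{-2})$ from Theorem~\ref{thm: main expand for TC}, together with $\mathrm{DF}\geq\mathbb{M}$. This is exactly the ``variant'' you sketch in your final sentence. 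What your direct approach buys is self-containment and no reliance on the NA pluripotential machinery; what the paper's approach buys is that the argument sits naturally inside its framework, where $\mathbb{Q}_k=2k(\mathbb{E}-E_k\circ\mathrm{SN}_k)$ is already the quantized K-energy, so the result drops out of the same expansion used elsewhere rather than requiring a separate Chow-weight computation.
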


Its contrapositive proposition is that if $(X,L)$ is K-unstable,
then there is a sequence $k_{j}\ra\infty$ such that $(X,k_{j}L)$
is Chow-unstable. This can be strengthened as follows, see §\ref{subsec: main expansion NA}
for its proof. 
\begin{thm}
\label{thm: K-uns imply Chow-uns}If $(\mathcal{X},\mathcal{L})$
is an ample test-configuration for $(X,L)$ such that $\mathrm{DF}(\mathcal{X},\mathcal{L})<0$
and $r\mathcal{L}$ is line bundle, then $(X,krL)$ is Chow-unstable
when $k$ is sufficiently large.
\end{thm}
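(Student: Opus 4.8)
The plan is to exhibit, for each large $k$, a single non-Archimedean norm $\chi_{k}\in\mathcal{N}(R_{kr})$ with negative Chow weight, and then invoke the criterion recalled before Definition~\ref{def: KN functional}: the embedded variety $X\hookrightarrow\mathbb{P}R_{kr}^{*}$ is Chow-semistable if and only if its Chow weight $M_{X}\colon\mathcal{N}(R_{kr})\to\mathbb{R}$ is everywhere $\ge0$. So it is enough to produce $\chi_{k}$ with $M_{X}(\chi_{k})<0$; the natural candidate is the non-Archimedean norm attached to the one-parameter subgroup induced by the test configuration, and I would compute its Chow weight from the non-Archimedean formula of Theorem~\ref{thm: NA express Chow}\,(1) together with the defining expansion~(\ref{eq:expand weight}).

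First I would construct this $\chi_{k}$. Since $r\mathcal{L}$ is a $\pi$-ample line bundle, for $k\gg0$ the bundle $krL$ is very ample on $X$, $\mathcal{L}^{kr}$ is $\pi$-very ample, and $\pi_{*}(\mathcal{L}^{kr})$ is locally free and compatible with base change; hence $\mathrm{H}^{0}(\mathcal{X}_{0},\mathcal{L}_{0}^{kr})\cong R_{kr}$ as vector spaces, and the $\mathbb{C}^{*}$-action on the former endows $R_{kr}$ with a one-parameter subgroup $\lambda_{k}\subset\mathrm{GL}(R_{kr})$ whose weights are those of the $\mathbb{C}^{*}$-module $\mathrm{H}^{0}(\mathcal{X}_{0},\mathcal{L}_{0}^{kr})$, with total weight the integer $w_{kr}$ of~(\ref{eq:expand weight}). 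Let $\chi_{k}\in\mathcal{N}(R_{kr})$ be the limit non-Archimedean norm of the associated geodesic ray as in Example~\ref{exa: NAlimit of geodesic}, oriented so that its jumping numbers are exactly these weights; then
\[
E_{R_{kr}}(\chi_{k})=\frac{1}{N_{kr}}\sum_{i}\lambda_{i}(\chi_{k})=\frac{w_{kr}}{N_{kr}}.
\]

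Next I would compute $M_{X}(\chi_{k})$. The datum $(\mathcal{X},\mathcal{L})$ also determines $\phi_{\mathcal{X}}\in\mathrm{CPSH}^{\na}(L)$, and the key point is that for $k$ large (and, if needed, divisible) the non-Archimedean Fubini--Study metric $\mathrm{FS}(\chi_{k})$ on $\mathcal{O}(1)|_{X}=krL$ equals $kr\cdot\phi_{\mathcal{X}}$: an ample test configuration with $r\mathcal{L}$ a line bundle is reconstructed at a sufficiently large level by the Fubini--Study procedure applied to its own section filtration. Combining this with the linear scaling of the normalized non-Archimedean Monge--Amp\`ere energy under $L\rightsquigarrow krL$ and the identification of $\mathbb{E}(\phi_{\mathcal{X}})$ with the leading coefficient $\mathbb{E}(\mathcal{X},\mathcal{L})$ of~(\ref{eq:expand weight}) gives $\mathbb{E}(\mathrm{FS}(\chi_{k}))=kr\,\mathbb{E}(\mathcal{X},\mathcal{L})$. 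Then Theorem~\ref{thm: NA express Chow}\,(1), the displayed identity above, and~(\ref{eq:expand weight}) yield
\[
M_{X}(\chi_{k})=\mathbb{E}\big(\mathrm{FS}(\chi_{k})\big)-E_{R_{kr}}(\chi_{k})=kr\,\mathbb{E}(\mathcal{X},\mathcal{L})-\frac{w_{kr}}{N_{kr}}=\tfrac12\,\mathrm{DF}(\mathcal{X},\mathcal{L})+O(k^{-1}).
\]
Since $\mathrm{DF}(\mathcal{X},\mathcal{L})<0$, we obtain $M_{X}(\chi_{k})<0$ for all $k\gg0$, hence $(X,krL)$ is Chow-unstable; if the reconstruction step is only available for $k$ in an arithmetic progression, one upgrades to all large $k$ using that Chow-instability passes to Veronese re-embeddings.

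The main obstacle is the identity $\mathbb{E}(\mathrm{FS}(\chi_{k}))=kr\,\mathbb{E}(\mathcal{X},\mathcal{L})$, equivalently that $\tfrac1{kr}\mathrm{FS}(\chi_{k})$ agrees with $\phi_{\mathcal{X}}$ exactly, or at least to order $o(k^{-1})$ in Monge--Amp\`ere energy, once $k$ is large. A soft convergence $\tfrac1{kr}\mathrm{FS}(\chi_{k})\to\phi_{\mathcal{X}}$ only pins $\mathbb{E}(\mathrm{FS}(\chi_{k}))$ down to $kr\,\mathbb{E}(\mathcal{X},\mathcal{L})+O(1)$, which is too crude to fix the sign of the $O(1)$-size quantity $M_{X}(\chi_{k})$, so a quantitative reconstruction is essential --- this is exactly the sort of statement the expansions of \S\ref{subsec: main expansion NA} (in particular Theorem~\ref{thm: main expand for TC}) are designed to provide, and it is also what forces the precise constant $\tfrac12$. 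The remaining ingredients are routine: the uniform-in-$k$ flatness and cohomology-and-base-change input of the first step, the purely conventional matching of the orientation of $\chi_{k}$ with the $\mathbb{C}^{*}$-action, and the appeal to the characterization of Chow-semistability by nonnegativity of $M_{X}$ (which itself repackages the Hilbert--Mumford criterion and the density of rational norms).
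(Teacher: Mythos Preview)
Your candidate norm and the paper's are essentially the same, but the gap you flag as the ``main obstacle'' is genuine and is \emph{not} what Theorem~\ref{thm: main expand for TC} supplies: that theorem expands $E_{kr}\circ\mathrm{SN}_{kr}(\phi_{\mathcal{L}})$, not $\mathbb{E}\circ\mathrm{FS}_{kr}$ of anything, so it gives no control on $\mathbb{E}(\mathrm{FS}(\chi_k))$ to the order you need. The paper does not close this gap; it sidesteps it. Arguing by contradiction, one assumes $(X,k_jrL)$ is Chow-semistable along a subsequence and takes $\chi_{rk_j}\coloneqq\mathrm{SN}_{rk_j}(\phi_{\mathcal{L}})$. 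Then only the \emph{inequality} $\mathrm{FS}_{rk_j}(\chi_{rk_j})\le\phi_{\mathcal{L}}$ from Proposition~\ref{prop: FS vs SN}\,(1) is needed: together with monotonicity of $\mathbb{E}$ it gives
\[
0\;\le\;\mathbb{E}\bigl(\mathrm{FS}_{rk_j}(\chi_{rk_j})\bigr)-E_{rk_j}(\chi_{rk_j})\;\le\;\mathbb{E}(\phi_{\mathcal{L}})-E_{rk_j}\circ\mathrm{SN}_{rk_j}(\phi_{\mathcal{L}}),
\]
and now multiplying by $2rk_j$ and invoking Theorem~\ref{thm: main expand for TC} forces $\mathbb{M}(\phi_{\mathcal{L}})\ge 0$, hence $\mathrm{DF}(\mathcal{X},\mathcal{L})\ge\mathbb{M}(\phi_{\mathcal{L}})\ge 0$ by~(\ref{eq: NA K-energy}), a contradiction.

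Two further issues with your route: your identification of $\chi_k$ with $\mathrm{SN}_{kr}(\phi_{\mathcal{L}})$ via Witt~Nystr\"om's filtration uses~(\ref{eq: WN =00003D SN}), which assumes the central fibre is reduced (the paper absorbs this base-change step into the proof of Theorem~\ref{thm: main expand for TC}); and the claim that Chow-instability passes to Veronese re-embeddings is not available --- it is semistability that one expects to behave well under Veronese, and even that does not immediately compare $\mathbb{P}R_m^*$ with $\mathbb{P}R_{km}^*$. The contradiction-plus-inequality argument is precisely what delivers the conclusion for \emph{all} large $k$ without any such step.
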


This motivates us to study K-unstable varieties from the angle of
Chow-unstability. For K-unstable manifolds, Xia \cite{xia_sharp_2021}
showed the existence and uniqueness of the maximal K-destabilizing
ray, which corresponds to a non-Archimedean metric on $L$ by \cites{berman_variational_2021}{li_geodesic_2022}.
We call this NA metric the maximal K-destabilizer (Definition \ref{def: max K-desta}).
It is natural to ask 
\begin{quote}
Does the sequence of maximal Chow-destabilizers ``converge'' to
the maximal K-destabilizers in some sense?
\end{quote}
This can be seen as the NA counterpart of Donaldson's theory \cite{donaldson_scalar_2001}
for balanced embeddings and cscK metrics. Before we dive into the
NA world, we briefly review \cite{donaldson_scalar_2005}, which emphasizes
the variational point of view and will hint us how to study the above
problem. 

\subsection{\label{subsec: Donaldson work}Balanced norms quantize cscK metrics}

In Kähler geometry, quantization method is to approximate the geometry
of $\mathcal{H}(L)$ by the geometry of $\mathcal{H}(R_{k})$, also
includes the functionals defined over them. There is a pair of maps
connecting them. 

For $k\geq1$ such that $kL$ is very ample, the Fubini--Study map
$\mathbf{FS}_{k}:\mathcal{H}(R_{k})\rightarrow\mathcal{H}(L)$ is
defined by 
\[
\mathbf{FS}_{k}(H)=\phi_{r}+\frac{1}{k}\log\frac{1}{N_{k}}\sum_{i=1}^{N_{k}}\left|s_{i}\right|_{k\phi_{r}}^{2},
\]
where $\phi_{r}\in\mathcal{H}(L)$ is any reference metric and $(s_{i})$
is any $H$-orthonormal basis of $R_{k}$. %

The Hilbert map $\mathbf{H}_{k}:\mathcal{H}(L)\rightarrow\mathcal{H}(R_{k})$
is defined by 
\[
\mathbf{H}_{k}(\phi)(s,s')=\int_{X}\left\langle s,s'\right\rangle _{k\phi}\mathrm{MA}(\phi),\ \forall s,s'\in R_{k}.
\]
Compose two maps together, for any $\phi\in\mathcal{H}(L),$ we have
\[
\mathbf{FS}_{k}\circ\mathbf{H}_{k}(\phi)=\phi+\frac{1}{k}\log\frac{\rho_{k}(\phi)}{N_{k}},\ \rho_{k}(\phi)\coloneqq\sum_{i}\left|s_{i}\right|_{k\phi}^{2},
\]
where $(s_{i})$ is any $\mathbf{H}_{k}(\phi)$-orthonormal basis,
and $\rho_{k}(\phi)$ is called the Bergman kernel. Tian--Bouche--Catlin--Zelditch's
expansion gives 
\begin{equation}
\rho_{k}(\phi)=\frac{L^{n}}{n!}k^{n}+\frac{L^{n}}{n!}\frac{S(\phi)}{2}k^{n-1}+\mathrm{O}(k^{n-2}).\label{eq:Bergman expan}
\end{equation}
It implies that $\mathbf{FS}_{k}\circ\mathbf{H}_{k}(\phi)\rightarrow\phi$
as $k\rightarrow\infty$, so we can think that norms $\mathbf{H}_{k}(\phi)$
quantize $\phi$.

The MA-energy $\mathcal{E}$ is quantized by $\mathbf{E}_{R_{k}}$
(\ref{eq: Archi E_V}). Fix a reference metric $\phi_{r}\in\mathcal{H}(L)$,
we denote $\mathbf{E}_{k}(H)\coloneqq k^{-1}\mathbf{E}_{R_{k}}(H,\mathbf{H}_{k}(\phi_{r}))$.
By (\ref{eq:Bergman expan}), we have 
\begin{equation}
\mathbf{E}_{k}\circ\mathbf{H}_{k}(\phi)=\mathcal{E}(\phi)-\frac{1}{2k}\mathcal{M}(\phi)+\mathrm{O}(k^{-2}),\ \forall\phi\in\mathcal{H}(L),\label{eq: main expansion A}
\end{equation}
where $\mathcal{M}(\phi)$ is the K-energy (\ref{eq: K-energy}).
Then Donaldson introduced $\mathcal{Q}_{k}:\mathcal{H}(L)\rightarrow\mathbb{R}$
(denoted by $\tilde{\mathcal{L}}$ in \cite{donaldson_scalar_2005})
\begin{equation}
\mathcal{Q}_{k}(\phi)\coloneqq2k\left(\mathcal{E}(\phi)-\mathbf{E}_{k}\circ\mathbf{H}_{k}(\phi)\right).\label{eq: Archi Qk}
\end{equation}
By (\ref{eq: main expansion A}), it converges to $\mathcal{M}$ pointwisely.
Moreover, it is convex along the geodesics, see \cite[P473]{berndtsson_positivity_2009}.
By \cite[Corollary 1]{donaldson_scalar_2005}, the critical points
of $\mathcal{Q}_{k}$ are \emph{balanced metrics} which satisfy 
\[
\mathbf{FS}_{k}\circ\mathbf{H}_{k}(\phi)=\phi\ \Leftrightarrow\ \rho_{k}(\phi)\equiv N_{k}.
\]
There is a bijection (both sets may be empty): 
\[
\left\{ \phi\in\mathcal{H}(L)\mid\mathbf{FS}_{k}\circ\mathbf{H}_{k}(\phi)=\phi\right\} \xrightleftharpoons[\mathbf{FS}_{k}]{\mathbf{H}_{k}}\left\{ H\in\mathcal{H}(R_{k})\mid\mathbf{H}_{k}\circ\mathbf{FS}_{k}(H)=H\right\} .
\]
The norms in the right-hand set are called \emph{balanced norms}.
Let $\iota_{k}:X\hookrightarrow\mathbb{P}R_{k}^{*}$ be the Kodaira
embedding. The balanced norms are exactly the critical points of $\mathbf{M}_{\iota_{k}(X)}$
(\ref{eq:KN final}), the Kempf--Ness functional for Chow-stability.
So the existence of critical points of $\mathcal{Q}_{k}$ is equivalent
to the Chow-polystability of $\iota_{k}(X)$. 

Suppose that $\mathcal{M}$ admits a critical point $\phi_{\mathrm{cscK}}$,
since $\mathcal{Q}_{k}$ approximates $\mathcal{M}$, naturally one
may expect that $\mathcal{Q}_{k}$ also admits a critical point $\phi_{k}$
for $k\gg1$. If so, then $(X,L)$ would be asymptotic Chow-polystable.
Furthermore, $\phi_{k}$ should converge to $\phi_{\mathrm{cscK}}$.
This is not true in general, see \cite{mabuchi_energy-theoretic_2005,ono_example_2012}.
But when the automorphism group is discrete, \cite{donaldson_scalar_2001}
confirms these expectations. 
\begin{thm}[\cite{donaldson_scalar_2001}]
 \label{thm: Donaldson}Suppose that $(X,L)$ admits a cscK metric
$\phi_{\mathrm{cscK}}\in\mathcal{H}(L)$ and $\mathrm{Aut}(X,L)/\mathbb{C}^{*}$
is discrete, then $(X,kL)$ is Chow-stable for $k\gg1$. Moreover,
there exists a sequence of balanced norms $H_{k}\in\mathcal{H}(R_{k})$
such that the corresponding balanced metrics $\mathbf{FS}_{k}(H_{k})$
converge to $\phi_{\mathrm{cscK}}$ in $C^{\infty}$-norm. 
\end{thm}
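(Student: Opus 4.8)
\emph{The plan and the approximate solution.} The plan is to construct, for $k\gg1$, a balanced norm $H_k\in\mathcal{H}(R_k)$ in a controlled neighborhood of $\mathbf{H}_k(\phi_{\mathrm{cscK}})$ by a quantitative inverse-function-theorem argument, and then to deduce Chow-stability and the $C^\infty$-convergence from it. Recall from \S\ref{subsec: Donaldson work} that $H\in\mathcal{H}(R_k)$ is balanced iff $\rho_k(\mathbf{FS}_k(H))\equiv N_k$, equivalently iff $H$ is a fixed point of $T_k\coloneqq\mathbf{H}_k\circ\mathbf{FS}_k$ on $\mathcal{H}(R_k,H_r)$, equivalently iff $H$ is a critical point of the geodesically convex Kempf--Ness functional $\mathbf{M}_{\iota_k(X)}$ (Proposition \ref{prop: conv of Mk}), i.e.\ a zero of the associated moment map $\mu_k$; and that the quantized energy $\mathcal{Q}_k$, whose critical points are the balanced metrics $\mathbf{FS}_k(H)$, converges to the K-energy $\mathcal{M}$ with rate $\mathrm{O}(k^{-1})$ by (\ref{eq: main expansion A}). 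Take $H_k^{(0)}\coloneqq\mathbf{H}_k(\phi_{\mathrm{cscK}})$. Since $\mathbf{FS}_k\circ\mathbf{H}_k(\phi)=\phi+\tfrac{1}{k}\log\tfrac{\rho_k(\phi)}{N_k}$, and since $S(\phi_{\mathrm{cscK}})\equiv\bar S$ makes the subleading coefficient $\tfrac{L^n}{n!}\tfrac{S(\phi_{\mathrm{cscK}})}{2}$ in (\ref{eq:Bergman expan}) coincide with the order-$k^{n-1}$ coefficient of the Riemann--Roch expansion of $N_k$, we get $\tfrac{\rho_k(\phi_{\mathrm{cscK}})}{N_k}=1+\mathrm{O}(k^{-2})$, hence $\mathbf{FS}_k\circ\mathbf{H}_k(\phi_{\mathrm{cscK}})=\phi_{\mathrm{cscK}}+\mathrm{O}(k^{-2})$ and $\mu_k(H_k^{(0)})$ is small to high order (using (\ref{eq:Bergman expan}) with remainder controlled uniformly in $C^\infty$). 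If the estimates below need more slack, replace $H_k^{(0)}$ by $\mathbf{H}_k(\phi_{\mathrm{cscK}}+k^{-1}\eta_k)$, with $\eta_k$ solving a linear equation built from the $k^{n-2}$-coefficient in (\ref{eq:Bergman expan}), to push the error one order further.

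\emph{Invertibility of the linearization — where discreteness enters.} The Hessian of $\mathbf{M}_{\iota_k(X)}$ at $H_k^{(0)}$, restricted to the directions tangent to $\mathrm{Im}\,d\mathbf{H}_k$, is to leading order a positive multiple of the Lichnerowicz operator $\mathcal{D}^*\mathcal{D}$ of the cscK metric, where $\mathcal{D}f=\bar\partial\nabla^{1,0}f$, plus corrections of lower order in $k$; in the complementary directions it is uniformly positive for geometric reasons. The hypothesis that $\mathrm{Aut}(X,L)/\mathbb{C}^*$ is discrete means that $X$ carries no nonzero holomorphic vector fields, so $\mathcal{D}^*\mathcal{D}$ is invertible on functions modulo constants, with lowest eigenvalue $\lambda_1>0$. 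Hence the Hessian at $H_k^{(0)}$ is bounded below by a positive quantity $\delta_k$ of a definite order in $k$ for $k\gg1$, so $D\mu_k(H_k^{(0)})$ is invertible with its inverse controlled uniformly up to that order.

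\emph{Newton iteration and conclusions.} Feeding into a Newton iteration (equivalently, a contraction argument for $H\mapsto T_k(H)$ on a small ball around $H_k^{(0)}$) the smallness of $\mu_k(H_k^{(0)})$, the control on $D\mu_k(H_k^{(0)})^{-1}$, and a quadratic remainder estimate for $\mu_k$ with the matching power of $k$, one obtains a genuine zero $H_k$ of $\mu_k$ close to $H_k^{(0)}$: the sought balanced norm. Then existence of $H_k$ gives, by \cite{zhang_heights_1996,luo_geometric_1998}, that $\iota_k(X)$ is Chow-polystable; finiteness of $\mathrm{Aut}(X,L)/\mathbb{C}^*$ forces the stabilizer of $f_{\iota_k(X)}$ in $\mathrm{SL}(R_k)$ to be finite for $k\gg1$, so $(X,kL)$ is Chow-stable. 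Finally $\mathbf{FS}_k(H_k)=\mathbf{FS}_k(H_k^{(0)})+\mathrm{o}(1)=\phi_{\mathrm{cscK}}+\mathrm{o}(1)$ by the first step, and this is upgraded to $C^\infty$-convergence by interior elliptic bootstrapping on $\rho_k(\mathbf{FS}_k(H_k))\equiv N_k$ together with the uniform Bergman asymptotics.

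\emph{Main obstacle.} The delicate point is the quantitative bookkeeping of the three inputs to the iteration: matching the exact orders in $k$ of the error $\mu_k(H_k^{(0)})$, of the lower bound $\delta_k$ for the quantized Lichnerowicz operator, and of the nonlinear remainder, so that the iteration actually closes — whence the need for refined approximate solutions and for uniform-in-$k$ $C^\infty$ control of the Bergman kernel remainder; this is the content of \cite{donaldson_scalar_2001}. A variational alternative, in the spirit of \cite{donaldson_scalar_2005} and of this paper's framework, would instead show that $\mathbf{M}_{\iota_k(X)}$ (equivalently $\mathcal{Q}_k$) is \emph{uniformly coercive} for $k\gg1$ — inheriting coercivity from $\mathcal{M}$, which is coercive precisely because a cscK metric exists and the automorphisms are discrete — so that the convex functional $\mathbf{M}_{\iota_k(X)}$ attains its minimum on the finite-dimensional Hadamard space $\mathcal{H}(R_k,H_r)$ at a balanced norm; there the difficulty migrates to establishing that uniform coercivity, i.e.\ to transferring the lower bound $M_{\iota_k(X)}\ge\delta>0$ from the radial K-energy to all non-Archimedean norms on $R_k$.
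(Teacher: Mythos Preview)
The paper does not give its own proof of this theorem: it is stated as a background result with a citation to \cite{donaldson_scalar_2001}, and no proof environment follows. So there is nothing in the paper to compare your argument against.

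That said, your sketch is a faithful outline of Donaldson's original strategy: approximate solution $\mathbf{H}_k(\phi_{\mathrm{cscK}})$ via the Bergman kernel expansion, invertibility of the linearized operator from the Lichnerowicz operator under the discrete-automorphism hypothesis, and a quantitative Newton iteration to produce the balanced norm. You also correctly identify the main technical issue (matching the powers of $k$ so the iteration closes) and the variational alternative from \cite{donaldson_scalar_2005}. As a summary of the literature this is fine; just be aware that in the present paper the theorem is quoted, not proved.
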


\section{Non-Archimedean pluripotential theory}

To connect the maximal Chow-destabilizers to the maximal K-stabilizer,
we adapt the variational approach \cite{donaldson_scalar_2005} from
the Archimedean setting to the NA setting. It is based on the NA pluripotential
theory developed by Boucksom and Jonsson \cites{boucksom_non-archimedean_2018}{boucksom_global_2022}{boucksom_non-archimedean_2024}.
We briefly review it below. 

\subsection{\label{subsec:Berkovich}Berkovich's analytification }

Let $(X,L)$ be a polarized variety over $\mathbb{C}$. We equip $\mathbb{C}$
with the trivial valuation. As a set, Berkovich's analytification
of $X$ is 
\[
X^{\an}\coloneqq\left\{ v:\mathbb{C}(Y)\rightarrow\mathbb{R}\cup\{\infty\}\mid Y\subset X\right\} ,
\]
where $Y$ runs over all subvarieties, and $v$ is any valuation on
the functional field $\mathbb{C}(Y)$ which extends the trivial valuation
on $\mathbb{C}\subset\mathbb{C}(Y)$. Let $\left|\right|_{v}\coloneqq e^{-v}$
be the associated NA norm. For $f\in\mathbb{C}(Y)$, we write $\left|f(v)\right|\coloneqq\left|f\right|_{v}=e^{-v(f)}$.
Denote by $v_{tr}$ the trivial valuation on $\mathbb{C}(X)$. 

The kernel map $\mathrm{ker}:X^{\an}\rightarrow X$ sends $v$ to
the generic point of $Y$. The center $c(v)$ of $v$ is the unique
scheme point on $Y$ such that $v\geq0$ on $\mathcal{O}_{Y,c(v)}$
and $v>0$ on the maximal ideal $\mathfrak{m}_{Y,c(v)}\subset\mathcal{O}_{Y,c(v)}$.
This gives us the center map $c:X^{\an}\rightarrow X$. Berkovich's
topology on $X^{\an}$ is the weakest one such that $\mathrm{ker}$
is continuous, and for any open subset $U\subset X$ and $f\in\mathcal{O}_{X}(U)$,
the map 
\[
U^{\an}\coloneqq\mathrm{ker}^{-1}(U)\ra\bbr_{\geq0},\ v\mapsto\left|f(v)\right|
\]
is continuous. It makes $X^{\an}$  a compact Hausdorff space, but
the center map $c$ is anti-continuous (preimage of open subset is
closed). Let $X^{\mathrm{val}}\subset X^{\an}$ be the set of valuations
on $\mathbb{C}(X)$, and $X^{\mathrm{div}}\subset X^{\mathrm{val}}$
the set of divisorial valuations, which is dense in $X^{\an}$. 

The line bundle $L$ also has an analytification, denoted by $L^{\an}$.
A metric $\phi$ on $L^{\an}$ (called a NA metric) assigns to every
local section $s\in\Gamma(U,L)$ a function $\left|s\right|_{\phi}:U^{\an}\rightarrow[0,\infty]$
such that 
\[
\left|fs\right|_{\phi}(v)=\left|f(v)\right|\cdot\left|s\right|_{\phi}(v),\ \forall f\in\mathcal{O}_{X}(U),\ \forall v\in U^{\an}.
\]
Then we have $\left|s+s'\right|_{\phi}\leq\max\left(\left|s\right|_{\phi},\left|s'\right|_{\phi}\right)$
for $s,s'\in\Gamma(U,L)$. 

Owing to the trivial norm on $\mathbb{C}$, there is a distinguished
metric $\phi_{tr}$ on $L^{\an}$, called the \emph{trivial metric}.
We will denote by $\left|s\right|_{tr}$ or $\left|s\right|$ the
norm function of a section $s$ w.r.t. the trivial metric. Next, we
give the definition of $\phi_{tr}$. Given $s\in\Gamma(U,L)$ and
$v\in U^{\an}$, we firstly define the valuation $v(s)$, then define
$\left|s\right|_{tr}(v)\coloneqq e^{-v(s)}.$ 

(1) If $c(v)\in U$, we can trivialize $s$ at $c(v)$, namely take
a generator $\mathfrak{e}$ of the stalk $L_{c(v)}$ and let $s|_{c(v)}=f\mathfrak{e}$
for a $f\in\mathcal{O}_{X,c(v)}$. Then we define $v(s)=v(f)\in[0,\infty]$.
By the property of $c(v)$, $v(s)$ is independent of the choices
of $\mathfrak{e}$. 

(2) If $c(v)\notin U$, we can't trivialize $s$ at $c(v)$. Since
$c(v)\in\overline{\mathrm{ker}(v)}$, there is a morphism between
stalks $\alpha:L_{c(v)}\rightarrow L_{\mathrm{ker}(v)}$. Take a generator
$\mathfrak{e}\in L_{c(v)}$, then $\alpha(\mathfrak{e})$ is a generator
of $L_{\mathrm{ker}(v)}$. Let $s|_{\mathrm{ker}(v)}=f\alpha(\mathfrak{e})$
for a $f\in\mathcal{O}_{X,\mathrm{ker}(v)}$, now we define $v(s)=v(f)\in\bbr\cup\{\infty\}$,
which is also independent of the choices of $\mathfrak{e}$. 

With the trivial metric $\phi_{tr}$, other metrics can be represented
as $\phi_{tr}+\varphi$, where $\varphi:X^{\an}\rightarrow\mathbb{R}\cup\{-\infty\}$
is a function. We will use this function $\varphi$ to represent the
metric $\phi_{tr}+\varphi$. Given a local section $s\in\Gamma(U,L^{m})$,
its norm function w.r.t. the metric $\varphi$ is 
\[
\left|s\right|_{\varphi}\coloneqq\left|s\right|_{tr}e^{-m\varphi}=e^{-v(s)-m\varphi}:U^{\an}\rightarrow[0,\infty].
\]
There is a $\mathbb{R}_{>0}$-action $*$ on the functions on $X^{\an}$
which is defined by 
\begin{equation}
(a*\varphi)(v)\coloneqq a\varphi(a^{-1}v),\ a>0,\ v\in X^{\an}.\label{eq: R+ action on func}
\end{equation}

\subsection{NA psh metrics and energy functionals}

Given a nonzero section $s\in R_{m}$, the function $m^{-1}\log\left|s\right|_{tr}$
defines a metric on $L^{\an}$. A Fubini--Study metric $\varphi$
on $L^{\an}$ is defined to have the form 
\begin{equation}
\varphi=\frac{1}{m}\max_{j}\left\{ \log\left|s_{j}\right|_{tr}+\lambda_{j}\right\} ,\label{eq: def FS metric}
\end{equation}
where $m\geq1$, $\lambda_{j}\in\mathbb{R}$ and $\{s_{j}\}\subset R_{m}$
are finitely many sections without common zero. Note that $\vphi$
is continuous and $\sup_{X^{\an}}\varphi=\varphi(v_{tr})$. Let $\mathcal{H}_{\mathbb{R}}^{\na}$
be the set of all Fubini--Study metrics. 

A test-configuration $(\mathcal{X},\mathcal{L})$ of $(X,L)$ induces
a function $\varphi_{\mathcal{L}}$ on $X^{\an}$, refer to \cite[§A1]{boucksom_non-archimedean_2024}.
By \cite[Corollary A13]{boucksom_non-archimedean_2024}, $(\mathcal{X},\mathcal{L})\mapsto\varphi_{\mathcal{L}}$
gives a bijection from the set of normal ample test-configurations
to $\mathcal{H}_{\mathbb{Q}}^{\na}$, the set of metrics (\ref{eq: def FS metric})
with $\lambda_{j}\in\mathbb{Q}$. 

A psh metric $\varphi:X^{\an}\rightarrow\mathbb{R}\cup\{-\infty\}$
(excluding $\varphi\equiv-\infty$) is defined to be the pointwise
limit of a decreasing sequence in $\mathcal{H}_{\mathbb{Q}}^{\na}$.
It is usc and satisfies $\sup_{X^{\an}}\varphi=\varphi(v_{tr})$.
Denote by $\mathrm{PSH}^{\na}(L)$ the set of psh metrics on $L^{\an}$,
and $\mathrm{CPSH}^{\na}(L)$ the set of continuous psh metrics on
$L^{\an}$. By Dini's theorem, $\mathrm{CPSH}^{\na}(L)$ is the closure
of $\mathcal{H}_{\mathbb{Q}}^{\na}$ in the space of continuous functions
with respect to the $C^{0}$-norm. 

The NA Monge--Ampère energy $\mathbb{E}$ is a functional on $\mathrm{PSH}^{\na}(L)$.
For $\varphi=\varphi_{\mathcal{L}}$ induced by a normal ample test-configuration
$(\mathcal{X},\mathcal{L})$, $\mathbb{E}(\varphi)$ is the leading
term of expansion (\ref{eq:expand weight}). For general $\varphi\in\mathrm{PSH}^{\na}(L)$,
we define 
\begin{equation}
\mathbb{E}(\varphi)\coloneqq\inf\left\{ \mathbb{E}(\phi)\mid\varphi\leq\phi\in\mathcal{H}_{\mathbb{Q}}^{\na}\right\} \in\mathbb{R}\cup\{-\infty\}.\label{eq: NA MA}
\end{equation}
The space of finite-energy metrics is defined as 
\[
\mathcal{E}_{\na}^{1}(L)\coloneqq\left\{ \varphi\in\mathrm{PSH}^{\na}(L)\mid\mathbb{E}(\varphi)>-\infty\right\} .
\]
There is a metric $d_{1}$ on $\mathcal{E}_{\na}^{1}(L)$ making it
a complete geodesic space, see \cite{reboulet_plurisubharmonic_2022}
and \cite[Example 5.12]{xia_mabuchi_2023}. 

The NA K-energy $\mathbb{M}$ is introduced in \cite{boucksom_uniform_2017}
as a modification of the Donaldson-Futaki invariants. For $\varphi=\varphi_{\mathcal{L}}$
induced by an ample test-configuration $(\mathcal{X},\mathcal{L})$,
it is equal to 
\begin{equation}
\mathbb{M}(\varphi)=\mathrm{DF}(\mathcal{X},\mathcal{L})-V^{-1}(\mathcal{X}_{0}-\mathcal{X}_{0,\mathrm{red}})\cdot\mathcal{L}^{n}\leq\mathrm{DF}(\mathcal{X},\mathcal{L}).\label{eq: NA K-energy}
\end{equation}
So if $\mathcal{X}_{0}$ is reduced, $\mathbb{M}(\varphi)$ is the
same as $\mathrm{DF}(\mathcal{X},\mathcal{L})$. It can be extended
to a functional $\mathbb{M}:\mathcal{E}_{\na}^{1}(L)\rightarrow\bbr\cup\{\infty\}$,
see \cite[§4.1]{boucksom_non-archimedean_2023}. Compared with $\mathrm{DF}$,
$\mathbb{M}$ is homogeneous, i.e. $\mathbb{M}(a*\phi)=a\mathbb{M}(\phi)$
for $a>0$. 
\begin{rem}
The Definition \ref{def: K-stability} (3) for K-semistability is
equivalent to that $\mathbb{M}\geq0$ on $\mathcal{H}_{\mathbb{Q}}^{\na}$.
One direction follows by the inequality (\ref{eq: NA K-energy}).
Another direction follows by \cite[Proposition 7.16]{boucksom_uniform_2017}. 
\end{rem}

\subsection{\label{subsec: limit NA metric}The limit NA metric associated to
a psh ray}

We recall an important construction in \cite[§4.2]{berman_variational_2021},
which assigns a NA metric to a sublinear psh ray. 

Fix a reference metric $\phi_{r}\in\mathcal{H}(L)$ and let $\omega_{r}=dd^{c}\phi_{r}$.
Suppose $\phi=(\phi_{t})_{t>0}\subset\mathrm{PSH}(L)$ is sublinear
psh ray, where ``sublinear'' means that $u_{t}\coloneqq\phi_{t}-\phi_{r}\leq at+C$
for all $t>0$, and $a>0$, $C$ are constants. Let $\triangle\coloneqq\{\left|\tau\right|<1\}$
be the unit disk. The function 
\begin{equation}
V(x,\tau)\coloneqq u_{-\log\left|\tau\right|^{2}}(x)+a\log\left|\tau\right|^{2}\label{eq:psh on product}
\end{equation}
is bounded above and defines a $\pi_{1}^{*}\omega_{r}$-psh function
on $X\times\triangle$. For a divisorial valuation $v\in X^{\mathrm{div}}$,
let $\sigma(v)\in\left(X\times\mathbb{C}\right)^{\mathrm{div}}$ be
the Gauss extension, which is $\mathbb{C}^{*}$-invariant and such
that $\sigma(v)(\tau)=1$. Then we define (\cite[Definition 4.2]{berman_variational_2021})
\begin{equation}
\phi^{\na}:X^{\mathrm{div}}\rightarrow\mathbb{R},\ \phi^{\na}(v)=-\sigma(v)(V)+a,\label{eq: define u^NA}
\end{equation}
where $\sigma(v)(V)$ is the generic Lelong number, see \cite[§1.4]{xia_singularities_2025}.
Since $\sigma(v)(\log\left|\tau\right|^{2})=1$, it is independent
of the choices of $a$. By \cite[Theorem 6.2]{berman_variational_2021},
$\phi^{\na}$ can be extended uniquely to a metric in $\mathrm{PSH}^{\na}(L)$.
We call it the \emph{limit NA metric} associated to $\phi=(\phi_{t})_{t>0}$. 

The $L^{1}$-geodesic rays are sublinear psh rays, and their limit
NA metrics belong to $\mathcal{E}_{\na}^{1}(L)$. This gives us a
map 
\begin{equation}
\Pi:\mathcal{R}^{1}(L)\rightarrow\mathcal{E}_{\na}^{1}(L),\ \ell\mapsto\ell^{\na},\label{eq: limit NA metric}
\end{equation}
which is equivariant w.r.t. time-scaling on $\mathcal{R}^{1}(L)$
and the $\mathbb{R}_{>0}$-action (\ref{eq: R+ action on func}) on
$\mathcal{E}_{\na}^{1}(L)$. 

By \cite[Theorem 6.4]{berman_variational_2021}, for any $L^{1}$-geodesic
ray $(\ell_{t})$, we have 
\begin{equation}
\mathbb{E}(\ell^{\na})\geq\lim_{t\rightarrow\infty}\frac{1}{t}\mathcal{E}(\ell_{t}).\label{eq:NA E > slope}
\end{equation}
The inequality may be strict, and the equality holds if and only if
$(\ell_{t})$ is maximal in the following sense. 
\begin{defn}[\cite{berman_variational_2021}]
\label{def: BBJ max ray} A $L^{1}$-geodesic ray $(\ell_{t})_{t\geq0}$
is called \emph{maximal} if for any sublinear psh ray $\phi=(\phi_{t})_{t>0}\subset\mathcal{E}^{1}(L)$
such that $\lim_{t\rightarrow0}\phi_{t}\leq\ell_{0}$ and $\phi^{\na}\leq\ell^{\na}$,
we have $\phi_{t}\leq\ell_{t}$ for all $t>0$.
\end{defn}

Conversely, by \cite[Theorem 6.6]{berman_variational_2021}, for any
$\ell_{0}\in\mathcal{E}^{1}(L)$ and $\vphi\in\mathcal{E}_{\na}^{1}(L)$,
there is a unique maximal $L^{1}$-geodesic ray $\ell$ starting from
$\ell_{0}$ and $\ell^{\na}=\vphi$. When $\ell_{0}\in\mathcal{H}(L)$
and $\varphi$ is induced by a test-configuration, $\ell$ is the
geodesic ray constructed by Phong--Sturm \cite{phong_test_2007}.
Proposition \ref{prop: for BBJ imbedd} allows us to define a starting-point-free
version of BBJ embedding 
\begin{equation}
\iota:\mathcal{E}_{\na}^{1}(L)\hookrightarrow\mathcal{R}^{1}(L)/_{\sim},\ \varphi\mapsto[\ell],\label{eq:BBJ emb}
\end{equation}
where $\ell$ is any maximal $L^{1}$-geodesic ray such that $\ell^{\na}=\vphi$.
One can check that $\iota$ is equivariant w.r.t. the $\mathbb{R}_{>0}$-action.
Reboulet \cite[Theorem 4.4.1]{reboulet_space_2023} shows that $\iota$
is an isometry with respect to the metric $d_{1}$ and $d_{1}^{c}$.
For the $d_{p}$ metric, there is a relevant result due to Finski
\cite[Theorem 2.7]{finski_geometry_2024}. 

\subsection{The NA formula for Chow-weights}

We have defined the Chow-weights $M_{X}$ as the limit slopes of $\mathbf{M}_{X}$.
Now we express this limit by the NA MA-energy $\mathbb{E}$. 

First we clarify some notations. Let $X\subset\mathbb{P}V^{*}$ be
a smooth projective variety. Take a reference metric $\phi_{r}$ on
$L=\mathcal{O}(1)|_{X}$ with curvature form $\omega_{r}$. The Fubini--Study
map $\mathbf{FS}:\mathcal{H}(V)\rightarrow\mathcal{H}(L)$ is (\ref{eq:def of FS}).
NA Fubini--Study map $\mathrm{FS}:\mathcal{N}(V)\rightarrow\mathrm{PSH}^{\na}(L)$
is defined by 
\begin{equation}
\mathrm{FS}(\chi)=\max_{1\leq i\leq N}\left(\log\left|s_{i}\right|_{tr}+\chi(s_{i})\right),\label{eq:def NA FS}
\end{equation}
where $(s_{i})\subset V$ is any basis diagonalizing $\chi$, and
we take $s_{i}$ as sections of $L$. 
\begin{thm}
\label{thm: NA express Chow}Let $X\subset\mathbb{P}V^{*}$ be a smooth
projective variety, and $H=(H_{t})_{t\geq0}\subset\mathcal{H}(V)$
be a geodesic ray with limit NA norm $\chi_{H}$. 

(1) The limit NA metric associated to the psh ray $\left(\mathbf{FS}(H_{t})\right)_{t}$
is $\mathrm{FS}(\chi_{H})$. 

(2) NA formula for Chow-weight: 
\begin{equation}
M_{X}(\chi_{H})\coloneqq\lim_{t\rightarrow\infty}\frac{1}{t}\mathbf{M}_{X}\left(H_{t}\right)=\mathbb{E}\circ\mathrm{FS}(\chi_{H})-E_{V}(\chi_{H}),\label{eq: Chow NA express}
\end{equation}
where $\mathbb{E}$ is the NA MA-energy, $\mathrm{FS}$ is the NA
Fubini--Study map (\ref{eq:def NA FS}) and $E_{V}(\chi_{H})$ is
the volume (\ref{eq: volume NA}). 
\end{thm}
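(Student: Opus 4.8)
The plan is to reduce to an explicit geodesic ray, evaluate its limit NA metric directly from the definition, and then split the Kempf--Ness functional into a Monge--Amp\`ere part and a volume part.

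For (1), every geodesic ray in $\mathcal{H}(V)$ has the form $H_t=H_0(e^{-tA}\cdot,\cdot)$ with $A$ an $H_0$-self-adjoint operator, so I fix an $H_0$-orthonormal eigenbasis $(s_i)$ with $As_i=\lambda_is_i$. By Example \ref{exa: NAlimit of geodesic}, $\chi_H$ is diagonalized by $(s_i)$ with $\chi_H(s_i)=\lambda_i$, and by (\ref{eq:def of FS}), $\mathbf{FS}(H_t)=\phi_r+\log\sum_i e^{t\lambda_i}|s_i|_{\phi_r}^2$, a sublinear psh ray. Taking $a=\max_i\lambda_i$, the $\pi_1^*\omega_r$-psh function (\ref{eq:psh on product}) is $V(x,\tau)=\log\sum_i|\tau|^{2(a-\lambda_i)}|s_i|_{\phi_r}^2(x)$, which is bounded above on $X\times\triangle$ since all exponents are non-negative. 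For a divisorial $v\in X^{\mathrm{div}}$, the Gauss extension $\sigma(v)$ restricts to $v$ on $\mathbb{C}(X)$ and satisfies $\sigma(v)(\log|\tau|^2)=1$; since $\log\sum_i e^{w_i}$ differs from $\max_i w_i$ by a bounded function, its generic Lelong number along $\sigma(v)$ equals $\min_i\sigma(v)(w_i)$, and here $\sigma(v)\bigl(\log(|\tau|^{2(a-\lambda_i)}|s_i|_{\phi_r}^2)\bigr)=(a-\lambda_i)+v(s_i)=(a-\lambda_i)-\log|s_i|_{tr}(v)$. Hence $\sigma(v)(V)=a-\max_i\bigl(\lambda_i+\log|s_i|_{tr}(v)\bigr)$, and by (\ref{eq: define u^NA}), $\bigl(\mathbf{FS}(H_\bullet)\bigr)^{\na}(v)=-\sigma(v)(V)+a=\max_i\bigl(\log|s_i|_{tr}(v)+\chi_H(s_i)\bigr)=\mathrm{FS}(\chi_H)(v)$; as both metrics are continuous on the compact space $X^{\an}$ and $X^{\mathrm{div}}$ is dense, they coincide.

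For (2), write $\tfrac1t\mathbf{M}_X(H_t)=\tfrac1t\mathcal{E}_X\bigl(\mathbf{FS}(H_t),\mathbf{FS}(H_r)\bigr)-\tfrac1t\mathbf{E}_V(H_t,H_r)$. The volume term is elementary: with $(s_i)$ as above $H_t(s_i,s_j)=e^{-t\lambda_i}\delta_{ij}$, so by (\ref{eq: Archi E_V}) one has $\mathbf{E}_V(H_t,H_r)=\tfrac tN\sum_i\lambda_i+\mathrm{O}(1)$, hence $\tfrac1t\mathbf{E}_V(H_t,H_r)\to E_V(\chi_H)$. For the Monge--Amp\`ere term, (1) tells us the psh ray $\bigl(\mathbf{FS}(H_t)\bigr)$ has limit NA metric $\mathrm{FS}(\chi_H)$; applying (\ref{eq:NA E > slope}) to the maximal $L^1$-geodesic ray $\ell$ with $\ell_0=\mathbf{FS}(H_0)$ and $\ell^{\na}=\mathrm{FS}(\chi_H)$ provided by \cite[Theorem 6.6]{berman_variational_2021}, together with $\mathbf{FS}(H_t)\le\ell_t$ (maximality, Definition \ref{def: BBJ max ray}) and the monotonicity of $\mathcal{E}_X$, gives $\lim_t\tfrac1t\mathcal{E}_X\bigl(\mathbf{FS}(H_t),\mathbf{FS}(H_r)\bigr)\le\mathbb{E}(\mathrm{FS}(\chi_H))$, i.e. $M_X(\chi_H)\le\mathbb{E}(\mathrm{FS}(\chi_H))-E_V(\chi_H)$. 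Since $\mathrm{FS}\colon(\mathcal{N}(V),d_\infty)\to(\mathrm{CPSH}^{\na},\|\cdot\|_\infty)$ and $\mathbb{E}$ are Lipschitz, while $M_X$ (Proposition \ref{prop:convexity of Chow}) and $E_V$ are Lipschitz on $\mathcal{N}(V)$, both sides of the asserted identity are continuous in $\chi$, so it suffices to prove equality on the dense set of $\chi$ with rational jumping numbers. For such $\chi$, $\mathrm{FS}(\chi)\in\mathcal{H}^{\na}_{\mathbb{Q}}$ (the diagonalizing sections are base-point-free on $X$ and the weights rational), so $\mathbb{E}(\mathrm{FS}(\chi))$ is the leading term of the weight expansion (\ref{eq:expand weight}) of the associated test-configuration; on the other hand, Theorem \ref{thm: PS} identifies the limit slope of $\mathbf{M}_X$ along $H_t$ with the Hilbert--Mumford weight of the Chow form $f_X$ under the corresponding $1$-PS, and the classical asymptotic Riemann--Roch computation of that weight (\cites{zhang_heights_1996}{phong_stability_2003}{paul_geometric_2004}) equals precisely this leading term minus the average weight $E_V(\chi)$, giving the reverse inequality and hence the identity.

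I expect the last step to be the main obstacle: showing that the Fubini--Study ray actually realizes the full non-Archimedean Monge--Amp\`ere energy of its limit (the inequality opposite to the one supplied by maximality). Reducing it to the classical Chow-weight computation, as above, is the cleanest route, but it forces one to verify that the specific normalization (\ref{eq:norm chow}) of $\|\cdot\|_{\mathrm{Chow}}$ --- the very one that makes Theorem \ref{thm: PS} hold --- is also the one under which the leading weight coefficient is exactly $\mathbb{E}(\mathrm{FS}(\chi))$; this bookkeeping of normalization constants is where the real care is needed.
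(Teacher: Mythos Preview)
Your proof of (1) is correct and matches the paper's argument essentially line for line: diagonalize, write the psh function $V$ explicitly, replace $\log\sum$ by $\max$ (since they differ by a bounded constant, hence have the same Lelong numbers), and compute.

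For (2), your volume computation is identical to the paper's. The difference lies in the Monge--Amp\`ere term. You obtain the inequality $\lim_t\tfrac1t\mathcal{E}_X(\mathbf{FS}(H_t))\le\mathbb{E}(\mathrm{FS}(\chi_H))$ via the maximal ray, and then propose to close the gap by a density/continuity argument reducing to rational $\chi$, where you invoke a ``classical asymptotic Riemann--Roch computation'' of the Chow weight. This route is not wrong, but it is considerably more work than necessary, and the obstacle you flag (matching normalizations so that the Hilbert--Mumford weight equals exactly $\mathbb{E}(\mathrm{FS}(\chi))-E_V(\chi)$) is a genuine one: it amounts to redoing, in the rational case, precisely the identity you are trying to prove.

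The paper bypasses all of this with a single observation: the ray $\bigl(\mathbf{FS}(H_t)\bigr)_t$ is a psh ray \emph{with algebraic singularities} in the sense of \cite{berman_variational_2021}, since $u_t=\log\sum_i e^{t\lambda_i}|s_i|_{\phi_r}^2$ is visibly of this type. For such rays, \cite[Lemma~5.3]{berman_variational_2021} gives directly
\[
\lim_{t\to\infty}\tfrac1t\,\mathcal{E}_X\bigl(\mathbf{FS}(H_t)\bigr)=\mathbb{E}\bigl(u^{\na}\bigr)=\mathbb{E}\bigl(\mathrm{FS}(\chi_H)\bigr),
\]
with no inequality to upgrade and no normalization bookkeeping. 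Combined with the volume term, this finishes (2) in one line. Your maximality-plus-density strategy recovers the same conclusion but trades a one-line citation for a nontrivial verification that you yourself identify as the main difficulty.
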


\begin{proof}
(1) It follows by a direct computation. As Example \ref{exa: NAlimit of geodesic},
we take a $H_{0}$-orthonormal basis $(s_{i})$ diagonalizing all
$H_{t}$ and assume $H_{t}(s_{i})^{2}=e^{-t\lambda_{i}}$. Then $\chi_{H}$
is also diagonalized by $(s_{i})$ and $\chi_{H}(s_{i})=\lambda_{i}$.
Hence $\mathrm{FS}(\chi_{H})=\max_{i}\left(\log\left|s_{i}\right|+\lambda_{i}\right)$.
By Proposition \ref{prop: conv of Mk} (1), $\mathbf{FS}(H_{t})$
is a psh ray. Explicitly, we have 
\[
u_{t}\coloneqq\mathbf{FS}(H_{t})-\phi_{r}=\log\sum_{i=1}^{N}e^{t\lambda_{i}}\left|s_{i}\right|_{\phi_{r}}^{2}\leq t\varLambda+C,\ \varLambda\coloneqq\max_{i}\lambda_{i}.
\]
The induced $\pi_{1}^{*}\omega_{r}$-psh function (\ref{eq:psh on product})
on $X\times\triangle$ is
\[
V(x,\tau)\coloneqq\log\sum_{i=1}^{N}\left|\tau\right|^{-2\mu_{i}}\left|s_{i}(x)\right|_{\phi_{r}}^{2},\ \mu_{i}\coloneqq\lambda_{i}-\varLambda.
\]
Denote by $u^{\na}$ the limit NA metric associated to $\left(\mathbf{FS}(H_{t})\right)_{t}$.
By definition (\ref{eq: define u^NA}), 
\[
u^{\na}(v)=-\sigma(v)(V)+\varLambda,\ \forall v\in X^{\mathrm{div}}.
\]
To compute the Lelong number, it is easier to consider 
\[
U(x,\tau)\coloneqq\log\max_{1\leq i\leq N}\left|\tau\right|^{-2\mu_{i}}\left|s_{i}(x)\right|_{\phi_{r}}^{2}=\max_{1\leq i\leq N}\left(\log\left|s_{i}(x)\right|_{\phi_{r}}^{2}+\log\left|\tau\right|^{-2\mu_{i}}\right).
\]
It is also $\pi_{1}^{*}\omega_{r}$-psh and satisfies $U\leq V\leq U+\log N$,
so we have 
\begin{eqnarray*}
\sigma(v)(V)=\sigma(v)(U) & = & \min_{1\leq i\leq N}\sigma(v)\left(\log\left|s_{i}(x)\right|_{\phi_{r}}^{2}\right)+\sigma(v)\left(\log\left|\tau\right|^{-2\mu_{i}}\right)\\
 & = & \min_{i}\left(v(s_{i})-\mu_{i}\right),
\end{eqnarray*}
where we used some basic properties of Lelong number, see \cite[§1.4]{xia_singularities_2025}.
Finally, 
\[
u^{\na}(v)=-\sigma(v)(V)+\varLambda=\max_{i}\left(\lambda_{i}-v(s_{i})\right)=\mathrm{FS}(\chi_{H})(v).
\]

(2) In the terminology of \cite{berman_variational_2021}, $\left(\mathbf{FS}(H_{t})\right)$
is a psh ray with algebraic singularities. By \cite[Lemma 5.3]{berman_variational_2021},
we have 
\[
\lim_{t\rightarrow\infty}\frac{1}{t}\mathcal{E}\left(\mathbf{FS}(H_{t})\right)=\mathbb{E}(u^{\na}).
\]
For the another term of $\mathbf{M}_{X}$, we have 
\[
\mathbf{E}_{V}(H_{t},H_{0})=\frac{t}{N}\sum_{i=1}^{N}\lambda_{i}=tE_{V}(\chi_{H}).
\]
Then (\ref{eq: Chow NA express}) follows. 
\end{proof}

\subsection{\label{subsec:FS SN}NA Fubini--Study maps \& sup-norm maps}

In the non-Archimedean setting, Boucksom--Jonsson \cite{boucksom_non-archimedean_2018}
introduced two maps which play the role of Fubini--Study map $\mathbf{FS}_{k}$
and Hilbert map $\mathbf{H}_{k}$. 
\begin{defn}
(1) Given a polarized variety $(X,L)$. Suppose $kL$ is globally
generated. The NA Fubini--Study map $\mathrm{FS}_{k}:\mathcal{N}(R_{k})\rightarrow\mathcal{H}_{\mathbb{R}}^{\na}$
is defined by 
\begin{equation}
\mathrm{FS}_{k}(\chi)\coloneqq\frac{1}{k}\sup_{s\in R_{k}\backslash\{0\}}\left(\log\left|s\right|_{tr}+\chi(s)\right)=\frac{1}{k}\max_{1\leq i\leq N_{k}}\left(\log\left|s_{i}\right|_{tr}+\chi(s_{i})\right),\label{eq: def FS_k}
\end{equation}
where $(s_{i})$ is any basis diagonalizing $\chi$. Note that $\mathrm{FS}_{k}(\chi_{tr})=0$
and $\mathrm{FS}_{k}(\chi)$ is increasing in $\chi$. For $c\in\mathbb{R}$
and $a>0$, we have 
\[
\mathrm{FS}_{k}(\chi+kc)=\mathrm{FS}_{k}(\chi)+c,\ \mathrm{FS}_{k}(a\chi)=a*\mathrm{FS}_{k}(\chi),
\]
where $a*\mathrm{FS}_{k}(\chi)$ is defined by (\ref{eq: R+ action on func}). 

(2) Let $\mathcal{L}^{\infty}$ be the set of bounded functions on
$X^{\an}$. The sup-norm map $\mathrm{SN}_{k}:\mathcal{L}^{\infty}\rightarrow\mathcal{N}(R_{k})$
is defined by 
\begin{equation}
\mathrm{SN}_{k}(\varphi)(s)\coloneqq-\log\left(\sup_{X^{\an}}\left|s\right|_{tr}e^{-k\varphi}\right)=\inf_{v\in X^{\an}}\left\{ v(s)+k\varphi(v)\right\} ,\label{eq: def SN_k}
\end{equation}
where $\varphi\in\mathcal{L}^{\infty}$ and $s\in R_{k}$. Note that
$\mathrm{SN}_{k}(0)=\chi_{tr}$ and $\mathrm{SN}_{k}(\varphi)$ is
increasing in $\varphi$. For $c\in\mathbb{R}$ and $a>0$, we have
\[
\mathrm{SN}_{k}(\varphi+c)=\mathrm{SN}_{k}(\varphi)+kc,\ \mathrm{SN}_{k}(a*\varphi)=a\mathrm{SN}_{k}(\varphi).
\]

The interactive properties between them are important in the sequel.
Note that many properties in below had been included in \cite[Lemma 7.23]{boucksom_spaces_2021}. 
\end{defn}

\begin{prop}
\label{prop: FS vs SN}Given a polarized variety $(X,L)$ such that
$kL$ is globally generated. 

(1) For any $\phi\in\mathcal{L}^{\infty}$, we have $[\phi]_{k}\coloneqq\mathrm{FS}_{k}\circ\mathrm{SN}_{k}(\phi)\leq\phi$
on $X^{\an}$.

(2) For any $\chi\in\mathcal{N}(R_{k})$, we have $\mathrm{SN}_{k}\circ\mathrm{FS}_{k}(\chi)\geq\chi$
on $R_{k}$. 

(3) The image set $\mathcal{FS}_{k}\coloneqq\mathrm{FS}_{k}\left(\mathcal{N}(R_{k})\right)$
and $\mathcal{SN}_{k}\coloneqq\mathrm{SN}_{k}\left(\mathcal{L}^{\infty}\right)$
can be characterized in the following way
\begin{eqnarray}
\mathcal{FS}_{k} & = & \{\phi\in\mathcal{L}^{\infty}\mid\mathrm{FS}_{k}\circ\mathrm{SN}_{k}(\phi)=\phi\};\label{eq: FS SN=00003Did}\\
\mathcal{SN}_{k} & = & \{\chi\in\mathcal{N}(R_{k})\mid\mathrm{SN}_{k}\circ\mathrm{FS}_{k}(\chi)=\chi\}.\label{eq:SN FS=00003Did}
\end{eqnarray}
Moreover, $\mathrm{FS}_{k}:\mathcal{SN}_{k}\rightarrow\mathcal{FS}_{k}$
is a bijection with inverse $\mathrm{SN}_{k}$. 
\[
\begin{array}{ccc}
\mathcal{N}(R_{k}) & \xrightleftharpoons[\mathrm{SN}_{k}]{\mathrm{FS}_{k}} & \mathcal{L}^{\infty}\\
\cup &  & \cup\\
\mathcal{SN}_{k} & \stackrel{\mathrm{1-1}}{\longleftrightarrow} & \mathcal{FS}_{k}
\end{array}
\]

(4) The map $\mathrm{FS}_{k}$ can be determined by $\mathrm{SN}_{k}$
(and vice versa) in the following way 
\begin{eqnarray*}
\mathrm{FS}_{k}(\chi)(v) & = & \inf\left\{ \phi(v)\mid\phi\in\mathcal{L}^{\infty},\ \mathrm{SN}_{k}(\phi)\geq\chi\right\} ,\ v\in X^{\an};\\
\mathrm{SN}_{k}(\phi)(s) & = & \sup\left\{ \chi(s)\mid\chi\in\mathcal{N}(R_{k}),\ \mathrm{FS}_{k}(\chi)\leq\phi\right\} ,\ s\in R_{k}.
\end{eqnarray*}
\end{prop}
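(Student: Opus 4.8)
The plan is to establish the two elementary inequalities (1) and (2) by directly unwinding the defining formulas (\ref{eq: def FS_k}) and (\ref{eq: def SN_k}), and then to deduce (3) and (4) formally: once monotonicity is taken into account, (1) and (2) say precisely that $(\mathrm{FS}_{k},\mathrm{SN}_{k})$ is a monotone Galois connection between the ordered sets $(\mathcal{N}(R_{k}),\leq)$ and $(\mathcal{L}^{\infty},\leq)$, and everything else is order theory.

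For (1): for $v\in X^{\an}$ one has $k\,[\phi]_{k}(v)=\sup_{s\neq0}\bigl(-v(s)+\inf_{w}(w(s)+k\phi(w))\bigr)$, and bounding the inner infimum by its value at $w=v$ gives $-v(s)+\inf_{w}(w(s)+k\phi(w))\leq k\phi(v)$ for every $s$, hence $[\phi]_{k}(v)\leq\phi(v)$. For (2), symmetrically, $\mathrm{SN}_{k}\circ\mathrm{FS}_{k}(\chi)(s)=\inf_{v}\bigl(v(s)+\sup_{t\neq0}(-v(t)+\chi(t))\bigr)$, and bounding the inner supremum below by its value at $t=s$ gives $v(s)+\sup_{t\neq0}(-v(t)+\chi(t))\geq\chi(s)$ for every $v$, hence $\mathrm{SN}_{k}\circ\mathrm{FS}_{k}(\chi)\geq\chi$. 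Both are one-line estimates swapping the order of an infimum and a supremum.

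Next I record the adjunction $\mathrm{FS}_{k}(\chi)\leq\phi\iff\chi\leq\mathrm{SN}_{k}(\phi)$: if $\mathrm{FS}_{k}(\chi)\leq\phi$ apply the monotone map $\mathrm{SN}_{k}$ and invoke (2); if $\chi\leq\mathrm{SN}_{k}(\phi)$ apply the monotone map $\mathrm{FS}_{k}$ and invoke (1). Then (3) follows: if $\phi=\mathrm{FS}_{k}(\chi)$, then (2) gives $\mathrm{SN}_{k}(\phi)\geq\chi$, so $\mathrm{FS}_{k}\circ\mathrm{SN}_{k}(\phi)\geq\mathrm{FS}_{k}(\chi)=\phi$ by monotonicity, which together with (1) forces $\mathrm{FS}_{k}\circ\mathrm{SN}_{k}(\phi)=\phi$; the reverse inclusion in (\ref{eq: FS SN=00003Did}) is trivial, and (\ref{eq:SN FS=00003Did}) is the mirror argument with the roles of (1) and (2) exchanged. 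The bijection statement then follows because $\mathrm{FS}_{k}(\mathcal{SN}_{k})\subseteq\mathcal{FS}_{k}$ and $\mathrm{SN}_{k}(\mathcal{FS}_{k})\subseteq\mathcal{SN}_{k}$, while by (\ref{eq: FS SN=00003Did}) and (\ref{eq:SN FS=00003Did}) the composites restrict to the identity on these subsets. For (4): given $\chi$, any $\phi\in\mathcal{L}^{\infty}$ with $\mathrm{SN}_{k}(\phi)\geq\chi$ has $\phi\geq\mathrm{FS}_{k}\circ\mathrm{SN}_{k}(\phi)\geq\mathrm{FS}_{k}(\chi)$ by (1) and monotonicity, while $\phi=\mathrm{FS}_{k}(\chi)$ is itself admissible by (2), so the pointwise infimum equals $\mathrm{FS}_{k}(\chi)$; the supremum formula for $\mathrm{SN}_{k}(\phi)(s)$ is dual, using that $\chi=\mathrm{SN}_{k}(\phi)$ is admissible by (1) and that every admissible $\chi$ satisfies $\chi\leq\mathrm{SN}_{k}\circ\mathrm{FS}_{k}(\chi)\leq\mathrm{SN}_{k}(\phi)$ by (2) and monotonicity.

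I do not anticipate a serious obstacle: the whole proposition is a formal consequence of the two inequalities (1), (2) and the monotonicity of $\mathrm{FS}_{k}$ and $\mathrm{SN}_{k}$ recorded in their definitions. The only points needing a moment's care are verifying that the competitors appearing in (4) actually lie in the relevant index sets --- $\mathrm{FS}_{k}(\chi)\in\mathcal{L}^{\infty}$ because it is a Fubini--Study metric, hence continuous on the compact space $X^{\an}$, and $\mathrm{SN}_{k}(\phi)$ is genuinely a non-Archimedean norm on $R_{k}$ --- and keeping the direction of monotonicity straight so that the adjunction is stated with the inequalities pointing the right way.
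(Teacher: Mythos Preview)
Your proposal is correct and follows essentially the same approach as the paper: both establish (1) and (2) by directly unwinding the definitions (bounding the inner $\inf$/$\sup$ at the outer variable), and then derive (3) and (4) from (1), (2), and the monotonicity of $\mathrm{FS}_{k}$ and $\mathrm{SN}_{k}$. Your explicit framing of the pair $(\mathrm{FS}_{k},\mathrm{SN}_{k})$ as a monotone Galois connection is a nice conceptual packaging that the paper leaves implicit, but the underlying logical steps are the same.
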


\begin{proof}
(1) Denote $\chi_{\phi}\coloneqq\mathrm{SN}_{k}(\phi)$. For any $s\in R_{k}$
and $v\in X^{\an}$, by definition (\ref{eq: def SN_k}) of $\chi_{\phi}(s)$,
we have 
\[
\log\left|s\right|(v)+\chi_{\phi}(s)\leq\log\left|s\right|(v)+k\phi(v)-\log\left|s\right|(v)=k\phi(v).
\]
Take the supremum over $s\in R_{k}\backslash\{0\}$, we obtain $\mathrm{FS}_{k}(\chi_{\phi})(v)\leq\phi(v)$. 

(2) Denote $\phi_{\chi}\coloneqq\mathrm{FS}_{k}(\chi)$. By definition
(\ref{eq: def FS_k}), for any $s\in R_{k}\backslash\{0\}$, we have
$k\phi_{\chi}\geq\log\left|s\right|+\chi(s)$, i.e. $k\phi_{\chi}(v)+v(s)\geq\chi(s)$
for all $v\in X^{\an}$. Take the infimum over $v\in X^{\an}$, we
obtain $\mathrm{SN}_{k}(\phi_{\chi})(s)\geq\chi(s).$

(3) Obviously, $\mathcal{FS}_{k}$ contains the right-hand side of
(\ref{eq: FS SN=00003Did}). Conversely, let $\phi=\mathrm{FS}_{k}(\chi)\in\mathcal{FS}_{k}$,
by (1) we have $\mathrm{FS}_{k}\circ\mathrm{SN}_{k}(\phi)\leq\phi$.
For the reverse inequality, by (2) we have $\mathrm{SN}_{k}(\phi)=\mathrm{SN}_{k}\circ\mathrm{FS}_{k}(\chi)\geq\chi$.
Since $\mathrm{FS}_{k}$ is increasing, it implies $\mathrm{FS}_{k}\circ\mathrm{SN}_{k}(\phi)\geq\mathrm{FS}_{k}(\chi)=\phi$.
The proof of (\ref{eq:SN FS=00003Did}) is similar. 

Next, we show the bijectivity of $\mathrm{FS}_{k}:\mathcal{SN}_{k}\rightarrow\mathcal{FS}_{k}$.
For any $\phi\in\mathcal{FS}_{k}$, we have showed $\mathrm{FS}_{k}\circ\mathrm{SN}_{k}(\phi)=\phi$,
so the map is surjective. If $\mathrm{FS}_{k}(\chi)=\mathrm{FS}_{k}(\chi')$
for $\chi,\chi'\in\mathcal{SN}_{k}$, taking $\mathrm{SN}_{k}$ on
both sides, by (\ref{eq:SN FS=00003Did}) we have $\chi=\chi'$. Hence
the map is injective. 

(4) We only show the first identity, the second one is similar. Let
$\phi=\mathrm{FS}_{k}(\chi)$, by (2) we have $\mathrm{SN}_{k}(\phi)\geq\chi$,
hence $\mathrm{FS}_{k}(\chi)\geq\inf$. On the other hand, for any
$\phi\in\mathcal{L}^{\infty}$ such that $\mathrm{SN}_{k}(\phi)\geq\chi$.
Since $\mathrm{FS}_{k}$ is increasing, we have $\mathrm{FS}_{k}\circ\mathrm{SN}_{k}(\phi)\geq\mathrm{FS}_{k}(\chi)$.
By (1), it implies $\phi\geq\mathrm{FS}_{k}(\chi)$, thus $\inf\geq\mathrm{FS}_{k}(\chi)$,
so $\inf=\mathrm{FS}_{k}(\chi)$. 
\end{proof}

\subsection{Properties of maximal Chow-destabilizers }

Let $X\subset\mathbb{P}V^{*}$ be a smooth projective variety. Set
$L=\mathcal{O}(1)|_{X}$. We have maps $\mathrm{FS}:\mathcal{N}(V)\rightarrow\mathrm{PSH}^{\na}(L)$
(\ref{eq:def NA FS}) and $\mathrm{SN}:\mathrm{PSH}^{\na}(L)\rightarrow\mathcal{N}(V)$
which is defined as 
\[
\mathrm{SN}(\varphi)(s)\coloneqq\inf_{v\in X^{\an}}\left\{ v(s)+\varphi(v)\right\} ,\ \forall s\in V,
\]
where we take $s$ as a section of $L$. These two maps also satisfy
the properties in Proposition \ref{prop: FS vs SN}. 
\begin{thm}
\label{thm: property of Chow-destab}Suppose $X\subset\mathbb{P}V^{*}$
is Chow-unstable, let $\bar{\chi}\in\mathcal{N}(V)$ be a maximal
Chow-destabilizer for $X$, namely a minimizer of the normalized Chow-weight
$\bar{M}_{X}(\chi)=M_{X}(\chi)/\left\Vert \chi\right\Vert _{2}$. 

(1) The jumping numbers $\lam_{i}(\bar{\chi})$ satisfy 
\begin{equation}
\sum_{i=1}^{\dim V}\lam_{i}(\bar{\chi})=0\ \textrm{and}\ \lam_{i}(\bar{\chi})\leq\left(-\inf_{\mathcal{N}(V)}\bar{M}\right)^{-1}\left\Vert \bar{\chi}\right\Vert ,\ \forall i.\label{eq: upper of Chow-destab}
\end{equation}

(2) $\mathrm{SN}\circ\mathrm{FS}(\bar{\chi})=\bar{\chi}$. 
\end{thm}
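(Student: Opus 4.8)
The plan is to establish the two statements in order, with the first identity of (1) feeding into both the bound in (1) and the fixed‑point property (2).

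\emph{First identity of (1): $\sum_i\lambda_i(\bar\chi)=0$.} This will be a shift argument. Since $M_X(\chi+c)=M_X(\chi)$ while $c\mapsto\|\chi+c\|_2^2=\frac1N\sum_i(\lambda_i(\chi)-c)^2$ is strictly minimized at $c=E_V(\chi)$ (with $\|\chi\|_2^2=\|\chi-E_V(\chi)\|_2^2+E_V(\chi)^2$), if $E_V(\bar\chi)\neq0$ then replacing $\bar\chi$ by $\bar\chi-E_V(\bar\chi)$ leaves $M_X$ unchanged but strictly decreases $\|\cdot\|_2$. As $X$ is Chow‑unstable we have $\inf M_X<0$, hence by homogeneity $\inf\bar M_X<0$ and $M_X(\bar\chi)<0$; so this strictly decreases the normalized Chow‑weight, contradicting minimality. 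Thus $E_V(\bar\chi)=\frac1N\sum_i\lambda_i(\bar\chi)=0$.

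\emph{Bound in (1).} Normalize $\|\bar\chi\|_2=1$ (scale invariance), fix a basis $(s_i)$ diagonalizing $\bar\chi$ with $\bar\chi(s_1)=\lambda_1(\bar\chi)$ the largest jumping number, and for small $t>0$ let $\chi^{(t)}$ be diagonalized by $(s_i)$ with $\chi^{(t)}(s_1)=\lambda_1(\bar\chi)-t$ and $\chi^{(t)}(s_i)=\bar\chi(s_i)$ for $i\geq2$. Then $\chi^{(t)}\leq\bar\chi$, so by monotonicity of $\mathrm{FS}$ and of $\mathbb{E}$, together with the NA formula of Theorem~\ref{thm: NA express Chow} and $E_V(\chi^{(t)})=-t/N$, one gets $M_X(\chi^{(t)})=\mathbb{E}\circ\mathrm{FS}(\chi^{(t)})-E_V(\chi^{(t)})\leq M_X(\bar\chi)+t/N$, whereas $\|\chi^{(t)}\|_2^2=1-\tfrac{2\lambda_1(\bar\chi)t}{N}+\tfrac{t^2}{N}$. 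Minimality gives $M_X(\chi^{(t)})\geq(\inf\bar M_X)\|\chi^{(t)}\|_2$; combining the two estimates, dividing by $t$ and letting $t\to0^+$ yields $\lambda_1(\bar\chi)\leq1/(-\inf\bar M_X)=\|\bar\chi\|/(-\inf\bar M_X)$, and since $\lambda_i(\bar\chi)\leq\lambda_1(\bar\chi)$ this is the claim.

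\emph{Statement (2).} Put $\tilde\chi:=\mathrm{SN}\circ\mathrm{FS}(\bar\chi)$. By Proposition~\ref{prop: FS vs SN} one has $\tilde\chi\geq\bar\chi$ and $\mathrm{FS}(\tilde\chi)=\mathrm{FS}(\bar\chi)$ (as $\mathrm{FS}(\bar\chi)$ is a fixed point of $\mathrm{FS}\circ\mathrm{SN}$), so Theorem~\ref{thm: NA express Chow} gives $M_X(\tilde\chi)=M_X(\bar\chi)-\bigl(E_V(\tilde\chi)-E_V(\bar\chi)\bigr)\leq M_X(\bar\chi)$, with equality iff $\tilde\chi=\bar\chi$; moreover $\lambda_1(\tilde\chi)=\sup\mathrm{FS}(\tilde\chi)=\sup\mathrm{FS}(\bar\chi)=\lambda_1(\bar\chi)$. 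To show $\tilde\chi\ne\bar\chi$ is impossible I would run a convexity/perturbation argument: along the NA geodesic $\gamma_s$ from $\bar\chi$ to $\tilde\chi$ one has $\bar\chi\leq\gamma_s\leq\tilde\chi$, so $\mathrm{FS}(\gamma_s)$ is constant and $M_X(\gamma_s)$ is affine (and by convexity of $M_X$ this affine formula persists for all $s\leq1$), while $\|\gamma_s\|_2$ is strictly convex; imposing $\bar M_X(\gamma_s)\geq\inf\bar M_X$ forces rigid relations between the increments $\delta_i:=\tilde\chi(s_i)-\bar\chi(s_i)\geq0$ and the $\lambda_i(\bar\chi)$, and combined with the uniqueness of the minimizer (Theorem~\ref{thm:existence-uniqueness of Chow}) — any competitor realizing $\inf\bar M_X$ is $t\bar\chi$, whence $\mathrm{FS}(t\bar\chi)=t*\mathrm{FS}(\bar\chi)$ and $\sup\mathrm{FS}(\bar\chi)=\lambda_1(\bar\chi)>0$ force $t=1$ — this should yield $\delta=0$, i.e. $\tilde\chi=\bar\chi$. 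The main obstacle is precisely this last implication: $\mathrm{SN}\circ\mathrm{FS}$ typically \emph{increases} $\|\cdot\|_2$ (it ``concavifies'' the filtration), so one cannot simply compare $\bar M_X(\tilde\chi)$ with $\bar M_X(\bar\chi)$; the minimality of $\bar\chi$ has to be used against the whole geodesic family (not just $\tilde\chi$ and its scalings) in order to pin down $\|\tilde\chi\|_2$, equivalently to force $(\delta_i)$ proportional to $(\lambda_i(\bar\chi))$, which together with $\delta\geq0$ and $\sum_i\lambda_i(\bar\chi)=0$ gives $\delta=0$.
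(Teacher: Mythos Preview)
Your argument for part (1) is correct, though it differs from the paper's. The paper introduces the auxiliary functional $\breve{M}(\chi)=M_X(\chi)+\tfrac12\|\chi\|_2^2$, shows that a suitable rescaling $\breve\chi$ of $\bar\chi$ is its \emph{unique} minimizer, and then rewrites $\breve{M}(\chi)=\mathbb{E}\circ\mathrm{FS}(\chi)+\tfrac{1}{2N}\sum_i(\lambda_i(\chi)-1)^2-\tfrac12$. The first identity follows from the shift variation $t\mapsto\breve{M}(\breve\chi+t)$, and the bound $\lambda_i(\breve\chi)\leq1$ follows by comparing $\breve\chi$ with the truncation $\breve\chi\wedge(\chi_{tr}+1)$. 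Your direct perturbation route (shift for the identity, decrease the top jumping number for the bound) works equally well and is arguably more elementary; what you lose is the single clean object $\breve{M}$ that unifies (1) and (2).

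Part (2) is where your proposal has a genuine gap. You correctly observe that $\tilde\chi:=\mathrm{SN}\circ\mathrm{FS}(\bar\chi)\geq\bar\chi$, that $\mathrm{FS}(\tilde\chi)=\mathrm{FS}(\bar\chi)$, and that $\lambda_1(\tilde\chi)=\lambda_1(\bar\chi)$; but your stated plan---force $(\delta_i)$ proportional to $(\lambda_i(\bar\chi))$ by invoking uniqueness of the minimizer along the geodesic $\gamma_s$---does not go through: you never show that any $\gamma_s$ with $s>0$ actually attains $\inf\bar M_X$, so uniqueness has nothing to bite on. The paper's resolution is exactly the $\breve{M}$ trick above: after normalizing to $\breve\chi$ (so all $\lambda_i(\breve\chi)\leq1$), one has $\tilde\chi\leq\mathrm{SN}\circ\mathrm{FS}(\chi_{tr}+1)=\chi_{tr}+1$, hence $\lambda_i(\tilde\chi)\leq1$ as well; since $x\mapsto(x-1)^2$ is decreasing on $(-\infty,1]$ and $\lambda_i(\tilde\chi)\geq\lambda_i(\breve\chi)$, the quadratic term of $\breve{M}$ drops while the $\mathbb{E}\circ\mathrm{FS}$ term is unchanged, giving $\breve{M}(\tilde\chi)\leq\breve{M}(\breve\chi)$ and uniqueness closes. (For what it's worth, your ingredients \emph{can} be assembled differently: the first-order condition $\tfrac{d}{ds}|_{s=0}\bar M_X(\gamma_s)\geq0$ together with the bound $\lambda_i(\bar\chi)\leq(-\inf\bar M_X)^{-1}$ forces $\delta_i>0\Rightarrow\lambda_i(\bar\chi)=(-\inf\bar M_X)^{-1}$, whence $\lambda_i(\tilde\chi)>\lambda_1(\bar\chi)$, contradicting $\lambda_1(\tilde\chi)=\lambda_1(\bar\chi)$. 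But this is not the route you outlined.)
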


\begin{proof}
(1) First, we claim that $\breve{\chi}\coloneqq(-\inf\bar{M})\frac{\bar{\chi}}{\left\Vert \bar{\chi}\right\Vert }$
is the unique minimizer of 
\[
\inf\left\{ \breve{M}(\chi)\coloneqq M(\chi)+\frac{1}{2}\left\Vert \chi\right\Vert _{2}^{2}\mid\chi\in\mathcal{N}(V)\right\} .
\]
For any $\chi\in\mathcal{N}(V)$, consider $h(t)\coloneqq\breve{M}(t\chi)=tM(\chi)+\frac{t^{2}}{2}\left\Vert \chi\right\Vert _{2}^{2}$
for $t>0$. If $M(\chi)\geq0$, we have $\inf_{t>0}h(t)=0$; if $M(\chi)<0$,
we have $\inf_{t>0}h(t)=-\frac{1}{2}\bar{M}(\chi)^{2}$ and the infimum
is attained at $t=-\bar{M}(\chi)/\left\Vert \chi\right\Vert $. It
implies that $\breve{\chi}$ is a minimizer of $\breve{M}$. The uniqueness
is due to the convexity of $M_{X}$ (Proposition \ref{prop:convexity of Chow})
and the uniform convexity of $\chi\mapsto\left\Vert \chi\right\Vert _{2}^{2}$. 

Recall the NA formula (\ref{eq: Chow NA express}) of $M_{X}$, 
\begin{eqnarray*}
\breve{M}(\chi) & = & \mathbb{E}\circ\mathrm{FS}(\chi)-\frac{1}{N}\sum_{i=1}^{N}\lambda_{i}(\chi)+\frac{1}{2N}\sum_{i=1}^{N}\lambda_{i}(\chi)^{2}\\
 & = & \mathbb{E}\circ\mathrm{FS}(\chi)+\frac{1}{2N}\sum_{i=1}^{N}(\lambda_{i}(\chi)-1)^{2}-\frac{1}{2},
\end{eqnarray*}
where $\lam_{i}(\chi)$ are jumping numbers of $\chi$. Function $t\mapsto\breve{M}(\breve{\chi}+t)$
attains its minimum at $t=0$. Take derivative at $t=0$, we obtain
$\sum\lam_{i}(\breve{\chi})=0$, so $\sum\lam_{i}(\bar{\chi})=0$. 

Given any $\chi\in\mathcal{N}(V)$, consider $\chi'=\chi\wedge(\chi_{tr}+1)$,
i.e. the minimum of $\chi$ and $\chi_{tr}+1$. Its jumping numbers
are $\lam_{i}(\chi')=\min\{\lam_{i}(\chi),1\}$. Since $\mathbb{E}$
and $\mathrm{FS}$ are increasing, $\mathbb{E}\circ\mathrm{FS}(\chi')\leq\mathbb{E}\circ\mathrm{FS}(\chi)$.
These imply $\breve{M}(\chi')\leq\breve{M}(\chi)$. If we take $\chi=\breve{\chi}$,
then $\chi'$ is also a minimizer of $\breve{M}$, thus $\chi'=\chi$
by the uniqueness. So we have $\lam_{i}(\breve{\chi})\leq1$ for all
$i$. This gives us the upper bound in (\ref{eq: upper of Chow-destab}). 

(2) For any $\chi\in\mathcal{N}(V)$ with $\lam_{i}(\chi)\leq1$ for
all $i$ (i.e. $\chi\leq\chi_{tr}+1$). Consider $\chi'\coloneqq\mathrm{SN}\circ\mathrm{FS}(\chi)$.
Since $\mathrm{SN}$ and $\mathrm{FS}$ are increasing, we have $\chi'\leq\mathrm{SN}\circ\mathrm{FS}(\chi_{tr}+1)=\chi_{tr}+1$,
hence $\lam_{i}(\chi')\leq1$ for all $i$. By Proposition \ref{prop: FS vs SN},
we have $\chi'\geq\chi$ and $\mathrm{FS}(\chi')=\mathrm{FS}(\chi)$.
Inequality $\chi'\geq\chi$ implies $\lam_{i}(\chi')\geq\lam_{i}(\chi)$.
By these facts, we can conclude $\breve{M}(\chi')\leq\breve{M}(\chi)$.
If we take $\chi=\breve{\chi}$, then $\chi'$ is also a minimizer,
thus $\chi'=\chi$ by the uniqueness. Hence $\mathrm{SN}\circ\mathrm{FS}(\breve{\chi})=\breve{\chi}$,
so $\mathrm{SN}\circ\mathrm{FS}(\bar{\chi})=\bar{\chi}$. 
\end{proof}

\section{Maximal K-destabilizers}

\subsection{The maximal K-destabilizing rays of Xia}

For a polarized manifold $(X,L)$, Donaldson \cite{donaldson_lower_2005}
showed a moment-weight inequality
\begin{equation}
\inf_{\phi\in\mathcal{H}(L)}\mathrm{Cal}(\phi)^{1/2}\geq\sup_{(\mathcal{X},\mathcal{L})}\frac{-\mathrm{DF}(\mathcal{X},\mathcal{L})}{\left\Vert (\mathcal{X},\mathcal{L})\right\Vert _{2}},\label{eq:MW Donaldson}
\end{equation}
where $\mathrm{Cal}(\phi)\coloneqq\int_{X}\left(S(\phi)-\bar{S}\right)^{2}\mathrm{MA}(\phi)$
is the Calabi energy and $(\mathcal{X},\mathcal{L})$ runs over all
test-configuration for $(X,L)$. Donaldson conjectured that the equality
holds. 

When $(X,L)$ is the toric manifold associated to Delzant polytope
$P$, Székelyhidi \cite{szekelyhidi_optimal_2008} confirmed Donaldson's
conjecture by assuming that the Calabi flow exists for all time. Restricting
on the toric test-configurations, the right-hand side optimization
problem can be reduced into 
\begin{equation}
\inf\left\{ \frac{\mathcal{L}(f)}{\left\Vert f\right\Vert _{2,P}}\mid\mathcal{L}(f)\coloneqq\fint_{P}f\mathrm{d}x-\fint_{\partial P}f\mathrm{d}\sigma\right\} \label{eq:min DF toric}
\end{equation}
where $f:P\ra\bbr$ are rational piecewise-linear concave functions.
After enlarging the domain of (\ref{eq:min DF toric}), Székelyhidi
obtained a minimizer called the optimal ``test-configuration''. 

In the general case, Xia \cite{xia_sharp_2021} established an equality
which enlarges the domains of both sides of (\ref{eq:MW Donaldson}).
In \cite{darvas_geodesic_2020}, for a $L^{1}$-geodesic ray $\ell=(\ell_{t})_{t\geq0}$,
the radial K-energy $\mathcal{M}^{\mathrm{rad}}(\ell)$ is defined
as the limit slope of K-energy along this ray 
\begin{equation}
\mathcal{M}^{\mathrm{rad}}(\ell)\coloneqq\lim_{t\rightarrow+\infty}\frac{1}{t}\mathcal{M}(\ell_{t})\in(-\infty,\infty].\label{eq:radial K-energy}
\end{equation}
By \cite[Lemma 4.10]{darvas_geodesic_2020}, $\mathcal{M}^{\mathrm{rad}}(\ell)$
only depends on the $d_{1}$-asymptotic class of $\ell$. Hence it
descends to a functional on $\mathcal{R}^{1}(L)/_{\sim}$, which is
convex along the chord geodesics, see \cite[Theorem 4.11]{darvas_geodesic_2020}.
By considering the weak gradient flow of convex functional $\mathcal{M}$
on Hadamard space $\mathcal{E}^{2}(L)$, Xia obtain 
\begin{thm}[\cite{xia_sharp_2021}]
\label{thm: Xia} For polarized manifold $(X,L)$, we have
\[
\inf_{\phi\in\mathcal{E}^{2}(L)}\mathrm{Cal}(\phi)=\sup_{\ell\in\mathcal{R}^{2}(L)}\frac{-\mathcal{M}^{\mathrm{rad}}(\ell)}{\left\Vert \ell\right\Vert _{2}},
\]
where $\mathrm{Cal}(\phi)$ is the gradient norm of $\mathcal{M}$
(see \cite{xia_sharp_2021} 3.2), and $\left\Vert \ell\right\Vert _{2}\coloneqq d_{2}(\ell_{0},\ell_{1})$
is the $d_{2}$-speed of $\ell$. Moreover, if $\inf_{\mathcal{R}^{2}}\mathcal{M}^{\mathrm{rad}}<0$
(called geodesically K-unstable), up to time-scaling and $d_{2}$-asymptotic
relation, the right-hand side admits a unique maximizer. 
\end{thm}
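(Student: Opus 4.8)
The plan is to run the metric gradient flow of the K-energy on the Hadamard space $(\mathcal{E}^{2}(L),d_{2})$, a complete $\mathrm{CAT}(0)$ space by \cite{darvas_mabuchi_2017}, on which $\mathcal{M}$ extends to a $d_{2}$-lsc, geodesically convex functional (not identically $+\infty$) by \cite{berman_convexity_2017}. For such a functional the metric slope $|\nabla\mathcal{M}|(\phi)\coloneqq\limsup_{\psi\to\phi}\,(\mathcal{M}(\phi)-\mathcal{M}(\psi))_{+}/d_{2}(\phi,\psi)$ is $d_{2}$-lsc and equals $\mathrm{Cal}(\phi)$ (on $\mathcal{H}(L)$ it is the classical Calabi functional $(\int_{X}(S(\phi)-\bar S)^{2}\mathrm{MA}(\phi))^{1/2}$), and the gradient-flow semigroup $(S_{s})_{s\ge 0}$ exists, is locally Lipschitz with speed $|\nabla\mathcal{M}|(S_{s}\phi)$, satisfies $\tfrac{d}{ds}\mathcal{M}(S_{s}\phi)=-|\nabla\mathcal{M}|^{2}(S_{s}\phi)$, and has $s\mapsto|\nabla\mathcal{M}|(S_{s}\phi)$ non-increasing --- all standard for convex functionals on Hadamard spaces (cf.\ \cite{bacak_convex_2014}, \cite{xia_sharp_2021}). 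The inequality ``$\le$'' --- the abstract moment--weight inequality --- I would prove pointwise: for an $L^{2}$-ray $\ell$, convexity of $\mathcal{M}$ along finite-energy geodesics makes $t\mapsto\mathcal{M}(\ell_{t})$ convex, so $-|\nabla\mathcal{M}|(\ell_{0})\le\tfrac{d}{dt}\big|_{0^{+}}\mathcal{M}(\ell_{t})\le\lim_{t\to\infty}\mathcal{M}(\ell_{t})/t$; since $\mathcal{M}^{\mathrm{rad}}$ depends only on the $d_{1}$-asymptotic class \cite[Lemma 4.10]{darvas_geodesic_2020}, re-basing $\ell$ at $\ell_{t_{0}}$ and rescaling yields $-\mathcal{M}^{\mathrm{rad}}(\ell)/\|\ell\|_{2}\le\inf_{\mathcal{E}^{2}(L)}\mathrm{Cal}$.

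For ``$\ge$'' together with the existence of a maximiser, fix $\phi^{0}\in\mathcal{H}(L)$, set $\phi^{s}\coloneqq S_{s}\phi^{0}$, and let $m\coloneqq\lim_{s\to\infty}|\nabla\mathcal{M}|(\phi^{s})$. Integrating the two flow identities gives $\mathcal{M}(\phi^{s})/s\to -m^{2}$, while $d_{2}(\phi^{0},\phi^{s})\le\int_{0}^{s}|\nabla\mathcal{M}|(\phi^{\tau})\,d\tau$ with non-increasing integrand gives $\limsup_{s\to\infty}d_{2}(\phi^{0},\phi^{s})/s\le m$. The crucial point is that $(\phi^{s})$ is asymptotic --- in the chordal sense --- to a genuine $d_{2}$-geodesic ray $\ell$ from $\phi^{0}$; equivalently, the constant-speed segments $\ell^{(s)}\coloneqq[\phi^{0},\phi^{s}]$, parametrised on $[0,s]$, converge locally uniformly to a ray $\ell$ in the $\mathrm{CAT}(0)$ space $\mathcal{R}^{2}_{\phi^{0}}(L)$ (cf.\ \cite[Theorem 4.7]{xia_sharp_2021}). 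Granting this, convexity of $\mathcal{M}$ along $\ell^{(s)}$ gives $\mathcal{M}(\ell^{(s)}_{t})\le\tfrac{s-t}{s}\mathcal{M}(\phi^{0})+\tfrac{t}{s}\mathcal{M}(\phi^{s})$ for $0\le t\le s$; letting $s\to\infty$ and using $d_{2}$-lsc of $\mathcal{M}$ yields $\mathcal{M}(\ell_{t})\le\mathcal{M}(\phi^{0})-m^{2}t$, hence $\mathcal{M}^{\mathrm{rad}}(\ell)\le -m^{2}$, while $\|\ell\|_{2}\le m$. If $\inf_{\mathcal{R}^{2}}\mathcal{M}^{\mathrm{rad}}<0$, then ``$\le$'' forces $m\ge\inf\mathrm{Cal}>0$, so $\ell$ is nontrivial and
\[
\frac{-\mathcal{M}^{\mathrm{rad}}(\ell)}{\|\ell\|_{2}}\ \ge\ \frac{m^{2}}{m}=m\ \ge\ \inf_{\mathcal{E}^{2}(L)}\mathrm{Cal}\ \ge\ \sup_{\ell'\in\mathcal{R}^{2}(L)}\frac{-\mathcal{M}^{\mathrm{rad}}(\ell')}{\|\ell'\|_{2}}\ \ge\ \frac{-\mathcal{M}^{\mathrm{rad}}(\ell)}{\|\ell\|_{2}},
\]
so all four quantities coincide, the stated identity holds, $m=\inf\mathrm{Cal}=\|\ell\|_{2}$, $\mathcal{M}^{\mathrm{rad}}(\ell)=-m^{2}$, and $\ell$ attains the supremum. (The degenerate case $\inf_{\mathcal{E}^{2}(L)}\mathrm{Cal}=0$, where the identity asserts the vanishing of both sides, is handled by ``$\le$'' plus a limiting version of the same construction.)

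For uniqueness we are in the case $m=\inf\mathrm{Cal}>0$. If $\ell^{1},\ell^{2}$ are two maximisers, normalise so that $\|\ell^{i}\|_{2}=m$, hence $\mathcal{M}^{\mathrm{rad}}(\ell^{i})=-m^{2}$, and let $\ell^{1/2}$ be the $d_{2}^{c}$-midpoint in $\mathcal{R}^{2}(L)/_{\sim}$. Convexity of $\mathcal{M}^{\mathrm{rad}}$ along chordal geodesics \cite[Theorem 4.11]{darvas_geodesic_2020} gives $\mathcal{M}^{\mathrm{rad}}(\ell^{1/2})\le -m^{2}<0$, and the $\mathrm{CAT}(0)$ inequality gives $\|\ell^{1/2}\|_{2}^{2}\le m^{2}-\tfrac{1}{4}d_{2}^{c}(\ell^{1},\ell^{2})^{2}$. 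If $\ell^{1}\not\sim\ell^{2}$ this yields $-\mathcal{M}^{\mathrm{rad}}(\ell^{1/2})/\|\ell^{1/2}\|_{2}>m$, contradicting that $m$ is the supremum --- the same mechanism used in the proof of Theorem~\ref{thm:existence-uniqueness of Chow}(2). Hence $\ell^{1}\sim\ell^{2}$, so the maximiser is unique up to time-scaling and the $d_{2}$-asymptotic relation.

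The main obstacle is the asymptotic-ray step: showing that the gradient-flow trajectory converges \emph{in direction} to a $d_{2}$-geodesic ray of speed $m$, and that the slope along the flow decreases to the global infimum $\inf_{\mathcal{E}^{2}(L)}|\nabla\mathcal{M}|$. In this infinite-dimensional $\mathrm{CAT}(0)$ setting this is a genuine theorem, relying on weak ($\Delta$-)compactness of bounded sets, $\Delta$-lower semicontinuity of convex functionals, and estimates for the Moreau--Yosida resolvent of $\mathcal{M}$; it is the technical heart of \cite{xia_sharp_2021}. A subsidiary point is to identify the metric slope $|\nabla\mathcal{M}|$ on $\mathcal{H}(L)$ with the classical Calabi functional, so that the left-hand side of the identity is precisely the object produced by the flow.
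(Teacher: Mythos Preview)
The paper does not prove this theorem: it is quoted from \cite{xia_sharp_2021}, and the only argument the paper supplies is the short Remark following the statement, which explains why uniqueness in the \emph{free}-starting-point space $\mathcal{R}^{2}(L)/_{\sim}$ follows from Xia's uniqueness in the \emph{fixed}-starting-point space $\mathcal{R}_{\phi_{r}}^{2}(L)$. That reduction goes via parallelism: given two unit-speed maximisers $\ell,\ell'$, one transports $\ell'$ to the ray $\bar{\ell}$ with $\bar{\ell}_{0}=\ell_{0}$, uses \cite[Lemma 4.10]{darvas_geodesic_2020} to see $\bar{\ell}$ is still a maximiser, and then invokes \cite[Corollary 1.2]{xia_sharp_2021} to get $\bar{\ell}=\ell$, hence $\ell\sim_{d_{2}}\ell'$.

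Your sketch is essentially a reconstruction of Xia's proof itself---gradient flow of the convex lsc functional $\mathcal{M}$ on the Hadamard space $(\mathcal{E}^{2}(L),d_{2})$, the abstract moment--weight inequality from convexity, and the asymptotic-ray theorem for the flow---and you correctly flag the last of these as the technical heart. Your chain of inequalities for the reverse direction is clean and closes without needing to know \emph{a priori} that the flow slope decreases to the global infimum; that identity falls out at the end. For uniqueness you argue directly with the $\mathrm{CAT}(0)$ midpoint inequality in $\mathcal{R}^{2}(L)/_{\sim}$ combined with convexity of $\mathcal{M}^{\mathrm{rad}}$ along chordal geodesics, which is a perfectly good alternative to the paper's parallelism-then-cite-Xia reduction; the two routes are equivalent in strength here, though yours is self-contained while the paper's is shorter because it defers to the source. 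There is no genuine gap in your outline beyond the acknowledged one (convergence of the flow to a ray of the correct speed), which is indeed the substance of \cite{xia_sharp_2021}.
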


\begin{rem}
The statement for the uniqueness is slightly different from \cite[Theorem 1.1, Corollary 1.2]{xia_sharp_2021},
since the author uses the starting-point-fixed space $\mathcal{R}_{\phi_{r}}^{2}(L)$.
Suppose $\ell$ and $\ell'$ are two unit-speed minimizers for $\mathcal{M}^{\mathrm{rad}}(\ell)/\left\Vert \ell\right\Vert _{2}$.
Let $\bar{\ell}$ be the $L^{2}$-geodesic ray $d_{2}$-asymptotic
to $\ell'$ and $\bar{\ell}_{0}=\ell_{0}$. By \cite[Lemma 4.10]{darvas_geodesic_2020},
$\mathcal{M}^{\mathrm{rad}}(\bar{\ell})=\mathcal{M}^{\mathrm{rad}}(\ell')$,
so $\bar{\ell}$ is also a minimizer with unit-speed. Now using \cite[Corollary 1.2]{xia_sharp_2021},
we have $\bar{\ell}=\ell$, thus $\ell\sim_{d_{2}}\ell'$. 
\end{rem}

A fundamental result of Li \cite[Theorem 1.2]{li_geodesic_2022} says
that if $\ell\in\mathcal{R}^{1}(L)$ such that $\mathcal{M}^{\mathrm{rad}}(\ell)<\infty$,
then $\ell$ must be maximal in the sense of Definition \ref{def: BBJ max ray}.
In particular, the minimizers of $\mathcal{M}^{\mathrm{rad}}(\ell)/\left\Vert \ell\right\Vert _{2}$
are maximal, so they can be recovered by the associated limit NA metric.
By Proposition \ref{prop: max asymp induce same}, the induced NA
limit does not depend on the choices of minimizer, up to $\mathbb{R}_{>0}$-scaling. 
\begin{defn}
\label{def: max K-desta}For $(X,L)$ with $\inf_{\mathcal{R}^{2}}\mathcal{M}^{\mathrm{rad}}<0$,
a \emph{maximal K-destabilizing ray} $\ell^{\mathrm{K}}\in\mathcal{R}^{2}(L)$
is a minimizer for $\mathcal{M}^{\mathrm{rad}}(\ell)/\left\Vert \ell\right\Vert _{2}$,
which is unique up to time-scaling and $d_{2}$-asymptotic relation.
The induced NA metric $\phi^{\mathrm{K}}\in\mathcal{E}_{\na}^{1}(L)$
by $\ell^{\mathrm{K}}$ is called a \emph{maximal K-destabilizer},
which is unique up to the $\mathbb{R}_{>0}$-scaling (\ref{eq: R+ action on func}). 

We do not normalize $\ell^{\mathrm{K}}$ by the unit-speed condition
since there is a more canonical normalization (\ref{eq:cano normal K}). 
\end{defn}

\begin{rem}
\label{rem: K-un imply inf<0}If $(X,L)$ is K-unstable, then $\inf_{\mathcal{R}^{2}}\mathcal{M}^{\mathrm{rad}}<0$.
Since if $(\mathcal{X},\mathcal{L})$ is an ample test-configuration
with $\mathrm{DF}(\mathcal{X},\mathcal{L})<0$, then $\mathbb{M}(\varphi_{\mathcal{L}})<0$
by (\ref{eq: NA K-energy}). By \cite[Theorem 1.7]{li_geodesic_2022},
$\mathbb{M}(\varphi_{\mathcal{L}})$ is the limit slope of $\mathcal{M}$
along the Phong--Sturm geodesic ray associated to $(\mathcal{X},\mathcal{L})$,
hence $\inf_{\mathcal{R}^{2}}\mathcal{M}^{\mathrm{rad}}<0$. 
\end{rem}

\subsection{Symmetries of the maximal K-destabilizer}

Thanks to the uniqueness, the maximal K-destabilizing rays and K-destabilizers
inherit all the symmetries of $(X,L)$. 

Let $G$ be any subgroup of $\mathrm{Aut}(X,L)$. It acts on $\mathcal{H}(L)$
by pulling-back $g\cdot\phi\coloneqq\left(g^{-1}\right)^{*}\phi$.
For a metric $\phi=\phi_{r}+u$ ($u$ is a function), we have $g\cdot\phi=g\cdot\phi_{r}+u\circ g^{-1}$.
It is easy to check that for any $p\in[1,\infty)$, $G$-action preserves
the $L^{p}$-Finsler metric on $\mathcal{H}(L)$, so it preserves
Darvas's metric $d_{p}$. After completion, $G$ acts on $\mathcal{E}^{p}(L)$
by isometries. 

If $\ell=(\ell_{t})_{t\geq0}$ is a geodesic ray in $\mathcal{E}^{p}(L)$,
so is $g\cdot\ell\coloneqq(g\cdot\ell_{t})$. If $\ell$ is $d_{p}$-asymptotic
to $\ell'$, so are $g\cdot\ell$ and $g\cdot\ell'$. Hence $G$ acts
on $\mathcal{R}^{p}(L)/_{\sim}$ by defining $g\cdot[\ell]\coloneqq[g\cdot\ell]$,
and it preserves the chordal metric (\ref{eq:chord metric}). 

On the non-Archimedean side, $G$ acts on $X^{\an}$ in the following
way. For a subvariety $Y\subset X$ and $v\in Y^{\mathrm{val}}$,
we define $g\cdot v\in(gY)^{\mathrm{val}}$ by 
\[
(g\cdot v)f\coloneqq v(f\circ g),\ \forall f\in\mathbb{C}(gY).
\]
The $G$-action on $\mathrm{PSH}^{\na}(L)$ is defined as $g\cdot\phi\coloneqq\phi\circ g^{-1}$.
Hence the trivial metric ($\phi\equiv0$) is $G$-invariant, this
can also be seen from 
\[
v(g\cdot s)=(g^{-1}\cdot v)(s),\ \forall s\in R_{k}.
\]
One can check that $\mathbb{E}$ is $G$-invariant, so the subspace
$\mathcal{E}_{\na}^{1}(L)\coloneqq\{\mathbb{E}>-\infty\}$ is preserved
by $G$. 
\begin{prop}
With respect to the $G$-actions defined above, the NA limit map $\Pi:\mathcal{R}^{1}(L)\rightarrow\mathcal{E}_{\na}^{1}(L)$
(\ref{eq: limit NA metric}) and BBJ embedding $\iota:\mathcal{E}_{\na}^{1}(L)\hookrightarrow\mathcal{R}^{1}(L)/_{\sim}$
(\ref{eq:BBJ emb}) are both $G$-equivariant. 
\end{prop}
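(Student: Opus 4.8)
The plan is to establish the $G$-equivariance of $\Pi$ first and then deduce that of $\iota$ from it; the only geometric input is the compatibility of the Gauss extension and of generic Lelong numbers with the automorphism $g\times\mathrm{id}$ of $X\times\mathbb{C}$, everything else being formal.

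\medskip\noindent\emph{Equivariance of $\Pi$.} Fix $\ell=(\ell_t)_{t\ge0}\in\mathcal{R}^1(L)$ and $g\in G$. To compute $(g\cdot\ell)^{\na}$ I would run the construction of §\ref{subsec: limit NA metric} using the transported reference metric $g\cdot\phi_r\in\mathcal{H}(L)$ in place of $\phi_r$; this is legitimate because the resulting element of $\mathrm{PSH}^{\na}(L)$ does not depend on the reference. Since $g\cdot\ell_t=g\cdot\phi_r+u_t\circ g^{-1}$ with $u_t=\ell_t-\phi_r$, the quasi-psh function (\ref{eq:psh on product}) attached to $g\cdot\ell$ relative to $g\cdot\phi_r$ is exactly $\widetilde V=V\circ(g\times\mathrm{id})^{-1}$, where $V$ is the one attached to $\ell$ relative to $\phi_r$. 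Next, for $v\in X^{\mathrm{div}}$ the valuation $(g\times\mathrm{id})_*\sigma(v)$ is $\mathbb{C}^{*}$-invariant (because $g\times\mathrm{id}$ commutes with the scaling action on the second factor), sends $\tau$ to $1$, and restricts to $g\cdot v$ on $\mathbb{C}(X)$; by the characterization of the Gauss extension this forces $(g\times\mathrm{id})_*\sigma(v)=\sigma(g\cdot v)$. Combining this with the invariance of generic Lelong numbers under biholomorphisms (\cite[§1.4]{xia_singularities_2025}), $\sigma(g\cdot v)(\widetilde V)=\sigma(v)(V)$, so by (\ref{eq: define u^NA}) one gets $(g\cdot\ell)^{\na}(g\cdot v)=-\sigma(v)(V)+a=\ell^{\na}(v)=(g\cdot\ell^{\na})(g\cdot v)$ for every $v\in X^{\mathrm{div}}$. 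As $g\cdot(\,\cdot\,)$ is a bijection of the dense set $X^{\mathrm{div}}$ and both members lie in $\mathrm{PSH}^{\na}(L)$, the uniqueness of the extension in \cite[Theorem 6.2]{berman_variational_2021} gives $(g\cdot\ell)^{\na}=g\cdot\ell^{\na}$. The identical computation shows that the NA-limit assignment $\phi\mapsto\phi^{\na}$ is $G$-equivariant on \emph{all} sublinear psh rays, not just on geodesic ones.

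\medskip\noindent\emph{Equivariance of $\iota$.} Let $\varphi\in\mathcal{E}^1_{\na}(L)$ and pick a maximal $L^1$-geodesic ray $\ell$ with $\ell^{\na}=\varphi$. Since $G$ acts on $(\mathcal{E}^1(L),d_1)$ by isometries, $g\cdot\ell$ is again an $L^1$-geodesic ray, and by the first part $(g\cdot\ell)^{\na}=g\cdot\varphi$. It remains to check that $g\cdot\ell$ is maximal in the sense of Definition \ref{def: BBJ max ray}: given a sublinear psh ray $\psi=(\psi_t)_{t>0}\subset\mathcal{E}^1(L)$ with $\lim_{t\to0}\psi_t\le g\cdot\ell_0$ and $\psi^{\na}\le(g\cdot\ell)^{\na}$, the ray $g^{-1}\cdot\psi$ is again a sublinear psh ray (pullback along an automorphism preserves plurisubharmonicity and the sublinear growth), satisfies $\lim_{t\to0}(g^{-1}\cdot\psi_t)\le\ell_0$, and, by the equivariance of the NA-limit construction, $(g^{-1}\cdot\psi)^{\na}=g^{-1}\cdot\psi^{\na}\le g^{-1}\cdot(g\cdot\ell)^{\na}=\ell^{\na}$; maximality of $\ell$ then yields $g^{-1}\cdot\psi_t\le\ell_t$, i.e. $\psi_t\le g\cdot\ell_t$ for all $t>0$. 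Thus $g\cdot\ell$ is a maximal $L^1$-geodesic ray with NA-limit $g\cdot\varphi$, and since $\iota$ is well defined (Proposition \ref{prop: for BBJ imbedd}) so that any maximal representative may be used, $\iota(g\cdot\varphi)=[g\cdot\ell]=g\cdot[\ell]=g\cdot\iota(\varphi)$.

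\medskip\noindent\emph{Main obstacle.} Everything above is soft once two standard facts are granted: the uniqueness characterization of the Gauss extension by its $\mathbb{C}^{*}$-invariance together with its values on $\tau$ and on $\mathbb{C}(X)$, and the biholomorphic invariance of generic Lelong numbers. The genuine point of care is the bookkeeping in the first step — one must build $(g\cdot\ell)^{\na}$ with the transported reference $g\cdot\phi_r$ so that the psh function on $X\times\triangle$ becomes a clean pullback of the one for $\ell$; any other choice reintroduces reference-change terms that, although ultimately harmless, obscure the argument.
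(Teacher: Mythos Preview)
Your proof is correct and follows essentially the same approach as the paper: first establish $(g\cdot\ell)^{\na}=g\cdot\ell^{\na}$ from the definition of the NA limit, then use this together with the fact that $g\cdot\ell$ remains maximal to deduce the equivariance of $\iota$. The paper's own proof is much terser---it simply asserts that both facts ``can be checked from the definition''---whereas you carry out the checks explicitly (choice of transported reference metric, compatibility of the Gauss extension with $g\times\mathrm{id}$, invariance of generic Lelong numbers, and the verification of maximality via pulling back a competitor by $g^{-1}$).
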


\begin{proof}
By the definition (\ref{eq: define u^NA}), one can directly check
that $(g\cdot\ell)^{\na}=g\cdot\ell^{\na}$ for all $L^{1}$-geodesic
ray $\ell$ and $g\in G$. So $\Pi$ is $G$-equivariant.

For any $\phi\in\mathcal{E}_{\na}^{1}(L)$, let $\ell$ be a maximal
$L^{1}$-geodesic ray such that $\ell^{\na}=\phi$. For any $g\in G$,
we can check from the definition, $g\cdot\ell$ is also maximal. The
NA metric associated to $g\cdot\ell$ is $g\cdot\ell^{\na}=g\cdot\phi$.
So we have $\iota(g\cdot\phi)=[g\cdot\ell]=g\cdot[\ell]=g\cdot\iota(\phi)$. 
\end{proof}
\begin{thm}
\label{thm: sym of K-destab}Suppose that $(X,L)$ is K-unstable,
and $G$ is a subgroup of $\mathrm{Aut}(X,L)$. 

(1) Let $\ell^{K}$ be a maximal K-destabilizing ray, then for any
$g\in G$, $g\cdot\ell^{\mathrm{K}}$ is $d_{2}$-asymptotic to $\ell^{\mathrm{K}}$.
In particular, if the starting-point $\ell_{0}^{K}$ is $G$-invariant,
then so is $\ell_{t}^{K}$ for all $t>0$. 

(2) The maximal K-destabilizer $\phi^{\mathrm{K}}\coloneqq\left(\ell^{\mathrm{K}}\right)^{\na}$
is $G$-invariant. 
\end{thm}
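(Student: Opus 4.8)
The plan is to reduce everything to the uniqueness statement of Theorem~\ref{thm: Xia} combined with the $G$-equivariance of the NA limit map $\Pi$ established in the preceding proposition; the argument is entirely soft, using only that $G$ preserves both terms of the ratio $\mathcal{M}^{\mathrm{rad}}(\ell)/\|\ell\|_2$, so that it carries a minimizer to a minimizer.

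First I would record that $\mathcal{M}^{\mathrm{rad}}$ is $G$-invariant on $\mathcal{R}^2(L)$. Since the integrand in (\ref{eq: K-energy}) is built from the scalar curvature, for a biholomorphism $g$ one has $\mathcal{M}(g\cdot\phi,g\cdot\psi)=\mathcal{M}(\phi,\psi)$; together with the cocycle property this gives $\mathcal{M}(g\cdot\phi)=\mathcal{M}(g\cdot\phi,\phi_r)=\mathcal{M}(g\cdot\phi,g\cdot\phi_r)+\mathcal{M}(g\cdot\phi_r,\phi_r)=\mathcal{M}(\phi)+C(g)$ with $C(g)$ independent of $\phi$, hence $\mathcal{M}^{\mathrm{rad}}(g\cdot\ell)=\lim_{t\to\infty}t^{-1}\big(\mathcal{M}(\ell_t)+C(g)\big)=\mathcal{M}^{\mathrm{rad}}(\ell)$. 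Next, as noted in the excerpt $G$ acts on $\mathcal{E}^2(L)$ by $d_2$-isometries, so $\|g\cdot\ell\|_2=d_2(g\cdot\ell_0,g\cdot\ell_1)=\|\ell\|_2$. By Remark~\ref{rem: K-un imply inf<0} the K-unstability of $(X,L)$ gives $\inf_{\mathcal{R}^2}\mathcal{M}^{\mathrm{rad}}<0$, so Theorem~\ref{thm: Xia} applies: the minimizer of $\mathcal{M}^{\mathrm{rad}}(\ell)/\|\ell\|_2$ is unique up to time-scaling and $d_2$-asymptotic relation.

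For (1): by the two invariances, $g\cdot\ell^{\mathrm{K}}$ is again a minimizer, hence $d_2$-asymptotic to some reparametrization $s\mapsto\ell^{\mathrm{K}}_{cs}$ of $\ell^{\mathrm{K}}$; but $\|g\cdot\ell^{\mathrm{K}}\|_2=\|\ell^{\mathrm{K}}\|_2$ forces $c=1$, so $g\cdot\ell^{\mathrm{K}}\sim_{d_2}\ell^{\mathrm{K}}$. If in addition $\ell^{\mathrm{K}}_0$ is $G$-fixed, then $g\cdot\ell^{\mathrm{K}}$ and $\ell^{\mathrm{K}}$ are $L^2$-geodesic rays with the same starting point, so by Busemann convexity $t\mapsto d_2(g\cdot\ell^{\mathrm{K}}_t,\ell^{\mathrm{K}}_t)$ is convex, bounded, and zero at $t=0$, hence identically zero; thus $g\cdot\ell^{\mathrm{K}}_t=\ell^{\mathrm{K}}_t$ for all $t$. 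For (2): $g\cdot\ell^{\mathrm{K}}$ is again maximal (maximality is preserved by $G$, as seen in the proof of the preceding proposition), and it is $d_2$- hence $d_1$-asymptotic to the maximal ray $\ell^{\mathrm{K}}$, so Proposition~\ref{prop: max asymp induce same} gives $\Pi(g\cdot\ell^{\mathrm{K}})=\Pi(\ell^{\mathrm{K}})=\phi^{\mathrm{K}}$; since $\Pi$ is $G$-equivariant the left side equals $g\cdot\phi^{\mathrm{K}}$, whence $g\cdot\phi^{\mathrm{K}}=\phi^{\mathrm{K}}$.

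The only point requiring care is the bookkeeping of the time-scaling ambiguity in Theorem~\ref{thm: Xia}: one must use $G$-invariance of the $d_2$-speed to rule out a nontrivial reparametrization, after which matching speeds forces the NA limits to agree on the nose rather than merely up to the $\mathbb{R}_{>0}$-action. Everything else is formal and follows from the uniqueness theorem and the equivariance of $\Pi$.
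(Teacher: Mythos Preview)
Your proposal is correct and follows essentially the same route as the paper: establish $G$-invariance of $\mathcal{M}^{\mathrm{rad}}$ via the cocycle identity, use that $G$ acts by $d_2$-isometries to preserve $\|\ell\|_2$, invoke the uniqueness in Theorem~\ref{thm: Xia} to get $g\cdot\ell^{\mathrm{K}}\sim_{d_2}\ell^{\mathrm{K}}$, then apply Busemann convexity for the fixed-starting-point refinement, and finally combine Proposition~\ref{prop: max asymp induce same} with the $G$-equivariance of $\Pi$ for part (2). The only difference is cosmetic: you make the elimination of the time-scaling ambiguity explicit via the equal-speed observation, whereas the paper absorbs this into its reading of the uniqueness statement.
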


\begin{proof}
(1) First, we show $\mathcal{M}^{\mathrm{rad}}$ is $G$-invariant.
Let $\mathcal{M}(\phi,\phi_{r})$ be the K-energy with reference metric
$\phi_{r}$. It is easy to see
\[
\mathcal{M}(g\cdot\phi,g\cdot\phi_{r})=\mathcal{M}(\phi,\phi_{r}),\ \forall g\in G.
\]
For any $\ell\in\mathcal{R}^{1}(L)$, by the cocycle property of $\mathcal{M}$,
we have 
\begin{eqnarray*}
\mathcal{M}(g\cdot\ell_{t},\phi_{r}) & = & \mathcal{M}(g\cdot\ell_{t},g\cdot\phi_{r})+\mathcal{M}(g\cdot\phi_{r},\phi_{r})\\
 & = & \mathcal{M}(\ell_{t},\phi_{r})+\mathcal{M}(g\cdot\phi_{r},\phi_{r}).
\end{eqnarray*}
Divide both sides by $t$ and let $t\ra\infty$, we obtain 
\begin{equation}
\mathcal{M}^{\mathrm{rad}}(g\cdot\ell)=\mathcal{M}^{\mathrm{rad}}(\ell),\ \forall g\in G.\label{eq: inv of M'}
\end{equation}

By definition, $\ell^{\mathrm{K}}$ is a minimizer of $\mathcal{M}^{\mathrm{rad}}(\ell)/\left\Vert \ell\right\Vert _{2}$.
Since the $G$-action preserves $d_{2}$, we have $\left\Vert g\cdot\ell^{\mathrm{K}}\right\Vert _{2}=\left\Vert \ell^{\mathrm{K}}\right\Vert _{2}$
for all $g\in G$. Combine with (\ref{eq: inv of M'}), we know that
$g\cdot\ell^{\mathrm{K}}$ is also a minimizer. The uniqueness part
of Theorem \ref{thm: Xia} implies $g\cdot\ell^{\mathrm{K}}\sim_{d_{2}}\ell^{\mathrm{K}}$. 

For the last statement, since $\left(\mathcal{E}^{2}(L),d_{2}\right)$
is CAT(0), $h(t)\coloneqq d_{2}(g\cdot\ell_{t}^{\mathrm{K}},\ell_{t}^{\mathrm{K}})$
is convex in $t$. Combine with $h\leq C$ and $h(0)=0$, it implies
$h\equiv0$. 

(2) Since $g\cdot\ell^{\mathrm{K}}$ is asymptotic to $\ell^{\mathrm{K}}$,
by Proposition \ref{prop: max asymp induce same} they induce the
same NA metric. So we have 
\[
g\cdot\phi^{\mathrm{K}}=\left(g\cdot\ell^{\mathrm{K}}\right)^{\na}=\left(\ell^{\mathrm{K}}\right)^{\na}=\phi^{\mathrm{K}}.
\]
\end{proof}
As an application, in the toric setting, we identify the maximal K-destabilizer
with the optimal test-configuration in \cite{szekelyhidi_optimal_2008}. 
\begin{example}
\label{exa:toric optimal}Let $M$ be a lattice of rank $n$. The
associated complex torus is $T_{\bbc}=\mathrm{Hom}_{\bbz}(M,\bbc^{*})$.
Given a Delzant polytope $P\subset M_{\bbr}$, let $(X,L)$ be the
associated toric manifold, which has a linearized $T_{\bbc}$-action
with a dense open subset. The coordinate ring of $T_{\bbc}$ is
\[
\bbc[M]=\left\{ \sum_{m\in M}a_{m}\chi^{m}\mid a_{m}\in\bbc\right\} .
\]
The NA analytic torus $\mathbb{T}^{\an}\subset X^{\an}$ is the set
of semi-valuations $v:\bbc[M]\ra\bbr\cup\{\infty\}$. The compact
NA analytic torus $\mathbb{S}^{\an}$ is the subset of $\mathbb{T}^{\an}$
such that $v(\chi^{m})=0$ for all $m\in M$. There is a retraction
map $\mathscr{R}:X^{\an}\ra X^{\an}$, which is the composition of
tropicalization map and Gauss section, see \cite[chapter 4]{burgos_gil_arithmetic_2014}.
For $v\in\mathbb{T}^{\an}$, $\mathscr{R}v$ is defined by 
\[
(\mathscr{R}v)\left(\sum_{m\in M}c_{m}\chi^{m}\right)\coloneqq\min_{c_{m}\neq0}v(\chi^{m}).
\]
The image of $\mathscr{R}$ is called the skeleton of $X^{\an}$,
which is homeomorphic to $P$. 

Assume that $(X,L)$ is K-unstable. Theorem \ref{thm: sym of K-destab}
(2) implies that the maximal K-destabilizer $\phi^{\mathrm{K}}$ is
$T_{\bbc}$-invariant, but this is not enough to reduce $\phi^{\mathrm{K}}$
into a function over $P$. For the latter, $\phi^{\mathrm{K}}$ is
required to be invariant under the action of $\mathbb{S}^{\an}$,
which is equivalent to $\phi^{\mathrm{K}}\circ\mathscr{R}=\phi^{\mathrm{K}}$. 

We turn to consider the maximal K-destabilizing ray $\ell^{K}$. Take
$\ell_{0}^{K}$ to be invariant under the action of real torus $T_{\bbr}=\mathrm{Hom}_{\bbz}(M,\mathbb{S}^{1})$.
Theorem \ref{thm: sym of K-destab} (1) implies that $\ell_{t}^{K}$
is $T_{\bbr}$-invariant for all $t\geq0$. Hence, to find $\ell^{K}$,
it suffices to minimize $\mathcal{M}^{\mathrm{rad}}$ over $T_{\bbr}$-invariant
geodesic rays. 

Via the Legendre transform, $T_{\bbr}$-invariant Kähler metrics are
represented by convex functions (symplectic potential) over $P$.
The $T_{\bbr}$-invariant geodesic rays (e.g. $\ell^{K}$) are represented
by rays of convex functions which are affine in $t$. The K-energy
$\mathcal{M}$ of $T_{\bbr}$-invariant metrics has a nice formula
in terms of symplectic potentials, see \cite[Proposition 3.2.8]{donaldson_scalar_2002}.
By this formula, the limit slopes $\mathcal{M}^{\mathrm{rad}}$ along
$T_{\bbr}$-invariant geodesic rays are given by the toric DF invariant
$\mathcal{L}(f)$ (\ref{eq:min DF toric}). So minimizing $\mathcal{M}^{\mathrm{rad}}$
over the $T_{\bbr}$-invariant geodesic rays will yield Székelyhidi's
optimal test-configuration. Moreover, we can show that the NA metrics
induced by $T_{\bbr}$-invariant geodesic rays are $\mathbb{S}^{\an}$-invariant. 
\end{example}

\section{Connecting two kinds of maximal destabilizers}

In this section, we provide a heuristic approach to connect the maximal
Chow-destabilizers to the maximal K-destabilizers. 

\subsection{\label{subsec: proof main}Proof of Theorem \ref{thm: main}}

Assume that $(X,L)$ is K-unstable, so $\inf_{\mathcal{R}^{2}}\mathcal{M}^{\mathrm{rad}}<0$
by Remark \ref{rem: K-un imply inf<0}. By Definition \ref{def: max K-desta},
we have the maximal K-destabilizers $\phi^{\mathrm{K}}$. On the other
hand, by Theorem \ref{thm: K-uns imply Chow-uns}, $(X,krL)$ is Chow-unstable
for $k\gg1$. Replacing $L$ by $rL$, we can assume $r=1$. Then
by §\ref{subsec:Maximal chow}, we have a sequence of maximal Chow-destabilizers
indexed by $k$. Our approach to connect two kinds of destabilizers
is as follows:

(1) re-characterize $\phi^{\mathrm{K}}$ as the minimizer of normalized
NA K-energy $\overline{\mathbb{M}}$; 

(2) approximate the minimization problem for $\overline{\mathbb{M}}$; 

(3) reduce the approximating problems to the minimization problems
for Chow-weight. 

\subsubsection{Re-characterize $\phi^{\mathrm{K}}$ as the minimizer of $\breve{\mathbb{M}}$}

Recall the BBJ embedding $\iota:\mathcal{E}_{\na}^{1}\hookrightarrow\mathcal{R}^{1}/_{\sim}$
(\ref{eq:BBJ emb}). Li \cite{li_geodesic_2022} showed that 
\begin{equation}
\mathcal{M}^{\mathrm{rad}}(\iota(\phi))\geq\mathbb{M}(\phi),\ \forall\phi\in\mathcal{E}_{\na}^{1},\label{eq:M' vs NA M}
\end{equation}
and the equality holds when $\phi$ is induced by an ample test-configuration.
Our first assumption is 

\textbf{$\clubsuit$ A1}: (\ref{eq:M' vs NA M}) is an equality for
all $\phi\in\mathcal{E}_{\na}^{1}$. Equivalently, $\mathcal{M}^{\mathrm{rad}}(\ell)=\mathbb{M}(\ell^{\na})$
for all maximal $L^{1}$-geodesic ray $\ell$. 

This is a conjecture in \cite[Conjecture 1.6]{li_geodesic_2022} and
has been verified by \cites{boucksom_yau-tian-donaldson_2025} and
\cite{darvas_ytd_2025} recently.

Set $\mathcal{E}_{\na}^{2}\coloneqq\iota^{-1}\left(\mathcal{R}^{2}/_{\sim}\right)$.
Under \textbf{A1}, up to scaling, $\phi^{\mathrm{K}}$ is the unique
minimizer of 
\begin{equation}
\inf\left\{ \overline{\mathbb{M}}(\phi)\coloneqq\frac{\mathbb{M}(\phi)}{\left\Vert \phi\right\Vert _{2}}\mid0\neq\phi\in\mathcal{E}_{\na}^{2}\right\} ,\label{eq: min NA M}
\end{equation}
where $\left\Vert \phi\right\Vert _{2}\coloneqq\left\Vert \iota(\phi)\right\Vert _{2}$.
To avoid the scaling-ambiguity of minimizer, we transform the above
problem into another one. We learn this trick from \cite[§4.2.2]{inoue_entropies_2022-1}. 
\begin{prop}[{\cite[section 4.2.2]{inoue_entropies_2022-1}}]
\label{prop: Inoue} If $\phi^{\mathrm{K}}$ is a minimizer of (\ref{eq: min NA M}),
then 
\begin{equation}
\inf\left\{ \breve{\mathbb{M}}(\phi)\coloneqq\mathbb{M}(\phi)+\frac{1}{2}\left\Vert \phi\right\Vert _{2}^{2}\mid\phi\in\mathcal{E}_{\na}^{2}\right\} \label{eq: min M horn}
\end{equation}
has a unique minimizer 
\[
\phi^{\flat}\coloneqq\left(\frac{-\overline{\mathbb{M}}(\phi^{\mathrm{K}})}{\left\Vert \phi^{\mathrm{K}}\right\Vert _{2}}\right)*\phi^{\mathrm{K}}.
\]
Note that $\phi^{\flat}$ is invariant under scaling: $\phi^{\mathrm{K}}\rightsquigarrow a*\phi^{\mathrm{K}}$.
We call $\phi^{\flat}$ the canonically-normalized maximal K-destabilizer,
which satisfies 
\begin{equation}
\left\Vert \phi^{\flat}\right\Vert _{2}=-\overline{\mathbb{M}}(\phi^{\flat})=-\inf_{\mathcal{E}_{\na}^{2}}\overline{\mathbb{M}}.\label{eq:cano normal K}
\end{equation}
\end{prop}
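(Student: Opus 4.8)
The plan is to run Inoue's reparametrisation trick, so the proposal is mostly bookkeeping around two homogeneity facts and one convexity fact. Set $m\coloneqq\inf_{\mathcal{E}_{\na}^{2}\setminus\{0\}}\overline{\mathbb{M}}=\overline{\mathbb{M}}(\phi^{\mathrm{K}})$; since $(X,L)$ is K-unstable, $m<0$ (Remark \ref{rem: K-un imply inf<0}). First I would record the two homogeneities: $\mathbb{M}$ is degree-one homogeneous for the $\bbr_{>0}$-action $*$ (noted after (\ref{eq: NA K-energy})), and $\|\cdot\|_{2}$ is likewise degree-one homogeneous — because $\iota$ is $\bbr_{>0}$-equivariant and time-scaling a $d_{2}$-geodesic ray $\ell$ by $a$ sends its speed $d_{2}(\ell_{0},\ell_{1})$ to $d_{2}(\ell_{0},\ell_{a})=a\,d_{2}(\ell_{0},\ell_{1})$. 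Consequently $\mathcal{E}_{\na}^{2}$ is $*$-invariant and $\overline{\mathbb{M}}$ is scale-invariant.

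Next I would reduce the minimisation of $\breve{\mathbb{M}}$ to a one-variable problem. Any nonzero $\phi\in\mathcal{E}_{\na}^{2}$ can be written $\phi=\|\phi\|_{2}*\psi$ with $\|\psi\|_{2}=1$, and by the homogeneities
\[
\breve{\mathbb{M}}(t*\psi)=t\,\mathbb{M}(\psi)+\tfrac12 t^{2},\qquad t\ge0 .
\]
This quadratic in $t$ has minimum over $t>0$ equal to $-\tfrac12\mathbb{M}(\psi)^{2}$, attained at $t^{*}=-\mathbb{M}(\psi)$, precisely when $\mathbb{M}(\psi)<0$, and infimum $\ge0$ otherwise; also $\breve{\mathbb{M}}(0)=0$. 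Since $\mathbb{M}(\psi)=\overline{\mathbb{M}}(\psi)$ for unit-speed $\psi$, taking the infimum over $\psi$ gives $\inf_{\mathcal{E}_{\na}^{2}}\breve{\mathbb{M}}=-\tfrac12 m^{2}<0$, and it is attained by choosing $\psi=\phi^{\mathrm{K}}/\|\phi^{\mathrm{K}}\|_{2}$ (a unit-speed minimizer of $\overline{\mathbb{M}}$, so $\mathbb{M}(\psi)=m$) together with $t^{*}=-m$. This produces $\phi^{\flat}=(-m)*\big(\tfrac1{\|\phi^{\mathrm{K}}\|_{2}}*\phi^{\mathrm{K}}\big)=\tfrac{-\overline{\mathbb{M}}(\phi^{\mathrm{K}})}{\|\phi^{\mathrm{K}}\|_{2}}*\phi^{\mathrm{K}}$, an expression visibly unchanged under $\phi^{\mathrm{K}}\rightsquigarrow a*\phi^{\mathrm{K}}$; and then $\|\phi^{\flat}\|_{2}=-m$ and $\mathbb{M}(\phi^{\flat})=-m^{2}$, so $\overline{\mathbb{M}}(\phi^{\flat})=m$, which is (\ref{eq:cano normal K}).

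For uniqueness I would invoke strict convexity of $\breve{\mathbb{M}}$ along $d_{2}$-geodesics in $\mathcal{E}_{\na}^{2}$: under \textbf{A1}, $\mathbb{M}=\mathcal{M}^{\mathrm{rad}}\circ\iota$ is convex there (the Darvas--Lu convexity of $\mathcal{M}^{\mathrm{rad}}$ along chord geodesics recalled above), while $\phi\mapsto\tfrac12\|\phi\|_{2}^{2}$ is the squared distance to the trivial ray in the $\mathrm{CAT}(0)$ space $\mathcal{R}^{2}/_{\sim}$, hence strictly convex. If $\phi_{0}\ne\phi_{1}$ were two minimizers, the midpoint of the connecting geodesic would satisfy $\breve{\mathbb{M}}(\phi_{1/2})<\tfrac12\big(\breve{\mathbb{M}}(\phi_{0})+\breve{\mathbb{M}}(\phi_{1})\big)=\inf\breve{\mathbb{M}}$ — impossible.

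I expect the main obstacle to be the $d_{2}$-geometry of the BBJ embedding: Reboulet's isometry is only for $d_{1}$, so to make the steps above rigorous I need that $\mathcal{E}_{\na}^{2}$ equipped with the pulled-back $d_{2}$ is a geodesic space and that $\iota$ carries its geodesics to chord geodesics, which legitimises both the homogeneity of $\|\cdot\|_{2}$ used throughout and the convexity statements in the uniqueness step. This is where Finski's result on the $d_{p}$-behaviour of $\iota$ would enter; granting that, the remainder is the elementary one-variable optimisation.
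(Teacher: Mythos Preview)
Your existence argument and identification of $\phi^{\flat}$ match the paper's proof exactly: both run Inoue's one-variable reduction $\breve{\mathbb{M}}(a*\phi)=a\mathbb{M}(\phi)+\tfrac{a^{2}}{2}\|\phi\|_{2}^{2}$, read off the optimal $a$, and compute (\ref{eq:cano normal K}).

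The difference is in uniqueness. You argue via strict convexity of $\breve{\mathbb{M}}$ along $d_{2}$-geodesics in $\mathcal{E}_{\na}^{2}$, which, as you correctly flag, forces you to import a $d_{2}$-isometry or geodesic-preservation property for $\iota$ that the paper never establishes. The paper sidesteps this entirely: once the reduction is in place, any minimizer of $\breve{\mathbb{M}}$ must arise as the optimal rescaling of a minimizer of $\overline{\mathbb{M}}$; but under \textbf{A1} the minimizer of $\overline{\mathbb{M}}$ is already known to be unique up to scaling (this is Theorem \ref{thm: Xia} transported through $\iota$), and the optimal scaling absorbs that freedom, producing a single $\phi^{\flat}$. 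So uniqueness is an immediate corollary of the reduction plus the pre-existing uniqueness of $\phi^{\mathrm{K}}$, and no $d_{2}$-geodesic machinery on $\mathcal{E}_{\na}^{2}$ is needed. Your route would work if the $d_{2}$ issue were resolved, but it is strictly harder than what the situation requires.
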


\begin{proof}
For $a>0$, we have 
\[
\left\Vert a*\phi\right\Vert _{2}\coloneqq\left\Vert \iota(a*\phi)\right\Vert _{2}=\left\Vert a\cdot\iota(\phi)\right\Vert _{2}=a\left\Vert \phi\right\Vert _{2},
\]
where $a\cdot\iota(\phi)$ means the time-scaling of geodesic ray.
We also have $\mathbb{M}(a*\phi)=a\mathbb{M}(\phi)$. Fix a $\phi\in\mathcal{E}_{\na}^{2}$,
consider the quadratic function of $a>0$: 
\[
\breve{\mathbb{M}}(a*\phi)=a\mathbb{M}(\phi)+\frac{a^{2}}{2}\left\Vert \phi\right\Vert _{2}^{2}.
\]
When $\mathbb{M}(\phi)\geq0$, its infimum is zero; when $\mathbb{M}(\phi)<0$,
it is easy to see 
\[
\inf_{a>0}\breve{\mathbb{M}}(a*\phi)=-\frac{1}{2}\overline{\mathbb{M}}(\phi)^{2},
\]
and it is attained at $a=-\mathbb{M}(\phi)/\left\Vert \phi\right\Vert _{2}^{2}$.
By this observation, (\ref{eq: min M horn}) can be reduced into (\ref{eq: min NA M}),
and all the statements follow. 
\end{proof}

\subsubsection{Approximating the minimization problem (\ref{eq: min M horn})}

Since we want to apply the sup-norm operator, we assume that: 

\textbf{$\clubsuit$ A2}: the maximal K-destabilizer $\phi^{\mathrm{K}}$
is continuous. 

Then $\phi^{\flat}$ is the unique minimizer of 
\begin{equation}
\inf\left\{ \breve{\mathbb{M}}(\phi)\coloneqq\mathbb{M}(\phi)+\frac{1}{2}\left\Vert \phi\right\Vert _{2}^{2}\mid\phi\in\mathrm{CPSH}^{\na}\right\} .\label{eq: min M horn CPSH}
\end{equation}

Currently, it seems a very strong assumption. Since even in the toric
setting, we only know that Székelyhidi's optimal test function is
bounded, see \cite{li_extremal_2023}. 

To approximate $\breve{\mathbb{M}}$, first we quantize the ``norm''
$\left\Vert \phi\right\Vert _{2}$. By Theorem \ref{thm:quantize L2 norm}
below, for any $\phi\in\mathrm{CPSH}^{\na}$ we have 
\[
\left\Vert \phi\right\Vert _{2}\coloneqq\left\Vert \iota(\phi)\right\Vert _{2}=\lim_{k\rightarrow\infty}\frac{1}{k}\left\Vert \mathrm{SN}_{k}(\phi)\right\Vert _{2}.
\]
We need another assumption to quantize $\mathbb{M}$. 

\textbf{$\clubsuit$ A3}: for any $\phi\in\mathrm{CPSH}^{\na}$, we
have 
\[
E_{k}\circ\mathrm{SN}_{k}(\phi)=\mathbb{E}(\phi)-\frac{1}{2}\mathbb{M}(\phi)k^{-1}+\mathrm{o}(k^{-1}),
\]
where $E_{k}(\chi)=\frac{1}{kN_{k}}\sum_{i}\lam_{i}(\chi)$ is the
average of jumping numbers of $\chi$. 

This is the analogue of (\ref{eq: main expansion A}). It holds for
$\phi\in\mathcal{H}_{\mathbb{Q}}^{\na}$, see Theorem \ref{thm: main expand for TC}
below. In the toric setting, it incarnates to the two-terms Euler--Maclaurin
expansion. By the way, \cite[first version, §8]{xia_pluripotential-theoretic_2020}
obtained a second order expansion for analytic piecewise linear test
curves. 

By \textbf{A3}, we can use the functional 
\begin{equation}
\mathbb{Q}_{k}(\phi)\coloneqq2k\left(\mathbb{E}(\phi)-E_{k}\circ\mathrm{SN}_{k}(\phi)\right),\ \phi\in\mathrm{CPSH}^{\na}\label{eq:  NA Qk}
\end{equation}
to approximate $\mathbb{M}(\phi)$. Then the problem (\ref{eq: min M horn CPSH})
could be approximated by 
\begin{equation}
\inf\left\{ \breve{\mathbb{Q}}_{k}(\phi)\coloneqq\mathbb{Q}_{k}(\phi)+\frac{1}{2k^{2}}\left\Vert \mathrm{SN}_{k}(\phi)\right\Vert _{2}^{2}\mid\phi\in\mathrm{CPSH}^{\na}\right\} .\label{eq:min Qk horn}
\end{equation}
In the next section, we show (\ref{eq:min Qk horn}) admits a unique
minimizer given a maximal Chow-destabilizer. 

\textbf{$\clubsuit$ A4}: assume that the minimizer of (\ref{eq:min Qk horn})
could converge to the minimizer of (\ref{eq: min M horn CPSH}), i.e.
$\phi^{\flat}$. 

The problems of this type are studied in the calculus of variations
via the concept of $\Gamma$-convergence, refer to \cite{braides_gamma-convergence_2002}.

\subsubsection{Reduction of the approximating problem (\ref{eq:min Qk horn}) }

We show that the problem (\ref{eq:min Qk horn}) can be reduced into
the minimization problem for Chow-weight. 

For any $\phi\in\mathrm{CPSH}^{\na}$, by Proposition \ref{prop: FS vs SN},
we have 
\[
\phi\geq[\phi]_{k}\coloneqq\mathrm{FS}_{k}\circ\mathrm{SN}_{k}(\phi),\ \mathrm{SN}_{k}\left([\phi]_{k}\right)=\mathrm{SN}_{k}(\phi).
\]
These imply that 
\[
\breve{\mathbb{Q}}_{k}(\phi)\geq\breve{\mathbb{Q}}_{k}([\phi]_{k}).
\]
Moreover, the equality holds only when $\phi=[\phi]_{k}$. Since we
have $\mathbb{E}(\phi)=\mathbb{E}([\phi]_{k})$ and $\phi\geq[\phi]_{k}$,
\cite[Lemma 6.3]{berman_variational_2021} implies $\phi=[\phi]_{k}$.
It follows that the problem (\ref{eq:min Qk horn}) can be reduced
to 
\begin{equation}
\inf\left\{ \breve{\mathbb{Q}}_{k}(\phi)\coloneqq\mathbb{Q}_{k}(\phi)+\frac{1}{2k^{2}}\left\Vert \mathrm{SN}_{k}(\phi)\right\Vert _{2}^{2}\mid\phi\in\mathcal{FS}_{k}\right\} ,\label{eq:min Qk horn FS}
\end{equation}
where $\mathcal{FS}_{k}$ is the image of $\mathrm{FS}_{k}:\mathcal{N}(R_{k})\rightarrow\mathrm{CPSH}^{\na}$.
By Proposition \ref{prop: FS vs SN} (3), $\mathrm{FS}_{k}:\mathcal{SN}_{k}\rightarrow\mathcal{FS}_{k}$
is a bijection. We change the domain from $\mathcal{FS}_{k}$ to $\mathcal{SN}_{k}$.
By Proposition \ref{prop: FS vs SN} again, for any $\chi\in\mathcal{SN}_{k}$,
we have 
\[
\mathbb{Q}_{k}\left(\mathrm{FS}_{k}(\chi)\right)=2k\left(\mathbb{E}\circ\mathrm{FS}_{k}(\chi)-E_{k}(\chi)\right)=2M_{\iota_{k}(X)}(\chi),
\]
where $M_{\iota_{k}(X)}$ is the Chow-weight for Kodaira embedding
$\iota_{k}:X\hookrightarrow\mathbb{P}R_{k}^{*}$, and the second equality
is the formula (\ref{eq: Chow NA express}). Hence the problem (\ref{eq:min Qk horn FS})
is transformed into 
\begin{equation}
\inf\left\{ \breve{M}_{k}(\chi)\coloneqq2M_{\iota_{k}(X)}(\chi)+\frac{1}{2k^{2}}\left\Vert \chi\right\Vert _{2}^{2}\mid\chi\in\mathcal{SN}_{k}\right\} .\label{eq: min Mk horn in SNk}
\end{equation}
First we relax the constraint $\chi\in\mathcal{SN}_{k}$, as the proof
of Theorem \ref{thm: property of Chow-destab}, we see the relaxed
version of (\ref{eq: min Mk horn in SNk}) admits a unique minimizer
$\chi_{k}^{\flat}$ which minimizes $\bar{M}_{\iota_{k}(X)}$ and
satisfies 
\begin{equation}
\left\Vert \chi_{k}^{\flat}\right\Vert _{2}^{2}=-2k^{2}M_{\iota_{k}(X)}(\chi_{k}^{\flat})=-2k^{2}\inf_{\mathcal{N}(R_{k})}\breve{M}_{k}.\label{eq:cano normal Chow}
\end{equation}
However, by Theorem \ref{thm: property of Chow-destab} (2), $\chi_{k}^{\flat}$
automatically belongs to $\mathcal{SN}_{k}$, so it is also the unique
minimizer of (\ref{eq: min Mk horn in SNk}). 

Finally, tracing the reduction procedure, we see that the problem
(\ref{eq:min Qk horn}) admits a unique minimizer $\mathrm{FS}_{k}(\chi_{k}^{\flat})$.
Finally, the convergence assumption \textbf{A4} yields 
\[
\mathrm{FS}_{k}(\chi_{k}^{\flat})\rightarrow\phi^{\flat},\textrm{ as }k\rightarrow\infty.
\]

\subsection{Supplementary results}

In this section, we prove some facts used in the previous sections. 

\subsubsection{Quantizing the $L^{2}$-norms of potentials }

First we recall Witt Nyström's filtration associated to a test configurations,
refer to \cite[Appendix A]{boucksom_non-archimedean_2024} for details.
A graded NA norm on the section ring $R(X,L)$ is a family $\chi=(\chi_{k})_{k\in\bbn}$
with $\chi_{k}\in\mathcal{N}(R_{k})$, which satisfies the super-additivity
\[
\chi_{k+l}(s\cdot s')\geq\chi_{k}(s)+\chi_{l}(s'),\ \forall s\in R_{k},\ s'\in R_{l}
\]
and there exists $C>0$ such that $\left|\chi_{k}(s)\right|\leq Ck$
for all $s\in R_{k}\backslash\{0\}$ and $k\geq1$. An example is
the super-norm $\mathrm{SN}(\phi)\coloneqq\left(\mathrm{SN}_{k}(\phi)\right)$
induced by a bounded function $\phi$ on $X^{\an}$. 

Given a test-configuration $(\mathcal{X},\mathcal{L})$ for $(X,L$).
For $s\in R_{k}$, let $\bar{s}$ be the $\bbcs$-equivariantly extended
section of $\mathcal{L}^{k}$ over $\mathcal{X}\backslash\mathcal{X}_{0}$.
Witt Nyström's filtration \cite{witt_nystrom_test_2012} is defined
by 
\[
F^{\lambda}R_{k}\coloneqq\left\{ s\in R_{k}\mid\tau^{-\left\lceil \lam\right\rceil }\bar{s}\in\mathrm{H}^{0}(\mathcal{X},\mathcal{L}^{k})\right\} ,
\]
where $\tau$ denotes the composition of $\mathcal{X}\rightarrow\bbc$
with the coordinate function on $\bbc$. Denote by $\chi_{\mathcal{L}}$
the corresponding graded NA norm. Let $\mathrm{H}^{0}(\mathcal{X}_{0},\mathcal{L}_{0}^{k})_{\lambda}$
be the $\lam$-weight subspace of the $\mathbb{C}^{*}$-module $\mathrm{H}^{0}(\mathcal{X}_{0},\mathcal{L}_{0}^{k})$,
then we have 
\begin{equation}
\mathrm{H}^{0}(\mathcal{X}_{0},\mathcal{L}_{0}^{k})_{\lambda}\cong F^{\lambda}R_{k}/F^{\lambda+1}R_{k},\label{eq:central =00003D filtration}
\end{equation}
see \cite[Lemma 1.2]{boucksom_uniform_2017}. 

\cite[Proposition A.3]{boucksom_non-archimedean_2024} is crucial
in the proof below, which says that when $\mathcal{X}_{0}$ is reduced,
we have 
\begin{equation}
\chi_{\mathcal{L}}=\mathrm{SN}(\phi_{\mathcal{L}}),\label{eq: WN =00003D SN}
\end{equation}
where $\phi_{\mathcal{L}}$ is the NA metric induced by $(\mathcal{X},\mathcal{L})$. 
\begin{thm}
\label{thm:quantize L2 norm}For any $\phi\in\mathrm{CPSH}^{\na}(L)$,
we have 
\begin{equation}
\left\Vert \phi\right\Vert _{2}\coloneqq\left\Vert \iota(\phi)\right\Vert _{2}=\lim_{k\rightarrow\infty}\frac{1}{k}\left\Vert \mathrm{SN}_{k}(\phi)\right\Vert _{2}.\label{eq: quantize L2 norm}
\end{equation}
\end{thm}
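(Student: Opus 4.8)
The plan is to reduce to the case of an ample test-configuration, where $\mathrm{SN}_k(\varphi_{\mathcal L})$ is the Witt Nyström filtration and both sides of \eqref{eq: quantize L2 norm} become the second moment of a Duistermaat--Heckman measure, and then to propagate the identity by $C^0$-density. Throughout I would use that $\mathcal H^{\na}_{\mathbb Q}$ is $C^0$-dense in $\mathrm{CPSH}^{\na}(L)$ and that each $\phi\in\mathcal H^{\na}_{\mathbb Q}$ is $\varphi_{\mathcal L}$ for a normal ample test-configuration.

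\textbf{Step 1 (Lipschitz reduction).} First I would check that both sides of \eqref{eq: quantize L2 norm} are $1$-Lipschitz in $\phi$ for the sup-norm on $\mathrm{CPSH}^{\na}(L)$, uniformly in $k$ on the right. For the right-hand side this is immediate from the monotonicity of $\mathrm{SN}_k$ and the relation $\mathrm{SN}_k(\phi+c)=\mathrm{SN}_k(\phi)+kc$: if $\|\phi-\psi\|_{C^0}=\varepsilon$ then $\psi-\varepsilon\le\phi\le\psi+\varepsilon$ forces $|\lambda_i(\mathrm{SN}_k\phi)-\lambda_i(\mathrm{SN}_k\psi)|\le k\varepsilon$ for all $i$, hence $|\tfrac1k\|\mathrm{SN}_k\phi\|_2-\tfrac1k\|\mathrm{SN}_k\psi\|_2|\le\varepsilon$; the same bound with $\psi=0$ gives $|\lambda_i(\mathrm{SN}_k\phi)|\le k\|\phi\|_{C^0}$, so the sequence on the right is uniformly bounded and, being equi-Lipschitz, converges for every $\phi\in\mathrm{CPSH}^{\na}$ once it converges on a dense subset, to a $1$-Lipschitz limit. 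For the left-hand side, fix $\phi_0\in\mathcal H(L)$ and let $\ell^{\phi},\ell^{\psi}$ be the maximal $L^1$-geodesic rays from $\phi_0$ with $(\ell^{\phi})^{\na}=\phi$, $(\ell^{\psi})^{\na}=\psi$. Adding the linear term $\pm\varepsilon t$ to $\ell^{\psi}$ again yields maximal rays, with NA limits $\psi\pm\varepsilon$; combining this with the monotonicity of the BBJ construction \cite{berman_variational_2021} in the NA datum, the inequalities $\psi-\varepsilon\le\phi\le\psi+\varepsilon$ give $|(\ell^{\phi})_t-(\ell^{\psi})_t|\le\varepsilon t$ on $X$ for all $t$, hence $d_2((\ell^{\phi})_t,(\ell^{\psi})_t)\le\varepsilon t$ and $\bigl|\|\phi\|_2-\|\psi\|_2\bigr|\le d_2^{c}(\iota(\phi),\iota(\psi))\le\varepsilon$. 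Thus it suffices to prove \eqref{eq: quantize L2 norm} for $\phi=\varphi_{\mathcal L}$.

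\textbf{Step 2 (the test-configuration case).} Here I would identify both sides with $\int_{\mathbb R}\lambda^2\,d\mathrm{DH}_{(\mathcal X,\mathcal L)}$. After replacing $(\mathcal X,\mathcal L)$ by the normalization of a suitable degree-$d$ base change, we may assume $\mathcal X_0$ is reduced --- this is harmless since both sides are homogeneous of degree $1$ under the $\mathbb R_{>0}$-action ($\mathrm{SN}_k(a\ast\phi)=a\,\mathrm{SN}_k(\phi)$ and $\|a\ast\phi\|_2=a\|\phi\|_2$), so the base-change scaling cancels. Now \eqref{eq: WN =00003D SN} gives $\mathrm{SN}_k(\varphi_{\mathcal L})=\chi_{\mathcal L,k}$, and by \eqref{eq:central =00003D filtration} the multiset of jumping numbers of $\chi_{\mathcal L,k}$ is exactly the multiset of $\mathbb C^{*}$-weights on $\mathrm H^0(\mathcal X_0,\mathcal L_0^k)$. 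Hence the spectral measures $\tfrac1{N_k}\sum_i\delta_{\lambda_i(\chi_{\mathcal L,k})/k}$ converge weakly to $\mathrm{DH}_{(\mathcal X,\mathcal L)}$, with support in a fixed compact interval, so $\lim_k\bigl(\tfrac1k\|\mathrm{SN}_k(\varphi_{\mathcal L})\|_2\bigr)^2=\int_{\mathbb R}\lambda^2\,d\mathrm{DH}_{(\mathcal X,\mathcal L)}$. On the other side, $\iota(\varphi_{\mathcal L})$ is the class of the Phong--Sturm geodesic ray of $(\mathcal X,\mathcal L)$, and one has $\|\iota(\varphi_{\mathcal L})\|_2^2=\int_{\mathbb R}\lambda^2\,d\mathrm{DH}_{(\mathcal X,\mathcal L)}$, the \emph{non-centered} $L^2$-norm of the Duistermaat--Heckman measure; I would obtain this either from the known description of the $d_p$-speed of an algebraic geodesic ray in terms of its weight measure, or --- equivalently --- from the $d_2$-isometry of $\iota$ due to Finski \cite{finski_geometry_2024} (turning $\|\iota(\varphi_{\mathcal L})\|_2$ into $d_2(\varphi_{\mathcal L},\phi_{tr})$ on the NA side) together with the pluripotential formula for that distance. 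A reassuring special case is $\varphi_{\mathcal L}\equiv a$: then $\mathrm{DH}=\delta_a$, the ray is $\ell_t=\phi_r+at$, and $d_2(\phi_r,\phi_r+a)=|a|$. Comparing the two expressions settles the test-configuration case, and Step 1 finishes the proof.

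\textbf{Main obstacle.} The crux is the equality $\|\iota(\varphi_{\mathcal L})\|_2^2=\int\lambda^2\,d\mathrm{DH}_{(\mathcal X,\mathcal L)}$, i.e.\ computing the $d_2$-speed of the Phong--Sturm ray. The first-order statement $\tfrac1t\mathcal E(\ell_t)\to\mathbb E(\varphi_{\mathcal L})=\int\lambda\,d\mathrm{DH}$ is already available from \cite[Lemma 5.3]{berman_variational_2021} (it is exactly the input used in Theorem \ref{thm: NA express Chow}), but the second-order speed statement is more delicate and is where one must genuinely invoke the $d_p$-geometry of test-configuration rays, or carry out a direct computation via quantization of the Mabuchi metric. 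A secondary, purely bookkeeping point is to verify that the reduction to a reduced central fiber is compatible with the normalizations of $\mathrm{SN}_k$ and of $d_2$; everything else --- the $C^0$-density, the monotonicities of $\mathrm{SN}_k$ and of the BBJ envelope, and the Duistermaat--Heckman equidistribution with uniform support --- is standard.
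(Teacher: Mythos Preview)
Your proposal is correct and follows essentially the same route as the paper: reduce to ample test-configurations by a $C^0$-Lipschitz/density argument, pass to a reduced central fiber via base change (using homogeneity under the $\mathbb R_{>0}$-action), and then identify both sides with the second moment of the Duistermaat--Heckman measure via $\mathrm{SN}_k(\varphi_{\mathcal L})=\chi_{\mathcal L,k}$ on the right and Hisamoto's pushforward formula $\mathrm{DH}(\mathcal X,\mathcal L)=(\dot\ell_t)_\#\mathrm{MA}(\ell_t)$ on the left. The only cosmetic differences are that the paper does the test-configuration case first and the density argument second, and that it cites Hisamoto's theorem (and Finski's generalization \cite[Theorem~2.2]{finski_geometry_2024}) explicitly for the step you flag as the ``main obstacle''.
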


\begin{proof}
First, the limit exists by a general result, see \cite[Theorem 3.3]{boucksom_non-archimedean_2024}. 

\textbf{Step 1}. Assume that $\phi$ is induced by an ample test-configuration
$(\mathcal{X},\mathcal{L})$. For an integer $d\geq1$, let $(\mathcal{X}_{d},\mathcal{L}_{d})$
be the normalization of the base change of $(\mathcal{X},\mathcal{L})$
along the map $\tau\mapsto\tau^{d}$. Then this new test-configuration
$(\mathcal{X}_{d},\mathcal{L}_{d})$ induces NA metric $d*\phi$.
By the reduced fibre theorem (see stacks-project \href{https://stacks.math.columbia.edu/tag/09IJ}{Tag 09IJ}),
we can take $d$ sufficiently divisible such that the central fiber
of $\mathcal{X}_{d}$ is reduced. Note that both sides of (\ref{eq: quantize L2 norm})
are homogeneous under scaling $\phi\rightsquigarrow d*\phi$, so it
is enough to verify (\ref{eq: quantize L2 norm}) for $d*\phi$. Hence
we can assume that the central fiber of $\mathcal{X}$ is reduced. 

When the central fiber is reduced, we have the relation (\ref{eq: WN =00003D SN}).
Hence the square of the right-hand side of (\ref{eq: quantize L2 norm})
is equal to 
\[
\lim_{k\rightarrow\infty}\frac{1}{k^{2}N_{k}}\sum_{i=1}^{N_{k}}\lam_{i}^{2},
\]
where $(\lam_{i})$ are jumping numbers of $\chi_{\mathcal{L}}$.
By (\ref{eq:central =00003D filtration}), $(\lam_{i})$ are also
the weights of $\bbc^{*}$-module $\mathrm{H}^{0}(\mathcal{X}_{0},\mathcal{L}_{0}^{k})$.
Thus the above limit is the 2-moment of the Duistermaat--Heckman
measure $\mathrm{DH}(\mathcal{X},\mathcal{L})$, see \cite[§5]{boucksom_uniform_2017}. 

For the left-hand side of (\ref{eq: quantize L2 norm}), $\iota(\phi)$
is Phong--Sturm's geodesic ray $(\ell_{t})$ induced from $(\mathcal{X},\mathcal{L})$.
By the main theorem of \cite{hisamoto_limit_2016}, or its generalization
\cite[Theorem 2.2]{finski_geometry_2024}, we have 
\[
\mathrm{DH}(\mathcal{X},\mathcal{L})=\left(\dot{\ell}_{t}\right)_{\#}\mathrm{MA}(\ell_{t}),\ \forall t>0,
\]
where $\#$ denotes the push-forward of measure. Hence the 2-moment
of DH measure is equal to $\left\Vert \iota(\phi)\right\Vert _{2}^{2}$,
now (\ref{eq: quantize L2 norm}) follows. 

\textbf{Step2}. For any $\phi\in\mathrm{CPSH}^{\na}(L)$, there exists
a sequence $\phi^{j}\in\mathcal{H}_{\mathbb{Q}}^{\na}$ decreasing
to $\phi$. By Dini's theorem ($X^{\an}$ is compact), the convergence
is uniform. Step1 shows that 
\begin{equation}
\left\Vert \iota(\phi^{j})\right\Vert _{2}=\lim_{k\rightarrow\infty}\frac{1}{k}\left\Vert \mathrm{SN}_{k}(\phi^{j})\right\Vert _{2},\ \forall j\geq1.\label{eq: step2}
\end{equation}
Next, we show that two sides will converge to the corresponding sides
of (\ref{eq: quantize L2 norm}). 

By the uniform convergence, for any $\varepsilon>0$, there exists
$N>0$ such that 
\[
\phi^{j}\geq\phi\geq\phi^{j}-\varepsilon,\ \textrm{on}\ X^{\an},\ \textrm{when}\ j>N.
\]
Let $\ell\coloneqq\iota(\phi)$ and $\ell^{j}\coloneqq\iota(\phi^{j})$,
which are the maximal geodesic rays starting from $\ell_{0}$ and
such that $\ell^{\na}=\phi$, $(\ell^{j})^{\na}=\phi^{j}$. By Definition
\ref{def: BBJ max ray} for the maximality of $\ell^{j}$, we have
$\ell_{t}^{j}\geq\ell_{t}$ for all $t>0$. Note that $\ell_{t}^{j}-\varepsilon t$
is also a geodesic ray and it induces metric $\phi^{j}-\varepsilon$,
similarly we have $\ell_{t}\geq\ell_{t}^{j}-\varepsilon t$ for all
$t>0$. Hence for a fixed $t>0$, $\ell_{t}^{j}$ uniformly converges
to $\ell_{t}$. 

Since $\left\Vert \iota(\phi)\right\Vert _{2}=d_{2}(\ell_{0},\ell_{1})$
and $\left\Vert \iota(\phi^{j})\right\Vert _{2}=d_{2}(\ell_{0},\ell_{1}^{j})$,
by the triangle inequality, we have 
\[
\left|\left\Vert \iota(\phi)\right\Vert _{2}-\left\Vert \iota(\phi^{j})\right\Vert _{2}\right|\leq d_{2}(\ell_{1},\ell_{1}^{j}).
\]
By the equality \cite[Theorem 3.32]{darvas_geometric_2019} and uniform
convergence $\ell_{1}^{j}\rightrightarrows\ell_{1}$, we have 
\[
d_{2}(\ell_{1},\ell_{1}^{j})^{2}\leq2\int_{X}\left|\ell_{1}-\ell_{1}^{j}\right|^{2}\mathrm{MA}(\ell_{1})+2\int_{X}\left|\ell_{1}-\ell_{1}^{j}\right|^{2}\mathrm{MA}(\ell_{1}^{j})\rightarrow0.
\]
Hence $\left\Vert \iota(\phi^{j})\right\Vert _{2}\rightarrow\left\Vert \iota(\phi)\right\Vert _{2}$. 

Let $\chi_{k}^{j}\coloneqq\mathrm{SN}_{k}(\phi^{j})$ and $\chi_{k}\coloneqq\mathrm{SN}_{k}(\phi)$,
it is easy to see 
\[
\left|\left\Vert \chi_{k}^{j}\right\Vert _{2}-\left\Vert \chi_{k}\right\Vert _{2}\right|\leq d_{2}(\chi_{k}^{j},\chi_{k})\leq d_{\infty}(\chi_{k}^{j},\chi_{k})\leq k\sup_{X^{\an}}\left|\phi^{j}-\phi\right|.
\]
Divide both sides by $k$ and let $k\rightarrow\infty$, we obtain
\[
\left|\lim_{k\rightarrow\infty}\frac{1}{k}\left\Vert \chi_{k}^{j}\right\Vert _{2}-\lim_{k\rightarrow\infty}\frac{1}{k}\left\Vert \chi_{k}\right\Vert _{2}\right|\leq\sup_{X^{\an}}\left|\phi^{j}-\phi\right|\rightarrow0.
\]
Finally, let $j\ra\infty$ for both sides of (\ref{eq: step2}), we
obtain (\ref{eq: quantize L2 norm}). 
\end{proof}

\subsubsection{\label{subsec: main expansion NA}A second-order expansion }
\begin{thm}
\label{thm: main expand for TC}Suppose that $\phi=\phi_{\mathcal{L}}$
is induced by an ample test-configuration $(\mathcal{X},\mathcal{L})$
and $r\mathcal{L}$ is line bundle, then we have 
\begin{equation}
E_{k}\circ\mathrm{SN}_{k}(\phi)=\mathbb{E}(\phi)-\frac{1}{2}\mathbb{M}(\phi)k^{-1}+\mathrm{O}(k^{-2}),\textrm{ as }k=rk'\ra\infty.\label{eq:main expansion NA}
\end{equation}
\end{thm}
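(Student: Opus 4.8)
The strategy is to reduce, by a base change, to the case where the central fibre $\mathcal{X}_0$ is reduced, and in that case to recognize $E_k\circ\mathrm{SN}_k(\phi_{\mathcal{L}})$ as the normalized weight sum $w_k/(kN_k)$, whose asymptotics is precisely the defining expansion \eqref{eq:expand weight} of $\mathrm{DF}$. \textbf{Reduced central fibre.} Suppose first $\mathcal{X}_0$ is reduced. By \eqref{eq: WN =00003D SN} one has $\mathrm{SN}_k(\phi_{\mathcal{L}}) = \chi_{\mathcal{L},k}$ for every $k$ with $r\mid k$, where $\chi_{\mathcal{L}}$ is Witt Nyström's graded NA norm. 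The isomorphism \eqref{eq:central =00003D filtration} shows that the multiset of jumping numbers $\lambda_1(\chi_{\mathcal{L},k})\geq\dots\geq\lambda_{N_k}(\chi_{\mathcal{L},k})$ is, counted with multiplicity, the multiset of $\mathbb{C}^*$-weights of $\mathrm{H}^0(\mathcal{X}_0,\mathcal{L}_0^k)$; hence $\sum_{i=1}^{N_k}\lambda_i(\mathrm{SN}_k(\phi_{\mathcal{L}})) = w_k$. Therefore $E_k\circ\mathrm{SN}_k(\phi_{\mathcal{L}}) = \tfrac{1}{kN_k}\sum_i\lambda_i(\mathrm{SN}_k(\phi_{\mathcal{L}})) = \tfrac{w_k}{kN_k}$, and substituting \eqref{eq:expand weight} (which applies along $k=rk'\to\infty$, its $\mathrm{O}(k'^{-2})$ error being $\mathrm{O}(k^{-2})$ since $r$ is fixed) yields
\[
E_k\circ\mathrm{SN}_k(\phi_{\mathcal{L}}) = \mathbb{E}(\mathcal{X},\mathcal{L}) - \mathrm{DF}(\mathcal{X},\mathcal{L})\,\tfrac{1}{2k} + \mathrm{O}(k^{-2}).
\]
Finally $\mathbb{E}(\mathcal{X},\mathcal{L}) = \mathbb{E}(\phi_{\mathcal{L}})$ since $\mathbb{E}$ depends only on the induced NA metric, and $\mathrm{DF}(\mathcal{X},\mathcal{L}) = \mathbb{M}(\phi_{\mathcal{L}})$ by \eqref{eq: NA K-energy}, the correction term $V^{-1}(\mathcal{X}_0-\mathcal{X}_{0,\mathrm{red}})\cdot\mathcal{L}^n$ vanishing because $\mathcal{X}_0$ is reduced. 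This is \eqref{eq:main expansion NA} in the reduced case.

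\textbf{Reduction to the reduced case.} For a general ample $(\mathcal{X},\mathcal{L})$ with $r\mathcal{L}$ a line bundle, as in the proof of Theorem \ref{thm:quantize L2 norm} choose $d\geq1$ sufficiently divisible so that the central fibre of the normalized base change $(\mathcal{X}_d,\mathcal{L}_d)$ along $\tau\mapsto\tau^d$ is reduced; this is again an ample test-configuration, $r\mathcal{L}_d$ is a line bundle, and $\phi_{\mathcal{L}_d} = d*\phi_{\mathcal{L}}$. Applying the reduced case to $(\mathcal{X}_d,\mathcal{L}_d)$ and using the homogeneities $\mathrm{SN}_k(d*\phi) = d\,\mathrm{SN}_k(\phi)$, $E_k(d\chi) = d\,E_k(\chi)$, $\mathbb{M}(d*\phi) = d\,\mathbb{M}(\phi)$, together with $\mathbb{E}(d*\phi) = d\,\mathbb{E}(\phi)$ (for $\phi$ induced by a test-configuration this is $\bar{\mathcal{L}}_d^{\,n+1} = d\,\bar{\mathcal{L}}^{\,n+1}$ on the $\mathbb{P}^1$-compactifications), we obtain
\[
d\cdot E_k\circ\mathrm{SN}_k(\phi_{\mathcal{L}}) = d\,\mathbb{E}(\phi_{\mathcal{L}}) - \tfrac{d}{2}\,\mathbb{M}(\phi_{\mathcal{L}})\,k^{-1} + \mathrm{O}(k^{-2}),
\]
and dividing by the fixed constant $d$ gives \eqref{eq:main expansion NA} in general.

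\textbf{The main point.} Almost everything here is bookkeeping once \eqref{eq: WN =00003D SN}, \eqref{eq:central =00003D filtration} and the weight expansion \eqref{eq:expand weight} are invoked; the step deserving care is the passage through the normalized base change — checking $\phi_{\mathcal{L}_d} = d*\phi_{\mathcal{L}}$, matching the homogeneity degrees of $\mathbb{E}$, $\mathbb{M}$ and $E_k\circ\mathrm{SN}_k$ under the $\mathbb{R}_{>0}$-action \eqref{eq: R+ action on func}, and verifying that the $\mathrm{O}(k^{-2})$ error is not degraded (immediate, as $d$ is held fixed while $k\to\infty$). As a byproduct, combining \eqref{eq:main expansion NA} with the inequality $\mathbb{E}\circ\mathrm{FS}_k\circ\mathrm{SN}_k(\phi_{\mathcal{L}})\leq\mathbb{E}(\phi_{\mathcal{L}})$ from Proposition \ref{prop: FS vs SN}(1) and the NA formula \eqref{eq: Chow NA express} gives $M_{\iota_k(X)}(\mathrm{SN}_k(\phi_{\mathcal{L}}))\leq\tfrac12\mathbb{M}(\phi_{\mathcal{L}})k^{-1}+\mathrm{O}(k^{-2})$, which is negative for $k\gg1$ whenever $\mathrm{DF}(\mathcal{X},\mathcal{L})<0$ (using $\mathbb{M}(\phi_{\mathcal{L}})\leq\mathrm{DF}(\mathcal{X},\mathcal{L})$); this is Theorem \ref{thm: K-uns imply Chow-uns}.
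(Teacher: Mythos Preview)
Your proof is correct and follows essentially the same route as the paper: base change to obtain a reduced central fibre, identify $\mathrm{SN}_k(\phi_{\mathcal{L}_d})$ with the Witt Nystr\"om filtration via \eqref{eq: WN =00003D SN} and \eqref{eq:central =00003D filtration}, read off the expansion from \eqref{eq:expand weight} using $\mathrm{DF}=\mathbb{M}$ in the reduced case, and transfer back by homogeneity. Your derivation of Theorem~\ref{thm: K-uns imply Chow-uns} as a byproduct also matches the paper's.
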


\begin{proof}
We take $r=1$, the general case is similar. Let $(\mathcal{X}_{d},\mathcal{L}_{d})$
be the normalization of the base change of $(\mathcal{X},\mathcal{L})$
along the map $\tau\mapsto\tau^{d}$. Take $d$ to be sufficiently
divisible such that the central fiber of $\mathcal{X}_{d}$ is reduced.
By (\ref{eq: WN =00003D SN}), Witt Nyström's filtration $\left(F^{\lambda}R_{k}\right)$
induced by $(\mathcal{X}_{d},\mathcal{L}_{d})$ coincides with $\mathrm{SN}_{k}(d*\phi)=d\mathrm{SN}_{k}(\phi)$.
Thus $E_{k}\circ\mathrm{SN}_{k}(d*\phi)$ is the average of jumping
numbers of $\left(F^{\lambda}R_{k}\right)$, which are also the weights
of $\mathrm{H}^{0}(\mathcal{X}_{d}|_{0},\mathcal{L}_{d}^{k}|_{0})$.
By Definition \ref{def: K-stability} for the Donaldson--Futaki invariant,
we have 
\[
E_{k}\circ\mathrm{SN}_{k}(d*\phi)=\mathbb{E}(d*\phi)-\frac{1}{2}\mathrm{DF}(\mathcal{X}_{d},\mathcal{L}_{d})k^{-1}+\mathrm{O}(k^{-2}).
\]
Since the central fiber of $\mathcal{X}_{d}$ is reduced, by \cite[(7.7)]{boucksom_uniform_2017},
we have 
\[
\mathrm{DF}(\mathcal{X}_{d},\mathcal{L}_{d})=\mathbb{M}(d*\phi)=d\mathbb{M}(\phi).
\]
Finally, since $\mathrm{SN}_{k}$, $E_{k}$ and $\mathbb{E}$ are
all homogeneous, (\ref{eq:main expansion NA}) follows by the above
expansion. 
\end{proof}
As a corollary, we prove Theorem \ref{thm: K-uns imply Chow-uns}. 
\begin{proof}[Proof of Theorem \ref{thm: K-uns imply Chow-uns}]
If it is not true, there exists a sequence $k_{j}\ra\infty$ such
that $(X,k_{j}rL)$ is Chow-semistable. Let $\chi_{rk_{j}}\coloneqq\mathrm{SN}_{rk_{j}}(\phi_{\mathcal{L}})$,
then $\phi_{\mathcal{L}}\geq\mathrm{FS}_{rk_{j}}(\chi_{rk_{j}})$
by Proposition \ref{prop: FS vs SN} (1). Since $\mathbb{E}$ is increasing,
we have 
\[
\mathbb{E}(\phi_{\mathcal{L}})-E_{rk_{j}}\circ\mathrm{SN}_{rk_{j}}(\phi_{\mathcal{L}})\geq\mathbb{E}\circ\mathrm{FS}_{rk_{j}}(\chi_{rk_{j}})-E_{rk_{j}}(\chi_{rk_{j}})\ge0,
\]
where $\ge0$ is due to the middle term is Chow-weight (\ref{eq: Chow NA express})
and $(X,k_{j}rL)$ is Chow-semistable. Now, multiply both sides by
$2rk_{j}$ and let $j\rightarrow\infty$, by (\ref{eq:main expansion NA}),
we obtain $\mathbb{M}(\phi_{\mathcal{L}})\geq0$. Then $\mathrm{DF}(\mathcal{X},\mathcal{L})\geq\mathbb{M}(\phi_{\mathcal{L}})\geq0$,
it is a contradiction. 
\end{proof}

\subsubsection{Convexity of $\mathbb{Q}_{k}$}

We show that $\mathbb{Q}_{k}$ (\ref{eq:  NA Qk}) is convex along
the geodesic segments constructed by Reboulet \cite{reboulet_plurisubharmonic_2022}.
For any $\phi_{0},\phi_{1}\in\mathrm{CPSH}^{\na}$, let $(\chi_{t}^{k})_{t\in[0,1]}$
be the norm geodesic between $\mathrm{SN}_{k}(\phi_{0})$ and $\mathrm{SN}_{k}(\phi_{1})$.
\cite[Theorem 2.6.1]{reboulet_plurisubharmonic_2022} says that for
a fixed $t$, $(\chi_{t}^{k})_{k}$ is super-additive, i.e. 
\[
\chi_{t}^{k+l}(s\cdot s')\geq\chi_{t}^{k}(s)+\chi_{t}^{l}(s'),\ \forall s\in R_{k},\ s'\in R_{l}.
\]
By Fekete's lemma, we can define 
\begin{equation}
\phi_{t}\coloneqq\lim_{k\ra\infty}\mathrm{FS}_{k}(\chi_{t}^{k})=\sup_{k}\mathrm{FS}_{k}(\chi_{t}^{k}),\ \textrm{for}\ t\in[0,1].\label{eq:Reboulet geo}
\end{equation}
Reboulet showed that $(t,v)\mapsto\phi_{t}(v)$ is continuous in both
variables and $[0,1]\ni t\mapsto\phi_{t}$ is a geodesic in the metric
space $(\mathcal{E}_{\na}^{1},d_{1})$. Moreover, the NA MA-energy
$\mathbb{E}$ is affine along $\phi_{t}$, refer to \cite[Theorem A,B]{reboulet_plurisubharmonic_2022}
for details. 
\begin{prop}
\label{prop:Qk convex}For any $\phi_{0},\phi_{1}\in\mathrm{CPSH}^{\na}$,
the functional $\mathbb{Q}_{k}$ (\ref{eq:  NA Qk}) is convex along
the geodesic $(\phi_{t})$ defined above. 
\end{prop}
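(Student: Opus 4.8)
The plan is to write $\mathbb{Q}_{k}(\phi)=2k\bigl(\mathbb{E}(\phi)-E_{k}\circ\mathrm{SN}_{k}(\phi)\bigr)$ and analyse the two terms separately along the path $(\phi_{t})$ of \eqref{eq:Reboulet geo}. By Reboulet's theorem quoted above (Theorem B of \cite{reboulet_plurisubharmonic_2022}), the NA Monge--Amp\`ere energy $\mathbb{E}$ is affine along $(\phi_{t})$, so $t\mapsto 2k\,\mathbb{E}(\phi_{t})$ is affine. Consequently it suffices to prove that $g(t):=E_{k}\circ\mathrm{SN}_{k}(\phi_{t})$ is concave on $[0,1]$; this is the non-Archimedean analogue of the concavity of $\phi\mapsto\tfrac1k\log\det\mathbf{H}_{k}(\phi)$ along Archimedean geodesics, which in \cite{berndtsson_positivity_2009} underlies the convexity of $\mathcal{Q}_{k}$ in \eqref{eq: Archi Qk}.

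First I would record the relevant elementary structure. For a basis $(s_{i})$ of $R_{k'}$ simultaneously diagonalising $\mathrm{SN}_{k'}(\phi_{0})$ and $\mathrm{SN}_{k'}(\phi_{1})$ — hence, by \eqref{eq:geodesic NA norm}, all the $\chi^{k'}_{t}$ — the numbers $\chi^{k'}_{t}(s_{i})$ are affine in $t$, so by \eqref{eq: def FS_k} each $\mathrm{FS}_{k'}(\chi^{k'}_{t})(v)$ is a finite maximum of functions affine in $t$; taking the supremum over $k'$, the function $t\mapsto\phi_{t}(v)$ is convex for every $v\in X^{\an}$. Next, Proposition \ref{prop: FS vs SN} gives $\mathrm{SN}_{k}(\phi)=\sup\{\chi\in\mathcal{N}(R_{k}):\mathrm{FS}_{k}(\chi)\le\phi\}$, and since $E_{k}$ is nondecreasing for the pointwise order on $\mathcal{N}(R_{k})$ (a larger norm has larger jumping numbers), $g(t)=\sup\{E_{k}(\chi):\chi\in\mathcal{N}(R_{k}),\ \mathrm{FS}_{k}(\chi)\le\phi_{t}\}$. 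In particular $\phi_{t}\ge\mathrm{FS}_{k}(\chi^{k}_{t})$ yields $\mathrm{SN}_{k}(\phi_{t})\ge\chi^{k}_{t}$, hence $g(t)\ge E_{k}(\chi^{k}_{t})=:\ell(t)$, with equality at $t=0,1$ since $\mathrm{SN}_{k}(\phi_{0})=\chi^{k}_{0}$ and $\mathrm{SN}_{k}(\phi_{1})=\chi^{k}_{1}$ by definition of $(\chi^{k}_{t})$; moreover $\ell(t)=\tfrac{1}{kN_{k}}\sum_{i}\lambda_{i}(\chi^{k}_{t})=\tfrac{1}{kN_{k}}\sum_{i}\chi^{k}_{t}(s_{i})$ is affine in $t$.

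The core of the proof is then to show $g\equiv\ell$ — equivalently, given the squeeze $g\ge\ell$ with matching endpoints, to show that $g$ is convex on $[0,1]$, since a convex function agreeing with the affine $\ell$ at $0$ and $1$ lies below $\ell$ and hence equals it. This is the step I expect to be the main obstacle: the obvious envelope estimates go the wrong way, because the pointwise convexity of $t\mapsto\phi_{t}(v)$ makes the constraint set $\{\mathrm{FS}_{k}(\chi)\le\phi_{t}\}$ behave badly under interpolation, and $\mathrm{SN}_{k}(\phi_{t})$ is in general strictly larger than $\chi^{k}_{t}$ for $0<t<1$, so one cannot simply import affinity of jumping numbers from the norm geodesic. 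The extra input I would use is the special nature of $(\phi_{t})$: the super-additivity of the family $(\chi^{k'}_{t})_{k'}$ from \cite[Theorem 2.6.1]{reboulet_plurisubharmonic_2022}, combined with a direct-image positivity argument in the spirit of \cite{berndtsson_positivity_2009} applied to the graded pieces $F^{\lambda}R_{k}$ of the filtration $\mathrm{SN}_{k}(\phi_{t})$ — exactly the compatibility designed to hold along Reboulet's geodesics. Once $g$ is known to be affine, $\mathbb{Q}_{k}(\phi_{t})=2k\,\mathbb{E}(\phi_{t})-2k\,g(t)$ is a difference of affine functions, hence affine and a fortiori convex, which proves Proposition \ref{prop:Qk convex}.
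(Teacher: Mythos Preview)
You have already proved the proposition by the end of your second paragraph, but you do not recognise it. The inequality you establish,
\[
g(t)\;=\;E_{k}\circ\mathrm{SN}_{k}(\phi_{t})\;\ge\;\ell(t)\;=\;(1-t)\,E_{k}\circ\mathrm{SN}_{k}(\phi_{0})+t\,E_{k}\circ\mathrm{SN}_{k}(\phi_{1}),
\]
is exactly the concavity statement needed: combined with the affinity of $t\mapsto\mathbb{E}(\phi_{t})$ it yields
\[
\mathbb{Q}_{k}(\phi_{t})\;\le\;(1-t)\,\mathbb{Q}_{k}(\phi_{0})+t\,\mathbb{Q}_{k}(\phi_{1}),
\]
which is precisely ``$\mathbb{Q}_{k}$ is convex along the geodesic $(\phi_{t})$''. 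The paper's proof stops here; your chain $\phi_{t}\ge\mathrm{FS}_{k}(\chi^{k}_{t})\Rightarrow\mathrm{SN}_{k}(\phi_{t})\ge\chi^{k}_{t}\Rightarrow g(t)\ge E_{k}(\chi^{k}_{t})=\ell(t)$ is the complete argument.

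Your third paragraph is a red herring. You set out to prove the much stronger claim $g\equiv\ell$ (equivalently $\mathrm{SN}_{k}(\phi_{t})=\chi^{k}_{t}$ for all $t$), and you propose to reach it by showing $g$ is \emph{convex}. This stronger claim is not needed, and is in general false: for $0<t<1$ the norm $\chi^{k}_{t}$ need not lie in $\mathcal{SN}_{k}$, while $\mathrm{SN}_{k}(\phi_{t})$ always does, so typically $\mathrm{SN}_{k}(\phi_{t})\gneq\chi^{k}_{t}$ and hence $g(t)>\ell(t)$ strictly (since $\chi\le\chi'$ with $\chi\ne\chi'$ forces some jumping number to increase). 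Thus $\mathbb{Q}_{k}$ is convex but not affine along the geodesic, and the programme of proving $g$ convex via a Berndtsson-type positivity argument is chasing a statement that does not hold. Drop the third paragraph and you have the paper's proof verbatim.
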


\begin{proof}
Since $\mathbb{E}(\phi_{t})$ is affine, we only need to show that
$E_{k}\circ\mathrm{SN}_{k}(\phi_{t})$ is concave in $t$. Let $(\chi_{t}^{k})_{t\in[0,1]}$
be the norm geodesic defined above. By (\ref{eq:Reboulet geo}) and
the monotonicity of $\mathrm{SN}_{k}$, we have 
\[
\mathrm{SN}_{k}(\phi_{t})\geq\mathrm{SN}_{k}\circ\mathrm{FS}_{k}(\chi_{t}^{k})\geq\chi_{t}^{k}.
\]
Since $E_{k}$ is increasing, we obtain 
\[
E_{k}\circ\mathrm{SN}_{k}(\phi_{t})\geq E_{k}(\chi_{t}^{k})=(1-t)E_{k}\circ\mathrm{SN}_{k}(\phi_{0})+tE_{k}\circ\mathrm{SN}_{k}(\phi_{1}).
\]
\end{proof}

\subsubsection{Two facts about the asymptotic geodesic rays }
\begin{prop}
\label{prop: max asymp induce same}Suppose that two maximal $L^{1}$-geodesics
$\ell$ and $\ell^{\'}$ are $d_{1}$-asymptotic to each other, then
we have $\ell^{\na}=\ell^{\'\na}$. 
\end{prop}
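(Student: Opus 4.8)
The plan is to reduce the statement to the uniqueness part of the Berman--Boucksom--Jonsson correspondence, \cite[Theorem 6.6]{berman_variational_2021}, combined with the Busemann convexity of $d_1$ that is already invoked in §3, so that essentially no new computation is needed; the one genuinely substantial input is Proposition~\ref{prop: for BBJ imbedd}, which we use as a black box.

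First I would transport everything to a common base point. Set $\varphi\coloneqq(\ell')^{\na}\in\mathcal{E}_{\na}^{1}(L)$. By \cite[Theorem 6.6]{berman_variational_2021} there is a unique maximal $L^1$-geodesic ray $m$ with $m_0=\ell_0$ and $m^{\na}=\varphi$. Now $m$ and $\ell'$ are both maximal $L^1$-geodesic rays with the same limit NA metric $\varphi$, so Proposition~\ref{prop: for BBJ imbedd} (exactly the well-definedness needed to define the embedding $\iota$) gives $m\sim_{d_1}\ell'$. Since $\ell\sim_{d_1}\ell'$ by hypothesis, transitivity of the $d_1$-asymptotic relation yields $m\sim_{d_1}\ell$.

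Next I would upgrade $m\sim_{d_1}\ell$ to the equality $m=\ell$. Both $m$ and $\ell$ are $L^1$-geodesic rays emanating from the same point $\ell_0$, so by Busemann convexity (\cite[Theorem 1.5]{chen_constant_2018}, used verbatim as in §3) the function $t\mapsto d_1(m_t,\ell_t)$ is convex on $[0,\infty)$; it is bounded because $m\sim_{d_1}\ell$, and a bounded convex function on $[0,\infty)$ is non-increasing, so it is dominated by its value $d_1(m_0,\ell_0)=0$ at $t=0$, hence identically zero. Therefore $m=\ell$, and consequently $\ell^{\na}=m^{\na}=\varphi=(\ell')^{\na}$, which is the claim. (Note this argument in fact only uses maximality of $\ell'$; that $\ell$ is maximal then follows a posteriori from $\ell=m$.)

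The only step that is not formal bookkeeping is the appeal to Proposition~\ref{prop: for BBJ imbedd}, i.e. the fact that two maximal $L^1$-geodesic rays with the same non-Archimedean limit are automatically $d_1$-asymptotic; this is the ``hard half'' of the BBJ dictionary and is where the comparison estimates really live, but it is established earlier, so here it is used directly. As an alternative one-line route I would note that Reboulet's result \cite[Theorem 4.4.1]{reboulet_space_2023}, that $\iota\colon(\mathcal{E}_{\na}^{1},d_1)\to(\mathcal{R}^{1}(L)/_{\sim},d_{1}^{c})$ is an isometry, makes the proposition immediate: $\iota(\ell^{\na})=[\ell]=[\ell']=\iota((\ell')^{\na})$ because $\ell$ and $\ell'$ are $d_1$-asymptotic, and an isometry is injective. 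I would keep the first argument as the main proof, since it stays within the results recalled in §\ref{subsec: limit NA metric}, and record the isometry shortcut as a remark.
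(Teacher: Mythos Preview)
Your argument is correct, but it takes a different route from the paper's. The paper does not invoke Proposition~\ref{prop: for BBJ imbedd}; instead it drops to a common lower starting point $\ell_0\wedge\ell_0'$, takes the Darvas--Lu parallel ray $\eta$ from there (so $\eta_t\le\ell_t,\ell_t'$), and then runs an energy argument: maximality of $\ell$ gives $\mathbb{E}(\ell^{\na})=\lim t^{-1}\mathcal{E}(\ell_t)=\lim t^{-1}\mathcal{E}(\eta_t)\le\mathbb{E}(\eta^{\na})\le\mathbb{E}(\ell^{\na})$, and the domination lemma \cite[Lemma~6.3]{berman_variational_2021} forces $\ell^{\na}=\eta^{\na}$ (and symmetrically for $\ell'$). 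So the paper's proof is self-contained relative to Proposition~\ref{prop: for BBJ imbedd}, whereas yours outsources the comparison step to it.

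Two remarks. First, in the paper's ordering Proposition~\ref{prop: for BBJ imbedd} is proved \emph{after} Proposition~\ref{prop: max asymp induce same}, so your phrase ``established earlier'' is inaccurate; however there is no logical circularity, since the paper's proof of Proposition~\ref{prop: for BBJ imbedd} does not use Proposition~\ref{prop: max asymp induce same}, so you would simply need to swap the order of the two propositions. Second, your approach does buy something: as you note, it only uses maximality of $\ell'$ and then \emph{deduces} $\ell=m$ is maximal, whereas the paper's energy argument uses maximality of both rays. The Reboulet-isometry shortcut is also fine, with the same caveat that the well-definedness of $\iota$ already rests on Proposition~\ref{prop: for BBJ imbedd}.
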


\begin{proof}
By \cite[Proposition 3.1.13]{xia_singularities_2025}, $\ell_{0}\wedge\ell_{0}^{\'}\in\mathcal{E}^{1}(L)$.
Let $\eta$ be the $L^{1}$-geodesic starting from $\ell_{0}\wedge\ell_{0}^{\'}$
and $d_{1}$-asymptotic to $\ell$ (so also to $\ell^{\'}$). By the
construction in \cite[Proposition 4.1]{darvas_geodesic_2020}, we
have $\ell_{t},\ell_{t}^{\'}\geq\eta_{t}$ for all $t\geq0$, so $\ell^{\na},\ell^{\'\na}\ge\eta^{\na}$.
It follows that 
\[
\mathbb{E}(\eta^{\na})\leq\mathbb{E}(\ell^{\na})=\lim_{t\ra\infty}\frac{1}{t}\mathcal{E}(\ell_{t})=\lim_{t\ra\infty}\frac{1}{t}\mathcal{E}(\eta_{t})\leq\mathbb{E}(\eta^{\na}),
\]
where the first $=$ is due to the maximality of $\ell$, the second
$=$ follows by $\ell\sim_{d_{1}}\eta$, the last $\leq$ is (\ref{eq:NA E > slope}).
Thus we have $\mathbb{E}(\eta^{\na})=\mathbb{E}(\ell^{\na})$. Combine
this with $\ell^{\na}\ge\eta^{\na}$, \cite[Lemma 6.3]{berman_variational_2021}
implies $\ell^{\na}=\eta^{\na}$. Similarly, we have $\ell^{\'\na}=\eta^{\na}$. 
\end{proof}
\begin{rem}
It is plausible to remove the maximality condition. If it is possible,
we can define the NA limit map $\mathcal{R}^{1}/_{\sim}\rightarrow\mathcal{E}_{\na}^{1}$. 
\end{rem}

\begin{prop}
\label{prop: for BBJ imbedd}Given any $\phi\in\mathcal{E}_{\na}^{1}$
and $\ell_{0}^{1},\ell_{0}^{2}\in\mathcal{E}^{1}(L)$, let $\ell^{1},\ell^{2}$
be the maximal $L^{1}$-geodesic rays emanating from $\ell_{0}^{1},\ell_{0}^{2}$
respectively and such that $\ell^{1,\na}=\ell^{2,\na}=\phi$. Then
$\ell^{1},\ell^{2}$ are $d_{1}$-asymptotic to each other. 
\end{prop}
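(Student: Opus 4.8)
The plan is to run the argument of Proposition \ref{prop: max asymp induce same} in reverse: exhibit a single maximal $L^{1}$-geodesic ray lying below both $\ell^{1}$ and $\ell^{2}$, show that it is $d_{1}$-asymptotic to each of them, and then invoke transitivity of the relation $\sim_{d_{1}}$.

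First I would set $\ell_{0}\coloneqq\ell_{0}^{1}\wedge\ell_{0}^{2}$, which lies in $\mathcal{E}^{1}(L)$ by \cite[Proposition 3.1.13]{xia_singularities_2025}, and let $\eta=(\eta_{t})_{t\geq0}$ be the unique maximal $L^{1}$-geodesic ray emanating from $\ell_{0}$ with $\eta^{\na}=\phi$, furnished by \cite[Theorem 6.6]{berman_variational_2021}. Since $\eta$ is a sublinear psh ray valued in $\mathcal{E}^{1}(L)$ with $\lim_{t\to0^{+}}\eta_{t}=\ell_{0}\leq\ell_{0}^{1}$ and $\eta^{\na}=\phi=\ell^{1,\na}$, the maximality of $\ell^{1}$ in the sense of Definition \ref{def: BBJ max ray} forces $\eta_{t}\leq\ell_{t}^{1}$ for all $t>0$; the same reasoning applied to $\ell^{2}$ gives $\eta_{t}\leq\ell_{t}^{2}$.

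Next I would estimate $d_{1}(\eta_{t},\ell_{t}^{1})$. Because $\eta_{t}\leq\ell_{t}^{1}$ are comparable finite-energy potentials, the $d_{1}$-distance between them reduces to $d_{1}(\eta_{t},\ell_{t}^{1})=\mathcal{E}(\ell_{t}^{1})-\mathcal{E}(\eta_{t})$ (Darvas). Dividing by $t$ and letting $t\to\infty$, the maximality of both $\ell^{1}$ and $\eta$ identifies the two limit slopes of $\mathcal{E}$ with $\mathbb{E}(\ell^{1,\na})$ and $\mathbb{E}(\eta^{\na})$ respectively, both equal to the finite number $\mathbb{E}(\phi)$ since $\phi\in\mathcal{E}_{\na}^{1}$, so $t^{-1}d_{1}(\eta_{t},\ell_{t}^{1})\to0$. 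As $t\mapsto d_{1}(\eta_{t},\ell_{t}^{1})$ is convex by Busemann convexity \cite[Theorem 1.5]{chen_constant_2018} and non-negative with vanishing asymptotic slope, it is bounded (indeed $\leq d_{1}(\eta_{0},\ell_{0}^{1})$), so $\eta\sim_{d_{1}}\ell^{1}$; symmetrically $\eta\sim_{d_{1}}\ell^{2}$, and hence $\ell^{1}\sim_{d_{1}}\ell^{2}$.

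I do not expect a serious obstacle, as this is precisely the mechanism underlying the well-definedness of the BBJ embedding $\iota$ in (\ref{eq:BBJ emb}). The two points to get right are (i) applying Definition \ref{def: BBJ max ray} with the correct inequalities on starting points and on $\na$-data so as to sandwich $\eta$ below both rays, and (ii) the identity $d_{1}(u,v)=\mathcal{E}(v)-\mathcal{E}(u)$ for comparable $u\leq v$ in $\mathcal{E}^{1}(L)$ together with the equality case of (\ref{eq:NA E > slope}). A minor verification is that $L^{1}$-geodesic rays are sublinear psh rays valued in $\mathcal{E}^{1}(L)$, so that Definition \ref{def: BBJ max ray} indeed applies to $\eta$; this is recorded in §\ref{subsec: limit NA metric}.
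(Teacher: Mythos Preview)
Your proof is correct and follows essentially the same strategy as the paper: construct an auxiliary maximal ray comparable to both $\ell^{1}$ and $\ell^{2}$, then use $d_{1}(u,v)=\mathcal{E}(v)-\mathcal{E}(u)$ for comparable potentials together with Busemann convexity. The only cosmetic difference is that you take the auxiliary ray from below (starting at $\ell_{0}^{1}\wedge\ell_{0}^{2}$, using the maximality of $\ell^{1},\ell^{2}$) whereas the paper takes it from above (starting at $\ell_{0}^{1}\lor\ell_{0}^{2}$, using the maximality of the auxiliary ray).
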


\begin{proof}
First we assume that $\ell_{t}^{2}\geq\ell_{t}^{1}$ for all $t\geq0$.
We have 
\[
\lim_{t\ra\infty}\frac{1}{t}d_{1}(\ell_{t}^{2},\ell_{t}^{1})=\lim_{t\ra\infty}\frac{1}{t}\mathcal{E}(\ell_{t}^{2})-\lim_{t\ra\infty}\frac{1}{t}\mathcal{E}(\ell_{t}^{1})=\mathbb{E}(\phi)-\mathbb{E}(\phi)=0,
\]
the second $=$ is due to the maximality of $\ell_{t}^{1},\ell_{t}^{2}$.
By the Busemann convexity \cite[Theorem 1.5]{chen_constant_2018},
$h(t)\coloneqq d_{1}(\ell_{t}^{2},\ell_{t}^{1})$ is convex. It implies
that $h(t)$ is bounded above. 

For the general case. Let $\bar{\ell}$ be the maximal $L^{1}$-geodesic
ray emanating from $\ell_{0}^{1}\lor\ell_{0}^{2}$ and $\bar{\ell}^{\na}=\phi$.
By the definition of maximality, we have $\bar{\ell}\geq\ell^{1},\ell^{2}$.
We have showed that $\bar{\ell}\sim_{d_{1}}\ell^{1}$ and $\bar{\ell}\sim_{d_{1}}\ell^{2}$,
so $\ell^{1}\sim_{d_{1}}\ell^{2}$. 
\end{proof}
\printbibliography

\end{document}